\newtheorem{thm}{Theorem}[section]
\newtheorem{prop}[thm]{Proposition}
\newtheorem{lem}[thm]{Lemma}
\newtheorem{cor}[thm]{Corollary}
\numberwithin{equation}{section}
\newtheorem{ex}[thm]{Example}
\newtheorem{assumption}[thm]{Assumption}
\theoremstyle{definition}
\newtheorem{rem}[thm]{Remark}
\newtheorem{defn}[thm]{Definition}
\def\Z{{\Bbb Z}}
\def\Q{{\Bbb Q}}
\def\R{{\Bbb R}}
\def\C{{\Bbb C}}
\def\A{{\Bbb A}}
\def\ff{{\frak f}}
\def\fn{{\frak n}}
\def\fq{{\frak q}}
\def\GL{{\operatorname {GL}}}
\def\GU{{\operatorname{GU}}}
\def\SL{{\operatorname{SL}}}
\def\Sp{{\operatorname{Sp}}}
\def\SO{{\operatorname{SO}}}
\def\U{{\operatorname {U}}}
\def\PGL{{\operatorname{PGL}}}
\def\GSp{{\operatorname{GSp}}}
\def\PGSp{{\operatorname{PGSp}}}
\def\Im{{\operatorname {Im}}}
\def\tr{{\operatorname{tr}}}
\def\nr{{\operatorname{N}}}
\def\End{{\operatorname{End}}}
\def\Hom{{\rm{Hom}}}
\def\diag{{\operatorname {diag}}}
\def\sgn{{\operatorname {sgn}}}
\def\leq{\leqslant}
\def\geq{\geqslant}
\def\bsl{\backslash}
\def\le{\leq}
\def\ge{\geq}
\def\d {{{d}}}
\renewcommand{\a}{\alpha}
\renewcommand{\b}{\beta}
\newcommand{\e}{\epsilon}
\renewcommand{\l}{\lambda}
\newcommand{\s}{\sigma}
\newcommand{\ga}{{\mathfrak{a}}}
\newcommand{\gf}{{\mathfrak{f}}}
\newcommand{\gm}{{\mathfrak{m}}}
\newcommand{\gn}{{\mathfrak{n}}}
\newcommand{\gp}{{\mathfrak{p}}}
\newcommand{\gq}{{\mathfrak{q}}}
\newcommand{\gA}{{\mathfrak{A}}}
\newcommand{\Acal}{{\mathcal A}}
\newcommand{\Dcal}{{\mathcal D}}
\newcommand{\Ecal}{{\mathcal E}}
\newcommand{\Fcal}{{\mathcal F}}
\newcommand{\Gcal}{{\mathcal G}}
\newcommand{\Ical}{{\mathcal I}}
\newcommand{\Lcal}{{\mathcal L}}
\newcommand{\Mcal}{{\mathcal M}}
\newcommand{\Ocal}{{\mathcal O}}
\newcommand{\Ucal}{{\mathcal U}}
\renewcommand{\AA}{\mathbb{A}}
\newcommand{\CC}{\mathbb{C}}
\newcommand{\GG}{\mathbb{G}}
\newcommand{\NN}{\mathbb{N}}
\newcommand{\QQ}{\mathbb{Q}}
\newcommand{\RR}{\mathbb{R}}
\newcommand{\ZZ}{\mathbb{Z}}
\newcommand{\bfk}{{\mathbf k}}
\newcommand{\bfA}{{\mathbf A}}
\newcommand{\bfG}{{\mathbf G}}
\newcommand{\bfK}{{\mathbf K}}
\newcommand{\bfL}{{\mathbf L}}
\newcommand{\bfT}{{\mathbf T}}
\newcommand{\Aut}{\operatorname{Aut}}
\newcommand{\Gal}{\operatorname{Gal}}
\newcommand{\Ker}{{\operatorname{Ker}}}
\newcommand{\notdivide}{\nmid}
\newcommand{\ord}{\operatorname{ord}}
\newcommand{\id}{\operatorname{id}}
\newcommand{\Spec}{\operatorname{Spec}}
\newcommand{\fin}{{\rm fin}}
\newcommand{\Lie}{{\operatorname{Lie}}}
\newcommand{\Res}{\operatorname{Res}}
\newcommand{\AI}{\operatorname{AI}}
\newcommand{\BC}{\operatorname{BC}}
\newcommand{\HE}{\operatorname{HE}}
\def\rec{{\rm rec}}
\def\Ind{{\rm Ind}}
\def\Sym{{\rm Sym}}
\title[Integrality of Hecke eigenvalues and the growth of Hecke fields]
{Integrality of Hecke eigenvalues and the growth of Hecke fields}
\author{Kenji Sakugawa}
\address{Faculty of Education, Shinshu University, 6-Ro, Nishi-nagano, Nagano 380-8544, Japan} 
\email{sakugawa\_kenji@shinshu-u.ac.jp}
\author{Shingo Sugiyama}
\address{Faculty of Mathematics and Physics, Institute of Science and
	Engineering, Kanazawa University, Kakumamachi, Kanazawa, Ishikawa 920-1192, Japan}
\email{s-sugiyama@se.kanazawa-u.ac.jp}
\subjclass[2020]{Primary 11F60; Secondary 11F41, 11F46, 11F75, 11R04.}
\keywords{Integrality, Hecke eigenvalues, Hecke operators,
	fields of rationality,  Hecke fields.}
\begin{document}

	\maketitle
	
	\begin{abstract}
		We prove that Hecke eigenvalues for any Hilbert and Siegel modular forms
		are algebraic integers.
		Our method does not rely on cohomologicality nor Galois representations.
		We apply the integrality of Hecke eigenvalues for Hilbert modular forms of non-parallel weight to the estimation of the growth of Hecke fields of Hilbert cusp forms with non-vanishing central $L$-values.
		As a further application,
		we give the growth of the fields of rationality of cuspidal automorphic representations of $\GL_{2d}(\AA_\QQ)$ for a prime number $d$ with non-vanishing central $L$-values.
		We also apply the integrality of Hecke eigenvalues for holomorphic Siegel cusp forms of general degree in order to give the growth of the Hecke fields of those forms.
		
	\end{abstract}
	
	\setcounter{tocdepth}{3}

	\section{Introduction}
	Hecke operators and their eigenvalues play pivotal roles
	in number theory connecting arithmetic objects with automorphic ones.
	It is well known that Hecke eigenvalues for holomorphic elliptic modular forms
	are algebraic integers.
	Shimura \cite{Shimura} gave such integrality of Hecke eigenvalues for holomorphic Hilbert modular forms
	of parallel weight.
	Integrality can be applicable and indispensable to estimate the growth of Hecke fields of elliptic modular forms
	as in Serre \cite{Serre}.
	Although such integrality was proved in special settings, 
	there is no literature on such integrality in a general setting.
	For example, the case of Hilbert modular forms of {\it non-parallel and non-paritious} weight, and of {\it vector-valued} Siegel modular forms
	of {\it level $\Gamma(N)$} (the principal congruence subgroup) have not been treated.
	
	In this paper, we give the integrality of Hecke eigenvalues for holomorphic Hilbert modular forms and holomorphic Siegel modular forms in a general setting.
	Moreover,
	we apply our integrality results to the growth of the fields of rationality of cuspidal automorphic representations of $\GL_{2d}(\AA_\QQ)$ 
	with a prime number $d$ and of $\Sp_{2n}(\AA_\QQ)$ for any $n\ge 2$.

	\subsection{Case of Hilbert modular forms}
	Let $F$ be a totally real algebraic number field of finite degree and $\Ocal_F$ the ring of integers of $F$.
	Let ${\bf k}=(k_v)_{v | \infty}$ be a family of positive integers indexed by the set of
	the archimedean places of $F$ and let $\gn$ be a non-zero ideal of $\Ocal_F$. Take a character $\chi : (\Ocal_F/\gn)^\times\rightarrow \CC^\times$.
	We write $M_{\bf k}(\gn, \chi)$ for the space of 
	Hilbert modular forms of weight $\bf k$ with respect to the congruence subgroup $\Gamma_0(\gn)$ of level $\gn$ 
	and nebentypus $\chi$ (see \cite[($2.5_c$)]{Shimura}).
	It is well known that Hecke eigenvalues of Hilbert modular forms of weight $\bf k$
	are algebraic numbers. Moreover,
	if $\bf k$ is parallel, then
	Shimura proved in \cite[Proposition 2.2]{Shimura}
	that those Hecke eigenvalues are algebraic integers
	by constructing a basis of $M_{\bf k}(\gn, \chi)$ with the aid of Siegel modular forms with integral coefficients (see \cite[(9)]{Shimura1975} and \cite[p.683]{ShimuraTheta}).
	Such integrality was used to analyze the degrees of Hecke fields of Hilbert (or elliptic) cusp forms (cf.\ \cite{Serre},
	\cite{Royer} and \cite{SugiyamaTsuzuki}).

	Let $T'(\gp)$ denote the Hecke operator on $M_{\bf k}(\gn,\chi)$ for a non-zero prime ideal $\gp$ of $\Ocal_F$ coprime to $\gn$, which is defined in \cite[(2.21)]{Shimura}.
	Let $\bfT_{F,\ZZ}^{{\bf k}, \gn, \chi}$ be the $\ZZ$-subalgebra of ${\rm End}(M_{\bf k}(\gn, \chi))$
	generated by $T'(\gp)$ over all non-zero prime ideals $\gp$ coprime to $\gn$.
	\begin{thm}\label{thm0.1}
		All eigenvalues of any Hecke operator $T\in {\bf T}_{F, \ZZ}^{{\bf k},\gn,\chi}$ on $M_{\bf k}(\gn, \chi)$ are algebraic integers.
	\end{thm}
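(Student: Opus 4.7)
The plan is to reduce the integrality statement to the existence of a Hecke-stable $\ZZ$-submodule $L$ of $M_{\bf k}(\Gamma_0(\gn),\chi)$ whose $\CC$-span is the whole space. Granted such a lattice, the natural $\ZZ$-algebra map $\bfT_{F,\ZZ}^{{\bf k},\gn,\chi} \to \End_\ZZ(L)$ realizes the Hecke algebra inside a finitely generated $\ZZ$-module, so $\bfT_{F,\ZZ}^{{\bf k},\gn,\chi}$ is itself finitely generated and hence integral over $\ZZ$. Every eigenvalue of any $T \in \bfT_{F,\ZZ}^{{\bf k},\gn,\chi}$ on the finite-dimensional space $M_{\bf k}(\Gamma_0(\gn),\chi)$ is then a root of a monic polynomial over $\ZZ$, and is therefore an algebraic integer. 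The entire problem thus reduces to producing such a lattice.

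For the construction, I would proceed case by case on the weight, first passing from $\Gamma_0(\gn)$ with nebentypus $\chi$ to $\Gamma_1(\gn)$, using that $M_{\bf k}(\Gamma_0(\gn),\chi)$ sits as a Hecke-stable summand of $M_{\bf k}(\Gamma_1(\gn))$ (on which the operators $T'(\gp)$ are originally defined). When ${\bf k}$ is parallel, Shimura's explicit basis with algebraic integer Fourier coefficients, built from Siegel theta series restricted to the Hilbert modular domain, already supplies a suitable lattice. For non-parallel ${\bf k}$, my plan is to embed $M_{\bf k}(\Gamma_1(\gn))$ into a space of vector-valued Siegel modular forms of genus $d = [F:\QQ]$ via the restriction-of-scalars map from the Hilbert modular variety to the Siegel modular variety, choosing a $\GL_d$-coefficient system whose restriction to the diagonal torus accommodates the varying archimedean weights $k_v$. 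The integrality of Fourier coefficients (hence of Hecke eigenvalues) for vector-valued Siegel modular forms of level $\Gamma(N)$, treated separately in the paper through integral models of the Siegel moduli problem and the $q$-expansion principle, then transports to the Hilbert side along this embedding.

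The hard part will be the Hecke compatibility of the Hilbert--Siegel embedding. The operator $T'(\gp)$ at a prime $\gp$ of $F$ does not correspond to a single Siegel Hecke operator, but rather to a combination of Siegel Hecke operators at the rational prime $p$ below $\gp$, packaged by the local structure of $F$ at $\gp$. One must check that (i) the image of $M_{\bf k}(\Gamma_1(\gn))$ inside the Siegel space is stable under the relevant Siegel Hecke operators, and (ii) the $\ZZ$-lattice inherited from the Siegel integral structure is actually preserved by $T'(\gp)$, with the correct normalization at the bad primes dividing $\gn$. Once this compatibility is in place, the integrality on the Siegel side is pulled back to yield the integrality of Hecke eigenvalues for Hilbert modular forms of arbitrary (in particular non-parallel) weight, completing the argument.
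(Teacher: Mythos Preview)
Your general framework is sound: reducing to a Hecke-stable finitely generated $\ZZ$-submodule whose $\CC$-span is the whole space is exactly the right shape of argument, and the paper's abstract Theorem~\ref{machine of int} formalizes precisely this. The parallel-weight case via Shimura's theta basis is also correctly identified.

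Where your plan diverges from the paper, and where it runs into trouble, is the non-parallel case. You propose to pull back a Siegel integral structure along the modular embedding and then verify Hecke compatibility of that embedding. The paper does neither. Instead it obtains the integral structure on $M_{\bf k}(\Gamma_1(\gn))$ directly on the Hilbert side, by embedding into Katz's space of algebraic Hilbert modular forms $\prod_i M(\gc_i^{-1},\chi_{\bf k},\Gamma_{00}(\gn);\CC)$ and invoking Katz's algebraic $q$-expansion principle (Tate objects over $\ZZ$) to bound denominators; this is Theorem~\ref{integral model HMF}. Crucially, Hecke stability of the resulting lattice is then checked not through any Siegel correspondence but through Shimura's explicit relation \eqref{relation of FC},
\[
C(\gm, T'(\gp)f)=\sum_{\gm+\gp\subset\ga}\chi(\ga)\,\nr_{F/\QQ}(\ga)^{k_0-1}\,C(\ga^{-2}\gm\gp,f),
\]
which manifestly has $\Ocal_E$-integral coefficients and so preserves $M_{\bf k}(\Gamma_0(\gn),\chi;\Ocal_E)$ immediately. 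One then applies Theorem~\ref{machine of int} with $R=\Ocal_E$ and $e$ the Fourier expansion.

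The genuine gap in your route is the ``Hecke compatibility of the Hilbert--Siegel embedding.'' The image of the Hilbert modular variety inside the Siegel moduli space (abelian varieties with $\Ocal_F$-multiplication inside all abelian varieties of dimension $d$) is \emph{not} stable under Siegel Hecke correspondences at $p$: a generic $p$-isogeny does not respect the $\Ocal_F$-action, so your condition (i) fails. Conversely, the Hilbert Hecke operators $T'(\gp)$ for the various primes $\gp\mid p$ are independent of one another, whereas the Siegel Hecke algebra at $p$ sees only the rational prime; there is no way to recover each $T'(\gp)$ as a $\ZZ$-combination of Siegel Hecke operators. So the compatibility you flag as ``the hard part'' is not merely hard but unavailable. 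The fix is to abandon the Siegel Hecke algebra entirely for this step and check stability of the lattice under $T'(\gp)$ directly via \eqref{relation of FC}; once you do that, the Siegel detour for the integral structure becomes unnecessary as well, since Katz's theory already supplies it on the Hilbert side.
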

	
	We remark that the proof of Theorem \ref{thm0.1} needs neither cohomologicality
	nor cuspidality of $f$.
	Integrality could be proved by the use of the Eichler-Shimura isomorphism and Betti cohomology
	if $\bf k$ satisfies $k_v\ge 2$ for all $v|\infty$ and all $k_v$'s have the same parity (cf.\ \cite[Definition 1.1 and Theorem 1.14]{Dimitrov2}, \cite{Hida94}).
	One of advantages of our method is to treat the non-paritious case, where the weight $\bf k$ has both even and odd components.
	As such non-paritious Hilbert modular forms are not cohomological,
	cohomological arguments do not work for integrality.
	
	Certain paritious Hilbert modular forms with properties such as ``cohomological'', ``of parallel weight one'', etc., have associated Galois representations.
We can give the integrality of Hecke eigenvalues in such paritious cases (see Remarks \ref{known for cohomological Hilbert modular} and \ref{known for parallel weight one}).
	
	We prove Theorem \ref{thm0.1} by reducing this to the existence of an integral structure where Hecke operators act.
	A candidate of the integral structure is given by Katz \cite{Katz}. However, it should be noted that our modular forms are different from Katz modular forms. Therefore, even if we use results of \cite{Katz}, it is not obvious whether an integral structure exists. Hence,
	for the existence of an integral structure,
	it is necessary to utilize the existence of a \emph{rational structure} proved by Shimura \cite{Shimura}. The Hecke-stability of this integral structure can be shown by explicit relations between Hecke operators and Fourier coefficients given in \cite{Shimura}.

	\subsection{Case of Siegel modular forms}
	
	We consider Siegel modular forms of degree $n\ge2$.
	Let $\rho$ be an irreducible algebraic representation of $\GL_n(\CC)$ with highest weight $(k_1,\ldots, k_n) \in \ZZ^n$ with $k_1\ge k_2\ge \cdots\ge k_n$.
	For any $N \in \NN$, let 
	$\Gamma(N)\subset\Sp_{2n}(\ZZ)$ be the principal congruence subgroup of level $N$,
	and let $\chi=(\chi_1,\ldots, \chi_n) : \prod_{j=1}^n (\ZZ/N\ZZ)^\times \rightarrow \CC^\times$ be a character.
	We write $M_{\rho}(\Gamma(N), \chi)$ for the space of vector-valued Siegel modular forms
	of degree $n$, type $\rho$, level $N$ and character $\chi$ (see \S \ref{Hecke operators on Siegel modular forms}).
	We may assume $k_n\ge 0$ by K\"ocher's principle.

	Let ${\bf T}_{n,\ZZ}^{\rho, N, \chi}$ be the $\ZZ$-algebra of 
	$\End(M_{\rho}(\Gamma(N), \chi))$
	generated by
	$$
	p^{\delta(k_n<n)\frac{(n-k_n)(n-k_n+1)}{2}}T(p), \qquad (\text{$p$ : prime}, \ p\notdivide N)
	$$
	and
	$$p^{\delta(k_n\le n)n(n-k_n+1)}T_{j,n-j}(p^2), \qquad (\text{$p$ : prime}, \ p\notdivide N, \ 1\le j \le n).
	$$
	Here
	$T(p)$ and $T_{j,n-j}(p^2)$ are the Hecke operators defined as in \S \ref{Hecke operators on Siegel modular forms}, and
	$\delta({\rm P})$ for a condition $\rm P$ is the generalized Kronecker delta
	so that $\delta({\rm P})=1$ if $\rm P$ is true and $0$ otherwise.
	The two factors $p^{\delta(\cdots)(\cdots)}$ are equal to $1$ when $k_n\ge n+1$.
By \cite[Theorem 1.1 and Corollary]{Evdokimov} and \cite[Theorem 3.3.23 (1)]{AndrianovBook}, ${\bf T}_{n,\ZZ}^{\rho, N, \chi}\otimes_{\ZZ} \CC$ coincides with the $\CC$-algebra consisting of the usual Hecke operators on $\End(M_\rho(\Gamma(N),\chi))$.

	\begin{thm}\label{integrality for SMF}
		All eigenvalues of any Hecke operator $T \in {\bf T}_{n, \ZZ}^{\rho, N, \chi}$ on $M_{\rho}(\Gamma(N), \chi)$
		are algebraic integers.
	\end{thm}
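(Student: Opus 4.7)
The plan is to reduce the theorem to producing a $\ZZ$-lattice $L \subset M_\rho(\Gamma(N),\chi)$ which is of full rank over $\ZZ$ (i.e.\ $L\otimes_\ZZ\CC = M_\rho(\Gamma(N),\chi)$) and which is stable under every element of $\bfT_{n,\ZZ}^{\rho,N,\chi}$. Once such an $L$ is in hand, each $T\in\bfT_{n,\ZZ}^{\rho,N,\chi}$ acts on $L$ by an integer matrix with respect to any $\ZZ$-basis, hence its characteristic polynomial lies in $\ZZ[X]$ and its eigenvalues are algebraic integers, as required.

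To construct $L$, I would first fix an admissible $\ZZ$-form $V_{\rho,\ZZ}\subset V_\rho$ of the irreducible $\GL_n$-representation $\rho$ (for instance, a Weyl module generated by a highest weight vector normalized integrally), and define $L$ by the condition that all Fourier coefficients $a(T)\in V_\rho$ lie in $V_{\rho,\ZZ}\otimes\ZZ[\zeta_N]$, then descend along $\operatorname{Gal}(\QQ(\zeta_N)/\QQ)$ to obtain a $\ZZ$-structure compatible with $\chi$. A $q$-expansion principle for vector-valued Siegel modular forms (either via an integral model of the automorphic vector bundle associated to $\rho$ on a Faltings--Chai type compactification, or, in the spirit of Shimura, via a spanning set of theta series together with suitable vector-valued pluriharmonic variants) yields that $L$ has full rank over $\ZZ$. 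The analytic core of the argument is to check that each normalized generator in $\bfT_{n,\ZZ}^{\rho,N,\chi}$ preserves $L$. For this I would use the explicit formulas for $T(p)$ and $T_{j,n-j}(p^2)$ acting on Fourier coefficients (Andrianov--Zharkovskaya type formulas extended to vector-valued forms via the explicit elementary divisor decomposition of the relevant double cosets): each Fourier coefficient of the image is a finite sum of coefficients $a(T')$ of $F$, twisted by $\rho$ evaluated on $p$-scaled diagonal matrices of type $\operatorname{diag}(1,\ldots,1,p,\ldots,p)$ or of mixed $\{1,p,p^2\}$-type. The denominators that appear are representation-theoretic and controlled entirely by the lowest weight $k_n$, and the normalizing exponents $p^{\delta(k_n<n)(n-k_n)(n-k_n+1)/2}$ and $p^{\delta(k_n\le n)n(n-k_n+1)}$ are precisely what is needed to clear them, becoming trivial when $k_n\ge n+1$.

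The main obstacle is the careful $p$-adic valuation bookkeeping verifying that the stated exponents actually suffice to integralize $T_{j,n-j}(p^2)$ uniformly in $0\le j\le n-1$ and across the full range of $k_n$, including the degenerate regime $k_n<n$, where $\rho$ interacts nontrivially with both the $j$-block (with entries $p$) and the $(n-j)$-block (with entries $p^2$). One has to show that the minimal integer $e$ for which $p^e T_{j,n-j}(p^2)$ preserves integral Fourier coefficients is bounded by $\delta(k_n\le n)n(n-k_n+1)$, which reduces to a highest-weight computation for $\rho$ evaluated on the relevant elementary divisor representatives. A secondary but non-trivial point is establishing the $q$-expansion principle itself in this generality, since for vector-valued forms of level $\Gamma(N)$ one must track both the character $\chi$ and the $V_\rho$-coefficients simultaneously; I would handle this by combining Galois descent with the integral moduli interpretation, exactly parallel to the bookkeeping used for the Hilbert case of Theorem \ref{thm0.1}.
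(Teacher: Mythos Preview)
Your proposal is correct and follows essentially the same route as the paper: build a finitely generated lattice via integral Fourier coefficients (the paper cites Taylor's \cite[Lemma 2.1]{Taylor} for the full-rank statement and works over the ring of integers $\mathcal O$ of a field containing the values of $\chi$, rather than descending to $\ZZ$), then verify Hecke stability by computing the action on Fourier coefficients through explicit coset representatives, with the weight bound of Lemma~\ref{comp of weight} controlling the $p$-denominators.

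One technical point to flag: your phrase ``elementary divisor decomposition of the relevant double cosets'' is exactly right for $\Gamma_0(N)$ but not for $\Gamma(N)$. The paper stresses that $\Gamma(N)$ is the harder case precisely because elementary divisor theory does not directly give the coset representatives; instead one needs the decomposition of Lemma~\ref{comp rep for S(N)} involving the auxiliary elements $g_j(m)\in\Sp_{2n}(\ZZ)$, and the Gauss-sum evaluations in Propositions~\ref{lem 1 preserve SMF} and~\ref{lem 2 preserve SMF} are carried out separately for $N=1$ and $N\ge 2$. Your valuation bookkeeping outline is otherwise accurate, and the paper's computation confirms that the stated exponents suffice (and are governed by a function $F_b(a)$ minimized over the relevant $(a,b)$-range).
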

	Note that Theorem \ref{integrality for SMF} is still a new result even in the case of $N=1$,
	and that such integrality holds for $M_{\rho}(\Gamma_0(N))$,
	where $\Gamma_0(N)$ is the congruence subgroup of $\Sp_{2n}(\ZZ)$ defined as \cite[(1.3.28)]{AndrianovBook}. Indeed, by $M_\rho(\Gamma_0(N)) \subset M_\rho(\Gamma(N))$, the integrality of Hecke eigenvalues for $M_\rho(\Gamma_0(N))$ follows immediately from 
		Theorem \ref{integrality for SMF}.
	
	Integrality of Hecke eigenvalues for Siegel modular forms has been studied in some cases.
	Integrality was given for the case of scalar-valued Siegel modular forms of level $1$ by Kurokawa \cite{Kurokawa} for degree $n=2$, and by Mizumoto \cite{Mizumoto}, \cite{Mizumotocorrection} for general degree $n$ (see also Katsurada \cite{Katsurada}).
	The $\Sym^2\otimes\det^k$-valued case of level 1 and degree $2$ were studied by Satoh \cite{Satoh}.
	The algebraicity that Hecke eigenvalues are contained in $\overline{\QQ}$
	was given for $\Sym^l\otimes\det^k$-valued Siegel modular forms of degree $n$ and level 1 by Takei \cite{Takei}.
	Contrary to the previous works, Theorem \ref{integrality for SMF} is given for general vector-valued Siegel modular forms of general level,
	and it includes the case of singular modular forms, i.e., Siegel modular forms of small weights (cf.\ \cite[Kapitel IV, Definition 5.2 and Satz 5.3]{Freitag}, \cite[Satz A 4.1]{Freitag} and \cite[\S2.3.6]{AndrianovBook}).
	
	We also remark that, as for automorphic representations of $\Sp_{2n}$, the integrality was proved by Shin and Templier \cite[Proposition 4.1]{ShinTemplier} by using Galois representations under the hypotheses on Arthur's endoscopic classification (see Remark \ref{remark on Shin Templier}).
	
	We expect the same machinery for the existence of an integral structure for the case of automorphic forms on the adelization of a certain algebraic group of PEL-type.
	Indeed, we give the existence of integral structures for automorphic forms in such a case (see Appendix \ref{appendix}).
	Our proof for integral structures relies basically on the construction of degenerating abelian varieties by Mumford \cite{Mumford72} and an algebraic interpretation of $q$-expansions by Lan \cite{Lan12} which is a direct generalization of Faltings-Chai \cite{FaltingsChai} (see also the thesis of Taylor \cite{Taylor}).

	\subsection{Applications to the growth of the fields of rationality and the non-vanishing of central $L$-values}
	One application of the integrality of Hecke eigenvalues is Tsuzuki and the second author's work \cite{SugiyamaTsuzuki} on explicit relative trace formulas for the maximal split torus of $\GL_2$.
	They gave two
	equidistribution results of Hecke eigenvalues of Hilbert cusp forms of a fixed parallel weight with non-vanishing central $L$-values and non-vanishing central derivatives.
	They applied the two equidistributions to estimating the growth of the degrees of Hecke fields of Hilbert modular forms.

For explaining our result, we recall the notion of fields of rationality (or Hecke fields) of automorphic representations (\cite[\S3.1]{ClozelMotif}). Let $\pi=\pi_\infty \otimes \pi_\fin$ be an irreducible cuspidal automorphic representation of $\GL_n(\AA_F)$ with $n\ge 2$ for a number field $F$. The field of rationality $\QQ(\pi)$ of $\pi$ is defined to be the subfield of $\CC$ consisting of all complex numbers fixed by 
		the group $\{\sigma \in \Aut(\CC) \mid \pi_\fin^\sigma \cong \pi_\fin\}$.
			
		If $n=2$ and $\pi$ corresponds to a primitive Hilbert cusp form $f$
		of weight $\bfk=(k_v)_{v|\infty}$ and level $\gn$, $\QQ(\pi)$ is also called the Hecke field of $\pi$. We put $\lambda_{\gp}(\pi)\in \CC$ as a suitably normalized Hecke eigenvalue at any non-zero prime ideal $\gp$ not dividing $\gn$. If all $k_v$'s are even, as in \S\ref{Hilbert modular case}, we have the expression
		$$\QQ(\pi)=\QQ(\{\nr_{F/\QQ}(\gp)^{1/2}\lambda_\gp(\pi) \mid \gp\notdivide \gn\})=\QQ(f),$$
		where $\nr_{F/\QQ}$ is the absolute norm map for $F/\QQ$ and $\QQ(f)$ is the Hecke field of $f$.
	 For the detail of fields of rationality of Hilbert modular forms, see \cite[Theorem 3.10 and \S4.5.3]{Raghuram-Tanabe}.
	  
	As an application of Theorem \ref{thm0.1}, we can eliminate the parallel weight condition from the results \cite[Theorems 1.3 and 1.4]{SugiyamaTsuzuki} on the growth of fields of rationality
	(see  Corollaries \ref{nonzero L and growth of Hecke} and \ref{nonzero deriv of L and growth of Hecke}).
	Corollary \ref{nonzero L and growth of Hecke} is roughly stated as follows.
	
	\begin{cor}
		[Precisely, see Corollary \ref{nonzero L and growth of Hecke}]
		\label{main nonzero L and growth of Hecke}
		Let $F$ be a totally real number field of finite degree.
		Let $l=(l_v)_{v | \infty}$ be a family of positive even integers such that $\min_{v|\infty}l_v \ge 6$, which is not necessarily parallel.
		Then, there exists a family $\{\pi_k\}_{k\in \NN}$ of irreducible cuspidal automorphic representations of $\PGL_2(\AA_F)$
		of weight $l$
		such that
		\begin{itemize}
			\item  the conductor $\gf_{\pi_k}$ of $\pi_k$ satisfies $\lim_{k\rightarrow \infty}\nr_{F/\QQ}(\gf_{\pi_k})=\infty$,
			
			\item 
			we have the estimate
			$$[\Q(\pi_k):\Q]
			\gg \sqrt{\log\log \nr_{F/\QQ}(\gf_{\pi_k})},\qquad k \in \NN,$$
			
			\item the completed standard $L$-function $L(s, \pi_k)$ attached to $\pi_k $ satisfies $L(1/2,\pi_k)\not=0$.
			
		\end{itemize}
	\end{cor}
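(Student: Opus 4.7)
The plan is to combine the integrality provided by Theorem \ref{thm0.1} with the explicit relative trace formula and equidistribution results of Sugiyama and Tsuzuki \cite{SugiyamaTsuzuki}, in the spirit of Serre's counting argument \cite{Serre} for the growth of Hecke fields. Indeed, \cite[Theorem 1.3]{SugiyamaTsuzuki} already yields exactly the conclusion of the corollary under the additional hypothesis that $\bf l$ is parallel; the parallelness enters their argument only in order to invoke Shimura's integrality theorem \cite{Shimura}. Since Theorem \ref{thm0.1} supplies integrality in the non-parallel case as well, removing this hypothesis should be a direct substitution into the existing framework.

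First, I would import the equidistribution step. The explicit relative trace formula for the split maximal torus of $\GL_2$ developed in \cite{SugiyamaTsuzuki} expresses, for a fixed weight $\bf l$ and a family of level ideals $\gn_k$ with $\nr_{F/\QQ}(\gn_k)\to\infty$, a weighted average of central values $L(1/2,\pi)$ over cuspidal automorphic representations $\pi$ of $\PGL_2(\AA_F)$ of that weight and controlled ramification, as a geometric sum whose main term is positive and grows with the level. The analytic computations underlying this formula and its spectral consequences use only local archimedean representation theory at each infinite place and are independent of whether $\bf l$ is parallel. This produces a family $\{\pi_k\}$ with $\nr_{F/\QQ}(\gf_{\pi_k})\to\infty$ and $L(1/2,\pi_k)\neq 0$, whose Hecke eigenvalues at any fixed finite set of good primes equidistribute with respect to an explicit Plancherel-type measure.

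Second, I would apply Serre's counting argument. For each $\pi_k$ and each prime $\gp\nmid\gf_{\pi_k}$, the Hecke eigenvalue $a_\gp(\pi_k)$ lies in the field of rationality $\QQ(\pi_k)$, is an algebraic integer by Theorem \ref{thm0.1} applied to the Hilbert newform generating $\pi_k$, and admits a polynomial bound in $\nr_{F/\QQ}(\gp)$ at every archimedean conjugate. The number of algebraic integers in a number field of degree $d$ with all archimedean absolute values at most $R$ is $O(R^d)$; balancing this count against the growing supply of distinct equidistributed Hecke eigensystems then forces the lower bound $[\QQ(\pi_k):\QQ]\gg\sqrt{\log\log\nr_{F/\QQ}(\gf_{\pi_k})}$, exactly as in the parallel weight case of \cite[\S 5]{SugiyamaTsuzuki} (cf.\ also \cite{Serre}, \cite{Royer}).

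The main obstacle is the bookkeeping needed to verify that Theorem \ref{thm0.1}, which is formulated at the level of the Hecke algebra on the classical space $M_{\bf l}(\Gamma_0(\gn),\chi)$, transfers without loss of integrality to the eigenvalues $a_\gp(\pi_k)$ of the associated automorphic representation for every $\gp\nmid\gf_{\pi_k}$. This requires matching the classical operators $T'(\gp)$ with their adelic counterparts and following the newform-to-representation dictionary (cf.\ \cite{Raghuram-Tanabe}), making sure that any intervening normalizing factor is a rational integer, since otherwise the counting argument would be broken. Once these normalizations are handled, the remainder of the proof is a direct adaptation of \cite{SugiyamaTsuzuki}, with the phrase ``parallel weight'' replaced throughout by ``weight $\bf l$ with $\min_{v\mid\infty} l_v\ge 6$''.
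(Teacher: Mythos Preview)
Your proposal is correct and follows essentially the same route as the paper: you identify that the parallel-weight hypothesis in \cite[Theorem 1.3]{SugiyamaTsuzuki} enters only through Shimura's integrality result, and that substituting Theorem \ref{thm0.1} removes this restriction while leaving the equidistribution and Serre--Royer counting arguments intact. The paper makes exactly this substitution, pinpointing the single inequality in \cite[Proposition 8.4]{SugiyamaTsuzuki} where integrality is used and noting that $\nr_{F/\QQ}(\gq)^{(k-1)/2}\lambda_\gq(\pi)$ with $k=\max_{v\mid\infty}l_v$ is the relevant algebraic integer---which also resolves the normalization concern you raise.
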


	By virtue of automorphic inductions (\cite{ArthurClozel}) of Hilbert cusp forms of {\it non-parallel} weight,
	Corollary \ref{main nonzero L and growth of Hecke}
	is applied to the growth of the fields of rationality of automorphic representations of $\GL_n(\AA_\QQ)$.
	Let $d$ be a prime number.
	We state the growth of the fields of rationality of
	cuspidal automorphic representations
	of $\GL_{2d}(\AA_\QQ)$ and the dimensions of cuspidal cohomologies for ${\rm GL}_{2d}$
	with non-vanishing central $L$-values as follows.
	
	\begin{cor}[Precisely, see Corollary \ref{lower bound for Hecke field}]
		\label{cor Hecke fields GL2d}
		Let $d$ be a prime number.
		Then there exists a family $\{\Pi_k\}_{k \in \NN}$ of irreducible cohomological cuspidal automorphic representations of $\GL_{2d}(\AA_\QQ)$ such that
		\begin{itemize}
			
			\item Let $\gf_{\Pi_k} \in \NN$ be the conductor of $\Pi_k$, where we identify the conductor of $\Pi_k$ as an ideal with its positive generator. Then $\{\gf_{\Pi_k}\}_{k \in \NN}$ satisfies $\lim_{k\rightarrow \infty}\gf_{\Pi_k}=\infty$,
			
			\item we have the estimate
			$$[\QQ(\Pi_k):\QQ] \gg \sqrt{\log\log {\gf_{\Pi_k}}}, \qquad k \in \NN,$$
			
			\item the completed standard $L$-function $L(s,\Pi_k)$ attached to $\Pi_k$ satisfies $L(1/2, \Pi_k)\neq 0$.
		\end{itemize}
		
Furthermore there exist	$D \in \NN$ and a local system $\tilde V_\lambda$ on
the locally symmetric space	$$S_{\bfK_1(D{\gf_{\Pi_k}}\ZZ)}^{\GL_{2d}}:=\GL_{2d}(\QQ)\bsl \GL_{2d}(\AA_{\QQ})/\RR_{>0}\,\SO(2d)\,\bfK_1(D{\gf_{\Pi_k}}\ZZ)$$
	such that, for any $q \in \ZZ$ with $d^2\le q \le d^2+d-1$, we have
		the following estimate of the dimension of the cuspidal cohomology{\rm:}
		$$\dim H_{\rm cusp}^{q}(S_{\bfK_1(D\gf_{\Pi_k}\ZZ)}^{\GL_{2d}}, \tilde V_\lambda) \ge
		C \sqrt{\log\log \gf_{\Pi_k}},$$
		where the constant $C>0$ is independent of the conductors $\{\gf_{\Pi_k}\}_{k\in \NN}$.
		Here $\bfK_1(N)$ for $N \in \NN$ is the congruence subgroup of $\GL_2(\widehat\ZZ)$ with level $N$ give in \S4.
	\end{cor}

	We cannot produce such a family $\{\Pi_k\}_{k\in\NN}$ if we use the parallel weight version \cite[Theorem 3]{SugiyamaTsuzuki}
	of Corollary \ref{main nonzero L and growth of Hecke}.
	Note that
	Clozel \cite{Clozel87} gave the non-vanishing of cuspidal cohomology for $\GL_{2n}$, but not the non-vanishing of central $L$-values, using automorphic inductions from $\GL_1$.
	Also remark that Corollary \ref{cor Hecke fields GL2d} is not covered by Shin, Templier \cite[Proposition 6.10]{ShinTemplier} since $\GL_{2d}(\RR)$ does not have discrete series.
	Besides, they did not treat the non-vanishing of central $L$-values. 
	For upper bounds of dimensions of cuspidal cohomologies
	toward the Sarnak-Xue conjecture, refer to \cite{CalegariEmerton}
	and \cite{MarshallShin}.

	By the base change lifting from $\GL_2(\AA_F)$ to $\GL_2(\AA_E)$ where $E/F$ is a ramified quadratic extension of number fields,
	Corollary \ref{main nonzero L and growth of Hecke} leads us to the following result on the growth of the fields of rationality of automorphic representatons of
	$\GL_2(\AA_E)$.
	\begin{cor}\label{growth for BC}
		Let $F$ be a totally real number field of finite degree and $E$ a ramified quadratic extension.
		Let $\{\mathfrak{n}_k\}_{k\in \NN}$ be a family of non-zero ideals of $\Ocal_F$ such that $\lim_{k\rightarrow \infty}\nr_{F/\QQ}(\gn_k)=\infty$.
		Then, there exists a family $\{\Pi_k'\}_{k\in \NN}$ of irreducible
		cohomological cuspidal automorphic representations
		of $\PGL_2(\AA_E)$ such that $\nr_{E/\QQ}(\gf_{\Pi_k'}) = \nr_{F/\QQ}(\gn_k)^2$,
		$$[\QQ(\Pi_k'):\QQ]\gg \sqrt{\log\log \nr_{E/\QQ}(\gf_{\Pi_k'})}, \quad k \in \NN$$
		and		$$L(1/2,\Pi_k')\neq 0, \quad k \in \NN,$$
		where the implied constant is independent of $\{\gn_k\}_{k\in \NN}$.

Let $r_2$ be the number of complex places of $E$. Put $b_2^E=[E:\QQ]-r_2$ and $\tilde t_2^E=2[E:\QQ]-1$, which are the same as in \cite[Proposition 2.15]{Raghuram}.
		Then there exists a local system $\tilde V_{\lambda'}$ on
		$$S^{\Res_{E/\QQ} \GL_{2, E}}_{\bfK_1(\gf_{\Pi_k'})}:=\GL_{2}(E)\bsl \GL_{2}(\AA_{E})/ K_{E_\infty}^{+}\,\bfK_1(\gf_{\Pi_k'})$$
such that, for any $q \in \ZZ$ with $b_2^{E}\le q \le \tilde t_2^{E}$
		we have
		$$\dim H_{\rm cusp}^{q}(S^{\Res_{E/\QQ}\GL_{2,E}}_{\bfK_1(\gf_{\Pi_k'})}, \tilde V_{\lambda'}) \ge C \sqrt{\log\log \nr_{E/\QQ}(\gf_{\Pi_k'})},$$
		where the constant $C>0$ is independent of $\{\gn_k\}_{k\in \NN}$.
		Here $K_{E_\infty}^+$ is the product of the standard maximal compact subgroup of $\SL_2(E\otimes_\QQ \RR)$ and the connected component of the center of $\GL_2(E\otimes_\QQ \RR)$, and $\bfK_1(\ga)$ for a non-zero ideal $\ga$ of $\Ocal_E$ is the congruence subgroup of $\GL_2(\widehat\Ocal_E)$ with level $\ga$ give in \S4.
	\end{cor}
	When $E$ has complex places, 
	Shin and Templier's result \cite[Proposition 6.10]{ShinTemplier} does not cover the estimate
	$[\QQ(\Pi_k'):\QQ]\gg \sqrt{\log\log \nr_{E/\QQ}(\gf_{\Pi_k'})}$
	since $\GL_2(\CC)$ does not have discrete series.
	Besides, they did not treat the non-vanishing of central $L$-values.

	\subsection{Growth of Hecke fields of Siegel modular forms of degree 2}
	
	We give an application of the integrality for Siegel modular forms of degree $2$
	to the growth of their Hecke fields. Such an application was
	first stated by Kim, Wakatsuki and Yamauchi \cite{KimWY} and its continuation \cite{KimWY2}.
	In the proofs of \cite[Corollaries 1.4 and 1.5]{KimWY},
	the integrality of Hecke eigenvalues for Siegel modular forms of degree $2$
	should be used in an essential way.
	Nevertheless, the deduction of the integrality seems to be not appropriately addressed in their paper.
	In the proof in \cite[\S7.3]{KimWY}, they referred to Taylor's thesis \cite[Lemma 2.1]{Taylor}.
	However, \cite[Lemma 2.1]{Taylor} is a result on the integrality of Fourier coefficients but not
	of Hecke eigenvalues.
	Furthermore, since \cite[Theorems 1.1, 1.2 and 1.3]{KimWY2}\footnote{\cite[Theorem 1.3]{KimWY2} is stated by using $\inf\{\cdots\}$.
		This should be corrected as $\sup\{\cdots\}$.} rely on arguments in \cite[\S7.3]{KimWY} and \cite[\S6.3]{ShinTemplier},
	the	integrality of Hecke eigenvalues is needed to complete the proofs of \cite[Theorems 1.1, 1.2 and 1.3]{KimWY2}.
	Our Theorem \ref{integrality for SMF} is the required integrality of Hecke eigenvalues.

	Let $\rho$ be an irreducible algebraic representation
	of $\GL_2(\CC)$ with highest weight $(k_1, k_2)$.
	For $N\in \NN$, let $M_\rho(\Gamma(N))_\chi=\oplus_{\chi_1}M_\rho(\Gamma(N),(\chi_1,\chi))$ be the space of Siegel modular forms of type $\rho$ and level $N$ with central character $\chi: (\ZZ/N\ZZ)^\times \rightarrow \CC^\times$.
	The subspace of cusp forms in $M_\rho(\Gamma(N))_\chi$ is denoted by
	$S_\rho(\Gamma(N))_\chi$.
	Take a basis ${\rm HE}_\rho(N,\chi)$ of $S_\rho(\Gamma(N))_\chi$
	consisting of cuspidal Hecke eigenforms for all Hecke operators at all primes $p\notdivide N$
	so that each $f \in {\rm HE}_\rho(N,\chi)$ generates an irreducible cuspidal automorphic representation of $\GSp_{4}(\AA_\QQ)$.
	 Let ${\rm HE}_\rho(N,\chi)^{\rm tm}$ be  the set of all $f \in {\rm HE}_\rho(N,\chi)$ such that the corresponding automorphic representations are tempered at all primes not dividing $N$.
	
For any $f \in {\rm HE}_\rho(N,\chi)$, let $\QQ(f)$ be the Hecke field of $f$ by removing data at ramified places. Namely, $\QQ(f)$ is the subfield of $\CC$ generated by all Hecke eigenvalues of $f$ for $T(p)$, $T_{1, 1}(p^2)$ and $T_{2, 0}(p^2)$ at all primes $p\notdivide N$. The Hecke field $\QQ(f)$ is a finite extension of $\QQ$ (see Corollary \ref{Hecke field is finite}).

By using Theorem \ref{integrality for SMF} on the integrality of Hecke eigenvalues for $S_\rho(\Gamma(N))_\chi$,
	we complete the proof of \cite[Corollaries 1.4 and 1.5]{KimWY}
	and obtain the following estimate on the degrees of Hecke fields.
	
	\begin{cor}\label{Hecke fields for degree 2}
		Fix $\rho$ and suppose $k_1\ge k_2\ge 3$.
		For a fixed prime number $p$,
		we take a family $\{(N_j, \chi_j)\}_{j\in \NN}$
		of $N_j \in \NN$ and Dirichlet characters $\chi_j$ modulo $N_j$
		satisfying $\gcd(N_j, 11!p)=1$ for all $j\in \NN$ and $\displaystyle \lim_{j\rightarrow \infty}N_j=\infty$.
		Then we have
		$$\max_{\substack{f \in {\rm HE}_{\rho}(N_j,\chi_j)^{\rm tm}}}[\QQ(f):\QQ]\gg_{\rho, p} \sqrt{\log \log N_j}, \quad j \in \NN.$$
		This result is still true when $f$ is restricted to be non-endoscopic.
	\end{cor}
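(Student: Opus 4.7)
The plan is to carry out the Serre-type counting argument for Hecke fields that underlies \cite[\S7.3]{KimWY} and \cite[\S6.3]{ShinTemplier}, with Theorem \ref{integrality for SMF} supplying the missing integrality step. Fix a prime $p$ coprime to every $N_j$. Since $k_1 \ge k_2 \ge 3$, both exponents $\delta(k_2<2)(2-k_2)(3-k_2)/2$ and $\delta(k_2\le 2)\cdot 2(3-k_2)$ vanish, so $T(p)$ and the $T_{j,2-j}(p^2)$ for $0\le j\le 1$ lie in $\bfT^{\rho,N_j,\chi_j}_{2,\ZZ}$ without any normalization. Theorem \ref{integrality for SMF} therefore gives that for every $f\in{\rm HE}_\rho(N_j,\chi_j)$ the Hecke eigenvalue $a_f(p)$ of $T(p)$ is an algebraic integer in $\QQ(f)$.

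The next step combines integrality with an archimedean bound. For $f\in{\rm HE}_\rho(N_j,\chi_j)^{\rm tm}$ the local component $\pi_{f,p}$ is tempered and unramified, so $|\iota(a_f(p))|\le C(\rho,p)$ for every complex embedding $\iota:\QQ(f)\hookrightarrow\CC$. The standard estimate for the number of algebraic integers of degree at most $d$ whose archimedean conjugates all lie in a bounded interval then gives
$$\#\{a_f(p) : f\in{\rm HE}_\rho(N_j,\chi_j)^{\rm tm},\ [\QQ(f):\QQ]\le d\}\ \le\ c_1(\rho,p)^{d}$$
for some absolute $c_1>0$. This is the step where integrality, as opposed to mere algebraicity, is essential.

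For the matching lower bound on the number of distinct values of $a_f(p)$, I would invoke the Shin--Templier Plancherel equidistribution of local Hecke parameters at $p$, together with the dimension asymptotics of $S_\rho(\Gamma(N_j),\chi_j)$. The hypothesis $\gcd(N_j,11!\,p)=1$ is imposed precisely so that the trace-formula estimates of \cite{ShinTemplier,KimWY} apply without modification. The pigeonhole comparison executed in \cite[\S7.3]{KimWY} (and refined in \cite{KimWY2} to handle the non-endoscopic restriction via Arthur's endoscopic classification for $\Sp_4$) then yields $\max_{f}[\QQ(f):\QQ]\gg_{\rho,p}\sqrt{\log\log N_j}$, proving the stated bound including the non-endoscopic refinement.

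The main obstacle, and the only step in \cite{KimWY,KimWY2} that was not complete, is the integrality assertion for $a_f(p)$: the counting of bounded algebraic integers of fixed degree requires integrality, not merely algebraicity. This input, previously attributed in \cite{KimWY} to \cite[Lemma 2.1]{Taylor} (which in fact concerns Fourier coefficients rather than Hecke eigenvalues), is now furnished by Theorem \ref{integrality for SMF}, and the remainder of the argument proceeds verbatim as in \cite{KimWY,KimWY2}.
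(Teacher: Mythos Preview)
Your approach matches the paper's: supply the missing integrality input via Theorem \ref{integrality for SMF} and then run the Royer--Shin--Templier counting argument exactly as in \cite[\S7.3]{KimWY} and \cite[\S6.3]{ShinTemplier}. One slip worth flagging: the standard bound on the number of algebraic integers of degree at most $d$ with all conjugates in $[-M,M]$ is $c_1^{d^2}$, not $c_1^{d}$ (one is counting monic integer polynomials of degree $\le d$, and the $j$-th coefficient ranges over an interval of length $\asymp M^j$; see \cite[Lemme~6.2]{Royer}). With $c_1^{d}$ the pigeonhole would yield $d\gg \log\log N_j$ rather than $\sqrt{\log\log N_j}$; since you state the correct square-root conclusion, this is presumably a typo, but the displayed bound should be corrected. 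The paper also packages the Hecke data as the full $(n{+}1)$-tuple of eigenvalues of $T(p)$ and the $T_{j,n-j}(p^2)$ rather than $a_f(p)$ alone, which is convenient because the Satake isomorphism makes the map from local parameters to that tuple injective; using only $a_f(p)$ still works after pushing the Plancherel measure forward, but you should say so explicitly.
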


	\subsection{Growth of Hecke fields of Siegel modular forms of general degree}
	\label{Growth of Hecke fields of Siegel modular forms of general degree}
	
	We give an application of the integrality for Siegel modular forms of degree $n \ge 2$ combining with the equidistribution 
	result \cite[Theorem 1.2]{KimWY3}.
	For an irreducible algebraic representation $\rho$ of $\GL_n(\CC)$ with highest weight $(k_1,\ldots,k_n)$
	and for $N\in \NN$,
	let $S_\rho(\Gamma(N))_{\bf1}$ be the space consisting of all Siegel cusp forms
	in $\oplus_{\chi_1,\ldots,\chi_{n-1}}M_\rho(\Gamma(N), (\chi_1,\ldots,\chi_{n-1},\bf1))$, where $\chi_1,\ldots,\chi_{n-1}$
	run over the set of Dirichlet characters modulo $N$ and $\bf1$ denotes the principal character modulo $N$.
	
	Let ${\rm HE}_\rho(N)$ be a basis of $S_\rho(\Gamma(N))_{\bf1}$ consisting of Hecke eigenforms for 
	Hecke operators at all primes $p\notdivide N$
	such that each $f\in {\rm HE}_\rho(N)$ generates an irreducible cuspidal automorphic representation of $\Sp_{2n}(\AA_\QQ)$ (see \cite[\S2]{KimWY3}).
	For $f \in {\rm HE}_\rho(N)$, let $\QQ(f)$ be the Hecke field of $f$ defined
	as the subfield of $\CC$ generated by the Hecke eigenvalues of $f$ for $T(p)$ and $T_{j,n-j}(p^2)$ ($1\le j \le n$) at all primes $p\notdivide N$.
	The Hecke field $\QQ(f)$ is a finite extension of $\QQ$ (see Corollary \ref{Hecke field is finite}).

	By using Theorem \ref{integrality for SMF} on the integrality of Hecke eigenvalues for $S_\rho(\Gamma(N))_{\bf1}$,
	we give the following.
	\begin{cor}\label{Hecke fields for degree n}
		Fix $\rho$ such that $k_1\ge k_2 \ge \cdots \ge k_n > n+1$.
		For a fixed prime number $p$,
		let $\{N_j\}_{j \in \NN}$ be a family of positive integers such that $p\notdivide N_j$ for all $j \in \NN$ and $\displaystyle \lim_{j \rightarrow \infty}N_j=\infty$.	
		Then we have
		\[\max_{ f \in {\rm HE}_{\rho}(N_j)}[\QQ(f):\QQ] \gg_{n, \rho, p} \sqrt{\log\log N_j}, \qquad j \in \NN.\]
	\end{cor}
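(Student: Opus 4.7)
The plan is to follow the Serre--Royer strategy used by Kim--Wakatsuki--Yamauchi in the degree-$2$ case (see Corollary~\ref{Hecke fields for degree 2}), with the required integrality now supplied by Theorem~\ref{integrality for SMF}. Fix the prime $p$ of the statement, and for $f \in \HE_\rho(N_j)$ let $a_p(f)$ denote the eigenvalue on $f$ of one of the algebra generators $T$ of $\bfT_{n,\ZZ}^{\rho,N_j,\chi_f}$ at $p$, where $\chi_f$ is the central character of $f$. By Theorem~\ref{integrality for SMF}, $a_p(f)$ is an algebraic integer lying in the Hecke field $\QQ(f)$, and every Galois conjugate $a_p(f)^\sigma$ is itself a Hecke eigenvalue of a Galois-conjugate form in $\HE_\rho(N_j)$.

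Next, the assumption $k_n > n+1$ forces $\pi_{f,\infty}$ to be a regular holomorphic discrete series, which in particular yields a uniform bound $|a_p(f)^\sigma| \le B$ for every embedding $\sigma:\QQ(f)\hookrightarrow \CC$, with a constant $B=B(p,n,\rho)$ independent of $f$ and $j$. Integrality together with this uniform bound gives the standard counting estimate that the number of algebraic integers in $\Qbar$ of degree at most $D$ with all conjugates of absolute value at most $B$ is at most $(2B+1)^{D(D+1)/2}$, by bounding the integer coefficients of the minimal polynomial via Newton's inequalities. Writing $D_j=\max_{f\in \HE_\rho(N_j)}[\QQ(f):\QQ]$, this yields
\[ \bigl|\{a_p(f) : f \in \HE_\rho(N_j)\}\bigr| \le \exp\bigl(c(p,n,\rho)\,D_j^{\,2}\bigr). \]

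On the other hand, the equidistribution theorem \cite[Theorem~1.2]{KimWY3} provides a quantitative convergence of the measures $|\HE_\rho(N_j)|^{-1}\sum_{f\in \HE_\rho(N_j)} \delta_{a_p(f)}$ to an absolutely continuous Plancherel measure on a bounded subset of $\RR$ whose support is independent of $j$. A Serre-type dyadic partitioning argument, as carried out in \cite[\S7]{KimWY} and \cite[\S6.3]{ShinTemplier}, forces the cardinality of $\{a_p(f) : f \in \HE_\rho(N_j)\}$ to grow at least like a positive power of $\log N_j$; combining this with the counting estimate above yields $D_j^{\,2} \gg \log\log N_j$, whence the claim. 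The main technical hurdle is not the counting step, which is now routine thanks to Theorem~\ref{integrality for SMF}, but the careful optimization of the quantitative rate in \cite[Theorem~1.2]{KimWY3} needed to extract the precise $\sqrt{\log\log N_j}$ exponent; this optimization is already present in the cited works and applies essentially verbatim once the integrality input is in place.
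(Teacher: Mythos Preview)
Your proposal follows the same Serre--Royer strategy as the paper's proof in \S\ref{Proof of Corollaries and (Siegel modular case)}: combine integrality (Theorem~\ref{integrality for SMF}) with the quantitative equidistribution of \cite{KimWY3} and the counting of bounded algebraic integers \`a la \cite{Royer}. Two points deserve correction or comparison.

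First, your justification for the uniform bound $|a_p(f)^\sigma|\le B$ is misattributed. The archimedean component being a holomorphic discrete series has nothing to do with bounding Hecke eigenvalues at the finite place $p$; rather, the bound comes from the fact that $\pi_{f,p}$ is a spherical unitary representation, so its Satake parameters lie in the bounded set $\Omega_p^+=\Omega_p\cup\Omega_p^{\rm c}$ (tempered plus complementary series). The paper makes this explicit, citing the boundedness of $\Omega_p^+$ and \cite{AsgariSchmidt}, and stresses that the Ramanujan--Petersson conjecture is not used. The condition $k_n>n+1$ is needed instead for the equidistribution input from \cite{KimWY3}.

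Second, you work with a \emph{single} Hecke eigenvalue $a_p(f)$, whereas the paper works directly in the $n$-dimensional Satake parameter space $\Omega_p$, using $n$-dimensional rectangles and the trigonometric polynomial approximants of \cite[\S6.2]{ShinTemplier}, and only at the end passes to the $(n{+}1)$-tuple of eigenvalues of $T(p),T_{0,n}(p^2),\ldots,T_{n-1,1}(p^2)$ (which determine the Satake parameter, hence the map is injective) to invoke integrality and the counting bound $\{(8p^A)^{d^2}\}^{n+1}$. Your one-dimensional route would require the additional check that the pushforward of the Plancherel measure under $a_p$ is non-atomic on a nontrivial interval; this is plausible but not something you can simply import from the cited references. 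The paper's multidimensional implementation sidesteps this issue entirely and yields the lower bound $\gg(\log N)^n$ for the number of distinct Satake parameters directly. Either route leads to $D_j^2\gg\log\log N_j$, but the paper's is cleaner and self-contained.
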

	
	Shin and Templier \cite[Proposition 6.10]{ShinTemplier} gave an estimate similar to the result above when $(k_1, \ldots, k_n)$ is regular,	under the hypotheses on Arthur's endoscopic classification (see Remark \ref{remark on Shin Templier}).
	They considered a family of irreducible cuspidal automorphic representations
	of $\Sp_{2n}(\A_\Q)$ whose archimedean component is in the fixed $L$-packet containing the holomorphic discrete series representation
	of lowest weight $(k_1, \ldots, k_n)$.
	Thus the advantages of our result are: (i) we give results without endoscopic classifications, and
	(ii) we restrict the archimedean 
	components of automorphic representations to a fixed holomorphic discrete series representation.

	\subsection{Organization of this paper}
	In \S \ref{abst},
	we introduce an abstract machinery for proving the integrality of eigenvalues of Hecke operators,
	and apply this to the Hilbert modular and the Siegel modular cases (Theorems \ref{thm0.1} and \ref{integrality for SMF}).
	In \S \ref{Application of integrality to the growth of Hecke fields}, the proof of Corollaries \ref{nonzero L and growth of Hecke} and \ref{nonzero deriv of L and growth of Hecke} (and hence Corollary \ref{main nonzero L and growth of Hecke}) is explained, where
	we use the integrality of Hecke eigenvalues of Hilbert cusp forms of non-parallel weight.
	The proof of Corollaries \ref{Hecke fields for degree 2} and \ref{Hecke fields for degree n} for Siegel modular forms is also explained in \S \ref{Proof of Corollaries and (Siegel modular case)}.
	In \S \ref{Cuspidal cohomology}, we prove Corollary \ref{lower bound for Hecke field} (and hence Corollary \ref{cor Hecke fields GL2d})
	and Corollary \ref{growth for BC}.
In Appendix \ref{appendix}, we prove Theorem \ref{integral model}, 
which states the existence of integral structures of the spaces of automorphic forms of PEL-type under
Assumption \ref{totdeg}, with the aid of Lan's comparison 
of algebraic and analytic $q$-expansions \cite{Lan12}.

	\subsection{Notation}\label{Notation}
	Let $\NN$ be the set of positive integers and $\overline{\QQ}$ the algebraic closure of $\QQ$ realized as a subfield of $\CC$. The ring of algebraic integers in $\overline{\QQ}$ is denoted by $\overline{\ZZ}$.
	
	For a condition $\rm P$, let $\delta({\rm P})$ denote the generalized Kronecker delta
	so that $\delta({\rm P})=1$ if $\rm P$ is true and
	$\delta({\rm P})=0$ otherwise.
	
	For a unital ring $R$, let ${\rm M}_n(R)$ denote the matrix algebra of $n\times n$
	matrices whose components are contained in $R$.
	We write $1_n$ and $0_n$ for the unit matrix and the zero matrix in ${\rm M}_{n}$.
	The
	general symplectic group $\GSp_{2n}(R)$ is defined as
	$$\GSp_{2n}(R)=\{ g \in \GL_{2n}(R) \mid {}^{t}g\left[\begin{smallmatrix}
		0_n & 1_n \\ -1_n & 0_n
	\end{smallmatrix}\right] g = \nu(g)\left[\begin{smallmatrix}
		0_n & 1_n \\ -1_n & 0_n
	\end{smallmatrix}\right] \},$$
	where $\nu: \GSp_{2n}(R)\rightarrow \GG_m(R)=R^\times$ is the similitude character.
	Then, $\Sp_{2n}(R) =\Ker\, \nu$ is the symplectic group of rank $n$.

	The symbol $\ll$ denotes Vinogradov's notation. If we clarify the dependence of the implied constant on parameters $a,b,c,\ldots$, we attach such parameters with the symbol $\ll$ such as $\ll_{a,b,c,\ldots}$.

	
	\section{Integrality of eigenvalues}\label{abst}
	
	In this section, we give a proof of the integrality of Hecke eigenvalues for Hilbert and Siegel modular forms.
	
	\subsection{Abstract setting}
	In this subsection, we introduce an abstract machinery
	for integrality.
	Let $R$ be a Noetherian integral domain with the fractional field $K$ and
	$L$ an algebraically closed extension field of $K$.
	Let $M$ be a finite dimensional $L$-vector space
	and let $T$ be an $L$-linear endomorphism of $M$.
	Further, we assume that there exists the following
	$L$-linear injective homomorphism
	\begin{eqnarray*}
		e:M\hookrightarrow L^{\mathbb N}
	\end{eqnarray*}
	from $M$ to the uncountably infinite dimensional $L$-vector space $L^\NN$.
	The $L$-linear homomorphism $e$ will be a Fourier expansion map later.
	We define an $R$-submodule $M_0$ of $M$ by
	\begin{eqnarray*}
		M_0:=e^{-1}(R^{\mathbb N}).
	\end{eqnarray*}
	We assume the following two conditions:
	\begin{itemize}
		\item[(i)] The $R$-module $M_0$ contains an $L$-basis of $M$.
		\item[(ii)] The operator $T$ preserves $M_0$.		
	\end{itemize}
	
	\begin{thm}\label{machine of int}
		In the setting as above, each eigenvalue $\lambda \in L$
		of $T$ is integral over $R$.
	\end{thm}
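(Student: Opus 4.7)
The plan is to prove that $M_0$ is finitely generated as an $R$-module, and then apply the Cayley--Hamilton / determinant trick to the $R$-linear endomorphism $T\colon M_0\to M_0$ to produce a monic polynomial $p(x)\in R[x]$ with $p(T)=0$; every eigenvalue of $T$ will then be a root of $p$ and hence integral over $R$.

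The main step, and the only non-formal one, is the finite generation of $M_0$. Put $d:=\dim_L M$ and, for each $i\in\NN$, let $\pi_i\colon L^{\NN}\to L$ be the $i$-th coordinate projection. Each composite $\pi_i\circ e$ lies in $\Hom_L(M,L)$, and the injectivity of $e$ means that these functionals jointly separate points of $M$, so they span $\Hom_L(M,L)$ over $L$. Choose indices $i_1,\dots,i_d$ such that $\pi_{i_1}\circ e,\dots,\pi_{i_d}\circ e$ form an $L$-basis of the dual. Then
\[
\phi:=(\pi_{i_1},\dots,\pi_{i_d})\circ e\colon M\longrightarrow L^d
\]
is an $L$-linear isomorphism, and by the very definition of $M_0$ its restriction sends $M_0$ injectively into $R^d$. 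Since $R$ is Noetherian, $R^d$ is a Noetherian $R$-module; therefore $\phi(M_0)\subset R^d$, and hence $M_0$ itself, is finitely generated over $R$.

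Given this finite generation, fix generators $m_1,\dots,m_s$ of $M_0$ and, using hypothesis (ii), write $Tm_i=\sum_j a_{ij}m_j$ with $a_{ij}\in R$. The classical determinant trick in the commutative ring $R[T]$ then yields $p(T)\cdot m_i=0$ for every $i$, where $p(x):=\det(xI_s-A)\in R[x]$ is the monic characteristic polynomial of $A=(a_{ij})$. This forces $p(T)=0$ on $M_0$, and by hypothesis (i) the identity persists on $M=M_0\otimes_R L$ and on $M\otimes_L\overline{L}$. Consequently every eigenvalue $\lambda\in\overline{L}$ of $T$ satisfies $p(\lambda)=0$, so $\lambda$ is integral over $R$. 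The only genuine obstacle is the finite-generation step; once it is in hand, the rest is routine, and in particular normality of $R$ plays no role in this abstract statement (it will be used later to identify integral elements of $K$ with elements of $R$ in the concrete applications).
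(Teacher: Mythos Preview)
Your proof is correct. The finite-generation step is essentially the same as the paper's (their Lemmas~\ref{lem1} and~\ref{lem2}): both embed $M_0$ into a finite free $R$-module and invoke Noetherianness. The difference lies in the second step. The paper passes to $V=M_0\otimes_R K$ and uses normality of $R$ to write $R=\bigcap_{\mathrm{ht}(\mathfrak p)=1}R_{\mathfrak p}$, thereby reducing to the DVR case where $M_0$ is free and the characteristic polynomial of $T$ visibly lies in $R[x]$. You instead apply the Cayley--Hamilton determinant trick directly to a generating set of $M_0$, producing a monic annihilating polynomial in $R[x]$ without any freeness or localization. Your route is more elementary and, as you note, shows that the normality hypothesis is not actually needed for Theorem~\ref{machine of int} itself; the paper's argument trades this economy for the explicit identification of the annihilating polynomial as a genuine characteristic polynomial over each $R_{\mathfrak p}$.
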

	In the rest of this subsection, we give a proof of this theorem.
	For each positive integer $N$,
	we regard $L^N$ as a quotient of $L^{\mathbb N}$
	by the natural projection
	\begin{eqnarray*}
		\mathrm{pr}_N: L^{\mathbb N}\twoheadrightarrow L^N;(x_1,\dots, x_N,x_{N+1},\dots)\mapsto (x_1,\dots, x_N).
	\end{eqnarray*}
	\begin{lem}	\label{lem1}
		For a sufficiently large positive integer $N$,
		the composite
		\[
		\mathrm{pr}_N\circ e:M\rightarrow L^N
		\]
		is injective.
	\end{lem}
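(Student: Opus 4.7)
The plan is to exploit the finite-dimensionality of $M$ over $L$ together with the injectivity of $e$ in a short descending-chain argument. Set $d=\dim_L M<\infty$ and, for each $N\in\mathbb{N}$, define
\[
K_N:=\ker(\mathrm{pr}_N\circ e)\subseteq M.
\]
Each $K_N$ is an $L$-subspace of $M$, and because $\mathrm{pr}_N$ factors through $\mathrm{pr}_{N+1}$, the sequence $K_1\supseteq K_2\supseteq K_3\supseteq\cdots$ is a decreasing chain of subspaces of the finite-dimensional space $M$.

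Next I would observe that a chain of subspaces of a $d$-dimensional space can strictly decrease at most $d$ times, so the chain must stabilize: there exists $N_0$ with $K_N=K_{N_0}$ for all $N\ge N_0$. Setting $K_\infty:=\bigcap_{N\ge 1}K_N$, this gives $K_{N_0}=K_\infty$.

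Then I would identify $K_\infty$ with $\ker e$. Indeed, an element $x\in M$ lies in $K_\infty$ iff $\mathrm{pr}_N(e(x))=0$ for every $N$, iff every coordinate of $e(x)\in L^{\mathbb N}$ vanishes, iff $e(x)=0$. By the injectivity of $e$ assumed in \eqref{eq1}, this forces $K_\infty=0$. Combining with the previous step, $K_{N_0}=0$, i.e.\ $\mathrm{pr}_{N_0}\circ e$ is injective, and the same holds for every $N\ge N_0$.

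There is no real obstacle here; the statement is a soft finiteness fact, and the only thing to keep straight is the identification of the intersection $\bigcap_N K_N$ with $\ker e$, which uses only that $L^{\mathbb N}$ is the direct product so that an element is zero precisely when every coordinate is zero. The content of the lemma will become useful in the sequel because it lets one replace the infinite-dimensional target $L^{\mathbb N}$ in \eqref{eq1} by a finite-dimensional one, after which the integrality assertion of Theorem \ref{machine of int} reduces to a statement about the characteristic polynomial of $T$ acting on a finitely generated $R$-module.
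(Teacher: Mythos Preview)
Your proof is correct and follows essentially the same approach as the paper: define the kernels $K_N=\ker(\mathrm{pr}_N\circ e)$, observe they form a descending chain in the finite-dimensional space $M$ and hence stabilize, and then use the injectivity of $e$ to conclude the stable value is zero. The only difference is cosmetic---you spell out the identification $\bigcap_N K_N=\ker e$ explicitly, whereas the paper compresses this into the single sentence ``If $M_{N'}\neq 0$, then $e$ is not injective.''
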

	\begin{proof}Let $M_N$ be the kernel of $\mathrm{pr}_N\circ e$.
		Then, by definition, we have the descending filtration $M\supset M_1\supset\cdots$
		of $L$-vector spaces.
		Since $M$ is finite dimensional, this sequence stabilizes for sufficiently large integer $N'$.
		If $M_{N'}\neq 0$, then $e$ is not injective. Hence, we have $M_{N'}=0$.
	\end{proof}
	
	\begin{lem}\label{lem2}The $R$-module $M_0$ is finitely generated over $R$. Moreover, the natural homomorphism
		\[
		M_0\otimes_RL\to M
		\]
		is an isomorphism.
	\end{lem}
	\begin{proof}
		By the lemma above, we have the following commutative diagram:
		\[
		\xymatrix{
			M_0\ar@{^{(}->}[d]\ar@{^{(}->}[rrr]^{\mathrm{pr}_N\circ e}& & &R^N\ar@{^{(}->}[d]\\
		M\ar@{^{(}->}[rrr]^{\mathrm{pr}_N\circ e}& & &L^N.
		}
		\]
		Then, $M_0$ is finitely generated over $R$ as $R$ is a Noetherian ring.
		
		We show the second assertion.
		Since $L$ is flat over $R$, the homomorphism
		\[
		M_0\otimes_RL\to R^N\otimes_RL\xrightarrow{\sim}L^N
		\]
		is injective. This homomorphism is factorized as
		\[
		M_0\otimes_RL\to M\xrightarrow{\mathrm{pr}_N\circ e}L^N,
		\] 
		the first homomorphism is injective. This is also surjective by the assumption (i).
		We are done.
	\end{proof}

		\begin{lem}\label{eigen is in R}
			Let $R$ be an integral domain with the fractional field $K$.
			Let $V$ be a finite dimensional $K$-vector space
			and $V_0$ a finitely generated $R$-submodule of $V$ which contains a $K$-basis of $V$.
			Let $T$ be a $K$-linear endomorphism of $V$.
			We regard $T$ as the natural extension of $T$ to $V\otimes_K\overline{K}$,
				where $\overline K$ is an algebraic closure of $K$.

			If $T(V_0)\subset V_0$, then any eigenvalue $\l \in \overline{K}$ of $T$ is integral over $R$.
			\label{lem1.5}
		\end{lem}
	\begin{proof}
		Let $m_1,\dots,m_n\in V_0$ be generators of the $R$-module $V_0$.
		By the assumption that $T(V_0)\subset V_0$, there exists $(a_{ij})_{1\le i,j\le n}\in {\rm M}_n(R)$ such that
		\[
		T(m_i)=\sum_{j=1}^n a_{ij}m_j
		\]
		for all $i \in \{1,\ldots, n\}$. Let us define a monic polynomial $F(X)\in R[X]$ by
		\[
		F(X)=\det\left(X1_n-\begin{pmatrix}
			a_{ij}
		\end{pmatrix}_{1\leq i,j\leq n}\right).
		\]
		Then, by \cite[Proposition 2.4]{Atiyah}, we have $F(T)=0$ in the ring $\End_R(V_0)$.
			Since $V_0$ contains a $K$-basis of $V$, the identity $F(T)=0$ also holds in $\End_K(V)$
		and hence in $\End_{\overline K}(V\otimes_K\overline K)$.
		
		Let $v \in V\otimes_K \overline{K}$ be an eigenvector of $T$ with the eigenvalue $\lambda \in \overline{K}$. Then we have
		\[
		F(\lambda)v=F(T)v=0
		\]
		in $V\otimes_K\overline K$. Since $v$ is non-zero, we obtain $F(\lambda)=0$.
		This completes the proof.
		\end{proof}
	\begin{proof}[Proof of Theorem \ref{machine of int}]
		By the assumption (ii), $T$ induces a $K$-linear map $M_0 \otimes_{R} K\rightarrow M_0 \otimes_{R} K$.
		According to Lemma \ref{lem2}, we have $M_0\otimes_RK\otimes_KL\xrightarrow{\sim}M$.
		Therefore, any eigenvalue of $T$ is contained in $\overline K$.
		Moreover, $M_0$ is finitely generated over $R$ by Lemma \ref{lem2}.
		Therefore by applying Lemma \ref{eigen is in R} as $V=M_0\otimes_{R} K$ and $V_0=M_0$,
		we have the conclusion.
	\end{proof}
	\subsection{Proof of Theorem \ref{thm0.1}}\label{proofofthm1}
	We prove the integrality of Hecke eigenvalues for Hilbert modular forms, which was proved by Shimura for the parallel weight modular forms (cf.\ \cite[Proposition 2.2]{Shimura}).
	
	Let $F$ be a totally real number field of finite degree,
	$\Ocal_F$ the integer ring of $F$,
	$\AA_F$ the adele ring of $F$, and $\AA_{F,\fin}$ the ring of finite adeles of $F$, respectively.
	Let $\Dcal_{F/\QQ}$ be the global different of $F/\QQ$ and $D_F$ the absolute value of the discriminant of $F/\QQ$.
	For a place $v$ of $F$, we write $v | \infty$ if $v$ is archimedean.
	For a finite place $v$, $\Ocal_{F,v}$ denotes the integer ring of the completion $F_v$ of $F$ at $v$.
	For a non-zero ideal $\ga$ of $\Ocal_F$, set $\ga_v=\ga\Ocal_{F,v}$. The absolute norm of $\ga$ is denoted by $\nr_{F/\QQ}(\ga)$.
	
	Let $\mathfrak n$ be a non-zero ideal of $\mathcal O_F$ and $\mathbf k=(k_v)_{v|\infty}$ a family of positive integers.
	For a Hilbert modular form $f\in M_{\bf k}(\gn, \chi)$,
	the adelic lift $\tilde f$ of $f$ defined as \cite[(2.7)]{Shimura}
	has a Fourier expansion. This is of the form
	\begin{align}\label{adelicFourierExpansion}
		\tilde f(\begin{smallmatrix}
		y & x \\ 0 & 1
	\end{smallmatrix})=& \sum_{0\ll \xi \in F}c(\xi y \Ocal_F, f)(\xi y_\infty)^{{\bf k}/2}\exp(-2\pi \,\tr(\xi y_\infty)) \psi_{F}(\xi x) \\
	&+\delta(\text{${\bf k}$ : parallel})\, c_0(y\Ocal_F, f)|y|_\AA^{k_0/2}\notag
	\end{align}
	for $x \in \AA_F$ and $y \in \AA_F^\times$ such that all components of $y_\infty$ are positive
	(cf.\ \cite[(2.18)]{Shimura})\footnote{The symbol $0\ll \xi \in F$ in the Fourier expansion means that $\xi \in F$ is totally positive. It is different from Vinogradov's notation.}.
	Here $\psi_{F}$ is the additive character of $F\bsl \AA_F$ given by $\psi_\QQ \circ \tr_{F/\QQ}$ with $\psi_\QQ=\otimes_v \psi_{\QQ, v}$ being the additive character of $\QQ\bsl \AA_\QQ$ determined by $\psi_{\QQ, \infty}(x)=e^{2\pi \sqrt{-1}x}$ for all $x \in \RR$.
	Furthermore,
	the symbol $\delta(\text{${\bf k}$ : parallel})$ equals $1$ or $0$ when $\bf k$ is parallel or non-parallel, respectively,
	$|\cdot |_\AA$ is the idele norm of $\AA_F^\times$, and we set $k_0:=\max_{v | \infty} k_v$.
	For a subring $R$ of $\mathbb C$, $M_{\bf k}(\gn, \chi;R)$
	denote the set of Hilbert modular forms of weight $\mathbf k$, level $\mathfrak n$ and nebentypus $\chi$
	whose Fourier coefficients $c(\xi y\mathcal O_F,f)\xi^{\bfk/2}$ are all in $R$.
	
	This $R$-module can be interpreted as Fourier expansions of Hilbert modular forms.
	Let $\GL_2(F)^{+}$ denote the group of all elements of $\GL_2(F)$ with totally positive determinants.
	Let $\Gamma_1(\mathfrak b,\mathfrak c)$ be the congruence subgroup of $\GL_2(F)^{+}$ defined by
	\[
	\Gamma_1(\mathfrak b,\mathfrak c)=\left\{\begin{pmatrix}
		a&b\\
		c&d
	\end{pmatrix}\in \GL_2(F) \ \bigg|\  a\in 1+\mathfrak c,\ b\in \mathfrak b^{-1},\ c\in \mathfrak b\mathfrak c,\ d\in \mathcal O_F,\ ad-bc\in \mathcal O_F^\times \right\}
	\]
	with a non-zero fractional ideal $\mathfrak b$ and a non-zero integral ideal $\mathfrak c$ in $F$ (\cite[(1.13)]{Shimura}).
Let $\mathfrak c_1,\dots,\mathfrak c_h$ be integral ideals of $F$ representing the narrow
	ideal class group of $F$.
	Then, by the definition of Hilbert modular forms by Shimura \cite{Shimura}, we have a decomposition
	\begin{equation}
		\label{incl1}
		\bigoplus_{\chi\colon (\mathcal O_F/\mathfrak n)^\times \to \mathbb C^\times}M_{\mathbf k}(\mathfrak n,\chi)=\prod_{i=1}^hM_{\mathbf k}(\Gamma_1(\mathfrak c_i\mathcal D_{F/\mathbb Q},\mathfrak n)).
	\end{equation}
	Here, $\mathfrak{H}$ is the Poincar\'e upper half-plane and
	$M_{\mathbf k}(\Gamma_1(\mathfrak c_i\mathcal D_{F/\mathbb Q},\mathfrak n))$ denotes the space of $\mathbb C$-valued holomorphic functions
	on $\mathfrak{H}^{[F:\QQ]}$
	satisfying the invariance
	\begin{equation}
		\label{modularity}
		f||_{\mathbf k}\gamma = f,\quad \forall \gamma\in \Gamma_1(\mathfrak c_i\mathcal D_{F/\mathbb Q},\mathfrak n)
		\end{equation}
	and the holomorphy at the cusps (see \cite[pp.638--639]{Shimura} for details).
	Recall that, for each function $f\colon \mathfrak H^{[F:\QQ]}\to\CC$ and each $\alpha=[\begin{smallmatrix}
			*&*\\
			c&d
	\end{smallmatrix}]\in (\GL_2(\RR)^{+})^{[F:\QQ]}$, the function $f||_\mathbf k\alpha\colon \mathfrak H^{[F:\QQ]}\to \CC$ is defined by the identity
\[
f||_{\mathbf k}\alpha(\tau)=\prod_{i=1}^{[F:\QQ]}\det(\alpha_i)^{-k_i/2}\prod_{i=1}^{[F:\QQ]}(c_i\tau_i+d_i)^{-k_i}f(\alpha\tau),
\]where the subscription $i$ implies the $i$th component. Therefore, each Hilbert modular form has the Fourier expansion by using the function
	$\exp(2\pi \sqrt{-1}\mathrm{Tr}_{F/\mathbb Q}(\tau))$, where $\mathrm{Tr}_{F/\mathbb Q}\colon \mathfrak{H}^{[F:\QQ]}\to \mathfrak H$ is the function defined by $\mathrm{Tr}_{F/\mathbb Q}((\tau_1,\dots,\tau_{[F:\QQ]}))=\sum_{j=1}^{[F:\QQ]}\tau_j$.
	Let us call this Fourier expansion {\it the Fourier expansion of $f$ in the classical sense}.
	According to \cite[(2.16) and (2.17)]{Shimura}, the Fourier expansion in the classical sense ``coincides'' with the Fourier expansion (\ref{adelicFourierExpansion}) above. Therefore, $M_{\bf k}(\gn, \chi;R)$ coincides with the subspace of $M_{\bf k}(\gn, \chi)$ consisting of Hilbert modular forms whose Fourier coefficients \emph{in the classical sense} are contained in $R$.
	Similarly, for any congruence subgroup $\Gamma$ of $\GL_2(F)^+$,
	we define $M_{\mathbf k}(\Gamma)$ to be the space of holomorphic function on $\mathfrak H^{[F:\QQ]}$ satisfying (\ref{modularity}) and define its subspace $M_{\mathbf k}(\Gamma;R)$  by using their Fourier expansions in the classical sense.
	
	We show the existence of an integral structure of the space of Hilbert modular forms as below.
	\begin{thm}\label{integral model HMF}
		There exists an algebraic number field $E$ of finite degree contained in $\mathbb C$ such that $\Ocal_E$ contains all values of $\chi$ and
		\[
		M_{\bf k}(\gn, \chi)=M_{\bf k}(\gn, \chi;\mathcal O_E)\otimes_{\mathcal O_E}\mathbb C.
		\]
	\end{thm}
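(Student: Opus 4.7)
The idea is to realize Hilbert modular forms as global sections of a natural line bundle on the PEL-type Shimura variety attached to $\mathrm{Res}_{F/\QQ}\GL_2$ (or rather a slight variant which accommodates the non-paritious case), and then to invoke the algebraic $q$-expansion principle on a toroidal compactification, together with Lan's comparison of algebraic and analytic $q$-expansions \cite{Lan12}, to translate this algebraic integral structure into a classical one. This matches the strategy signaled in the introduction, where Theorem~\ref{integral model HMF} is said to follow from the general Theorem~\ref{integral model} of Appendix~\ref{appendix}.

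First I would set up the moduli space of abelian varieties of dimension $d=[F:\QQ]$ with $\cO_F$-action, a suitable polarization, and level-$\gn$ structure. This has a smooth quasi-projective integral model over $\cO_{E_0}[1/N]$ for a sufficiently large number field $E_0$ (including the reflex field, a normal closure of $F$, and the values of $\chi$) and some positive integer $N$ divisible by $\nr_{F/\QQ}(\gn) D_F$. After base change to $\cO_E$ for $E\supset E_0$, the Hodge bundle of the universal abelian scheme decomposes under the $\cO_F$-action as $\bigoplus_{v\mid\infty}\omega_v$, and Hilbert modular forms of weight $\bfk=(k_v)_{v\mid\infty}$ and nebentypus $\chi$ correspond to $\chi$-equivariant global sections of the line bundle $\omega_{\bfk}=\bigotimes_{v\mid\infty}\omega_v^{\otimes k_v}$ on an appropriate level cover.

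Next, the algebraic $q$-expansion principle applied on a toroidal compactification asserts that a section of $\omega_{\bfk}$ is determined by its formal $q$-expansion at a single cusp, and that it is defined over a subring $R\subset\CC$ if and only if its $q$-expansion lies in $R$. Applying this to $R=\cO_E[1/N]$, the $\cO_E[1/N]$-module of sections is finitely generated and torsion-free, and base change to $\CC$ recovers the entire complex space $M_{\bfk}(\G_0(\gn),\chi)$. Lan's theorem identifies the algebraic $q$-expansion with the classical analytic Fourier expansion up to an explicit nonzero constant depending on normalizations, so after enlarging $E$ to absorb this constant and multiplying through by a suitable power of $N$ to clear denominators, one obtains a basis contained in $M_{\bfk}(\G_0(\gn),\chi;\cO_E)$, and hence the desired equality.

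The main obstacle will be the non-paritious case: when the components of $\bfk$ have mixed parities, the line bundle $\omega_{\bfk}$ does not descend from the cover to the canonical Shimura variety, and one must work on a strictly finer moduli problem whose construction and cusp theory have to be arranged so that Lan's comparison theorem still applies. A secondary nuisance is the descent from a principal-type level structure (on which the moduli interpretation is cleanest) to the $\G_0(\gn)$ level structure with nebentypus $\chi$, which should be handled by taking $\chi$-eigenspaces under a finite abelian group action and further enlarging $E$ to contain the required roots of unity.
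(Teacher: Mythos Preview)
Your overall strategy---moduli interpretation, algebraic $q$-expansion principle, Lan's comparison---is the right one, and it is indeed the mechanism behind both the paper's proof and the general Theorem~\ref{integral model} in the Appendix. However, your execution diverges from the paper's in a way that matters.

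The group $\mathrm{Res}_{F/\QQ}\GL_2$ is \emph{not} of PEL type: the PEL datum in the Hilbert case (Example~\ref{ex1}(2) and Remark~\ref{remHMF}) yields the subgroup $\mathbf G\subset\mathrm{Res}_{F/\QQ}\GL_{2,F}$ of matrices with determinant in $\QQ^\times$, and the automorphic forms on $\mathbf G$ are Katz's Hilbert modular forms, not Shimura's. So the ``slight variant'' you allude to is not a modification to accommodate non-paritious weights; it is a genuinely smaller group, and the gap between Shimura's $M_{\mathbf k}(\Gamma_0(\gn),\chi)$ and Katz's spaces must be bridged explicitly. The paper does this via a Fourier-expansion-preserving inclusion
\[
M_{\mathbf k}(\Gamma_1(\gn))\hookrightarrow \prod_i M(\gc_i^{-1},\chi_{\mathbf k},\Gamma_{00}(\gn);\CC),
\]
obtained by restricting to the congruence subgroups $\Gamma_{00}=\Gamma_1\cap\SL_2(F)$ and invoking Deligne--Ribet's identification \cite[Proposition~(5.7)]{Deligne-Ribet} of classical and Katz-style forms on $\Gamma_{00}$. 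Only then does the $q$-expansion principle (Katz's version \cite[(1.2.11), (1.7.6)]{Katz}, a special case of Lan's) give bounded denominators.

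The non-paritious case is also handled differently from what you suggest. There is no need to pass to a finer moduli problem: on the PEL side the weight is a character $\chi_{\mathbf k}$ of $\mathrm{Res}_{F/\QQ}\GG_{m,F}$, and one simply allows it to be defined over a field containing $F$ rather than over $\QQ$ (Remark~\ref{remarkonparity}). Since the goal is only a comparison of algebraic and analytic Fourier expansions, this causes no difficulty.

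Finally, the paper shortcuts half the work by citing Shimura \cite[Proposition~1.3]{Shimura} for the existence of an $E$-rational structure, reducing the problem to bounded denominators alone; your proposal to build the rational structure from scratch via the moduli would also work but is more than is needed.
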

	We note that Shimura already proved the existence of $E$ satisfying
	\[
	M_{\bf k}(\gn, \chi)=M_{\bf k}(\gn, \chi;E)\otimes_{E}\mathbb C
	\]
	in \cite[Proposition 1.3, (1.18) and (2.5$_{\rm c}$)]{Shimura}.
	Therefore to prove Theorem \ref{integral model HMF}, it suffices to show the boundedness
	of the denominators of Fourier coefficients.
	To prove the boundedness of denominators of Fourier coefficients of any elements of $M_{\bf k}(\gn, \chi; \overline{\QQ})$,
	it suffices to show that the boundedness for all elements of $\bigoplus_{\chi}M_{\bf k}(\gn, \chi; \overline{\QQ})$.
	
A key ingredient in the proof of Theorem \ref{integral model HMF} is a Katz modular form, which is a modular form on a PEL-type Shimura variety. We briefly recall its \emph{analytic} definition. For simplicity, we assume $F\neq \mathbb Q$. Let $\mathfrak c$ be a non-zero fractional ideal in $F$ as before.
A \emph{$\mathfrak c$-polarized lattice of $F\otimes_{\mathbb Q}\mathbb C$} is a pair $(\mathcal L,\langle\ ,\rangle)$, where
\begin{itemize}
\item $\mathcal L$ is an $\mathcal O_F$-submodule of $F\otimes_{\mathbb Q}\mathbb C$ such that $\mathcal L\otimes_{\mathbb Z}\mathbb R=F\otimes_{\mathbb Q}\mathbb C$, and
\item $\langle\ ,\ \rangle\colon \mathcal L\wedge_{\mathcal O_F}\mathcal L\xrightarrow{\sim} \mathcal D_{F/\mathbb Q}^{-1}\mathfrak c^{-1}$ is an isomorphism as $\mathcal O_F$-modules
\end{itemize}
(see \cite[(1.4.3), (1.4.4)]{Katz}).
A \emph{$\Gamma_{00}(\mathfrak n)$-level structure on a $\mathfrak c$-polarized lattice $(\mathcal L,\langle\ ,\ \rangle)$} is an injection $i\colon \mathfrak n^{-1}\mathcal D_{F/\mathbb Q}^{-1}/\mathcal D_{F/\mathbb Q}^{-1}\hookrightarrow \mathfrak n^{-1}\mathcal L/\mathcal L$ as $\mathcal O_F$-modules.
	We denote by $\mathrm{PL}(\mathfrak c;\mathfrak n)$ the set of triples $(\mathcal L,\langle\ ,\ \rangle,i)$ of $\mathfrak c$-polarized lattices of $F\otimes_{\mathbb Q}\mathbb C$ with $\Gamma_{00}(\mathfrak n)$-level structures.

Following Katz \cite[1.1]{Katz76}, we define the set $\mathbf{GL}^+_F$ by
\[
\mathbf{GL}^+_F=\{(\omega_1,\omega_2)\in ((F\otimes_{\mathbb Q}\mathbb{C})^{\times})^{2}\ |\ \omega_1\omega_2^{-1}\in \mathfrak H_F\}.
\]
Here we set
$$\mathfrak{H}_F=\{(z_{\iota})_{\iota} \in \CC^{\Hom(F, \RR)}\cong F\otimes_\QQ \CC\mid \Im(z_{\iota})>0 \ (\forall\iota\in\Hom(F, \RR))\}.$$
The natural action of $\mathrm{GL}_2(F\otimes_{\mathbb Q}\mathbb C)$ on $(F\otimes_{\mathbb Q}\mathbb C)^2$ from the right induces a natural \emph{left} action of $\mathrm{SL}_2(F\otimes_{\mathbb Q}\mathbb R)$ on $\mathbf {GL}^+_F$ defined by
\[
\SL_2(F\otimes_\QQ \RR)\times \mathbf{GL}_F^{+}\ni(\gamma, (\omega_1,\omega_2))\longmapsto (\omega_1,\omega_2)^{t\! }\gamma\in \mathbf{GL}_F^{+}.
\]
Note that our order is the reverse of that of Katz.

For each element $(\omega_1,\omega_2)$, we define a polarized $\mathfrak c$-lattice $(\mathcal L(\omega_1,\omega_2),\langle\ ,\ \rangle_{\omega_1,\omega_2})$ by
\[
\mathcal L(\omega_1,\omega_2):=2\pi\sqrt{-1}(\mathcal O_F\omega_1+\mathfrak c^{-1}\mathcal D_{F/\mathbb Q}^{-1}\omega_2)
\]
and by
\[
\langle 2\pi\sqrt{-1}\omega_1 ,2\pi\sqrt{-1}\omega_2\rangle_{\omega_1,\omega_2}=1.
\]
We choose an isomorphism
	\[
	\epsilon\colon \mathcal O_F/\mathfrak n\xrightarrow{\sim}\mathfrak c^{-1}\mathfrak n/\mathfrak n
	\]
	as $\mathcal O_F$-modules. Then we obtain a map $L_{\epsilon,\mathfrak c}\colon \mathbf{GL}^+_F\to \mathrm{PL}(\mathfrak c;\mathfrak n)$ defined by
\begin{equation}\label{naturalbij}
L_{\epsilon,\frak c}(\omega_1,\omega_2):=(\mathcal L(\omega_1,\omega_2),\ \langle\ ,\ \rangle_{\omega_1,\omega_2},i_{\omega_1,\omega_2}),
\end{equation}
where the $\Gamma_{00}(\mathfrak n)$-level structure $i_{\omega_1,\omega_2}$ is defined as the composite
\[
i_{\omega_1,\omega_2}\colon \mathfrak n^{-1}\mathcal D_{F/\mathbb Q}^{-1}/\mathcal D_{F/\mathbb Q}^{-1}\xrightarrow{\epsilon^{-1}}\mathfrak n^{-1}\mathcal D_{F/\mathbb Q}^{-1}\mathfrak c^{-1}/\mathcal D_{F/\mathbb Q}\mathfrak c^{-1}\xrightarrow{\times 2\pi\sqrt{-1}\omega_2} \mathfrak n^{-1}\mathcal L/\mathcal L
\]
of homomorphisms as $\mathcal O_F$-modules.
We set
\[
\Gamma_{00}(\mathfrak c\mathcal D_{F/\mathbb Q},\mathfrak n):=\Gamma_1(\mathfrak c\mathcal D_{F/\mathbb Q},\mathfrak n)\cap \SL_2(F),
\]
which coincides with the congruence subgroup $\Gamma_{00}(\mathfrak n;\mathfrak c,\mathcal O_F)$
defined in \cite[p.262]{Deligne-Ribet}.
\begin{lem}\label{naturalbijlem}
The map $
L_{\epsilon,\frak c}$ induces a bijection
\[
\overline L_{\epsilon,\frak c}\colon \Gamma_{00}(\mathfrak c\mathcal D_{F/\mathbb Q},\mathfrak n)\backslash\mathbf{GL}^+_F\xrightarrow{\sim} \mathrm{PL}(\mathfrak c;\mathfrak n)
\]
as sets.
\end{lem}
\begin{proof}
First, we show the surjectivity of the map $L_{\epsilon,\mathfrak c}$.
Let $(\mathcal L,\langle\ ,\ \rangle,i)$ be a $\mathfrak c$-polarized lattice with a $\Gamma_{00}(\mathfrak n)$-level structure.
Then the polarization $\langle\ ,\ \rangle$ induces a natural isomorphism
\[
\mathcal L\wedge_{\mathcal O_F}\mathfrak c\mathcal D_{F/\mathbb Q}\mathcal L\xrightarrow{\sim}\mathcal O_F
\]
as $\mathcal O_F$-modules. By abuse of notation, we denote this isomorphism by the same notation $\langle\ ,\ \rangle$. Pick $2\pi\sqrt{-1}u\in \mathcal L$ and $2\pi\sqrt{-1}v\in \mathfrak c\mathcal D_{F/\mathbb Q}\mathcal L$ such that
\[
\langle 2\pi\sqrt{-1}u,2\pi\sqrt{-1}v\rangle=1,\quad uv^{-1}\in \mathfrak H_F
\]
and the composite
\[
\mathfrak n^{-1}\mathcal D_{F/\QQ}^{-1}\mathfrak c^{-1}/\mathcal D_{F/\QQ}^{-1}\mathfrak c^{-1}\xrightarrow{\epsilon^{-1}}\mathfrak n^{-1}\mathcal D_{F/\QQ}^{-1}/\mathcal D_{F/\QQ}^{-1}\xrightarrow{i}\mathfrak n^{-1}\mathcal L/\mathcal L
\]
coincides with the map $a\mapsto 2\pi\sqrt{-1}a v$.
Then we have $\mathcal L=2\pi\sqrt{-1}(\mathcal O_Fu+\mathfrak c^{-1}\mathcal D^{-1}_{F/\mathbb Q}v)$.
Indeed, the inclusion $\mathcal L\supset2\pi\sqrt{-1}(\mathcal O_Fu+\mathfrak c^{-1}\mathcal D^{-1}_{F/\mathbb Q}v)$ is obvious. For each $l=2\pi\sqrt{-1}(au+bv)\in \mathcal L (a,b\in F)$, we have 
\[
a=\langle l,2\pi\sqrt{-1}v\rangle\in\mathcal O_F\quad b=-\langle l,2\pi\sqrt{-1}u\rangle\in  \mathfrak c^{-1}\mathcal D_{F/\mathbb Q}^{-1}
\]
because $2\pi\sqrt{-1}v\in \mathfrak c\mathcal D_{F/\QQ}\mathcal L$ and $2\pi\sqrt{-1}u\in \mathcal L$.
This implies the converse inclusion relation $\mathcal L\subset \mathcal O_Fu+\mathfrak c^{-1}\mathcal D_{F/\mathbb Q}^{-1}v$.
Therefore the map in the lemma is surjective.

Next we show the injectivity of the map $\overline L_{\epsilon,\frak c}$.
	Suppose $L_{\epsilon,\frak c}(\omega_1,\omega_2)=L_{\epsilon,\frak c}(\omega_1',\omega_2')$ for $(\omega_1,\omega_2),(\omega_1',\omega_2')\in \mathbf{GL}_F^+$.
	Since $\mathcal L(\omega_1,\omega_2)=\mathcal L(\omega_1',\omega_2')$,
	there exists an element $\gamma=[\begin{smallmatrix}
		\alpha&\beta\\
		\gamma&\delta
	\end{smallmatrix}]$ of $\GL_2(F)$ such that
	\[
	(\omega_1',\omega_2')=(\omega_1,\omega_2) ^{t} \!\gamma.
	\]
	Since the identities
	\[1=\langle 2\pi\omega_1',2\pi\omega_2'\rangle_{\omega_1',\omega_2'}=\langle 2\pi\omega_1',2\pi\omega_2'\rangle_{\omega_1,\omega_2}=\det(\gamma)
	\]
	hold, $\gamma$ is contained in $\SL_2(F)$. Moreover, by the identities
	\begin{equation*}
		\begin{split}
			\omega_1'&=\alpha \omega_1+\beta\omega_2\in \mathcal L(\omega_1,\omega_2),\\
			\omega_2'&=\gamma \omega_1+\delta\omega_2\in \mathfrak c\mathcal D_{F/\QQ}\mathcal L(\omega_1,\omega_2),
		\end{split}
	\end{equation*}
	we see \[
	\alpha,\delta\in \mathcal O,\quad \gamma\in \mathfrak c\mathcal D_{F/\QQ},\quad \beta\in \mathfrak c^{-1}\mathcal D_{F/\QQ}^{-1}.
	\]
	Finally, since $\gamma$ stabilizes $i_{\omega_1,\omega_2}$, we conclude that $\gamma$ is contained in $\Gamma_{00}(\mathfrak c\mathcal D_{F/\mathbb Q},\mathfrak n)$. This implies that $\overline L_{\epsilon,\frak c}$ is well-defined and injective.
\end{proof}
Since the action of the discrete group $\Gamma_{00}(\mathfrak c\mathcal D_{F/\mathbb Q},\mathfrak n)$ on $\mathbf{GL}_F^+$ is free and properly discontinuous, the quotient set
$\Gamma_{00}(\mathfrak c\mathcal D_{F/\mathbb Q},\mathfrak n)\backslash\mathbf{GL}^+_F$ has a natural structure of a complex manifold under which the map
$\Gamma_{00}(\mathfrak c\mathcal D_{F/\mathbb Q},\mathfrak n)\backslash\mathbf{GL}^+_F$ is a covering map between complex manifolds.
Then, by using a natural bijection in Lemma \ref{naturalbijlem}, we equip $\mathrm{PL}(\mathfrak c;\mathfrak n)$ with a structure as a complex manifold.

For each $(\mathcal L,\langle\ ,\ \rangle,i)\in \mathrm{PL}(\mathfrak c;\mathfrak n)$ and $\alpha \in F\otimes_{\mathbb Q}\CC$, put
		\[
		\alpha*(\mathcal L, \langle\ ,\ \rangle, i):=(\alpha^{-1}\mathcal L,\alpha\overline{\alpha} \langle\ ,\ \rangle,\alpha^{-1} i),
		\]
		where $\overline \alpha$ denotes the complex conjugate of $\alpha$. Then it is easily checked that the identity
		\begin{equation}\label{actionofGm}
		L_{\epsilon,\mathfrak c}(\alpha\omega_1,\alpha\omega_2)=\alpha^{-1}*L_{\epsilon,\mathfrak c}(\omega_1,\omega_2)
		\end{equation}
		holds.
Let $\chi_{\mathbf k}\colon  \mathrm{Res}_{F/\mathbb Q}(\mathbb G_{m,F})\times_{\Spec(\mathbb Q)}\Spec(\mathbb C)\cong \mathbb G_{m,\mathbb C}^d\to \mathbb G_{m,\mathbb C}$ be the algebraic character defined by
		\[
		\chi_{\mathbf k}((z_v)_{v|\infty})=\prod_{v|\infty}z_v^{k_v}.
		\]
An analytic \emph{$\mathfrak c$-Hilbert modular form of weight $\mathbf k$ and level $\Gamma_{00}(\mathfrak n)$ in the sense of Katz} (\cite[(1.6.1)]{Katz}) is a holomorphic function
\[
\mathcal F\colon \mathrm{PL}(\mathfrak c;\mathfrak n)\to \mathbb C
\]
satisfying
\[
\mathcal F(\alpha*(\mathcal L, \langle\ ,\ \rangle, i))=\chi_{\mathbf k}(\alpha)\mathcal F((\mathcal L,\langle\ ,\ \rangle,i))
\]
for all $(\mathcal L,\langle\ ,\ \rangle,i)\in \mathrm{PL}(\mathfrak c;\mathfrak n)$ and all $\alpha\in (F\otimes_{\mathbb Q}\mathbb C)^\times$.
We denote by $M(\mathfrak c,\Gamma_{00}(\mathfrak n),\chi_{\mathbf k};\mathbb C)^{\mathrm{an}}$ the space of analytic $\mathfrak c$-Hilbert modular forms of weight $\mathbf k$ and level $\Gamma_{00}(\mathfrak n)$ in the sense of Katz.

Next, we briefly recall the algebraic definition of Katz modular forms. See \cite[Section 1]{Katz} for details.
		Let $R$ be a commutative ring. A Hilbert-Blumenthal abelian variety (HBAV) over $R$ is an abelian scheme $X$ over $R$ endowed with an $\mathcal O_F$-action such that $\mathrm{Lie}(X/R)$ is a projective $R\otimes_{\ZZ}\mathcal O_F$-module of rank one.
	Let $\mathcal M(\mathfrak c,\Gamma_{00}(\mathfrak n))$ be the moduli stack of $\mathfrak c$-polarized HBAV's with $\Gamma_{00}(\mathfrak n)$-level structures.
	See \cite[(1.0)]{Katz} for the definition of a $\mathfrak c$-polarization and a $\Gamma_{00}(\mathfrak n)$-level structure on a HBAV. This moduli stack is a smooth algebraic space over $\ZZ$ when $\mathfrak n $ is divisible by an integer greater than $3$.
	Let $$\pi\colon X_{\mathrm{univ}}\to \mathcal M(\mathfrak c,\Gamma_{00}(\mathfrak n))$$ be the universal HBAV and set $$\omega:=\pi_*\Omega^1_{X_{\mathrm{univ}}/\mathcal M(\mathfrak c,\Gamma_{00}(\mathfrak n))}.$$
	By the definition of HBAV's, the group scheme $\mathrm{Res}_{\mathcal O_F/\ZZ}(\mathbb{G}_{m,\mathcal O_F})$ acts on the locally free coherent sheaf $\omega$.
	Let $\chi_{\mathbf k}\colon \mathrm{Res}_{\mathcal O_F/\ZZ}(\mathbb{G}_{m,\mathcal O_F})\times_{\mathrm{Spec}(\ZZ)}\mathrm{Spec}(\CC)\to \mathbb{G}_{m,\CC}$ be the algebraic character defined as before.
	Then $\chi_{\mathbf k}$ defines an algebraic character on $\mathrm{Res}_{\mathcal O_F/\ZZ}(\mathbb{G}_{m,\mathcal O_F})\times_{\mathrm{Spec}(\ZZ)}\mathrm{Spec}(R)$ for any commutative ring $R$ containing $F$. We fix such $R$ and define $\omega_R(\chi_{\mathbf k})$ to be the $\chi_{\mathbf k}$-isotropic component of $\bigoplus_{n\geq 0}\omega^{\otimes n}\otimes_{\ZZ}R$, which is an invertible sheaf on $\mathcal M(\mathfrak c,\Gamma_{00}(\mathfrak n))\times_{\mathrm{Spec}(\ZZ)}\mathrm{Spec}(R)$. The space of Katz modular forms defined over $R$ is then given by
	\[
	M(\mathfrak c,\Gamma_{00}(\mathfrak n),\chi_{\mathbf k};R):=H^0(\mathcal M(\mathfrak c,\Gamma_{00}(\mathfrak n))\times_{\mathrm{Spec}(\ZZ)}\mathrm{Spec}(R),\omega_R(\chi_{\mathbf k})).
	\]
	\begin{prop}[{\cite[(1.4.6), (1.6.3)]{Katz}}]
		There exists a natural isomorphism\[
		(F\otimes_{\QQ}\CC)^\times \backslash\mathrm{PL}(\mathfrak c;\mathfrak n)\xrightarrow{\cong}\mathcal M(\mathfrak c,\Gamma_{00}(\mathfrak n))(\CC)
		\]
		as complex manifolds such that the isomorphism class of HBAV's corresponding to the triple $(\mathcal L,\langle\ ,\ \rangle,i)$ is represented by $F\otimes_{\QQ}\CC/\mathcal L$. Moreover, this isomorphism induces an isomorphism
		\[
		M(\mathfrak c,\Gamma_{00}(\mathfrak n),\chi_{\mathbf k};\CC)\xrightarrow{\cong}M(\mathfrak c,\Gamma_{00}(\mathfrak n),\chi_{\mathbf k};\CC)^{\mathrm{an}}
		\]
		as $\CC$-vector spaces.
		\label{algebraicvsanalytic}
\end{prop}
Here we use the assumption $F\neq \QQ$.
Using the natural isomorphism above, we identify the algebraic Katz modular forms over $\CC$ with the analytic Katz modular forms.

	The complex manifold $\mathfrak H_F$ can be regarded as a complex submanifold of $\mathbf{GL}_F^+$ via the closed immersion
	\[
	\iota\colon \mathfrak H_F\hookrightarrow \mathbf{GL}_F^+;\quad \tau\mapsto (\tau,1).
	\]
	Then, by pullback via $\iota$, each analytic Katz modular form can be regarded as a holomorphic function on $\mathfrak H_F$.
Let $M_{\mathbf k}(\Gamma_{00}(\mathfrak c\mathcal D_{F/\mathbb Q},\mathfrak n))$ denote the space of holomorphic functions on $\mathfrak H_F$ satisfying the equation $f||_{\mathbf k}\gamma=f$ for all $\gamma\in \Gamma_{00}(\mathfrak c\mathcal D_{F/\mathbb Q},\mathfrak n)$.
\begin{lem}[{cf.\ \cite[Proposition (5.7)]{Deligne-Ribet}}]The correspondence $\mathcal F\mapsto \iota^*\mathcal F$ defines a natural isomorphism
	\[
	M(\mathfrak c,\Gamma_{00}(\mathfrak{n}),\chi_{\mathbf k};\mathbb C)^{\mathrm{an}}\xrightarrow{\sim}M_{\mathbf k}(\Gamma_{00}(\mathfrak c\mathcal D_{F/\mathbb Q},\mathfrak n)).
	\]
	\label{KatzMFandclassicalMF}
	\end{lem}
	\begin{proof}
	First, we show that $ \iota^*\mathcal F$
	 for each $\mathcal F\in M(\mathfrak c,\Gamma_{00}(\mathfrak{n}),\chi_{\mathbf k};\mathbb C)^{\mathrm{an}}$
	is invariant under the slash operator $||_{\mathbf k}\gamma$ for any $\gamma \in  \Gamma_{00}(\mathfrak c\mathcal D_{F/\mathbb Q},\mathfrak{n})$.
	For each $\gamma \in \Gamma_{00}(\mathfrak c\mathcal D_{F/\mathbb Q},\mathfrak{n})$,
	we have
	\begin{align*}
	\iota^*\mathcal F(\gamma\tau)&=\mathcal F(L_{\epsilon,\mathfrak c}(\gamma\tau,1))=\mathcal F(L_{\epsilon,\mathfrak c}((c\tau+d)^{-1}(\tau,1)^{t\! }\gamma))\\
	&=\mathcal F((c\tau+d)*L_{\epsilon,\mathfrak c}(\tau,1))=\chi_{\mathbf k}(c\tau+d)\iota^*\mathcal F(\tau).
	\end{align*}
	Therefore, the correspondence $\mathcal F\mapsto \iota^*\mathcal F$ defines the homomorphism \[
	M(\mathfrak c,\Gamma_{00}(\mathfrak{n}),\chi_{\mathbf k};\mathbb C)^{\mathrm{an}}\to M_{\mathbf k}(\Gamma_{00}(\mathfrak c\mathcal D_{F/\mathbb Q},\mathfrak n)).
	\]
	Conversely, for a given $f\in M_{\mathbf k}(\Gamma_{00}(\mathfrak c\mathcal D_{F/\mathbb Q},\mathfrak n))$, define $\mathcal F_f\colon \mathrm{PL}(\mathfrak c;\mathfrak{n})\to \CC$ by
	\[
	\mathcal F_f((\mathcal L,\langle\ ,\ \rangle,i)):=\chi_{\mathbf k}(\omega_2)f(\omega_1/\omega_2),
	\]
	where $(\omega_1,\omega_2)$ is an element of $ \mathbf{GL}_F^+$ satisfying $L_{\epsilon,\mathfrak c}(\omega_1,\omega_2)=(\mathcal L,\langle\ ,\ \rangle,i)$.
	Then, by Lemma \ref{naturalbijlem}, $\mathcal F_f$ is well-defined.
	It is clear that $\mathcal F_f$ is holomorphic and satisfies the identity (\ref{actionofGm}).
	By construction, the correspondences $f\mapsto \mathcal F_f$ and $\mathcal F\mapsto \iota^*\mathcal F$ are inverse to each other.
	This completes the proof of the lemma.
		\end{proof}
For each $\mathcal F\in M(\mathfrak c,\Gamma_{00}(\mathfrak{n}),\chi_{\mathbf k};\mathbb C)^{\mathrm{an}}$, the function $\iota^*\mathcal F$ admits a Fourier expansion
\[
\iota^*\mathcal F(\tau)=\sum_{\alpha\in \mathfrak c}b_\alpha(\mathcal F)\exp(2\pi\sqrt{-1}\mathrm{Tr}_{F/\QQ}(\alpha\tau)).
\]
We call $b_\alpha(\mathcal F)$ the Fourier coefficients of $\mathcal F$. The complex number $b_\alpha(\mathcal F)$ coincides with $b(\mathcal F,\alpha,\mathfrak c,\mathcal O_F,\epsilon)$ in \cite[(1.7.5)]{Katz}.
For each \emph{algebraic} Katz modular form $\Fcal$ in the space $M(\mathfrak c,\Gamma_{00}(\mathfrak{n}),\chi_{\mathbf k};\mathbb C)$, we define the complex number $b_\alpha(\mathcal F)$ by using the identification in Proposition \ref{algebraicvsanalytic}.
The following proposition is the key ingredient of our result.
\begin{prop}[{cf. \cite[(1.7.6), (1.2.11)]{Katz}, \cite[(6.15)]{Rapoport}, \cite[Theorem 5.4]{Lan12}}]\label{integralityKM}
For each $\mathcal F\in M(\mathfrak c,\Gamma_{00}(\mathfrak{n}),\chi_{\mathbf k};\mathbb C)$, the $\ZZ$-submodule
\[
\sum_{\alpha\in \mathfrak c}\ZZ b_\alpha(\mathcal F)
\]
of $\CC$ is a finitely generated $\ZZ$-module. In particular, if all Fourier coefficients $b_\alpha(\mathcal F)$ of $\mathcal F$ are contained in $\overline{\QQ}$, then there exists a positive integer $M$ such that all $Mb_\alpha(\mathcal F)$ are integral over $\ZZ$.
\end{prop}
\begin{proof}
Set $d=[F:\QQ]$.
	Let  $\ZZ[\![\mathfrak c;S]\!]$ be the ring of formal power series with coefficients in $\ZZ$ defined by
	a basis $l_1,\dots,l_d$ of $\Hom_{\QQ-{\rm linear}}(F,\QQ)$ such that $l_i(x)>0,\ i=1,\dots,d$ for any totally positive element $x\in F$ (\cite[(1.1)]{Katz}).
	Katz defined an algebraic Fourier expansion map
	\[
	M(\mathfrak c,\Gamma_{00}(\mathfrak{n}),\chi_{\mathbf k};\mathbb C)\to \CC[\![\mathfrak c;S]\!];\quad \mathcal F\mapsto \mathcal F(q)
	\]
	by evaluating a certain Tate object $\mathrm{Tate}_{\mathfrak c,\mathcal O_F}(q)$ (\cite[from (1.1.9) to (1.20)]{Katz}) which is a $\mathfrak c$-polarized HBAV with $\Gamma_{00}(\mathfrak n)$-level structure defined over the fraction field of $\ZZ[\![\mathfrak c;S]\!]$.
	Since this Tate object is defined over the fraction field of $\ZZ[\![\mathfrak c;S]\!]$,
	the image of the algebraic Fourier expansion map is contained in $\CC\otimes_{\ZZ}\ZZ[\![\mathfrak c;S]\!]$ (\cite[(1.2.11)]{Katz}). Write
	\[
	\mathcal F(q)=\sum_{\alpha\in \mathfrak c,\ l_i(\alpha)\geq 0}a_\alpha(\mathcal F)q^\alpha\in \CC\otimes_{\ZZ}\ZZ[\![\mathfrak c;S]\!].	\]
	The key point of the proof is that the identity
	\begin{equation}
		\label{comparisonofFC}
		a_\alpha(\mathcal F)=b_\alpha(\mathcal F)
	\end{equation}
	holds for all $\alpha\in\mathfrak c$ (\cite[(1.7.6)]{Katz}). This identity is proved by comparing the classical and the algebraic toroidal compactifications (\cite[pp.330--332]{Rapoport}).
	Such identities are proved for holomorphic automorphic forms on general PEL-type Shimura varieties in \cite[Theorem 5.4]{Lan12}.
	Thus, it suffices to show that the module
	\[
	\sum_{\alpha\in \mathfrak c,\ l_i(\alpha)\geq 0}\ZZ a_{\alpha}(\mathcal F)
	\]
	is finitely generated. Since $\mathcal F(q)\in \CC\otimes_{\ZZ}\ZZ[\![\mathfrak c;S]\!]$, there exist finitely many complex numbers $a_1,\dots,a_m$ and elements $f_1,\dots f_m$ of $\ZZ[\![\mathfrak c;S]\!]$ such that the identity
	\[
	\mathcal F(q)=\sum_{i=1}^ma_if_i
	\]
	holds. This implies that all $a_{\alpha}(\mathcal F)$ are contained in $\sum_{i=1}^m\ZZ a_i$.
	Since $\ZZ$ is Noetherian, the submodule $\sum_{\alpha}\ZZ a_{\alpha}(\mathcal F)$ of $\sum_{i=1}^m\ZZ a_i$ is also finitely generated. This completes the proof of the proposition.
\end{proof}
	\begin{lem}
		Let $\mathfrak c_1,\dots,\mathfrak c_h$ be integral ideals of $F$ representing the narrow
		ideal class group of $F$ and let $M(\mathfrak c_i,\chi_{\mathbf k},\Gamma_{00}(\mathfrak n);\mathbb C)$
		denote the space of $\mathfrak c_i$-Hilbert modular forms of weight $\chi_{\mathbf k}$ on $\Gamma_{00}(\mathfrak n)$ defined over $\mathbb C$ in the sense of Katz.
		 Then, there exists a natural inclusion
		\begin{equation}\label{inclu}
			\bigoplus_{\chi\colon (\mathcal O_F/\mathfrak n)^\times \to \mathbb C^\times} M_{\mathbf k}(\mathfrak n,\chi)\hookrightarrow \prod_{i=1}^{h}M(\mathfrak c_i,\chi_{\mathbf k},\Gamma_{00}(\mathfrak n);\mathbb C),
		\end{equation}
		which preserves the Fourier expansions.
		\label{HMtoKM}
	\end{lem}
		\begin{proof}
		Since the identity
		\[
		\Gamma_{00}(\mathfrak c_i\mathcal D_{F/\mathbb Q},\mathfrak n)=\Gamma_1(\mathfrak c_i\mathcal D_{F/\mathbb Q},\mathfrak n)\cap \SL_2(F)
				\]
		holds,
		we have a natural inclusion
		\[
		M_{\mathbf k}(\Gamma_1(\mathfrak c_i\mathcal D_{F/\mathbb Q},\mathfrak n))\subset M_{\mathbf k}(\Gamma_{00}(\mathfrak c_i\mathcal D_{F/\mathbb Q},\mathfrak n))
		\]
		which preserves the Fourier expansions.
		On the other hand, according to Proposition \ref{algebraicvsanalytic} and Lemma \ref{KatzMFandclassicalMF} (cf.\ \cite[Proposition (5.7)]{Deligne-Ribet}),
		we have the natual isomorphism
		\[
		M_{\mathbf k}(\Gamma_{00}(\mathfrak c_i\mathcal D_{F/\mathbb Q},\mathfrak n))\cong M(\mathfrak c_i,\chi_{\mathbf k},\Gamma_{00}(\mathfrak n);\mathbb C).
		\]
	Combining the above with \eqref{incl1}, we have the lemma.
	\end{proof}
	\begin{proof}[Proof of Theorem $\ref{integral model HMF}$]
	Let $f$ be an element of $M_{\mathbf k}(\mathfrak n,\chi;E)$, where $E$ is a finite number field.
	Recall that it suffices to show the boundedness of Fourier coefficients of $f$.
	Therefore, we may assume that the level $\mathfrak n$ of $f$ is divisible by an integer greater than $3$.
	By Lemma \ref{HMtoKM}, it suffices to show the boundedness of Fourier coefficients for
	each element of $M(\mathfrak c_i,\chi_{\mathbf k},\Gamma_{00}(\mathfrak n);\mathbb C)$,
	and it was proved in Proposition \ref{integralityKM}.
		This completes the proof of Theorem \ref{integral model HMF}. 
	\end{proof}
	
\begin{proof}[Proof of Theorem \ref{thm0.1}]According to Theorem \ref{integral model HMF},
	there exists an algebraic number field $E$ of finite degree contained in $\mathbb C$ such that
	\[
	M_{\bf k}(\gn, \chi)=M_{\bf k}(\gn, \chi;\mathbb C)=M_{\bf k}(\gn, \chi;\mathcal O_E)\otimes_{\mathcal O_E}\mathbb C.
	\]
	Put $C(\gm,f):=\nr(\gm)^{k_0/2}c(\gm,f)$.
	Let $T'(\gp)$ denote the Hecke operator on $M_{\bf k}(\Gamma_1(\gn))$ for a non-zero prime ideal $\gp$ of $\Ocal_F$ coprime to $\gn$, which is defined in \cite[(2.21)]{Shimura}.
	In this case, explicit relations of Fourier coefficients are given by
	\begin{align}\label{relation of FC}C(\gm, T'(\gp)f) = \sum_{\substack{\ga \subset\Ocal_F\\
				\gm+\gp\subset\ga}}\chi(\ga)\nr_{F/\QQ}(\ga)^{k_0-1}C(\ga^{-2}\gm\gp, f)
	\end{align}
	(see \cite[(2.23)]{Shimura}).
	Therefore, 
	each operator $T'(\mathfrak p)$ preserves the $\Ocal_E$-module $M_{\bf k}(\gn, \chi;\mathcal O_E)$.
	Thus, we can apply Theorem \ref{machine of int} to the case where $L=\CC$, $R=\Ocal_E$,
	$M=M_{\bf k}(\gn, \chi)$,
	$e$ is the Fourier expansion map and $T=T'(\gp)$.
	This completes the proof of Theorem \ref{thm0.1}.
\end{proof}
	
We review known results on the integrality of Hecke eigenvalues for Hilbert modular forms with Galois representations in the following two remarks.
	
	\begin{rem}\label{known for cohomological Hilbert modular}
	
		When all components of the weight ${\bf k}$ are greater than or equal to $2$ and 
		all $k_v$'s are even, 
		Hilbert modular eigenforms $f$ of weight $\bf k$ are cohomological,
		and cohomologicality yields Galois representations attached to them.
		Thus the integrality of Hecke eigenvalues can be deduced as in \cite[Proposition 4.1]{ShinTemplier},
		where Saito's integrality \cite[Proposition 3.5]{Saito} was essentially used.
		
	\end{rem}

\begin{rem}\label{known for parallel weight one}
		As for weight one modular forms, Rogawski and Tunnel \cite{RogawskiTunnel} gave an Artin representation
		associated to a primitive Hilbert cusp form of parallel weight one so that the standard or adjoint $L$-functions match.
		By \cite[Theorem 3.1]{RogawskiTunnel},
		the Satake parameter at any finite place $v$ of $F$ not dividing the level
		consists of roots of unity. Thus Hecke eigenvalues are algebraic integers for primitive Hilbert cusp forms of parallel weight one.
	\end{rem}

	\subsection{Proof of Theorem \ref{integrality for SMF}}
	\label{Hecke operators on Siegel modular forms}
	In this section, we prove Theorem \ref{integrality for SMF}.
Unlike the situation in \S\ref{proofofthm1}, the existence of an integral structure of the Siegel modular forms was already proved essentially by Taylor \cite[Lemma 2.1]{Taylor}.
	In the case of Siegel modular forms, the difficulty is to prove Theorem \ref{preserve SMF} that states Hecke operators preserve the integral structure. (Such preservation holds for Hilbert modular forms by \eqref{relation of FC}.)
For proving Theorem \ref{preserve SMF}, we
have only to prove Propositions
\ref{lem 1 preserve SMF} for $T(p^\delta)$ and \ref{lem 2 preserve SMF} for $T_{j,n-j}(p^2)$ as below.
As for Proposition
\ref{lem 1 preserve SMF} for $T(p^\delta)$, we find an explicit complete system of representatives for the set $\Sp_{2n}(\ZZ)\bsl \bar S_{p^\delta}^{(n)}$ in order to give an explicit relation among Fourier coefficients of Siegel modular forms $f$ and those of $T(p^\delta)f$.
After the calculation of Fourier coefficients of $T(p^\delta)f$,
the preservation of the integral structure by $T(p^\delta)$ is reduced to the $p$-divisibility of exponential sums $G_{a,b}(T)$ like Gauss sums indexed by certain integral matrices.
The case of Proposition
\ref{lem 2 preserve SMF} for $T_{j,n-j}(p^2)$ are similarly treated.
We assume $n\ge 2$ throughout this section.
	\begin{rem}
		Taylor's proof is based on Mumford's construction of degenerating abelian varieties over certain complete rings \cite{Mumford72} and Faltings' comparison of $q$-expansions which is a direct consequence of the construction of toroidal compactifications of Siegel modular varieties {\rm(}see Faltings-Chai's book \cite[Chapter V, Section 1]{FaltingsChai}{\rm)}. Note that a direct generalization to general PEL-type Shimura varieties was given by Lan \cite{Lan12}.
		We can show the existence of integral structures on the spaces of automorphic forms on certain PEL-type Shimura
		varieties by using Lan's comparison of $q$-expansions {\rm(}see Appendix \ref{appendix}{\rm)}.
	\end{rem}
	
	Let $\mathfrak{H}_n$ be the Siegel upper half-space of degree $n$ and
	let $\GSp_{2n}(\RR)^+$ be the group consisting of all $g \in \GSp_{2n}(\RR)$ with $\nu(g)>0$ (for the definition of $\GSp_{2n}$, see \S\ref{Notation}).
	For $N \in \NN$, we call
	$$\Gamma(N):= \Ker (\Sp_{2n}(\ZZ) \rightarrow \Sp_{2n}(\ZZ/N\ZZ))$$
	the principal congruence subgroup of level $N$.
	
	For an irreducible algebraic representation $(\rho, W_\rho)$ of $\GL_{n}(\CC)$ and a $W_\rho$-valued function $f: \mathfrak{H}_n\rightarrow W_\rho$, we set
	$f|_\rho\gamma (Z):=\rho(\nu(\gamma)^{-1/2}(CZ+D))^{-1}f(\gamma Z)$
	for $\gamma=[\begin{smallmatrix}A & B \\ C & D\end{smallmatrix}] \in \GSp_{2n}(\RR)^+$.
	We call $f$ a Siegel modular form of type $\rho$ and level $N$
	if $f$ is a holomorphic function and satisfies $f|_{\rho}[\gamma](Z)=f(Z)$ for all $\gamma \in \Gamma(N)$ and $Z\in \mathfrak{H}_n$.
	The symbol $M_\rho(\Gamma(N))$ stands for the space of Siegel modular forms of type $\rho$ and level $N$.
	By highest weight theory, $\rho$ is characterized by its highest weight $(k_1, \ldots, k_n) \in \ZZ^n$ with $k_1\ge \cdots \ge k_n$.
	Remark that
	\begin{align}\label{rho center}\rho(a 1_n)=a^{\sum_{j=1}^{n} k_j} \qquad (a>0)
	\end{align}
	holds.
	By K\"ocher's principle, the condition $k_n<0$ implies $M_\rho(\Gamma(N)) = 0$ (see \cite{Freitag1979}).
	Thus we may assume $k_n\ge 0$ from now on.
	
	The following lemma is used to prove Theorem \ref{preserve SMF}.
	\begin{lem}\label{comp of weight}
		Let $(\rho, W_\rho)$ be an irreducible algebraic representation
		of $\GL_n(\CC)$ whose highest weight is $(k_1,\ldots,k_n) \in \ZZ^n$ with $k_1\ge \cdots\ge k_n$.
		Let $\mu=(\mu_1,\ldots,\mu_n)$ be any weight of $W_\rho$.
		Then, we have $\mu_j\ge k_n$ for all $j=1,\ldots,n$.
	\end{lem}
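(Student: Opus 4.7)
The strategy is to reduce the lemma to the classical fact from the representation theory of reductive Lie groups that the set of weights of an irreducible representation with highest weight $\lambda$ is contained in the convex hull of the Weyl group orbit $W\cdot \lambda$. For $\GL_n(\CC)$, the Weyl group is the symmetric group $S_n$ acting on $\ZZ^n$ by permutation of coordinates, and the dominant highest weight is precisely our tuple $(k_1,\ldots,k_n)$ with $k_1\geq\cdots\geq k_n$.

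Granting the convex-hull statement, any weight $\mu=(\mu_1,\ldots,\mu_n)$ of $W_\rho$ can be written as
\[\mu=\sum_{\sigma\in S_n} c_\sigma\,(k_{\sigma(1)},\ldots,k_{\sigma(n)}),\qquad c_\sigma\geq 0,\ \sum_{\sigma\in S_n} c_\sigma=1.\]
Reading off the $j$-th coordinate, $\mu_j=\sum_{\sigma\in S_n} c_\sigma k_{\sigma(j)}$ is a convex combination of the values $k_1,\ldots,k_n$; since $k_n=\min_i k_i$, we obtain $\mu_j\geq k_n$, as required.

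An alternative, more hands-on route avoids quoting the convex-hull theorem. Factor $\rho=\det^{k_n}\otimes\rho'$, where $\rho'$ has highest weight $(k_1-k_n,\ldots,k_{n-1}-k_n,0)$ with all entries non-negative (recall $k_n\geq 0$). Then $\rho'$ is a polynomial representation of $\GL_n(\CC)$, hence appears as a subrepresentation inside some tensor power $(\CC^n)^{\otimes d}$ of the standard representation. The weights of $(\CC^n)^{\otimes d}$ are sums $e_{i_1}+\cdots+e_{i_d}$ of standard basis vectors, which have non-negative coordinates, so the same holds for every weight of $\rho'$. Twisting back by $\det^{k_n}$ shifts every weight by $(k_n,\ldots,k_n)$, giving $\mu_j\geq k_n$ for every weight $\mu$ of $\rho$.

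There is no serious obstacle here: the lemma is purely representation-theoretic and independent of the automorphic context. The only care required is book-keeping of conventions, namely the standard identification of the weight lattice of $\GL_n(\CC)$ with $\ZZ^n$ relative to the Borel subgroup of upper-triangular matrices, with respect to which $(k_1,\ldots,k_n)$ with $k_1\geq\cdots\geq k_n$ is dominant.
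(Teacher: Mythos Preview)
Your proposal is correct. Your second approach---twist off $\det^{k_n}$, observe that the resulting representation is polynomial and hence has only non-negative weights, then twist back---is exactly the route the paper takes. The paper is slightly more specific: rather than embedding $\rho'$ into a generic tensor power $(\CC^n)^{\otimes d}$, it uses the explicit realization of $\rho'$ as a subrepresentation of $\Sym^{k_1-k_2}(\CC^n)\otimes\cdots\otimes\Sym^{k_{n-1}-k_n}(\wedge^{n-1}\CC^n)$ (Fulton--Harris, \S15.5). This particular realization matters in the paper beyond the lemma itself, since it is used immediately afterward to define the $\ZZ$-lattice $W_{\rho,0}$ on which $\GL_n(\ZZ)$ acts. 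Your convex-hull argument is a perfectly valid alternative and perhaps the cleanest conceptual explanation, though it does not supply the explicit integral model. One minor remark: the parenthetical ``recall $k_n\ge 0$'' is unnecessary---the twist by $\det^{-k_n}$ makes sense for any $k_n\in\ZZ$, and the lemma as stated does not assume $k_n\ge 0$.
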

	\begin{proof}
		The representation $\rho_0:=\rho\otimes \det^{-k_n}$ has the highest weight
		$(k_1-k_n,\ldots,k_{n-1}-k_n, 0)$, and $\rho_0$
		is realized as a subrepresentation of $\Sym^{k_1-k_n}(\CC^n)\otimes{\Sym^{k_2-k_n}(\wedge^2\CC^n)\otimes}\cdots\otimes\Sym^{k_{n-1}-k_{n}}(\wedge^{n-1}\CC^n)$, where $\CC^n$ is the standard representation of $\GL_n(\CC)$ (see \cite[\S15.5]{FultonHarris}).
		By this realization, any weight $\mu'=(\mu_1',\ldots,\mu_n')$ of
		$\rho_0$ satisfies $\mu'_j\ge 0$ for all $j=1,\ldots,n$.
		We obtain the assertion by twisting $\rho_0$ by $\det^{k_n}$.
	\end{proof}
	
By the realization of $\rho$ in the proof of Lemma \ref{comp of weight}, we may assume that $W_\rho$ is a subrepresentation of $\{\Sym^{k_1-k_n }(\CC^n)\otimes\Sym^{k_2-k_n }(\wedge^2\CC^n)\otimes\cdots\otimes\Sym^{k_{n-1}-k_{n}}(\wedge^{n-1}\CC^n)\}\otimes \det^{k_n}$.
		Set $$W_{\rho,0}=W_\rho \cap [\{\Sym^{k_1-k_n }(\ZZ^n)\otimes\Sym^{k_2-k_n }(\wedge^2\ZZ^n)\otimes\cdots\otimes\Sym^{k_{n-1}-k_{n}}(\wedge^{n-1}\ZZ^n)\} \otimes {\rm det}_{\ZZ}(\ZZ^n)^{\otimes k_n}].$$
		Then $W_{\rho,0}$ is a $\ZZ$-lattice such that $W_{\rho, 0}\otimes_\ZZ \CC=W_\rho$ and $\GL_n(\ZZ)$ acts on $W_{\rho,0}$ by $\rho$.
	Moreover, $W_{\rho,0}$ has a $\ZZ$-basis consisting of weight vectors of $\rho$. For any $\ZZ$-algebra $R$, the group $\GL_n(R)$ acts on the
	$R$-module $W_{\rho,0}(R):=W_{\rho,0}\otimes_\ZZ R$.

	Let us introduce Hecke operators on Siegel modular forms (cf.\ \cite{Evdokimov}).
	For $m \in \NN$ coprime to $N$, set
	\begin{align}\label{Sbar}\bar{S}_m^{(n)}(N)=\{ g \in {\rm M}_{2n}(\ZZ)\cap \GSp_{2n}(\QQ) \mid g \equiv \left[\begin{smallmatrix}
			1_n& \\ & \nu(g)1_n \end{smallmatrix}\right] \pmod N, \ \nu(g)=m \ \}.\end{align}
	Let $\bfL(N)_\CC$ be the abstract Hecke algebra over $\CC$
	generated by $\Gamma(N)g\Gamma(N)$ for all $g \in \bar S_m^{(n)}(N)$ with $m \in \NN$ coprime to $N$ (see \cite[p.433]{Evdokimov}).
	We define the action of $\Gamma(N)g\Gamma(N)$ for $g \in \bar S_m^{(n)}(N)$ by
	{\allowdisplaybreaks\begin{align*}
			t_{\rho,N}^n(\Gamma(N) g \Gamma(N))f(Z) := &
			\, \nu(g)^{\frac{\sum_{j=1}^{n}k_j}{2}-\frac{n(n+1)}{2}}\sum_{\gamma= \left[\begin{smallmatrix}A&B\\C&D \end{smallmatrix}\right]\in \Gamma(N)\bsl \Gamma(N) g \Gamma(N)} f|_\rho\gamma(Z) \\
			= & \, \nu(g)^{\{\sum_{j=1}^{n}k_j\}-\frac{n(n+1)}{2}}
			\sum_{\gamma = \left[\begin{smallmatrix}A&B\\C&D \end{smallmatrix}\right]
				\in \Gamma(N)\bsl \Gamma(N) g \Gamma(N)}
			\rho(CZ+D)^{-1}f(\gamma Z),
		\end{align*}
	}where we use $f|_\rho\gamma (Z)=\rho(\nu(\gamma)^{-1/2}(CZ+D))^{-1}f(\gamma Z)$ and \eqref{rho center} for the second equation.
	Then, $t_{\rho, N}^n : {\bf L}(N)_\CC \rightarrow \End(M_\rho(\Gamma(N)))$
	is a $\CC$-algebra homomorphism.
	For any prime number $p$ coprime to $N$, we also define the Hecke operators $T(p)$, $T_{j,n-j}(p^2)$ $(j=1,\ldots,n)$ and $T(p^\delta)$ ($\delta \in \NN$) on $M_\rho(\Gamma(N))$ by
$$T(p):=t_{\rho,N}^n(\Gamma(N)\left[\begin{smallmatrix} 1_n & \\ & p1_n \end{smallmatrix}\right]\Gamma(N)),$$
	$$T_{j,n-j}(p^2):=t_{\rho,N}^{n}\left(\Gamma(N) g_j(p)\left[\begin{smallmatrix} p1_j & & & \\
		& 1_{n-j} & & \\
		& & p 1_j & \\
		&  & & p^21_{n-j} \end{smallmatrix}\right]\Gamma(N)\right),$$
	$$T(p^{\delta}):=\sum_{g \in \Gamma(N)\bsl \bar S_{p^\delta}^{(n)}(N)/\Gamma(N)}t_{\rho,N}^{n}(\Gamma(N)g\Gamma(N)).$$
Here $g_j(p) \in \Sp_{2n}(\ZZ)$ is a fixed element so that
$$g_j(p) \equiv \left[\begin{smallmatrix} p^{-1}1_j & & & \\
		& 1_{n-j} & & \\
		& &  p1_j & \\
		&  & & 1_{n-j} \end{smallmatrix}\right] \pmod N.$$
			We can take such a $g_j(p)$ by the surjectivity of $\Sp_{2n}(\ZZ) \rightarrow \Sp_{2n}(\ZZ/N\ZZ)$ (cf.\ \cite[Lemma 3.3.2]{AndrianovBook}).
We note that the abstract Hecke algebra $\bfL(N)_\CC$ is generated by
$T(p)$ and $T_{j,n-j}(p^2)$ for all prime numbers $p$ coprime to $N$ and all $j\in \{1,\ldots, n\}$ as a $\CC$-algebra (see \cite[Theorem 1.1 and Corollary]{Evdokimov} and \cite[Theorem 3.3.23 (1)]{AndrianovBook}).
Also remark that the third operator $T(p^\delta)$ for $\delta=1$ coincides with the first operator $T(p)$ by \cite[Lemma 1.1, 1)]{Evdokimov} and Lemma \ref{comp rep for S(N)} below.

	For $m \in \NN$ such that $\gcd(m,N)=1$ and for $j \in \NN$ with $0\le j\le n$,
	let $g_j(m) \in \Sp_{2n}(\ZZ)$ be an element such that
\begin{align}\label{gj(m)}
	g_j(m) \equiv \left[\begin{smallmatrix} m^{-1}1_j & & & \\
	& 1_{n-j} & & \\
	& &  m1_j & \\
	&  & & 1_{n-j} \end{smallmatrix}\right] \pmod N.
\end{align}
as in \cite[p.437]{Evdokimov}.
	For an $n$-tuple $\chi=(\chi_1,\ldots,\chi_n)$ of Dirichlet characters modulo $N$,
	let $M_{\rho}(\Gamma(N),\chi)$ denote the subspace consisting of all $f \in M_\rho(\Gamma(N))$
	such that $f|_{\rho} g_j(m) = \chi_j(m)f$ for all $m \in \NN$ with $\gcd(m, N)=1$ and
	all $j \in \{1,\ldots,n\}$.
	Since the diagonal subgroup of $\Sp_{2n}(\ZZ/N\ZZ)$
is a finite abelian group and	acts on $M_{\rho}(\Gamma(N))$,
	we have a Hecke equivalent decomposition \begin{align}\label{chi decomp}M_{\rho}(\Gamma(N))=\bigoplus_{\chi=(\chi_1,\ldots,\chi_n)}M_{\rho}(\Gamma(N),\chi),
	\end{align}
	where $\chi_j$ for each $j\in\{1,\ldots,n\}$ runs over all Dirichlet characters modulo $N$ (see \cite[p.437]{Evdokimov}).
Hence $t_{\rho, N}^{n}$ induces a $\CC$-algebra morphism $t_{\rho, N, \chi}^n : {\bf L}(N)_\CC \rightarrow \End(M_\rho(\Gamma(N),\chi))$.
	We remark that the $n$th component $\chi_n$ of $\chi$ corresponds to the central character of the irreducible automorphic representations of $\GSp_{2n}(\AA_\QQ)$ generated by the adelic lift of a modular form in $M_\rho(\Gamma(N),\chi)$. One can check this fact with the aid of \cite[\S2.3]{KimWY} for $n=2$ and \cite[\S 3]{Saha} for the scalar valued case, although we omit the detail.
	
	Fix $N$ and $\rho$ throughout this subsection.
	Any $f \in M_{\rho}(\Gamma(N))$ has the Fourier expansion
	\begin{align}\label{Fourier exp of Siegel}f(Z)=\sum_{S \in \bfA_n}a(S, f)\exp(2\pi\sqrt{-1} \tfrac{1}{N}\tr(SZ)),
		\end{align}
	where $\bfA_n$ is the set of all symmetric positive semi-definite half-integral $n\times n$ matrices.
	For a subring $\Ocal$ of $\CC$, we consider the $\Ocal$-submodule $M_{\rho}(\Gamma(N); \Ocal)$ of $M_{\rho}(\Gamma(N))$ consisting of $f \in M_{\rho}(\Gamma(N))$ whose Fourier coefficients
	$a(T,f)$ at all $T \in \bfA_n$ are contained in $W_{\rho,0}(\Ocal)$.
	Then we have the existence of an integral structure of
	 $M_{\rho}(\Gamma(N), \chi)$ for general $\chi$ as follows.

	\begin{lem}\label{existence of int model of chi}
				Let $\Ocal$ be the integer ring of a number field of finite degree
		which contains all values of Dirichlet characters modulo $N$.
		Let $\chi = (\chi_1,\ldots,\chi_n)$ be a character of the diagonal subgroup of $\Sp_{2n}(\ZZ/N\ZZ)$.
		We have
		$$M_{\rho}(\Gamma(N), \chi; \Ocal) \otimes_{\Ocal} \CC=M_{\rho}(\Gamma(N), \chi),$$
		where we set $M_{\rho}(\Gamma(N), \chi; \Ocal) := M_{\rho}(\Gamma(N), \chi)\cap M_\rho(\Gamma(N);\Ocal)$.
	\end{lem}
	\begin{proof}
Set $\vec{n}:= (k_n,\ldots,k_1)$.
In a part of the proof of $M_{\rho}(\Gamma(N); \ZZ)\otimes_{\ZZ} \CC= M_{\rho}(\Gamma(N))$ in \cite[Lemma 2.1]{Taylor}, 
the image of $\bigotimes^{\vec{n}}\omega_{\Acal/\Mcal}(\Mcal)$ in $M_\rho(\Gamma(N))$ spans $M_\rho(\Gamma(N))$ and each element has a Fourier expansion with coefficients in $W_{\rho,0}(\ZZ)$.\footnote{The $R$-module $\bigotimes^{\vec{n}}R^g$ for a commutative unital ring $R$ in \cite[\S2.2]{Taylor} is different from our $W_{\rho,0}(R)$ even if $\vec{n}$ is replaced with $(k_n,\ldots,k_1)$. This causes no problems as they are isomorphic to each other. We also note that the Young symmetrizer $c$ in \cite[p.24]{Taylor} is not correct. The fourth property of $c$ should be
``$\sigma c=(-)^{\sigma}c$ (where $(-)^{\sigma}$ denotes the sign of sigma)
if $\sigma$ preserves sets of the form
$\{\sum_{k=1}^{i_j-1}n_k+j, \sum_{k=1}^{i_j}n_k+j, \sum_{k=1}^{i_j+1}n_k+j,\ldots,
\sum_{k=1}^{g-1}n_k+j \}$ where $i_j$ is the least index such that
$n_{i_j}\ge j$ (where $n_0=0$).''}
In our setting, the finite abelian group $((\ZZ/N\ZZ)^\times)^n$ acts on $\bigotimes^{\vec{n}}\omega_{\Acal/\Mcal}(\Mcal)$ as in the case of $M_\rho(\Gamma(N))$. Hence
$\bigotimes^{\vec{n}}\omega_{\Acal/\Mcal}(\Mcal)$ is decomposed into the sum of the $\chi$-isotypic components as an $E$-module similarly to \eqref{chi decomp}, where $E$ is the fractional field of $\Ocal$.
Since the map from $\bigotimes^{\vec{n}}\omega_{\Acal/\Mcal}(\Mcal)$ to $M_\rho(\Gamma(N))$ is compatible with the decomposition into the $\chi$-isotypic components, we obtain that $M(\Gamma(N), \chi; \Ocal)$ contains a basis of the $\CC$-vector space $M(\Gamma(N), \chi)$. This completes the proof.
	\end{proof}
	
	Let ${\bf T}_{n,\ZZ}^{\rho, N, \chi}$ be the $\ZZ$-algebra of 
	$\End(M_{\rho}(\Gamma(N), \chi))$
	generated by
	$
	p^{\delta(k_n<n)\frac{(n-k_n)(n-k_n+1)}{2}}T(p)$
	and $p^{\delta(k_n\le n)n(n-k_n+1)}T_{j,n-j}(p^2)$
	for all prime numbers $p$ with $p\notdivide N$ and  for all $1\le j \le n$.

	\begin{thm}\label{preserve SMF}	Let $\Ocal$ and $\chi$ be as in Lemma \ref{existence of int model of chi}.
		Then, any element of $\bfT_{n,\ZZ}^{\rho, N, \chi}$ preserves $M_\rho(\Gamma(N),\chi; \Ocal)$.
	\end{thm}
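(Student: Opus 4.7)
The plan is to verify Theorem \ref{preserve SMF} by an explicit Fourier--theoretic calculation, separately for each generator of $\bfT_{n,\ZZ}^{\rho,N,\chi}$, and to show that the indicated $p$--power renormalization exactly cancels the $p$--denominators that arise. I would first produce coset representatives for $\Gamma(N)\bsl\Gamma(N)g\Gamma(N)$ with $g=\diag(1_n,p1_n)$ (for $T(p)$) and $g=\diag(1_j,p1_{n-j},p^21_j,p1_{n-j})$ (for $T_{j,n-j}(p^2)$) in upper block--triangular form $\gamma=\left[\begin{smallmatrix}A&B\\0&D\end{smallmatrix}\right]$. Since $\gcd(p,N)=1$, strong approximation at $p$ lets me impose the congruences $A\equiv 1_n$ and $D\equiv\nu(g)1_n\pmod{N}$; the classical elementary--divisor parametrization (cf.\ \cite{AndrianovBook}) then gives, up to left $\SL_n(\ZZ)$--action, $D\sim\diag(p^{a_1},\dots,p^{a_n})$ with $a_i\in\{0,1\}$ for $T(p)$ and $a_i\in\{0,1,2\}$ for $T_{j,n-j}(p^2)$, together with $A=\nu(g)(D^{t})^{-1}$.

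Second, for such $\gamma$ the slash action simplifies to $f|_\rho[\gamma](Z)=\nu(g)^{(\sum_j k_j)/2}\rho(D^{-1})f((AZ+B)D^{-1})$. Substituting the Fourier expansion \eqref{Fourier exp of Siegel} of $f$ and relabelling the Fourier index by $T\mapsto T'=\nu(g)^{-1}D^{-1}T(D^{t})^{-1}$, the Fourier coefficients of $Tf$ assume a finite closed form
\[
a(T',Tf)=\nu(g)^{\sum_j k_j-\frac{n(n+1)}{2}}\sum_{D}\rho(D^{-1})\Bigl(\sum_{B}e^{\frac{2\pi\sqrt{-1}}{N}\tr(T'BD^{-1})}\Bigr)\,a\bigl(\nu(g)^{-1}DT'D^{t},f\bigr),
\]
where the outer sum ranges over $D$--strata, the inner character sum over admissible $B$ is a Gauss--type sum, and the index $\nu(g)^{-1}DT'D^{t}$ is half--integral symmetric exactly when a divisibility condition on $T'$ is satisfied. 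Since values of $\chi$ are roots of unity in $\Ocal$ (coming from the diagonal matrices $g_j(m)$ used in the strong--approximation step), the character contributions are harmless, and what remains is a purely $p$--adic bookkeeping.

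The main obstacle, and the reason the normalization takes its precise form, is the weight--by--weight $p$--adic analysis in each stratum. Fixing a basis of $W_{\rho,0}$ consisting of weight vectors, $\rho(D^{-1})$ acts on a weight--$\mu$ vector by $p^{-\sum_i a_i\mu_i}$. Lemma \ref{comp of weight} gives $\mu_i\ge k_n$, and the central--character identity $\sum_i\mu_i=\sum_j k_j$ then bounds $\sum_i a_i\mu_i$ from above in each stratum. A case analysis on the strata (indexed by the multiset $\{a_i\}$), combined with the prefactor $p^{\sum_j k_j-n(n+1)/2}$ and the $p$--contributions from the $B$--sums, should show that the worst denominator across all weight vectors is exactly $p^{\delta(k_n<n)(n-k_n)(n-k_n+1)/2}$ for $T(p)$ and $p^{\delta(k_n\le n)n(n-k_n+1)}$ for $T_{j,n-j}(p^2)$. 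The hard case is the singular range $k_n\le n$, where the Hecke operator genuinely mixes degenerate Fourier indices $T'$; there the tight constant comes from counting symmetric $(n-k_n)\times(n-k_n)$ blocks whose dimension $(n-k_n)(n-k_n+1)/2$ appears as the exponent, and the bound $\mu_i\ge k_n$ from Lemma \ref{comp of weight} is sharp on the extremal weights of $\rho$ that dominate these strata.
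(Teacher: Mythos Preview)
Your outline is essentially the paper's own argument: the theorem is proved there via Propositions~\ref{lem 1 preserve SMF} and~\ref{lem 2 preserve SMF}, which carry out exactly the Fourier-coefficient computation you describe, stratified by the elementary divisors of $D$, with the Gauss-type $B$-sum evaluated and the weight bound $\mu_j\ge k_n$ from Lemma~\ref{comp of weight} used to control $\rho(D^{-1})$.

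Two places where your sketch is less precise than what the paper actually does, and where you would need to be careful:

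\textbf{(i) Coset representatives for $\Gamma(N)$.} For the principal congruence subgroup the ``classical elementary-divisor parametrization'' does \emph{not} directly apply; this is the reason the paper singles out $\Gamma(N)$ as harder than $\Gamma_0(N)$ (see the remark following the proof). The representatives are those of Lemma~\ref{comp rep for S(N)} and have the shape $g_1(p^{\alpha_1})\cdots g_n(p^{\alpha_n})\left[\begin{smallmatrix}A&NB\\0&D\end{smallmatrix}\right]$ with $D$ a diagonal $p$-matrix times an element of $R(p^{\alpha_1},\dots,p^{\alpha_{n-1}})\subset\Gamma_{\SL_n}(N)$. Only after passing to the Smith normal form $UDV=\diag(d_1,\dots,d_n)$ with $U,V\in\GL_n(\ZZ)$ does one get the diagonal picture; the factors $\rho(U),\rho(V)$ are then harmless because they lie in $\End_\ZZ(W_{\rho,0})$.

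\textbf{(ii) The Gauss sum and the exact exponent.} The $B$-sum is not merely ``Gauss-type'' but is evaluated exactly: $\Gcal(S,D)=\prod_{j=1}^n d_j^{\,n-j+1}$ or $0$ (Mizumoto/Freitag). The paper then rewrites
\[
(p^\delta)^{\sum_j k_j-\frac{n(n+1)}{2}}\rho(D^{-1})\,\Gcal(S,D)=\rho(V)\Bigl\{\rho(\diag(a_1,\dots,a_n))\prod_{j=1}^n a_j^{\,j-n-1}\Bigr\}\rho(U),
\]
with $a_jd_j=p^\delta$, and on a weight-$\mu$ vector the braces contribute $\prod_j a_j^{\,\mu_j-(n-j+1)}$. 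Here only $\mu_j\ge k_n$ is used: for $k_n\ge n$ each exponent is nonnegative, and for $k_n<n$ the deficit on the first $t=n-k_n$ indices is at most $\sum_{j=1}^t(t-j+1)=t(t+1)/2$, which is exactly the normalizing exponent. Your invocation of the central-character identity $\sum_i\mu_i=\sum_j k_j$ is not needed, and your heuristic that the exponent ``comes from counting symmetric $(n-k_n)\times(n-k_n)$ blocks'' is not how it arises---it is the triangular number from this weight-deficit count. For $T_{j,n-j}(p^2)$ the analogous exact evaluation of $G_j(T)$ (equal to $p^{j(n+1)}$ or $0$ when $N\ge2$) and a short optimization of a quadratic $F_b(a)$ give the constant $n(n-k_n+1)$.
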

	This theorem follows from Propositions \ref{lem 1 preserve SMF} and
	\ref{lem 2 preserve SMF} below.
	\begin{prop}\label{lem 1 preserve SMF}
		Let $\Ocal$ and $\chi$ be
as in Lemma \ref{existence of int model of chi}.
		For any prime  $p\notdivide N$ and any $\delta \in \NN$,
				 the Hecke operator $(p^\delta)^{\delta(k_n<n)\frac{(n-k_n)(n-k_n+1)}{2}}T(p^\delta)$
		preserves $M_\rho(\Gamma(N),\chi; \Ocal)$.
	\end{prop}
	\begin{prop}\label{lem 2 preserve SMF}
		Let $\Ocal$ and $\chi$ be as in Lemma \ref{existence of int model of chi}.
			For any prime $p\notdivide N$ and
		any $j\in \{1, \ldots, n\}$,
		the Hecke operator
		$p^{\delta(k_n\le n)n(n-k_n+1)}T_{j,n-j}(p^2)$
	preserves $M_\rho(\Gamma(N),\chi; \Ocal)$.
	\end{prop}

	For our purpose, we need only Proposition \ref{lem 1 preserve SMF} for $\delta=1$.
	However, we keep Proposition \ref{lem 1 preserve SMF} being the statement for all $\delta \in \NN$.
	
	For proving Proposition \ref{lem 1 preserve SMF}, we shall explicitly give a complete system of representatives for $\Gamma(N)\bsl \bar S_{p^\delta}^{(n)}(N)$.
	Set $\Gamma=\Sp_{2n}(\ZZ)$.
	For a non-negative integer $\delta\ge0$ and a prime power $p^\delta$ coprime to $N$, we put
	$$S_{p^{\delta}}^{(n)}=\{ g \in {\rm M}_{2n}(\ZZ)\cap \GSp_{2n}(\QQ) \mid \nu(g)=p^\delta \}.$$ 
	As in \cite[p.438]{Evdokimov}
	and \cite[(2.1)]{Zharkovskaya}, a complete system of representatives for $\Gamma \bsl S_{p^{\delta}}^{(n)}$ is
	given as
	\begin{align*}& V(p^\delta)=\{ [\begin{smallmatrix}
			A & B \\ 0_n & D
		\end{smallmatrix}] \in {\rm M}_{2n}(\ZZ)\ | \ D \in \diag(p^{\a_0}, p^{\a_0+\a_1}, \ldots,p^{\a_0+\cdots+\a_{n-1}})R(p^{\a_0},\ldots,p^{\a_{n-1}}),\\
		& \qquad \a_0+\cdots+\a_{n-1}\le \delta, \a_0\ge 0, \ldots,\a_{n-1}\ge 0,
		A =p^{\delta}({}^tD^{-1}), {}^{t}BD ={}^tDB, B\, (\text{mod } D)\}.
	\end{align*}
	Here $B\, (\text{mod } D)$ means that $B$ runs over a complete system of representatives for the set ${\rm M}_n(\ZZ)/{\sim_D}$, where $\sim_D$ is the equivalence relation defined by writing $B\sim_D B'$ for $B-B'\in 
	{\rm M}_n(\ZZ)D$.
Furthermore, the set $R(p^{\a_0}, \ldots, p^{\a_{n-1}})$ is a complete system of representatives
	for $$(\SL_n(\ZZ)\cap \diag(p^{\beta_1},\ldots, p^{\beta_n})^{-1}\SL_n(\ZZ)\diag(p^{\beta_1},\ldots,p^{\beta_n})) \bsl \SL_n(\ZZ),$$
	where we put $\beta_j=\sum_{l=0}^{j-1}\a_l$ for each $j$.

For any $M\in \NN$, set
	$$\Gamma_{\SL_n}(M)=\Ker(\SL_n(\ZZ)\rightarrow \SL_n(\ZZ/M\ZZ)).$$
By $\gcd(p,N)=1$, we have $\Gamma_{\SL_n}(p^{\b_n})\Gamma_{\SL_n}(N)=\SL_n(\ZZ)$ (cf.\ \cite[(1.8)]{Evdokimov}).
Combining this with
	$$\Gamma_{\SL_n}(p^{\beta_n})\subset \SL_n(\ZZ)\cap \diag(p^{\beta_1},\ldots, p^{\beta_n})^{-1}\SL_n(\ZZ)\diag(p^{\beta_1},\ldots,p^{\beta_n}),$$
we can fix $R(p^{\a_0}, \ldots, p^{\a_{n-1}})$ so that $R(p^{\a_0}, \ldots, p^{\a_{n-1}}) \subset \Gamma_{\SL_n}(N)$.
 Recall that the set $\bar S_{m}^{(n)}(N)$ for $m \in \NN$ with $\gcd(m,N)=1$ is defined in \eqref{Sbar}.
	We have the following lemma in the same way as \cite[Proposition 3.1]{Evdokimov} for the case $n=2$.
	\begin{lem}\label{comp rep for S(N)}
		Let $p$ be a prime number coprime to $N$.
		Let $\delta\ge 0$ be a non-negative integer.
		Then, the following is a complete system of representatives for $\Gamma(N)\bsl \bar S_{p^\delta}^{(n)}(N)${\rm:}
		\begin{align*} V_N(p^\delta)=&\{g_1(p^{\a_1})\cdots g_n(p^{\a_n}) [\begin{smallmatrix}
				A & N B \\ 0_n & D
			\end{smallmatrix}] \mid A=p^\delta({}^{t}D^{-1}), \\
			& D \in
			\diag(p^{\a_0}, p^{\a_0+\a_1}, \ldots, p^{\a_0+\cdots +\a_{n-1}})
			R(p^{\a_1}, \ldots, p^{\a_{n-1}}), \\
			&  {}^tBD={}^{t}DB, B\, \text{\rm mod }D,\quad \a_0,\ldots,\a_{n-1}\ge 0,\
			\a_0+\cdots+\a_{n-1}\le \delta
			\},
		\end{align*}
		where $\a_n:=\delta-(\a_0+\cdots+\a_{n-1})$ and each $g_j(p^{\alpha_j})$ is an element of $\Sp_{2n}(\ZZ)$ defined by \eqref{gj(m)}.
	\end{lem}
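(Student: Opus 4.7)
The plan is to bootstrap from the known $\Gamma$-coset decomposition $V(p^\delta)$ of $S_{p^\delta}^{(n)}$ recalled just before the lemma, carefully tracking the level-$N$ structure. The argument breaks into three steps.

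First, I would verify directly that $V_N(p^\delta)\subset\bar S_{p^\delta}^{(n)}(N)$ by reducing $v=g_1(p^{\a_1})\cdots g_n(p^{\a_n})\left[\begin{smallmatrix}A&NB\\0&D\end{smallmatrix}\right]$ modulo $N$. Since $R\equiv 1\pmod N$ and $NB\equiv 0\pmod N$, the second factor reduces to $\left[\begin{smallmatrix}p^\delta D_0^{-1}&0\\0&D_0\end{smallmatrix}\right]$ with $D_0=\diag(p^{\a_0},\ldots,p^{\a_0+\cdots+\a_{n-1}})$. The first factor, a product of elements that are diagonal modulo $N$, reduces to $\diag(X^{-1},X)$ with $X=\diag(p^{\a_1+\cdots+\a_n},\ldots,p^{\a_n})$. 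The crucial identity $XD_0=p^{\a_0+\cdots+\a_n}\,1_n=p^\delta 1_n$ then yields $v\equiv\diag(1_n,p^\delta 1_n)\pmod N$, placing $v$ in $\bar S_{p^\delta}^{(n)}(N)$.

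Second, I would show that the inclusion-induced map $\Gamma(N)\bsl \bar S_{p^\delta}^{(n)}(N)\to\Gamma\bsl S_{p^\delta}^{(n)}$ is a bijection. Injectivity follows from the observation that if $g_1=\gamma g_2$ with $g_i\in\bar S_{p^\delta}^{(n)}(N)$ and $\gamma\in\Gamma$, then reducing modulo $N$ and right-multiplying by $g_2^{-1}$ (valid since $\det g_2=p^{n\delta}$ is a unit mod $N$) gives $\gamma\equiv 1\pmod N$. For surjectivity, given $[h]_\Gamma$, I must find $\gamma\in\Gamma$ satisfying $\gamma h\equiv g_0:=\diag(1_n,p^\delta 1_n)\pmod N$; the element $g_0 h^{-1}$ has similitude $1$ and entries in $\ZZ[1/p]$, hence defines an element of $\Sp_{2n}(\ZZ/N\ZZ)$, which lifts to $\Gamma$ via surjectivity of $\Sp_{2n}(\ZZ)\twoheadrightarrow \Sp_{2n}(\ZZ/N\ZZ)$ (strong approximation).

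Third, I would construct a bijection $V_N(p^\delta)\to V(p^\delta)$ sending each $v$ to the unique $h\in V(p^\delta)$ with $[v]_\Gamma=[h]_\Gamma$. Since $g_1(p^{\a_1})\cdots g_n(p^{\a_n})\in\Gamma$, this amounts to replacing $NB$ by the unique representative $B^\ast$ (in the transversal chosen for $V(p^\delta)$) with $B^\ast\equiv NB$ modulo the shift lattice of symmetric integer matrices times $D$. Because $\gcd(N,\det D)=1$, multiplication by $N$ is a bijection on this finite quotient, so $B\mapsto B^\ast$ is a bijection on the parameter sets, preserving $(\a_0,\ldots,\a_{n-1},R)$. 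Composing with $V(p^\delta)\overset{\sim}{\to}\Gamma\bsl S_{p^\delta}^{(n)}$ and the inverse of the bijection from Step 2 yields the desired bijection $V_N(p^\delta)\to \Gamma(N)\bsl\bar S_{p^\delta}^{(n)}(N)$. The main obstacle is Step 2, where the $p$-power denominators in $g_0 h^{-1}$ must be handled and surjectivity of the reduction map on $\Sp_{2n}$ invoked; Steps 1 and 3 are direct computations, with the identity $XD_0=p^\delta 1_n$ serving as the conceptual heart of the construction.
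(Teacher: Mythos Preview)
Your proposal is correct and matches the approach the paper defers to: the paper gives no proof beyond citing \cite[Proposition~3.1]{Evdokimov} for $n=2$, and your three-step argument (reducing modulo $N$ via the identity $XD_0=p^\delta 1_n$, establishing the bijection $\Gamma(N)\bsl \bar S_{p^\delta}^{(n)}(N)\to\Gamma\bsl S_{p^\delta}^{(n)}$ through strong approximation for $\Sp_{2n}$, and transporting $V(p^\delta)$ across this bijection using that multiplication by $N$ is invertible on the $p$-power-order quotient governing $B\bmod D$) is exactly the natural generalization of Evdokimov's $n=2$ argument. One minor remark: the paper literally writes $B\bmod D$ with respect to $\mathrm{M}_n(\ZZ)D$, but combined with the constraint ${}^tBD={}^tDB$ the effective shift lattice is $\mathrm{Sym}_n(\ZZ)D$, which is what you correctly use in Step~3.
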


	\begin{proof}[Proof of Proposition \ref{lem 1 preserve SMF}]
		An essential idea of the proof is the same as in \cite{Mizumoto}.
		Let $f$	be an element of $M_{\rho}(\Gamma(N), \chi; \Ocal)$.
		Let $p$ be a prime number coprime to $N$.
		Let us consider the action of $T(p^\delta)$ for $\delta \in \NN$ (for Theorem \ref{preserve SMF}, the case $\delta=1$ is enough). The function
		$$T(p^{\delta})f (Z) =\sum_{g \in \Gamma(N)\bsl \bar S_{p^\delta}^{(n)}(N)/\Gamma(N)}t_{\rho,N}^{n}(\Gamma(N)g\Gamma(N))f(Z)$$
		has the following Fourier expansion
		\begin{align*}
			& (p^{\delta})^{\{\sum_{j=1}^{n}k_j\}-\frac{n(n+1)}{2}}\sum_{\a_0,\ldots,\a_{n-1}, B, D}\bigg\{\prod_{j=1}^{n}\chi_j(p^{\a_j})\bigg\}\rho(D)^{-1}f((p^\delta({}^{t}D^{-1})Z+NB)D^{-1}) \\
			= & (p^{\delta})^{\{\sum_{j=1}^{n}k_j\}-\frac{n(n+1)}{2}}\sum_{T \in \bfA_n}\sum_{\a_0,\ldots,\a_{n-1}, B, D}\bigg\{\prod_{j=1}^{n}\chi_j(p^{\a_j})\bigg\}\\
			&\times\rho(D)^{-1}a(T,f)\exp\left(2\pi\sqrt{-1}\frac{1}{N}\tr(D^{-1}Tp^{\delta}({}^{t}D^{-1})Z)\right) \exp(2\pi\sqrt{-1}\tr(TBD^{-1}))
		\end{align*}
		by virtue of \eqref{Fourier exp of Siegel} and the decomposition
		$$\Gamma(N)\bsl \bar S_{p^\delta}^{(n)}(N) =
		\coprod_{\substack{\a_0,\ldots,\a_{n-1}, B, D}} 
		\Gamma(N)g_1(p^{\a_1})\cdots g_n(p^{\a_n})[\begin{smallmatrix}
			p^{\delta}({}^tD^{-1}) & N B \\ 0 & D
		\end{smallmatrix}],$$
		where $\a_0,\ldots,\a_{n-1}$, $B$ and $D$ vary as in Lemma \ref{comp rep for S(N)}.
		For any $T \in \bfA_n$, the Fourier coefficient $a(T, T(p^\delta)f)$
		is equal to
		\begin{align*}& \sum_{\a_0,\ldots, \a_{n-1}}\bigg\{\prod_{j=1}^{n}\chi_j(p^{\a_j})\bigg\} \\
			& \times (p^{\delta})^{\{\sum_{j=1}^{n}k_j\}-\frac{n(n+1)}{2}}\sum_{B, D}\rho(D)^{-1}
			\exp(2\pi\sqrt{-1}\tr(p^{-\delta}T[{}^tD]BD^{-1}))
			a(p^{-\delta}T[{}^tD],f).
		\end{align*}
		Here $T[{}^tD]=DT\,{}^{t}D$, and we regard $a(S,f)=0$ unless $S\in \bfA_n$.
		For $S \in \bfA_n$, set
		$$\Gcal(S,D) = \sum_{B}\exp(2\pi\sqrt{-1}\tr(SBD^{-1})).$$
		This is evaluated as follows.
		We can take $U,V \in \GL_n(\ZZ)$ such that
		$UDV=\diag(d_1,\ldots, d_n)$,
		where $d_j \in \ZZ$ satisfies $d_j|d_{j+1}$ for all $j \in \{1,\ldots, n-1\}$.
		Put $[\frac{1}{2}(1+\delta_{\mu\nu})s_{\mu\nu}]_{1\le \mu,\nu\le n} =S[^{t}U]$.
		Then, as in the proof of \cite[Theorem 1.1]{Mizumoto}
		(cf.\ \cite[Kapitel IV, Hilfssatz 5.8]{Freitag}), we have
		$$\Gcal(S,D)=\begin{cases}
			\prod_{j=1}^{n}d_j^{n-j+1} \qquad (\text{if } d_\nu|s_{\mu\nu}, 1\le\mu\le\nu\le n),\\
			0 \qquad \qquad \qquad\quad (\text{otherwise}).
		\end{cases}$$
		For proving $(p^\delta)^{\delta(k_n<n)\frac{(n-k_n)(n-k_n+1)}{2}}a(T, T(p^\delta)f) \in W_{\rho,0}(\Ocal)=W_{\rho, 0}\otimes_\ZZ\Ocal$, it is sufficient to show the claim
		$$(p^\delta)^{\delta(k_n<n)\frac{(n-k_n)(n-k_n+1)}{2}}\times (p^\delta)^{\{\sum_{j=1}^n k_j\}-\frac{n(n+1)}{2}}\rho(D)^{-1}\Gcal(S,D) \in \End_\ZZ(W_{\rho,0})$$
		for all $S \in \bfA_n$ such that $\Gcal(S, D)\neq 0$.
		This claim is proved as follows.
		By $V^{-1}(p^\delta D^{-1})U^{-1}=\diag(a_1,\ldots,a_n)$, where $a_j \in \ZZ$ satisfies
		$a_jd_j=\nu(g)=p^{\delta}$, we have
		\begin{align*}
			(p^\delta)^{\{\sum_{j=1}^n{k_j}\}-\frac{n(n+1)}{2}}\rho(D^{-1})\Gcal(S,D)
			=\, & \rho(p^\delta D^{-1}) \prod_{j=1}^{n}(p^{\delta})^{-(n-j+1)}d_j^{n-j+1}
			\\
			=\, & \rho(V)\bigg\{\rho(\diag(a_1,\ldots a_n)) \prod_{j=1}^{n}a_j^{-n+j-1}\bigg\}\rho(U). \notag
		\end{align*}
		Since $\GL_n(\ZZ)$ acts on $W_{\rho, 0}$, both $\rho(U)$ and $\rho(V)$ are elements of $\End_{\ZZ}(W_{\rho,0})$.
Let $v$ be any weight vector of $W_{\rho,0}$ with weight $(\mu_1,\ldots, \mu_n)$.
	      Then we have
	      $$w:= \rho(\diag(a_1,\ldots,a_n))\bigg\{\prod_{j=1}^{n}a_j^{-n+j-1}\bigg\}v
	      =\bigg\{\prod_{j=1}^{n}a_j^{\mu_j+(-n+j-1)}\bigg\} v.$$
Since $W_{\rho,0}$ has a $\ZZ$-basis consisting of weight vectors as in the definition, it is enough to consider such a $w$.
If $k_n \ge n$, then we have $\mu_j+(-n+j-1) \ge k_n-n\ge 0$
for any $j\in\{1,\ldots,n\}$ by Lemma \ref{comp of weight}. This leads us to $w \in W_{\rho,0}$.
If $k_n<n$,
we set $t=n-k_n$. Then we note $1\le t \le n$.
For any $j \in \{1,\ldots, t\}$, we have $(p^\delta)^{t-j+1}a_j^{\mu_j+(-n+j-1)} \in \ZZ$
by $t-j+1+\mu_j+(-n+j-1) =\mu_j-k_n \ge 0$ with the aid of Lemma \ref{comp of weight}.
Furthermore, we have $a_j^{\mu_j+(-n+j-1)} \in \ZZ$ for any $j\ge t+1$ by $\mu_j+(-n+j-1)\ge \mu_j-k_n\ge0$.
Hence $\{\prod_{j=1}^{t}(p^\delta)^{t-j+1} \}w = (p^\delta)^{\frac{t(t+1)}{2}}w \in W_{\rho,0}$ holds.

As a result, the claim follows.
\end{proof}

	\begin{proof}[Proof of Proposition \ref{lem 2 preserve SMF}]
		We use the same symbols as in the proof of Proposition \ref{lem 1 preserve SMF}.
		The assertion is proved in the same way as \cite{Mizumotocorrection} as below.
		Let $f$	be an element of $M_{\rho}(\Gamma(N), \chi; \Ocal)$ and $p$ a prime number coprime to $N$.
Let us fix $j\in \{1,\ldots,n\}$. For our purpose, it is enough to show
		$$p^{\delta(k_n\le n)n(n-k_n+1)}a(T,T_{j,n-j}(p^2)f) \in W_{\rho,0}(\Ocal)$$
		for all $T\in \bfA_n$. We divide the proof into two cases ($N=1$ and $N\ge 2$).
		
		First, let us consider the case $N=1$ (i.e., $\Gamma(N)=\Gamma=\Sp_{2n}(\ZZ)$). In the same way as \cite{Mizumotocorrection}, the set
		$$\Gamma\bsl\Gamma \diag(p1_j, 1_{n-j}, p 1_j, p^{2}1_{n-j}) \Gamma
		$$
		has a complete system of representatives
		\begin{align}\label{complete rep for Tj}\left\{ \left[\begin{matrix}
			p^2D_{a,b}^{-1} & B \\ 0_n & D_{a,b}
		\end{matrix}\right]\left[\begin{matrix}
			{}^{t}u^{-1} & 0_n \\ 0_n & u
		\end{matrix}\right] \, \bigg| \, a,b \in \ZZ_{\ge0},\, a+b\le n, \, a\ge j, \, B \in X_{j, a,b},\,
		u\in Y_{a,b}\right\},
		\end{align}
		where we set
		$D_{a,b}=\diag(
		1_{n-a-b}, p1_a, p^2 1_b)$,
		$X_{j, a,b}$ is a complete system of representatives for
		$$\left\{ B \in {\rm M}_n(\ZZ) \ \middle| \ \left[\begin{matrix}
			p^2D_{a,b}^{-1} & B \\ 0_n & D_{a,b}
		\end{matrix}\right] \in \Gamma \diag(p1_j, 1_{n-j}, p 1_j, p^{2}1_{n-j}) \Gamma \right\}\bigg/ \text{mod}\ {D}_{a,b},$$
		and $Y_{a,b}$ is a complete system of representatives for $(\SL_n(\ZZ)\cap D_{a,b}^{-1}\SL_n(\ZZ)D_{a,b})\bsl\SL_n(\ZZ)$, respectively.
		By \cite[(3.3.67)]{AndrianovBook} and the mod $p$ rank condition on $B$, we may take $X_{j,a,b}$ as
		\begin{align*}\Bigg\{B=\left[\begin{matrix}
			0 & 0 & 0 \\
			0 & B_{22} & p\,{}^{t}B_{32} \\
			0 & B_{32} & B_{33}
		\end{matrix}\right] \, \Bigg| \, & B_{22} \in {\rm S}_a(\ZZ)/\text{mod }p, \ B_{32}\in {\rm M}_{b,a}(\ZZ)/\text{mod } p, \\ & B_{33} \in {\rm S}_b(\ZZ)/\text{mod }p^2, \ r_p(B_{22})=a-j \Bigg\},
	\end{align*}
		where ${\rm S}_m(\ZZ)$ for $m \in \NN$ denotes the set of all symmetric matrices in ${\rm M}_m(\ZZ)$, and ${\rm M}_{b,a}(\ZZ)$ denotes the set of all $b\times a$ integral matrices, respectively.
	Furthermore, $r_p(B_{22})$ denotes the rank of $B_{22}$ over $\ZZ/p\ZZ$.
	We note that the condition $r_p(B_{22})=a-j$ occurs from the requirement
	$$b+r_p(B_{22})+(n-a-b) =r_p\left(\left[\begin{matrix}
		p^2D_{a,b}^{-1} & B \\ 0_n & D_{a,b}
	\end{matrix}\right]\right)=r_p(\diag(p1_j, 1_{n-j}, p 1_j, p^{2}1_{n-j}) )=n-j.$$
		The condition $a\ge j$ in \eqref{complete rep for Tj} is required by $a-j=r_p(B_{22}) \ge 0$.
		
		For any $T \in \bfA_n$ and $a,b\in\ZZ_{\ge0}$ such that $a+b\le n$ and $a\ge j$, set
		$$G_{a,b}(T)=\sum_{B \in X_{j, a,b}}\exp(2\pi\sqrt{-1}\tr(TBD_{a,b}^{-1})).$$
		As for $G_{a,b}(T)$, we know $G_{a,b}(T) \in \ZZ$ and $\ord_p(G_{a,b}(T)) \ge b(a+b+1)$ by \cite{Mizumotocorrection}
		(cf.\ the argument in \cite[p.158]{AndrianovBook} and the rationality result \cite[p.244, Lemma 4.2.25]{AndrianovBook}).
		
In the same way as the Fourier expansion of $T(p^\delta)f$, $a(T,T_{j,n-j}(p^2)f)$ is equal to
\begin{align}\label{Fourier exp of Tj}\sum_{\substack{a\ge j ,b\ge0 \\ a+b\le n}}\sum_{u \in Y_{a,b}}(p^2)^{\{\sum_{l=1}^{n}k_j\}-\frac{n(n+1)}{2}}\rho(D_{a,b}u)^{-1}G_{a,b}(p^{-2}T[{}^{t}u\,{}^{t}D_{a,b}])a(p^{-2}T[{}^{t}u\,{}^{t}D_{a,b}], f),
	\end{align}
where we regard $a(S,f)=0$ unless $S \in \bfA_n$. By $\rho(u) \in \End_\ZZ(W_{\rho,0})$ and $\ord_p(G_{a,b}(S)) \ge b(a+b+1)$ for all $S \in \bfA_n$, it is enough to show
$$p^{\delta(k_n\le n)n(n-k_n+1)}\times p^{\{\sum_{l=1}^{n}2k_l\}-n(n+1)+b(a+b+1)} \rho(D_{a,b})^{-1}
\in \End_\ZZ(W_{\rho,0})$$
for all $a, b \in \ZZ_{\ge 0}$ with $a+b\le n$ and $a\ge1$ for our purpose.
Let $v$ be any weight vector of $W_{\rho,0}$ with weight $(\mu_1,\ldots, \mu_n)$.
Then the equality $$p^{\sum_{l=1}^{n}2k_l}\rho(D_{a,b})^{-1}v
=\rho(p^2D_{a,b}^{-1})v
=p^{\sum_{l=1}^{n-a-b}2\mu_l+\sum_{l=n-a-b+1}^{n-b}\mu_l}v$$
holds. With the aid of Lemma \ref{comp of weight}, we have
$\sum_{l=1}^{n-a-b}2\mu_l+\sum_{l=n-a-b+1}^{n-b}\mu_l \ge (2n-a-2b)k_n$.
Therefore, it is sufficient to show the claim
$$b(a+b+1) - n(n+1) + (2n-a-2b)k_n+\delta(k_n\le n)n(n-k_n+1) \ge 0.$$
We put $F_b(a):=b(a+b+1) - n(n+1) + (2n-a-2b)k_n,$
where $a,b \in \ZZ_{\ge 0}$ vary such that $a+b\le n$ and $a\ge1$.

When $k_n>n$,
For any fixed $b$, $F_b(a)$ is minimal when $a=n-b$ because of $b-k_n<b-n<0$.
		Hence, by noting $F_b(a)\ge F_b(n-b) = (n-b)(k_n-n-1)\ge0$, the claim follows.
		
		When $k_n\le n$, first we consider the case $b\ge k_n$. In that case we give
		\begin{align*}
		F_b(a)\ge & \, F_b(1) = b(b+2)-n(n+1)+(2n-1-2b)k_n, \qquad 1\le a \le n-b.
	\end{align*}
		As $F_b(1)$ on $k_n\le b \le n$ is minimal at $b=k_n$, we obtain $F_b(a)\ge -(n-k_n)(n-k_n+1)$.
		If $b < k_n$, we have $F_b(a)\ge F_b(n-b) = -(n-b)(n-k_n+1)\ge -n(n-k_n+1)$.
		Hence we have the claim.
		
		Second, let us consider the case $N\ge 2$.
		In a similar fashion to the construction of $V_N(p^\delta)$ in Lemma \ref{comp rep for S(N)},
		with the aid of \eqref{complete rep for Tj} for the case $N=1$ and \cite[Lemma 1.1, 3)]{Evdokimov},
		we find that the set
		$$\Gamma(N)\bsl\Gamma(N) g_j(p)\diag(p1_j, 1_{n-j}, p 1_j, p^{2}1_{n-j}) \Gamma(N)
		$$
		has a complete system of representatives 
		\begin{align*}\bigg\{ g_{n-a-b}(p) g_{n-b}(p)\left[\begin{matrix}
				p^2D_{a,b}^{-1} & NB \\ 0_n & D_{a,b}
			\end{matrix}\right]\left[\begin{matrix}
				{}^{t}u^{-1} & 0_n \\ 0_n & u
			\end{matrix}\right] \ \bigg| \ & a,b \in \ZZ_{\ge0},\ a+b\le n,\ a\ge j, \\ & B \in X_{j, a,b},\ u\in Y_{a,b} \bigg\}.
		\end{align*}
		Here
			$D_{a,b}=\diag(
		1_{n-a-b}, p1_a, p^2 1_b)$ and
		$X_{j, a,b}$ and $Y_{a,b}$ are the sets given in the case of $N=1$.
		We may assume $Y_{a,b}\subset \Gamma_{\SL_n}(N)$ since $p$ is coprime to $N$.		
		In the same way as the Fourier coefficients of $T_{j,n-j}(p^2)f$ at $T$ for $N=1$,
			$a(T,T_{j,n-j}(p^2)f)$ is equal to \eqref{Fourier exp of Tj} modified by adding the factor $\chi_{n-a-b}(p)\chi_{n-b}(p) \in \Ocal$ to the term at each $(a,b)$.
		Hence the argument in the proof for $N=1$ works even when $N\ge2$. This completes the proof of $p^{\delta(k_n\le n)n(n-k_n+1)}a(T,T_{j,n-j}(p^2)f)\in W_{\rho,0}(\Ocal)$.
	\end{proof}
	
	\begin{proof}[Proof of Theorem \ref{integrality for SMF}]
		By Lemma \ref{existence of int model of chi} and Theorem \ref{preserve SMF}, 
		we have only to apply Theorem \ref{machine of int} to $L=\CC$, $R=\Ocal$ and $M=M_\rho(\Gamma(N), \chi)$.
	\end{proof}
	
	In particular, we have the following.
	
	\begin{cor}\label{cor kn n+1}
		Suppose $k_n\ge n+1$.
		For any prime numbers $p$ with $p\notdivide N$and any $j\in\{1,\ldots,n\}$,		
		all Hecke eigenvalues of $T(p)$ and of $T_{j,n-j}(p^2)$
		on $M_\rho(\Gamma(N),\chi)$
		are algebraic integers.
	\end{cor}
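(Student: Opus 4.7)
The corollary is an immediate specialization of Theorem \ref{integrality for SMF}, and the plan is simply to check that the $p$-power prefactors in the definition of $\bfT_{n,\ZZ}^{\rho,N,\chi}$ collapse to $1$ under the weight hypothesis $k_n \ge n+1$.

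Concretely, the $\ZZ$-algebra $\bfT_{n,\ZZ}^{\rho,N,\chi}$ is generated by the operators
\[
p^{\delta(k_n<n)\frac{(n-k_n)(n-k_n+1)}{2}}\,T(p)\qquad\text{and}\qquad p^{\delta(k_n\le n)n(n-k_n+1)}\,T_{j,n-j}(p^2).
\]
Under the assumption $k_n \ge n+1$, both conditions $k_n < n$ and $k_n \le n$ are false, so $\delta(k_n<n)=\delta(k_n\le n)=0$, and hence both prefactors equal $p^0=1$. Therefore $T(p)$ and $T_{j,n-j}(p^2)$ themselves lie in $\bfT_{n,\ZZ}^{\rho,N,\chi}$ for every prime $p\nmid N$ and every $j\in\{1,\ldots,n-1\}$.

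Applying Theorem \ref{integrality for SMF} to any such $T\in\bfT_{n,\ZZ}^{\rho,N,\chi}$ yields that all eigenvalues of $T$ on $M_\rho(\Gamma(N),\chi)$ are algebraic integers. There is no step beyond this; no additional estimate on Fourier coefficients is required, because the whole point of carrying the explicit prefactors $p^{\delta(\cdots)(\cdots)}$ through Propositions \ref{lem 1 preserve SMF} and \ref{lem 2 preserve SMF} was precisely to absorb the denominators that appear only when $k_n$ is small. Since the regular weight range $k_n \ge n+1$ kills those denominators, the statement is just the ``generic'' case of Theorem \ref{integrality for SMF}.
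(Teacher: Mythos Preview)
Your proof is correct and is exactly the intended argument: the paper states this corollary immediately after Theorem \ref{integrality for SMF} without a separate proof, having already remarked (just before the definition of $\bfT_{n,\ZZ}^{\rho,N,\chi}$) that the two factors $p^{\delta(\cdots)(\cdots)}$ equal $1$ when $k_n\ge n+1$.
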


	\begin{rem}When $n=2$, Collorary \ref{cor kn n+1} was given via Galois representations in \cite[Theorem I]{Weissauer}.
\end{rem}

	\begin{rem}
Hecke operators on $M_\rho(\Gamma_0(N))$ are defined in the same way as in the case of $M_\rho(\Gamma(N))$.
Then, the same assertion as in Theorem \ref{integrality for SMF} holds for $M_\rho(\Gamma_0(N))$.
		Indeed, by $M_\rho(\Gamma_0(N)) \subset M_\rho(\Gamma(N))$, the integrality of Hecke eigenvalues for $M_\rho(\Gamma_0(N))$ follows immediately from Theorem \ref{integrality for SMF}.
	\end{rem}

	\begin{rem}\label{remark on Shin Templier}
				
		We compare our method with the one by Shin and Templier \cite{ShinTemplier} as follows.
		Shin and Templier \cite[Proposition 4.1]{ShinTemplier} proved the integrality
		of Satake parameters of some irreducible cohomological conjugate-self-dual cuspidal automorphic representations
		of $\GL_n(\AA_F)$ for some number field $F$.
		They used compatible systems of Galois representations
		and Saito's integrality result \cite{Saito} of Frobenius actions arising in arithmetic geometry.
		It was generalized to some groups $G$ such as symplectic, quasi-split special orthogonal, and quasi-split unitary groups in the same paper \cite[Corollary 4.16]{ShinTemplier} by deducing such general cases to the $\GL_n$-case \cite[Proposition 4.1]{ShinTemplier}
		via endoscopic liftings to $\GL_n$.
		By virtue of the integrality, the local result \cite[Lemma 5.1]{ShinTemplier} was proved,
		and hence two integrality results
		\cite[Proposition 4.1]{ShinTemplier} and \cite[Lemma 5.1]{ShinTemplier} mentioned above lead us to the finiteness of representations with bounded degree of fields of rationality (\cite[Proposition 5.5]{ShinTemplier} and \cite[Corollary 5.7]{ShinTemplier}).
		Finally, \cite[Corollary 5.7]{ShinTemplier} was essentially used for proving
		their main result \cite[Theorem 6.1]{ShinTemplier}
		on the growth of fields of rationality.
		Furthermore, three results \cite[Corollary 6.4]{ShinTemplier}, \cite[Theorem 6.6]{ShinTemplier} and
		\cite[Proposition 6.10]{ShinTemplier} on fields of rationality depend on the local result \cite[Lemma 5.1]{ShinTemplier}.
		In Arthur's endoscopic classification, we need hypotheses such as
		the stabilization of twisted trace formulas\footnote{This stabilization was given by 
			Moeglin and Waldspurger in \cite{MoeglinWaldspurger1} and \cite{MoeglinWaldspurger2}.}
		and the weighted fundamental lemma
		{\rm(}cf.\ \cite[Hypothesis 4.8]{ShinTemplier} and \cite[footnote 10 in p.2031]{ShinTemplier}{\rm)}.
		Although Shin and Templier's result covers a larger class of algebraic groups,
		their technique depends on these hypotheses on endoscopic classifications and the global Langlands correspondence for $\GL_n$. Their technique seems so difficult and integrality was not much explicated in \cite[Proposition 4.1]{ShinTemplier}.
		On the other hand,
		our proof of integrality is elementary and independent of their technique.
		
	\end{rem}

		Let $f \in M_\rho(\Gamma(N),\chi)$ be a simultaneous eigenvector
	for all Hecke operators $T(p)$ and $T_{j,n-j}(p^2)$ ($1\le j\le n$) at all primes $p\notdivide N$. Let $\lambda(p)$ and $\lambda_j(p^2)$ for each $j\in \{1,\ldots,n\}$ be the eigenvalues of $f$ for $T(p)$ and $T_{j,n-j}(p^2)$, respectively.
	Then the Hecke field of $f$ is defined as
	$$\QQ(f):=\QQ(\{\lambda(p), \lambda_j(p^2) \mid 1\le j\le n,\ p\notdivide N\}).$$
	The Hecke field $\QQ(f)$ is expected to be a finite extension of $\QQ$ as it is in many cases such as $k_n\ge n+1$, $N=1$, etc.
In \cite[p.13]{KimWY}, they concluded $[\QQ(f):\QQ]<\infty$ by using the claim that Hecke operators preserve a $\QQ$-structure of $M_\rho(\Gamma(N))$.
However, there seems to be no literature on the claim.
We can complete the proof of the finiteness of $[\QQ(f):\QQ]$ by our results as follows.

	\begin{cor}\label{Hecke field is finite}
		Let $f \in M_\rho(\Gamma(N),\chi)$ be a simultaneous eigenvector
		for all Hecke operators $T(p)$, $T_{j,n-j}(p^2)$ $(1\le j\le n)$ at all primes $p\notdivide N$.
Then $\QQ(f)$ is a finite extension of $\QQ$.
	\end{cor}
	\begin{proof}
Let $K$ be the cyclotomic field obtained from $\QQ$ by adjoining all values of Dirichlet characters $\chi_j$ for all $j\in\{1,\ldots,n\}$.
Then $\Ocal_K$ satisfies the condition in Lemma \ref{existence of int model of chi}.
Then $M_\rho(\Gamma(N),\chi;K)$ is a $K$-structure of $M_\rho(\Gamma(N), \chi)$ by Lemma \ref{existence of int model of chi}, 
and $T(p)$ and $T_{j,n-j}(p^2)$ ($1\le j\le n$)
	preserve $M_\rho(\Gamma(N),\chi;K)$ by Theorem \ref{preserve SMF}.
	Let $H$ be the subspace of the $K$-vector space $\End_K(M_\rho(\Gamma(N),\chi;K))$ generated by $T(p)$ and $T_{j,n-j}(p^2)$ at all $j\in \{1,\ldots,n\}$ and all primes $p\notdivide N$.
	By $\dim_K M_\rho(\Gamma(N),\chi;K)<\infty$, $H$ has a $K$-basis $T_1,\ldots, T_d$ each of which is of the form $T(p)$ or $T_{j,n-j}(p^2)$ for some $p$ and $j$. Then
	$\Q(f)= K(S)$ holds, where $S$ is the set of all eigenvalues of $T_1,\ldots, T_d$. As any element of $S$ are algebraic over $K$, we are done.
	\end{proof}
If $k_n\ge n+1$ and $f$ is cuspidal, this corollary follows from the rationality of cuspidal cohomology (cf.\ \cite[Proposition 2.15]{ShinTemplier}).
	
	\section{Applications of integrality to the growth of Hecke fields}
	\label{Application of integrality to the growth of Hecke fields}
	In this section, we prove 
	Corollaries \ref{nonzero L and growth of Hecke} and \ref{nonzero deriv of L and growth of Hecke} as  applications of Theorem \ref{thm0.1} (Hilbert modular case), and also prove Corollaries \ref{Hecke fields for degree 2} and \ref{Hecke fields for degree n} as applications
	of Theorem \ref{integrality for SMF} (Siegel modular case).
	Corollary \ref{main nonzero L and growth of Hecke} follows from Corollary \ref{nonzero L and growth of Hecke} immediately.
	
	\subsection{Hilbert modular case}
	\label{Hilbert modular case}
	
	We basically follow the notation of \cite{SugiyamaTsuzuki}.
	Let $F$ be a totally real number field of finite degree,
	$\Ocal_F$ the integer ring of $F$,
	$\AA_F$ the adele ring of $F$, and $\AA_{F,\fin}$ the ring of finite adeles of $F$, respectively.
	Let $D_F$ be the absolute value of the discriminant of $F/\QQ$.
	For a place $v$ of $F$, we write $v | \infty$ if $v$ is archimedean.
	For a finite place $v$, $\Ocal_{F,v}$ denotes the integer ring of the completion $F_v$ of $F$ at $v$.
	For a non-zero ideal $\ga$ of $\Ocal_F$, let $S(\ga)$ be the set of all finite places of $F$ dividing $\ga$ and
	$\nr_{F/\QQ}(\ga)$ the absolute norm of $\ga$, respectively.
	
	For a non-zero ideal $\gn$ of $\Ocal_F$ of $F$ and a family $l=(l_v)_{v|\infty}$, of positive even integers,
	let $\Pi_{\rm cus}^{*}(l, \gn)$ be the set of all irreducible cuspidal automorphic representations of $\PGL_{2}(\AA_F)$ of weight $l$ and conductor $\gf_{\pi}=\gn$.
Set $\l_v(\pi) = \a_v+\a_v^{-1}$ for $\pi \in \Pi_{\rm cus}^{*}(l, \gn)$ for any finite place $v$ not dividing $\gn$, where $(\a_v, \a_v^{-1})$ is the Satake parameter of $\pi$ at $v$.
	
	The field of rationality of $\pi \in \Pi_{\rm cus}^{*}(l, \gn)$ is denoted by $\QQ(\pi)$.
This field is defined as follows. Let $\pi_\fin$ be the finite part of $\pi$.
Then $\QQ(\pi)$ is defined as the set of all elements of $\CC$ fixed by 
the group $\{\sigma \in \Aut(\CC) \mid \pi_\fin^\sigma \cong \pi_\fin\}$
(for the definition of $\pi_\fin^{\s}$, see \S\ref{Preliminaries on cuspidal cohomologies}).
We note that $\QQ(\pi)$ coincides with the Hecke field $\QQ(f)$ of the primitive Hilbert cusp form $f$ corresponding to $\pi$.
Indeed, $\nr_{F/\QQ}(\gp)^{(\max_{v|\infty}l_v-1)/2}\lambda_{\gp}(\pi) \in \CC$ for any non-zero prime ideal $\gp\notdivide \gn$ is equal to the Hecke eigenvalue of $f$ with respect to the Hecke operator $T'(\gp)$. Hence,
by the strong multiplicity one theorem for $\GL_2$, we have the expression
$$\QQ(\pi)
=\QQ(\{\nr_{F/\QQ}(\gp)^{1/2}\lambda_\gp(\pi) \mid \gp\notdivide \gn\})
=\QQ(\{\nr_{F/\QQ}(\gp)^{(\max_{v|\infty}l_v-1)/2}\lambda_\gp(\pi) \mid \gp\notdivide \gn\})=\QQ(f)$$
	 (see \cite[Theorem 3.10 and \S4.5.3]{Raghuram-Tanabe}).
	 Since all $l_v$'s are even, $\QQ(\pi)$ is a finite totally real number field (\cite[Proposition 2.8]{Shimura}).
If there exist distinct archimedean places $v_1$ and $v_2$ of $F$ such that $l_{v_1}$ is even and $l_{v_2}$ is odd, then $\pi \otimes |\cdot|_\AA^{\max_{v |\infty}l_v/2}$ is not C-algebraic (see \cite[Theorem 1.4 (2)]{Raghuram-Tanabe}). In that case, $\QQ(\pi)$ is a subfield of $\overline{\QQ}$ but not expected to be of finite degree over $\QQ$ by \cite[Conjecture 3.1.6]{BuzzardGee}.

	We write $\Sigma_\fin$ for the set of all finite places of $F$.
	Let $T$ be a finite subset of $\Sigma_\fin$ and $\eta$ a quadratic character of $F^\times \backslash \AA_{F}^{\times}$ such that the conductor $\gf_{\eta}$ of $\eta$ is coprime to $T$.
	The set $\Ical_{T,\eta}^{+}$ (resp.\ $\Ical_{T,\eta}^{-}$)
	denotes the set of all non-zero ideals of $\Ocal_F$ satisfying the following:
	\begin{itemize}
		\item $\gn$ is coprime to $T\cup S(\gf_{\eta})$,
		\item $\tilde{\eta}(\gp_v)=-1$ for all $v \in S(\gn)$,
		\item $\tilde{\eta}(\gn)\prod_{v |\infty}\eta_{v}(-1)=1$ \qquad (resp.\ $\tilde{\eta}(\gn)\prod_{v |\infty}\eta_{v}(-1)=-1$),
	\end{itemize}
	where $\gp_v$ is the prime ideal of $\Ocal_F$ corresponding to $v$ and $\tilde{\eta}$ is the quadratic character of the narrow ideal class group modulo $\gf_\eta$ corresponding to $\eta$. Here are our corollaries of Theorem \ref{thm0.1}.
	
	\begin{cor} \label{nonzero L and growth of Hecke}
		Let $l=(l_v)_{v | \infty}$ be a family of positive even integers such that $\min_{v|\infty}l_v \ge 6$, which is not necessarily parallel.
		Let $\eta$ be a quadratic character of 
		$F^\times \backslash \AA_{F}^{\times}$. Let $S$ be a finite subset of $\Sigma_\fin-S(\ff_\eta)$ and ${\bf J}=\{J_v\}_{v\in S}$ a family of closed subintervals of $(-2,2)$. Given a non-zero prime ideal $\fq$ coprime to $S\cup S(\ff_\eta)$,
		there exist constants $C_{\fq, l}>0$ and
		$N_{\gq, S, l, \eta, {\bf J}}>0$ 
		satisfying the following{\rm :}
		For any $\fn\in \Ical_{S\cup S(\fq),\eta}^{+}$ with $\nr_{F/\QQ}(\fn)>N_{\fq, S, l, \eta, {\bf J}}$,
		there exists $\pi \in \Pi_{\rm{cus}}^*(l,\fn)$ such that
		\begin{itemize}
			\item[(i)] $L(1/2,\pi)\not=0$ and $L(1/2,\pi \otimes \eta)\not=0$, 
			\item[(ii)] $[\Q(\pi):\Q]\geq C_{\fq,l} \,\sqrt{\log\log \nr_{F/\QQ}(\fn)}$,
			\item[(iii)] $\l_{v}(\pi) \in J_v$ for all $v\in S$.
		\end{itemize}
	\end{cor}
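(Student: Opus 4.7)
The plan is to adapt the equidistribution-based argument of \cite[Theorems 1.3 and 1.4]{SugiyamaTsuzuki} to the non-parallel weight case. In \cite{SugiyamaTsuzuki}, Tsuzuki and the second author derive, via an explicit relative trace formula for the maximal split torus of $\GL_2(\AA_F)$, an asymptotic formula as $\nr_{F/\QQ}(\fn)\to\infty$ with $\fn \in \Ical^+_{S\cup S(\fq),\eta}$ for a weighted spectral sum of the shape
\[
\sum_{\pi \in \Pi_{\rm cus}^*(l,\fn)} \frac{L(1/2,\pi)\,L(1/2,\pi\otimes\eta)}{\lVert \pi\rVert^2}\,\prod_{v \in S}\Phi_v(\lambda_v(\pi))\,\Phi_\fq(\lambda_\fq(\pi)),
\]
where $\Phi_v$ and $\Phi_\fq$ are continuous test functions supported in $(-2,2)$. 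The parallel-weight hypothesis there is invoked only at the final counting step, where Shimura's integrality is used to turn the resulting equidistribution into a lower bound on $[\QQ(\pi):\QQ]$. Our Theorem \ref{thm0.1} now supplies the required integrality for arbitrary weight $l$, so the rest of the argument can be reused.

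First I would verify that the derivation of the relative trace formula in \cite{SugiyamaTsuzuki} survives the removal of parallelity. Both the geometric and the spectral sides factorize over the infinite places, each $l_v$ entering only in its own archimedean local factor, and the geometric-side convergence estimates require only $l_v\geq 6$. The sign condition built into $\Ical^+_{S\cup S(\fq),\eta}$ is precisely what forces $L(1/2,\pi)\,L(1/2,\pi\otimes\eta)\geq 0$ via the functional equation, so the limiting spectral measure on $\prod_{v\in S}[-2,2]\times[-2,2]$ is positive and has positive density on any open box $\prod_{v\in S} J_v\times I$ with $I\subset(-2,2)$ open. Specializing the test functions to indicator-type functions then yields, for all sufficiently large $\nr_{F/\QQ}(\fn)$, a lower bound of the form
\[
\#\bigl\{\pi \in \Pi_{\rm cus}^*(l,\fn) : \pi \text{ satisfies (i), (iii), and } \lambda_\fq(\pi) \in I\bigr\} \;\gg\; \nr_{F/\QQ}(\fn)^{\alpha}
\]
for some $\alpha>0$ depending only on $l,\eta,S,\bf J$.

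Next I would apply a Serre-type counting argument using Theorem \ref{thm0.1}. The normalized eigenvalue $a_\fq(\pi):=\nr_{F/\QQ}(\fq)^{(k_0-1)/2}\lambda_\fq(\pi)$, which is precisely the $T'(\fq)$-eigenvalue on $M_l(\Gamma_0(\fn),\chi)$ attached to $\pi$, is by Theorem \ref{thm0.1} an algebraic integer in the Hecke field $\QQ(\pi)$; its Galois conjugates are $T'(\fq)$-eigenvalues on the same finite-dimensional space and so are bounded in absolute value by a constant depending only on $l$ and $\fq$. A classical estimate in the spirit of \cite{Serre} caps the number of Galois orbits of algebraic integers of degree at most $d$ in a fixed bounded real interval. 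Comparing this uniform cap with the polynomial lower bound from the previous step, combined with the fact that the Galois action on $\Pi_{\rm cus}^*(l,\fn)$ permutes forms of equal Hecke-field degree in orbits of size equal to that degree, forces some $\pi \in \Pi_{\rm cus}^*(l,\fn)$ to satisfy $[\QQ(\pi):\QQ]\gg\sqrt{\log\log\nr_{F/\QQ}(\fn)}$, establishing (ii) together with (i) and (iii).

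The main obstacle I anticipate is the careful place-by-place verification that every step of the relative trace formula in \cite{SugiyamaTsuzuki} carries over to non-parallel weight. Since the geometric side is insensitive to the weight and the archimedean side decomposes as a product over $v\mid\infty$, I expect no essentially new conceptual difficulty, but the explicit identification of the archimedean spectral measure, the non-negativity of the weights, and the effective tracking of the constants $C_{\fq,l}$ and $N_{\fq,S,l,\eta,{\bf J}}$ in terms of $l,\eta,S,\bf J$ will require lengthy but routine bookkeeping.
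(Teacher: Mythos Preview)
Your overall strategy matches the paper's: reuse the relative trace formula machinery of \cite{SugiyamaTsuzuki}, observe that the parallel-weight hypothesis enters only through Shimura's integrality, and replace that input by Theorem~\ref{thm0.1}. That is exactly what the paper does.

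However, there is a quantitative error in your middle step. The equidistribution coming from the relative trace formula does \emph{not} give a polynomial lower bound $\#\{\pi:\dots\}\gg \nr_{F/\QQ}(\fn)^{\alpha}$; it gives only a logarithmic one. Concretely, the paper (following \cite[Lemmas~8.1, 8.2]{SugiyamaTsuzuki}) shows that any open interval $I$ missing all the $\lambda_\fq(\pi)$ with $\Lcal(\pi)\neq 0$ has $\mu_p$-measure $\ll (\log \nr_{F/\QQ}(\fn))^{-1+\e}$, and then a Royer-type covering argument yields
\[
\#\{\lambda_\fq(\pi):\Lcal(\pi)\neq 0\}\ \gg\ (\log \nr_{F/\QQ}(\fn))^{1-\e}.
\]
Combining this with the Serre--Royer count $\#\Ecal(M,d)\le (8M)^{d^2}$ for $M=2\nr_{F/\QQ}(\fq)^{(k_0-1)/2}$ is precisely what forces $d(\fn)^2\gg \log\log \nr_{F/\QQ}(\fn)$, i.e.\ the $\sqrt{\log\log}$ in the statement. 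With your claimed polynomial lower bound you would instead deduce $[\QQ(\pi):\QQ]\gg\sqrt{\log \nr_{F/\QQ}(\fn)}$, which is not what the method yields and not what is being asserted.

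Your ``main obstacle'' is also misplaced. The relative trace formula of \cite{SugiyamaTsuzukiRTF} is already established for arbitrary (not necessarily parallel) weights $l$ with $\min_v l_v\ge 6$; the paper states explicitly that parallelity is invoked only in \cite[Proposition~8.4]{SugiyamaTsuzuki}, i.e.\ at the integrality step. So there is no place-by-place verification to redo: once Theorem~\ref{thm0.1} supplies $\nr_{F/\QQ}(\fq)^{(k_0-1)/2}\lambda_\fq(\pi)\in\overline{\ZZ}$ with $k_0=\max_v l_v$, the argument of \cite{SugiyamaTsuzuki} goes through verbatim.
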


	\begin{cor} \label{nonzero deriv of L and growth of Hecke}
		Let $l=(l_v)_{v | \infty}$, $\eta$, $S$, $\bf J$ and $\gq$ be the same as in Corollary \ref{nonzero L and growth of Hecke}. Assume that
		for any non-zero ideal $\gn$ coprime to $\ff_\eta$,
		\begin{align}
			\tfrac{\d}{\d s}|_{s=1/2}(L(s,\pi)L(s,\pi\otimes \eta))\geq 0 \quad \text{for all $\pi \in \Pi_{\rm{cus}}^*(l,\gn)$.}
			\label{DLnonneg}
		\end{align}
		Then, for any $M>1$, there exist constants $C_{\fq, l}>0$ and 
		$N_{\fq, S, l, \eta, {\bf J}, M}>0$
		satisfying the following {\rm:}
		For any $\fn \in \Ical_{S\cup S(\fq),\eta}^{-}$ with
		$\nr_{F/\QQ}(\fn)>N_{\fq, S, l, \eta, {\bf J}, M}$
		and $\sum_{v\in S(\fn)}\frac{\log q_v}{q_v}\leq M$, there exists $\pi \in \Pi_{\rm{cus}}^*(l,\fn)$ such that
		\begin{itemize}
			\item[(i)] $\epsilon(1/2,\pi \otimes \eta)=-1$,
			\item[(ii)] $L(1/2,\pi)\not=0$ and $L'(1/2,\pi \otimes \eta)\not=0$,
			\item[(iii)] $[\Q(\pi):\Q]\geq C_{\fq,l}\,\sqrt{\log\log \nr_{F/\QQ}(\fn)}$ and 
			\item[(iv)] $\l_v(\pi)\in J_v$ for all $v\in S$.
		\end{itemize}
	\end{cor}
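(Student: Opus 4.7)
The plan is to follow the proof of \cite[Theorem~1.4]{SugiyamaTsuzuki}, using Theorem~\ref{thm0.1} in place of Shimura's parallel-weight integrality. Their argument has two largely independent pillars: an equidistribution of Hecke eigenvalues at the fixed prime $\gq$, weighted by $L(1/2,\pi)\,L'(1/2,\pi\otimes\eta)/L(1,\pi,\Ad)$, which follows from an explicit relative trace formula for the split torus of $\PGL_2$; and a Serre-style pigeonhole converting this equidistribution into a lower bound for $[\QQ(\pi):\QQ]$. Only the second step relies on parallelness of $l$, and only through the integrality of the Hecke eigenvalue at $\gq$, which Theorem~\ref{thm0.1} now supplies unconditionally.

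First I would run the relative trace formula of \cite{SugiyamaTsuzuki} for arbitrary even $l$ to produce, as $\nr_{F/\QQ}(\gn)\to\infty$ within $\Ical_{S\cup S(\gq),\eta}^{-}$ subject to $\sum_{v\in S(\gn)}\log q_v/q_v\leq M$, an asymptotic of the shape
\begin{equation*}
 \sum_{\pi\in\Pi_{\rm cus}^{*}(l,\gn)} \frac{L(1/2,\pi)\,L'(1/2,\pi\otimes\eta)}{L(1,\pi,\Ad)}\,\Phi\bigl(\lambda_\gq(\pi)\bigr)\prod_{v\in S}\mathbf{1}_{J_v}\bigl(\lambda_v(\pi)\bigr) \;=\; c_{l,\gq,\Phi,S,\mathbf{J}}\,\nr_{F/\QQ}(\gn)+o\bigl(\nr_{F/\QQ}(\gn)\bigr),
\end{equation*}
with $c_{l,\gq,\Phi,S,\mathbf{J}}>0$ for a judiciously chosen nonnegative test function $\Phi$. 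Because $\gn\in\Ical_{S\cup S(\gq),\eta}^{-}$ forces $\varepsilon(1/2,\pi\otimes\eta)=-1$, we have $L(1/2,\pi\otimes\eta)=0$; combined with \eqref{DLnonneg}, every summand is nonnegative, so at least $\gg\nr_{F/\QQ}(\gn)^{1-o(1)}$ many $\pi$ contribute, each satisfying (i), (ii) and (iv). The archimedean data of the trace formula factorize over $v\mid\infty$ and only depend on a single matrix coefficient of the holomorphic discrete series of weight $l_v$ at each $v\mid\infty$, so the passage from parallel to non-parallel $l$ is a matter of local bookkeeping.

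Next I would count, for each $d\geq 1$, how many of these $\pi$ can share a Hecke field of degree $\leq d$. By Theorem~\ref{thm0.1} applied to $T'(\gq)\in\bfT_{F,\ZZ}^{l,\gn,\chi}$, its eigenvalue on $\pi$ is an algebraic integer in $\QQ(\pi)$, equal to $\lambda_\gq(\pi)$ up to a fixed power of $\nr(\gq)$ depending only on $l_0=\max_{v\mid\infty}l_v$. Since all $l_v$ are even and $\geq 6$ the representation $\pi$ is cohomological, and the Ramanujan bound of Blasius gives $|\lambda_\gq(\pi)^\sigma|\leq 2$ for every embedding $\sigma:\QQ(\pi)\hookrightarrow\CC$, so every Galois conjugate of the above integer is bounded by a constant $B_{l,\gq}$ independent of $\gn$. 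A standard Northcott counting then produces at most $B_{l,\gq}^{O(d^{2})}$ possible values, hence at most that many $\pi$'s of Hecke-field degree $\leq d$. Equating with the lower bound $\gg\nr_{F/\QQ}(\gn)^{1-o(1)}$ from the previous paragraph forces $d\gg_{\gq,l}\sqrt{\log\log\nr_{F/\QQ}(\gn)}$, which is assertion~(iii).

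The most delicate step is verifying that the leading constant $c_{l,\gq,\Phi,S,\mathbf{J}}$ remains strictly positive and that the error term remains $o(\nr_{F/\QQ}(\gn))$ uniformly over $\gn\in\Ical_{S\cup S(\gq),\eta}^{-}$ with $\sum_{v\in S(\gn)}\log q_v/q_v\leq M$, in the non-parallel setting; since the spectral side of the trace formula factorizes cleanly over $v\mid\infty$ through local data attached to holomorphic discrete series, this should reduce to a direct adaptation of \cite{SugiyamaTsuzuki}, with no conceptual novelty beyond Theorem~\ref{thm0.1}.
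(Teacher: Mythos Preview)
Your overall strategy is exactly the paper's: run \cite[Theorem~1.4]{SugiyamaTsuzuki} verbatim, observing that the only place parallelness of $l$ enters is in \cite[Proposition~8.4]{SugiyamaTsuzuki} through Shimura's integrality, and replace that input by Theorem~\ref{thm0.1}. The paper confirms that the relative trace formula and the resulting quantitative equidistribution at $\gq$ go through unchanged for non-parallel even $l$.

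However, your pigeonhole step contains a genuine gap. Northcott counting at the single prime $\gq$ bounds the number of \emph{distinct eigenvalues} $\nr_{F/\QQ}(\gq)^{(k_0-1)/2}\lambda_\gq(\pi)$ of degree $\le d$, not the number of $\pi$'s: many $\pi$ can share the same $\lambda_\gq(\pi)$, so the step ``hence at most that many $\pi$'s of Hecke-field degree $\le d$'' does not follow. Correspondingly, your lower bound should not be on the number of contributing $\pi$'s either. The argument in \cite{SugiyamaTsuzuki} (and the paper's sketch) works entirely with distinct eigenvalues: the quantitative equidistribution says that any subinterval $I\subset[-2,2]$ missing all $\lambda_\gq(\pi)$ with nonzero weight has $\mu(I)\ll(\log\nr_{F/\QQ}(\gn))^{-1+\varepsilon}$; partitioning $[-2,2]$ \`a la \cite[Lemme~5.3]{Royer} then yields $\gg(\log\nr_{F/\QQ}(\gn))^{1-\varepsilon}$ \emph{distinct} values of $\lambda_\gq(\pi)$ among the $\pi$ satisfying (i), (ii), (iv). Comparing with the Northcott bound $\#\Ecal(2\nr_{F/\QQ}(\gq)^{(k_0-1)/2},d)\le(16\nr_{F/\QQ}(\gq)^{(k_0-1)/2})^{d^2}$, where $k_0=\max_{v\mid\infty}l_v$, gives $d\gg\sqrt{\log\log\nr_{F/\QQ}(\gn)}$. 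A symptom of the gap is that your version, if it worked, would match $\nr_{F/\QQ}(\gn)^{1-o(1)}$ against $B^{O(d^2)}$ and produce $d\gg\sqrt{\log\nr_{F/\QQ}(\gn)}$, far stronger than the stated (iii); the discrepancy is precisely because the map $\pi\mapsto\lambda_\gq(\pi)$ is not injective.
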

	\begin{proof}[Proof of Corollaries \ref{nonzero L and growth of Hecke} and \ref{nonzero deriv of L and growth of Hecke}]
		
		We sketch a proof of \cite[Theorem 1.3]{SugiyamaTsuzuki}, which is the parallel weight version of Corollary
		\ref{nonzero L and growth of Hecke}, for the convenience of the reader.
		The structure of the proof of \cite[Theorem 1.4]{SugiyamaTsuzuki},
		which is the parallel weight version of Corollary \ref{nonzero deriv of L and growth of Hecke}, is the same as \cite[Theorem 1.3]{SugiyamaTsuzuki}.
		For simplicity, we consider the case $S=\emptyset$.
		Under this restriction, one still sees that the idea of the application of the integrality of Hecke eigenvalues to the growth of Hecke fields.
		
Let $\mu=\mu_{\gq,\eta_\gq}$ be the probability measure on $[-2,2]$ 
		given by
			$$d\mu_{\gq,\eta_{\gq}}(x):=\frac{q_\gq-\eta_{\gq}(\varpi_\gq)}{(q_\gq^{1/2}+q_\gq^{-1/2}-x)(q_\gq^{1/2}+q_\gq^{-1/2}-\eta_\gq(\varpi_\gq)x)}\frac{\sqrt{4-x^2}}{2\pi}dx$$
		as in
		\cite[\S8]{SugiyamaTsuzuki}.
		Put
		$\Lcal(\pi) = L(1/2, \pi)L(1/2, \pi\otimes \eta)$
		for all $\pi \in \Pi_{\rm cus}^*(l,\gn)$.
		Let $I \subset [-2,2]$ be any open subinterval disjoint from the set
		$\{\lambda_\gq(\pi) \mid 
		\pi \in \Pi_{\rm cus}^*(l,\gn),\ \Lcal(\pi)\neq 0 \}.$
		By the relative trace formula for $\PGL_2$ given in \cite{SugiyamaTsuzukiRTF}, for any fixed $\e>0$ we have
		$$\mu(I)\le C \, \nr_{F/\QQ}(\gq)^{\epsilon} (\log \nr_{F/\QQ}(\gn))^{-1+\e}$$
		for any $\gn\in \Ical_{S(\gq),\eta}^{+}$ such that $\nr_{F/\QQ}(\gn)$
		is sufficiently large, where $C>0$ is a constant independent of $\gq, \gn$ and $I$ (see \cite[Lemmas 8.1 and 8.2]{SugiyamaTsuzuki}).
		 Next we fix any $\gn \in \Ical_{S(\gq),\eta}^+$ with a sufficiently large  $\nr_{F/\QQ}(\gn)$.
		Then we can take an open subinterval $I_0 \subset [-2,2]$ such that $\mu(I_0)=2C\, \nr_{F/\QQ}(\gq)^{\epsilon} (\log \nr_{F/\QQ}(\gn))^{-1+\e}$.
		By contraposition, there exists at least one $\pi \in \Pi_{\rm cus}^*(l,\gn)$ such that $\Lcal(\pi)\neq 0$
		and $\lambda_\gq(\pi) \in I_0$.
		By the argument in \cite[Lemme 5.3]{Royer} dividing $[-2,2]$ into a union of copies of $I_0$, we have
		\begin{align}\label{lower bound of number of pi}
\#\{\lambda_\gq(\pi) \mid
				\pi \in \Pi_{\rm cus}^*(l, \gn), 
			\Lcal(\pi)\neq 0\}
			\gg & 
			\,\frac{\mu([-2,2])}{\mu(I_0)} \\
			= & \frac{1}{\mu(I_0)}
			\gg \nr_{F/\QQ}(\gq)^{-\e} (\log \nr_{F/\QQ}(\gn))^{1-\e}.\notag
		\end{align}

		For $M>0$ and $d \in \NN$, let ${\mathcal E}(M,d)$ denote the set of $\l \in \overline{\ZZ}$
		such that
		the absolute values of all the conjugates of $\l$ are at most $M$
		and the absolute degree of $\l$ is at most $d$.
		Now let us set $d(\gn):=\max \{\deg \lambda_\gq(\pi) \mid 
		\pi \in \Pi_{\rm cus}^*(l,\gn),\ \Lcal(\pi)\neq 0 \}$ and assume that
		there exists $k\in \NN$ such that $l_v=k$ for all $v|\infty$.
		Since $l$ is parallel, the Hecke eigenvalue $\nr_{F/\QQ}(\gq)^{(k-1)/2}\lambda_\gq(\pi)$ for
		$\pi \in \Pi_{\rm cus}^*(l,\gn)$ is an algebraic integer by Shimura \cite[Proposition 2.2]{Shimura}.
		This yields $\nr_{F/\QQ}(\gq)^{(k-1)/2}\lambda_\gq(\pi) \in 
		\Ecal(2\nr_{F/\QQ}(\gq)^{(k-1)/2}, d(\gn))$ for all $\pi \in \Pi_{\rm cus}^*(l,\gn)$ such that $\Lcal(\pi)\neq0$.
		Hence we obtain
		\begin{align}\label{upper bound of number of pi}
\#\{ \nr_{F/\QQ}(\gq)^{(k-1)/2}\lambda_\gq(\pi) \mid\pi \in \Pi_{\rm cus}^*(l, \gn), \Lcal(\pi)\neq 0\}
			\le \#\Ecal(2\nr_{F/\QQ}(\gq)^{(k-1)/2}, d(\gn)).
		\end{align}
		By counting monic polynomials with integer coefficients with bounded roots, we obtain
		$\#\Ecal(2\nr_{F/\QQ}(\gq)^{(k-1)/2}, d(\gn)) \le (16\nr_{F/\QQ}(\gq)^{(k-1)/2})^{d(\gn)^2}$
		(see \cite[Lemme 6.2]{Royer}).
		By combining this inequality with \eqref{lower bound of number of pi} and
		\eqref{upper bound of number of pi}, we have the assertion (see \cite[Proposition 8.4]{SugiyamaTsuzuki}).

		In the argument above, the parallel weight condition of $l$ is used only for
		the inequality \eqref{upper bound of number of pi}
		(see \cite[Proposition 8.4]{SugiyamaTsuzuki}).
		In the proof of \cite[Proposition 8.4]{SugiyamaTsuzuki}, it is a key ingredient that ${\mathcal E}(2\nr_{F/\QQ}(\gq)^{(k-1)/2} ,d(\gn))$ contains all Hecke eigenvalues $\nr_{F/\QQ}(\gq)^{(k-1)/2}\l_\gq(\pi)$
		for all $\pi \in \Pi_{\rm cus}^{*}(l, \gn)$.
		In our setting, by Theorem \ref{thm0.1},
		${\mathcal E}(2\nr_{F/\QQ}(\gq)^{(k-1)/2} ,d(\gn))$ contains also all Hecke eigenvalues $\nr_{F/\QQ}(\gq)^{(k-1)/2}\l_\gq(\pi)$ with $k= \max_{v | \infty}l_v$ even when $l$ is non-parallel.
		Therefore we can eliminate the parallel weight condition from \cite[Proposition 8.4]{SugiyamaTsuzuki}
		and generalize \cite[Theorem 1.3]{SugiyamaTsuzuki} to the non-parallel weight case.
		This completes the proof of Corollary \ref{nonzero L and growth of Hecke}, and
		Corollary \ref{nonzero deriv of L and growth of Hecke} is proved in the same way.
	\end{proof}

	\subsection{Proof of Corollaries \ref{Hecke fields for degree 2} and \ref{Hecke fields for degree n} (Siegel modular case)}
	\label{Proof of Corollaries and (Siegel modular case)}
	As an application of the integrality of Hecke eigenvalues for Siegel modular forms (Theorem \ref{integrality for SMF}),
	we explain only the proof of Corollary \ref{Hecke fields for degree n}.
	Corollary \ref{Hecke fields for degree 2} may be proved similarly (see Remark \ref{proof GSp(4)}).
	The following method is the same as
	\cite[\S 6.2 and \S 6.3]{ShinTemplier} with the aid of \cite[\S5, \S6 and \S7]{Royer}. We describe the detail of our proof for the convenience of the reader.

	The space of $W_\rho(\CC)$-valued cusp forms on $\PGSp_{2n}(\AA_\QQ)$
	is isomorphic to that of $W_\rho(\CC)$-valued cusp forms on $\Sp_{2n}(\AA_\QQ)$
	as Hecke modules in the same way as the full level case ($N=1$) \cite[\S4.5.6]{ChenevierLannes},
	since we only consider the Hecke algebras at $p$ not dividing $N$.
	Hence we consider automorphic representations of $\Sp_{2n}(\AA_\QQ)$
	instead of $\GSp_{2n}(\AA_\QQ)$.
	
	Fix a prime number $p$.
	We define by $\Omega_p$
	the set of all isomorphism classes of
	irreducible spherical tempered representations of $\Sp_{2n}(\QQ_p)$,
	and by $\Omega_p^{\rm c}$
	the set of all isomorphism classes of irreducible spherical complementary series representations of $\Sp_{2n}(\QQ_p)$, respectively.
	Then $\Omega_p$
	is identified with
	\[\Omega_p =(\RR/2\pi\ZZ)^n/W\]
	and $\Omega_p^{\rm c}$ with a bounded set in $(\CC/2\pi\ZZ)^n/W$, respectively (cf.\ \cite{MuicTadic}).
	Here $W$ is the Weyl group of $\Sp_{2n}$, which acts on
	$(\CC/2\pi\ZZ)^n$ in the same manner as on the maximal split torus of $\Sp_{2n}$.
	Let $\mu_p=\mu_p^{\rm pl, temp}$ be the spherical Plancherel measure on $\Omega_p$.
	By abuse of notation, $\mu_p$ also denotes the measure on the spherical unitary dual $\Omega_p^+:=\Omega_p\cup \Omega_p^{\rm c}$ which extends $\mu_p$ on $\Omega_p$
	so that the support of the extended measure equals $\Omega_p$.

	Suppose $k_1\ge k_2\ge\cdots \ge k_n> n+1$.
Recall the basis ${\rm HE}_\rho(N)$ of $S_\rho(\Gamma(N))_{\bf1}$ explained in \S \ref{Growth of Hecke fields of Siegel modular forms of general degree}.
For each $f \in {\rm HE}_\rho(N)$, let $\pi_f=\otimes_{v}\pi_{f,v}$ is the irreducible cuspidal automorphic representation of $\Sp_{2n}(\AA_\QQ)$ generated by $f$, and let $(\theta_1(\pi_{f,p}),\ldots, \theta_n(\pi_{f,p})) \in [0,2\pi]^n$ for any prime $p\notdivide N$ be the family such that $(p^{\theta_1(\pi_{f,p})},\ldots, p^{\theta_n(\pi_{f,p})})$ is the Satake parameter of $\pi_{f,p}$.
	Set $S^1=\{z \in \C \mid |z|=1 \}$.
	By the quantitative equidistribution theorem (automorphic Plancherel density) for $\Sp_{2n}$ \cite[Theorem 1.1]{KimWY3},
	there exists $a>0$, $b>0$ and $c_0>0$ such that for any $\varphi$ on $\Omega_p^+$
	satisfying $\varphi(\theta_1\ldots, \theta_n)=P(z_1,\ldots, z_n)$ for a trigonometric polynomial function $P$ on $(S^1)^n$ of degree $\kappa \in \NN$, we have
	\[\frac{1}{\#{\rm HE}_\rho(N)}\sum_{f \in {\rm HE}_{\rho}(N)}\varphi(\theta_{1}(\pi_{f,p}),
	\ldots, \theta_{n}(\pi_{f,p})) = \int_{\Omega_p}\varphi(\theta_1,\cdots, \theta_n) d\mu_p
	+O(p^{a\kappa+b} N^{-n})\]
	as $N \rightarrow \infty$ under $N\ge c_0p^{2n\kappa}$ and $\gcd(N, p)=1$.
	We remark that we do not use the endoscopic classification here in contrast to \cite{KimWY3}.
	Hence the domain $\Omega_p$ of $f$ in \cite[Theorem 1.2]{KimWY3}
	is extended to $\Omega_p^{+}$ in our setting. See Remark \ref{not use endoscopic} for details.

	Let $I$ be any $n$-dimensional open rectangle in $\Omega_p$, i.e., $I$ is a product of $n$ open subintervals of $\RR/2\pi \ZZ$, and let ${\rm ch}_I$ be the characteristic function of $I$.
	Averaging the product of polynomials in \cite[Lemma 6.15]{ShinTemplier} by the Weyl group $W$,
	for any $\kappa\in \NN$ we can take $n$-variable trigonometric polynomials $Q_\kappa^+$ and $Q_\kappa^-$ of degree at most $\kappa$ so that $Q_\kappa^- \le {\rm ch}_I \le Q_\kappa^+$ and
	\[\int_{\Omega_p}(Q_\kappa^+ -Q_\kappa^-)d\mu_p=O\left(\frac{1}{\kappa^n}\right)\]
	(cf.\ \cite[(6.9)]{ShinTemplier}). In particular we have
	\[\mu_p^{\rm aut}(Q_\kappa^-)\le\mu_p^{\rm aut}({\rm ch}_{I})\le\mu_p^{\rm aut}(Q_\kappa^+),\]
	where we set $\mu_p^{\rm aut}(\varphi):= \frac{1}{\#{\rm HE}_\rho(N)}\sum_{f \in \HE_\rho(N)}\varphi((\theta_1(\pi_{f,p}),\ldots,\theta_n(\pi_{f,p})))$ for $\varphi \in C_{\rm c}(\Omega_p^+)$.
	With the aid of \cite[Lemma 6.16]{ShinTemplier}, the quantitative equidistribution theorem yields\footnote{By abuse of notation, we write $\mu_p(\phi)$ for $\int_{\Omega_p} \phi d\mu_p$ for any $\mu_p$-integrable function $\phi$ on $\Omega_p$.}
	\begin{align*}|\mu_p^{\rm aut}({\rm ch}_{I})-\mu_p(I)|\le & 
		\max(\mu_p^{\rm aut}(Q_\kappa^+)-\mu_p(Q_\kappa^-),
		|\mu_p^{\rm aut}(Q_\kappa^-)-\mu_p(Q_\kappa^+)|) \\
		\le & \max_{\delta\in \{+,-\}}(|\mu_p^{\rm aut}(Q_\kappa^{\delta})-\mu_p(Q_\kappa^{\delta})|+|\mu_p(Q_\kappa^{+}-Q_\kappa^-)|,) \\
		=\, & O(p^{a\kappa+b}N^{-n})+O(\kappa^{-n}).
	\end{align*}
	This leads us to
	\[\frac{\#\{ (\theta_1(\pi_{f,p}), \ldots, \theta_n(\pi_{f,p})) \in I \mid f \in {\rm HE}_\rho(N) \}}{\#{\rm HE}_\rho(N)} = \mu_p(I)+O(p^{a\kappa+b}N^{-n})+O(\kappa^{-n}),\]
	where the implied constant is independent of $I$.
	For any fixed $\e>0$ such that $\e<n/a$ and $\e<\frac{1}{2n}$, and for any $N \in \NN$ such that $N^{1-2n\e}>c_0$, set $\kappa=\lfloor\e\log N\rfloor \ge0$.
	Then $\kappa \in \NN$ and $N> c_0p^{2n\kappa}$ hold.
	Thus we have
	\[\frac{\#\{ (\theta_1(\pi_{f,p}), \ldots, \theta_n(\pi_{f,p})) \in I \mid f \in {\rm HE}_\rho(N) \}}{\#{\rm HE}_\rho(N)} = \mu_p(I)+O((\log N)^{-n}).\]
	In particular, if $I$ satisfies
	$I\cap\{(\theta_1(\pi_{f,p}), \ldots, \theta_n(\pi_{f,p}))\mid f \in {\rm HE}_\rho(N)\}=\emptyset$, then the left-hand side equals $0$ and hence $\mu_p(I) \le C (\log_p N)^{-n}$ holds for a constant $C>0$ independent of $I$ and $N$.
	Let $I_0$ be an open $n$-dimensional rectangle in $\Omega_p$ such that $\mu_p(I_0)=2C(\log_p N)^{-n}$, where we add the condition $N> p^{(4C)^{1/n}}$, which implies $2C(\log_p N)^{-n}<1/2$.
	By contraposition, $I_0$ must include at least one $(\theta_1(\pi_{f,p}), \ldots, \theta_n(\pi_{f,p}))$
	for some $f \in {\rm HE}_\rho(N)$.
	Hence, by the same argument as \cite[Lemme 5.3]{Royer}, there exists $\delta \in (0,1/2)$ satisfying $0<\delta<\mu_p(I_0)$ and
		\begin{align}\label{first step ineq}\#\{ (\theta_1(\pi_{f,p}), \ldots, \theta_n(\pi_{f,p})) \in \Omega_p \mid f \in {\rm HE}_\rho(N) \}\ge \frac{\mu_p(\Omega_p)-\delta}{\mu_p(I_0)} > \frac{(\log_p N)^n}{4C},\end{align}
	where we use $\mu_p(\Omega_p)=1$.

Next we construct an injective mapping $\lambda : \Omega_p^{+} \rightarrow \CC^{n}$.
For any $\theta \in \Omega_p^{+}$, let $\pi_\theta$ denote a unique unramified irreducible unitary representation of $\Sp_{2n}(\QQ_p)$ corresponding to $\theta$.
The Hecke operators $T(p)$ and $\{T_{j,n-j}(p^2)\}_{1\le j\le n-1}$
in $\End(S_\rho(\Gamma(N))_{\bf1})$ correspond to
generators, say $T$ and $\{T_j\}_{1\le j\le n-1}$, of the spherical Hecke algebra of $\Sp_{2n}(\QQ_p)$.\footnote{The action of $T_{n,0}(p^2)$ on $S_\rho(\Gamma(N))_{\bf1}$ is $p^{\sum_{j=1}^{n}k_j-n(n+1)} \id$. Hence the operator is negligible.}
Since any unramified irreducible representation of $\Sp_{2n}(\QQ_p)$ is determined by the actions of $T$ and $\{T_j\}_{1 \le j \le n-1}$,
we can injectively map $\theta \in \Omega_p^+$ to the system $\lambda(\theta)=(\lambda_0,\ldots, \lambda_{n-1}) \in \CC^{n}$ such that $\pi_\theta(T)=\lambda_0\id$ and $\pi_\theta (T_{j}) =\lambda_j \id$ for all $j\in\{1,\ldots,n-1\}$.
Therefore, we obtain an injective mapping $\Omega_p^{+} \ni \theta\mapsto \lambda(\theta) \in \CC^{n}$. In particular, we have
\begin{align}\label{lambda injective}&\#\{ (\theta_1(\pi_{f,p}), \ldots, \theta_n(\pi_{f,p})) \in \Omega_p^{+} \mid f \in {\rm HE}_\rho(N) \} \\
= & \#\{ \lambda(\theta_1(\pi_{f,p}), \ldots, \theta_n(\pi_{f,p})) \in \CC^{n} \mid f \in {\rm HE}_\rho(N) \}.\notag \end{align}

Then,
	by combining the boundedness of $\Omega_p^+$ with \cite[(10)]{AsgariSchmidt},
Theorem \ref{integrality for SMF} on the integrality of Hecke eigenvalues
shows that
	every $\lambda(\theta_1(\pi_{f,p}),\ldots, \theta_n(\pi_{f,p}))$ is contained in
	$$R:=\prod_{j=1}^{n}\{\alpha \in \overline{\ZZ} \mid |\s(\alpha)|\le p^A \ (\forall \s \in \Gal(\overline{\QQ}/\QQ)) \},$$
	where $A>0$ is a constant depending only on $n$ and $\rho$ such that the inequality $|\s(\a)|\le p^A$ holds for all eigenvalues $\a$
	of $T(p)$ and of $T_{j,n-j}(p^2)$
	for all $j\in \{1,\ldots, n-1\}$ and all $\s \in \Gal(\overline{\QQ}/\QQ)$.	
		Set $d=\max_{f \in {\rm HE}_\rho(N)}[\QQ(f):\QQ]$.
We can apply \cite[Lemme 6.1 and Lemme 6.2]{Royer} for $M=p^A$ to prove
	the estimate \begin{align}\label{second step ineq}
\#\{ \lambda(\theta_1(\pi_{f,p}), \ldots, \theta_n(\pi_{f,p})) \in \CC^{n} \mid f \in {\rm HE}_\rho(N) \}
\le \#R		\le \{(8p^A)^{d^2}\}^{n}.\end{align}
	Here we do not need the generalized Ramanujan-Petersson conjecture.
	For example, $A=k/2+1$ is enough if we consider elliptic cusp forms of weight $k$ (i.e., $n=1$).

	Combining two inequalities \eqref{first step ineq} and \eqref{second step ineq} with \eqref{lambda injective},
	we conclude
	Corollary \ref{Hecke fields for degree n}.
	\qed

	\begin{rem}\label{proof GSp(4)}
		Corollary \ref{Hecke fields for degree 2} may be proved similarly to Corollary \ref{Hecke fields for degree n}.
However, Corollary  \ref{Hecke fields for degree 2} treats the cases (1) $k_2=n+1=3$, and
(2) the central character $\chi_j$ of automorphic representations of $\GSp_4(\AA_\QQ)$ may not be trivial.
Thus we need modifications of the proof of Corollary \ref{Hecke fields for degree n} to apply to the case of $n=2$.

As for the case (1), we can relax the weight condition $k_1\ge k_2 > 3$ to 
$k_1\ge k_2 \ge 3$ since the equidistribution of Satake parameters for $\GSp_4$ is given by fine calculations of the invariant trace formula for $\GSp_4$ even when $k_2=3$ as in \cite[Theorem 1.1]{KimWY}.

As for the case (2), the equality \eqref{lambda injective} should be modified as follows.
In the proof of Corollary \ref{Hecke fields for degree n} for $n=2$,
we have only to replace $S_\rho(\Gamma(N))_{\bf1}$ with $S_\rho(\Gamma(N))_{\chi}$ and
${\rm HE}_\rho(N)$ with ${\rm HE}_\rho(N,\chi)$, respectively.
When $p$ is a prime number such that $p\notdivide N$ and $\omega_p$ is the $p$-component of the Hecke character $\otimes_{v}\omega_v$ of $\QQ^\times\bsl \AA_\QQ^\times$ corresponding to $\chi$,
the set $\Omega_p^{+}$ for $n=2$ is identified with the set of isomorphism classes of unramified
irreducible unitary representations of $\GSp_4(\QQ_p)$ with central character $\omega_p^{-1}$ since the action of $T_{2,0}(p^2)$ is a scalar multiple by
$p^{k_1+k_2-6} \chi(p)=p^{k_1+k_2-6} \omega_p(p)^{-1}$.
Then we obtain an injective mapping $\lambda:\Omega_p^{+} \rightarrow \CC^2$ similarly.
As a consequence, the proof of Corollary \ref{Hecke fields for degree n} for $n=2$ carries through after the equality \eqref{lambda injective} is replaced with
\begin{align*}&\#\{ (\theta_1(\pi_{f,p}), \theta_2(\pi_{f,p})) \in \Omega_p^{+} \mid f \in {\rm HE}_\rho(N,\chi) \}\\
	= & \#\{ \lambda(\theta_1(\pi_{f,p}), \theta_2(\pi_{f,p})) \in \CC^{2} \mid f \in {\rm HE}_\rho(N,\chi) \}.\notag \end{align*}
Moreover, the Hecke basis ${\rm HE}_\rho(N,\chi)$
is restricted to the set ${\rm HE}_{\rho}^{\rm tm}(N,\chi)$ of non-endoscopic lifts, since the contribution of the endoscopic lifts called Yoshida lifts is negligible as in \cite[\S7.3]{KimWY}
and the equidistribution result for ${\rm HE}_{\rho}^{\rm tm}(N,\chi)$ is given as in \cite[Theorem 1.3]{KimWY}.
\end{rem}

	\begin{rem}\label{not use endoscopic}
		The endoscopic classification used in \cite{KimWY3} is not needed
		to state their equidistribution result \cite[Theorem 1.2]{KimWY3}
		since their evaluation of the invariant trace formula requires only pseudo coefficients and analysis via Shintani zeta functions
		associated to prehomogeneous vector spaces.
		Since we do not use the endoscopic classification,
		we can relax the assumption ``$k_1>\cdots > k_n>n+1$'' of \cite[(1-4)]{KimWY3} to ``$k_1\ge\cdots \ge k_n>n+1$''.
		If we would permit the use of endoscopic classification,
		then their equidistribution result \cite[Theorem 1.2]{KimWY3} would be stated for genuine forms as in \cite[Theorem 1.3]{KimWY3},
		and thus Corollary \ref{Hecke fields for degree n} would be valid
		even when the set $\HE_\rho(N)$ is restricted to the subset $\HE_\rho(N)^{\rm g}$ consisting of genuine forms in $\HE_\rho(N)$.
		Since we do not use the endoscopic classification, the domain of test functions $f$ in our quantitative equidistribution theorem cannot be restricted to $\Omega_p$ in contrast to \cite[Theorem 1.2]{KimWY3}.
	\end{rem}	
	
	\section{Estimations of cuspidal cohomologies}
	\label{Cuspidal cohomology}
	
	In this section, we prove Corollaries \ref{cor Hecke fields GL2d}
	and \ref{growth for BC}.
	We prove Corollary \ref{lower bound for Hecke field} below, from which Corollary
	\ref{cor Hecke fields GL2d} follows immediately.
	For Corollary \ref{lower bound for Hecke field}, we use automorphic inductions from 
	$\GL_{2,F}$ to $\GL_{2d, \QQ}$ and results in \S\ref{Application of integrality to the growth of Hecke fields} for Hilbert modular forms.
	Our problems are:
	\begin{enumerate}
		
		\item the automorphic induction $\Pi=\AI_{F/\QQ}(\pi)$ of a Hilbert cusp form $\pi$ of ${\GL_2}(\AA_F)$ may not be cuspidal,
		
		\item the field of rationality $\QQ(\Pi)$ may be much smaller than $\QQ(\pi)$,
		
		\item the conductor of $\Pi$ may be quite different from that of $\pi$.
	\end{enumerate}
	As for (1), we have the non-parallel weight condition for proving the cuspidality (\S\ref{Automorphic inductions}).
	As for (2), we use the local Langlands correspondence for $\GL_n$ to justify that $\QQ(\Pi)$ is not so smaller than $\QQ(\pi)$ (Lemma \ref{field of AI}).
	As for (3), we combine the equality of the epsilon factors $\e(s,\pi) = \e(s,\Pi)$ with explicit formulas in terms of the conductors
	(Lemma \ref{field of AI}).
	
	For Corollary \ref{growth for BC}, we solve the same problems as (2) and (3) above (Lemma \ref{field of BC}). However, for cuspidality, we do not need the non-parallel weight condition. Instead, we use the condition that $E/F$ is a ramified extension (\S\ref{Quadratic base change liftings}).
	
	For a number field $F$ and an irreducible cuspidal automorphic representation $\pi$ of $\GL_n(\AA_F)$,
	$L(s,\pi)$ denotes the completed standard $L$-function attached to $\pi$ whose functional equation is
	$L(s, \pi) = \e(s, \pi)L(1-s, \pi^{\vee})$.
	For such a $\pi$, let $\gf_\pi$ denote the conductor of $\pi$,
	which is the maximum of non-zero ideals $\gn$ of $\Ocal_F$ such that the $\bfK_1(\gn)$-invariant subspace of $\pi$ is non-zero.
	Here $\bfK_1(\gn)$ is the open compact subgroup of $\GL_n(\AA_{F,\fin})$ defined as the product of
	$\bfK_{1,v}(\gn) \subset \GL_n(\Ocal_{F,v})$ over all finite places $v$ and $\bfK_{1,v}(\gn)$ is the subgroup consisting of
	all elements of $\GL_n(\Ocal_{F,v})$ whose last rows are congruent to $(0,0,\ldots,0,1)$
	modulo $\gn\Ocal_{F,v}$.
	The field of rationality $\QQ(\pi)$ of $\pi$ is defined to be the subfield of $\CC$ consisting of all complex numbers fixed by 
	the group $\{\sigma \in \Aut(\CC) \mid \pi_\fin^\sigma \cong \pi_\fin\}$ (see  \cite[\S3.1]{ClozelMotif} or \cite[Definition 2.2]{ShinTemplier}). It is a finite extension of $\QQ$ when $\pi$ is cohomological (see \cite[\S3]{ClozelMotif} or \cite[Proposition 2.15]{ShinTemplier}).

	\subsection{Preliminaries on cuspidal cohomologies}
	\label{Preliminaries on cuspidal cohomologies}
	Let us review cuspidal cohomology for $\GL_n$ defined over $\mathbb{Q}$. One can refer to \cite{GanRaghuram} or \cite{Raghuram}
	for cuspidal cohomologies.
	
	Let $T_n$ be the maximal $\QQ$-split torus of $\GL_n$ consisting of diagonal matrices.
	Then, the absolute Weyl group
	$W_n$
	for $(\GL_n,T_n)$
	acts on
	$X(T_n) = \Hom(T, {\GG}_m) \cong \ZZ^n$.
	By highest weight theory, we have a bijection
	$$X(T_n)/W_n \ni \l \mapsto V_\l\in {\rm Irr}_{\rm alg}(\GL_n(\CC)),$$
	where
	${\rm Irr}_{\rm alg}(\GL_n(\CC))$ is the set of the equivalence classes of irreducible algebraic representations of $\GL_n(\CC)$ and
	the symbol $V_\l$ is an irreducible algebraic representation of $\GL_n(\CC)$ of highest weight $\l$.
	Let $X^{+}(T_n)$ be the set of dominant weights, which is identified with the set $X(T_n)/W_n$.
	Let $\rho_n$ denote half the sum of positive roots for the Borel subgroup consisting of upper triangular matrices, i.e.,
	$$\rho_n=\frac{1}{2}\sum_{i<j}(\e_i-\e_{j})=\left(\frac{n-1}{2}, \frac{n-3}{2},\ldots, -\frac{n-1}{2}\right) \in \frac{1}{2}\ZZ^{n},$$
	where $\e_j \in X(T_n)$ is defined by $\e_j(\diag(t_1,\ldots,t_n))=t_j$ for all $\diag(t_1,\ldots,t_n)\in T_n$.
	
	Set
	$\gA = \{t1_n \mid t \in \RR_{>0} \}$ as a subgroup of $\GL_n(\AA_\QQ)$ via the embedding $\RR_{>0}\ni t \mapsto (t,1)\in \RR^{\times}\times \AA_{\QQ, \fin}^\times=\AA_\QQ^\times$.
	Put $K_\infty^+=\gA \,\SO(n)$.
	For any open compact subgroup $\Ucal$ of $\GL_n(\AA_{\QQ,\fin})$, the quotient
	$$S_{\Ucal}^{\GL_n}= \GL_n(\QQ)\bsl \GL_n(\AA_\QQ)/K_\infty^+\,\Ucal$$
	is a locally symmetric space.
	For a fixed $\l \in X^+(T_n)$,
	let $H_{\rm cusp}^{\bullet}(S_\Ucal^{\GL_n}, \tilde V_\l)$ denote
	the cuspidal cohomology with coefficients in the local system $\tilde V_\l$ on $S_{\Ucal}^{\GL_n}$ determined by $V_\lambda$.
	For an irreducible cuspidal automorphic representation $\Pi$ of $\GL_n(\AA_\QQ)$, we call
	$\Pi$ $\lambda$-cohomological if $\Pi$ has a non-zero contribution in $H_{\rm cusp}^{\bullet}(S_\Ucal^{\GL_n}, \tilde V_\l)$
	for some $\Ucal$. 
Also, $\Pi$ is called cohomological if there exists $\lambda \in X^+(T_n)$ such that $\Pi$ is $\lambda$-cohomological.
	Cuspidal cohomologies for $\Res_{E/\QQ}\GL_{n, E}$ with a number field $E$ are defined similarly by replacing $\lambda$ with a family $(\lambda_\iota)_{\iota :E \hookrightarrow \CC}$ of dominant integral weights indexed by the embeddings of $E$ into $\CC$.
	
We recall the notion of C-algebraicity and regularity of automorphic representations (cf.\ \cite[Definition 1.8 and Definition 3.12]{ClozelMotif}, \cite[Definition 2.1]{ShinTemplier}).
	Let $F$ be a number field and let $\Pi=(\otimes_{v | \infty}\Pi_v)\otimes \Pi_\fin$ be an irreducible cuspidal automorphic representation of $\GL_n(\AA_F)$.
For each $v|\infty$, there exists an $L$-parameter\footnote{We do not take $\GL_n(\CC) \rtimes W_{F_v}$ as the codomain of
$\phi_{\Pi_v}$ since $\GL_{2, F_v}$ is split over $F_v$.} $\phi_{\Pi_v}: W_{F_v}\rightarrow \GL_{n}(\CC)$ corresponding to $\Pi_v$ via the local Langlands correspondence for $\GL_n(\RR)$ and $\GL_n(\CC)$,
where $W_{F_v}$ is the Weil group of $F_v$.
We consider the restriction $\phi_{\Pi_v}|_{\CC^\times}$ of
$\phi_{\Pi_v}$ to $\CC^\times (\subset W_{F_v})$.
It is regarded as a cocharacter of $T_{n}(\CC)$ and
the cocharacter $\phi_{\Pi_v}|_{\CC^\times}$
is of the form
$$\CC^\times \ni z\mapsto \diag(z^{p_1}\overline{z}^{q_1},\ldots,z^{p_n}\overline{z}^{q_n}) \in T_n(\CC)$$
for some $(p_j)_j, (q_j)_j \in \CC^n$ such that $(p_j-q_j)_j\in \ZZ^n$.

For the case of $F_v \cong \RR$, $\Pi_v$ is called C-algebraic if
$(p_j)_j -(\frac{n-1}{2}, \frac{n-3}{2},\ldots, -\frac{n-1}{2}) \in \ZZ^n$.
And $\Pi_v$ is called regular if $p_j \neq p_{j'}$ hold for all $j\neq j'$.

For the case of $F_v\cong \CC$, $\Pi_v$ is called C-algebraic if
both $(p_j)_j -(\frac{n-1}{2}, \frac{n-3}{2},\ldots, -\frac{n-1}{2}) \in \ZZ^n$
and $(q_j)_j -(\frac{n-1}{2}, \frac{n-3}{2},\ldots, -\frac{n-1}{2}) \in \ZZ^n$ are satisfied.
And $\Pi_v$ is called regular if both $p_{j}\neq p_{j'}$ and $q_{j}\neq q_{j'}$ hold 
for all $j\neq j'$.

We go back to the global situation. The automorphic representation $\Pi$ is called C-algebraic (resp.\ regular) if $\Pi_v$ is C-algebraic (resp.\ regular) for every $v|\infty$.
We remark that
$\pi$ is cohomological if and only if $\pi$ is regular and C-algebraic
(\cite[Lemme 3.14]{ClozelMotif} and \cite[Lemma 2.12 and Remark 2.13]{ShinTemplier}).

	For $\s\in\Aut(\CC)$ and a representation $(\pi, V)$ of a group $G$,
	let $\pi^\s$ denote the representation of $G$ such that
	$\pi^\s(g)(v \otimes a) = \pi(g)v\otimes a$ for $v \otimes a \in V\otimes_{\CC,\s^{-1}} \CC$.
	We remark that the scalar multiple on $V\otimes_{\CC, \s^{-1}}\CC$ is given by $b(v\otimes a)=v\otimes ab$ for any $v\in V$, $a\in \CC$ and any $b \in \CC$.\footnote{The field $\CC$ is a left $\CC$-algebra by $\sigma^{-1}:\CC\rightarrow \CC$. We write $\CC_{\s^{-1}}$ for this left $\CC$-algebra.
		The extension of scalars $V\otimes_{\CC, \s^{-1}}\CC:= (\s^{-1})_{!}V = V\otimes_{\CC}\CC_{\s^{-1}} $ is a right $\CC$-algebra by the usual product of $\CC_{\s^{-1}}=\CC$. Then we have $v\otimes ab=(\s(b)v)\otimes a$ for all $v\in V$, $a \in \CC$ and $b\in\CC$.}

	Let $\Pi=\Pi_\infty\otimes\Pi_\fin$ be a $\l$-cohomological irreducible cuspidal automorphic representation of $\GL_n(\AA_\QQ)$ such that $\Pi^{\Ucal} \neq 0$
	for some open compact subgroup $\Ucal$ of $\GL_n(\AA_{\QQ,\fin})$.
	Since there is the only one archimedean place of $\QQ$, the representation $\Pi^\s:=\Pi_\infty\otimes\Pi_{\fin}^{\s}$ is also $\l$-cohomological
	for all $\s \in \Aut(\CC)$ (\cite[Th\'eor\`eme 3.13]{ClozelMotif}).
	For any $q \in \ZZ$ such that $\lfloor n^2/4\rfloor \le q\le \lfloor n^2/4\rfloor +\lfloor (n-1)/2\rfloor$
	(cf.\ \cite[Proposition 2.15]{Raghuram}),
	the dimension of cuspidal cohomology is estimated as
\begin{align}\label{dim cusp coho}
		\dim_\CC H_{\rm cusp}^{q}(S_\Ucal^{\GL_n}, \tilde V_\l) = & \sum_{\pi \ : \ {\rm cuspidal}}\dim_{\CC}\{H^q(\mathfrak{gl}_n(\RR)/ \Lie(K_\infty^+) ;\pi_\infty\otimes V_\l)\otimes \pi_{\fin}^{\Ucal}\} \\
		\ge & \sum_{\s \in \Aut(\CC/\QQ(\Pi))\bsl{\rm Aut}(\CC)}\dim_{\CC} \{H^q(\mathfrak{gl}_n(\RR)/ \Lie(K_\infty^+); \Pi_\infty \otimes V_{\l}) \otimes (\Pi_{\fin}^{\s})^{\Ucal}\} \notag\\
		\ge & \#(\Aut(\CC/\QQ(\Pi))\bsl \Aut(\CC))=[\QQ(\Pi) : \QQ].\notag
	\end{align}

	\subsection{Automorphic inductions}
	\label{Automorphic inductions}
	
	Let $d$ be a prime number.
	\begin{prop}\label{existence of totally real cyclic}
		There exists a totally real cyclic extension $F/\QQ$ of degree $d$.
	\end{prop}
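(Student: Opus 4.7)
The plan is to construct $F$ inside a cyclotomic field using Dirichlet's theorem on primes in arithmetic progressions. By Dirichlet, choose a prime $p$ satisfying $p \equiv 1 \pmod{2d}$; such primes exist in abundance since $\gcd(1, 2d) = 1$.

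Next, I would invoke the standard Galois theory of cyclotomic fields. The field $\QQ(\zeta_p)$ is abelian over $\QQ$ with Galois group canonically isomorphic to $(\ZZ/p\ZZ)^{\times}$, which is cyclic of order $p-1$. The maximal totally real subfield is $\QQ(\zeta_p)^{+} = \QQ(\zeta_p + \zeta_p^{-1})$, of degree $(p-1)/2$ over $\QQ$, corresponding to the fixed field of complex conjugation (the subgroup $\{\pm 1\} \subset (\ZZ/p\ZZ)^{\times}$). Since we arranged $2d \mid p-1$, we have $d \mid (p-1)/2 = [\QQ(\zeta_p)^{+} : \QQ]$, so the cyclic group $\Gal(\QQ(\zeta_p)^{+}/\QQ)$ admits a unique subgroup of index $d$. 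Take $F$ to be its fixed field.

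Then $F/\QQ$ is a subextension of a cyclic extension, hence cyclic of degree exactly $d$, and $F \subset \QQ(\zeta_p)^{+} \subset \RR$ (once we fix the embedding $\zeta_p \mapsto e^{2\pi\sqrt{-1}/p}$), so every embedding of $F$ into $\CC$ lands in $\RR$, i.e.\ $F$ is totally real.

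There is essentially no obstacle here beyond checking the divisibility condition; the only point requiring attention is to ensure the congruence $p \equiv 1 \pmod{2d}$ rather than merely $p \equiv 1 \pmod d$, so that $F$ sits inside the real subfield $\QQ(\zeta_p)^{+}$ and not merely inside $\QQ(\zeta_p)$. For odd $d$ this is automatic from $d \mid p-1$ and $p$ odd, while for $d = 2$ it amounts to $p \equiv 1 \pmod 4$; in either case Dirichlet's theorem supplies the needed prime.
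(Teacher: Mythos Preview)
Your proof is correct and follows essentially the same approach as the paper: both choose a prime $p\equiv 1\pmod{2d}$ via Dirichlet, pass to the totally real subfield $\QQ(\zeta_p+\zeta_p^{-1})$, and take the fixed field of the index-$d$ subgroup of its cyclic Galois group. Your additional remark explaining why the congruence is taken modulo $2d$ rather than $d$ is a helpful clarification not made explicit in the paper.
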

	\begin{proof}
		Let $q\ge 3$ be a prime number such that $q\equiv 1 \pmod{2d}$.
		The existence of $q$ is guaranteed by Dirichlet's theorem on arithmetic progression.
		Let $\zeta_q$ denote a primitive $q$th root of unity.
		Then, $\QQ(\zeta_q+\zeta_{q}^{-1})/\QQ$ is a totally real cyclic extension. Since $d$ divides $[\QQ(\zeta_q+\zeta_q^{-1}):\QQ]=(q-1)/2$,
		there exists a cyclic subgroup $H$ of $\Gal(\QQ(\zeta_q+\zeta_q^{-1})/\QQ)$ of index $d$.
		Thus it suffices to take the field $F$ corresponding to $H$ via Galois theory.
	\end{proof}

	Let $F/\QQ$ be a totally real cyclic extension with prime degree $d$.
	Fix a generator $\tau$ of $\Gal(F/\QQ)$.
	For any family $l=(l_v)_{v|\infty}$ of positive even integers and any non-zero ideal of $\Ocal_F$, take $\pi\in \Pi_{\rm cus}^*(l,\gn)$ and write $\Pi= \AI_{F/\QQ}(\pi)$ for
	the automorphic induction of $\pi$ (\cite{ArthurClozel}).
	Then
	$\Pi$ is an irreducible automorphic
	representation of $\GL_{2d}(\AA_\QQ)$ such that $L(s,\Pi)=L(s,\pi)$.

	Fix an enumeration of the set of all archimedean places of $F$
	as $\{1,\ldots, d\}$.
	Assume that $l=(l_v)_{v|\infty} =(l_j)_{j=1,\ldots, d} \in 2\NN^{d}$ is regular, i.e., $l_j\neq l_{j'}$ holds for all $j\neq j'$.
	For each $v | \infty$,
	$\pi_v$ is the discrete
	series representation of
	$\PGL_2(F_v)\cong \PGL_2(\RR)$ with minimal ${\rm O}(2)$-type $l_v$.
	Hence
	the $L$-parameter $\phi_{\pi_v} : W_{F_v}=W_\RR \rightarrow \GL_2(\CC)$ attached to $\pi_v$ is given as
	\begin{align}\label{archi L-para}\phi_{\pi_v}=\Ind_{W_\CC}^{W_\RR}\bigg(\frac{z}{\overline{z}}\bigg)^{(l_v-1)/2},
	\end{align}
	where $(\frac{z}{\overline{z}})^{1/2}$ for $z \in W_\CC= \CC^\times$ is defined as $z\sqrt{z\overline{z}}^{-1}$
	and a square root $\sqrt{z\overline{z}}$ is taken to be positive (see \cite[\S3.1.4]{Raghuram-Tanabe}). Thus
	$\pi_v$ is regular.
	Furthermore we have $\pi \circ \tau^j \not \cong \pi$ for all $j\in \{1,\ldots,d-1\}$ by the assumption on the regularity of $l$,
	and hence $\Pi=\AI_{F/\QQ}(\pi)$ is cuspidal (cf.\ \cite[Theorems 4.2 (e) and 5.1]{ArthurClozel}).
	
	As $\pi$ is a $\l$-cohomological cuspidal automorphic representation of $\GL_2(\AA_F)$
	with $\l=((\frac{l_v-1}{2}, -\frac{l_v-1}{2}) -\rho_2)_{v|\infty} \in \prod_{v|\infty}\ZZ^2$ by \cite[Theorem 1.4 (2), (3)]{Raghuram-Tanabe},
	we have the following.
	
	\begin{lem}\label{AI is coho}
		The $L$-parameter $\phi_{\Pi_\infty} : W_\RR \rightarrow \GL_{2d}(\CC)$ corresponding to the archimedean component $\Pi_\infty$ of $\Pi=\AI_{F/\QQ}(\pi)$ satisfies
		\begin{align}\label{L-para GL2d}\phi_{\Pi_\infty}|_{W_\CC} = \bigoplus_{j=1}^{d}\left[\begin{smallmatrix} (\frac{z}{\bar z})^{\frac{l_j-1}{2}} & 0\\ 0 & (\frac{z}{\bar z})^{-\frac{l_j-1}{2}}
			\end{smallmatrix}\right].\end{align}

		If a family $l=(l_j)_{j=1}^{d}$ of even integers is taken such that
		\begin{align}\label{l is regular}l_1> l_2>\cdots > l_d\ge 4,
		\end{align}
		then
		$\Pi$ is $\lambda'$-cohomological with
		$\l'= (\l_1',\ldots, \l_d', -\l_d', \ldots, -\l_1')$, where
		$\l_j'=\frac{l_j}{2}-d+j-1$ and $\l_j'\ge \l_{j+1}'$ for all $j\in \{1,\ldots, d-1\}$. In particular $\l'\neq 0$ holds.
	\end{lem}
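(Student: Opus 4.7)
The plan is to combine two standard ingredients: the compatibility of Arthur–Clozel automorphic induction with the local Langlands correspondence at infinity, and the well-known correspondence between archimedean $L$-parameters of cohomological representations of $\GL_n(\RR)$ and highest weights.

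For the first assertion, I would argue as follows. Since $F/\QQ$ is totally real of degree $d$, the unique archimedean place of $\QQ$ splits completely in $F$ into $d$ real places $v_1,\ldots,v_d$, so each $W_{F_{v_j}}=W_\RR$ and the relevant induction at infinity $\mathrm{Ind}_{W_{F_{v_j}}}^{W_\RR}$ is trivial. By compatibility of automorphic induction with the local Langlands correspondence (\cite{ArthurClozel}), one obtains $\phi_{\Pi_\infty}=\bigoplus_{j=1}^{d}\phi_{\pi_{v_j}}$. Substituting the formula \eqref{archi L-para} for each $\phi_{\pi_{v_j}}$ and restricting to the index-two subgroup $W_\CC$, so that $\mathrm{Ind}_{W_\CC}^{W_\RR}(z/\bar z)^{(l_j-1)/2}$ decomposes as $(z/\bar z)^{(l_j-1)/2}\oplus(z/\bar z)^{-(l_j-1)/2}$, then assembles into the claimed decomposition \eqref{L-para GL2d}.

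For the second assertion, I would invoke the criterion consistent with the convention already used in the paper: an irreducible admissible representation of $\GL_{2d}(\RR)$ whose restricted parameter is $\phi|_{W_\CC}\cong\bigoplus_{i=1}^{2d}(z/\bar z)^{\mu_i}$ with pairwise distinct $\mu_1>\cdots>\mu_{2d}$ is $\lambda$-cohomological with $\lambda_i=\mu_i-\rho_{2d,i}$ (cf.\ \cite{ClozelMotif}). Under the hypothesis \eqref{l is regular}, the $2d$ exponents appearing in \eqref{L-para GL2d} are pairwise distinct and sort as
$$\frac{l_1-1}{2}>\cdots>\frac{l_d-1}{2}>-\frac{l_d-1}{2}>\cdots>-\frac{l_1-1}{2},$$
the middle inequality using $l_d\ge 6>1$. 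Subtracting $\rho_{2d,i}=(2d-2i+1)/2$ coordinate-wise then produces $\lambda_j'=l_j/2-d+j-1$ for $1\le j\le d$ and $\lambda_{d+j}'=-\lambda_{d-j+1}'$ for $1\le j\le d$.

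Finally I would verify dominance and nonvanishing. The chain $\lambda_j'\ge\lambda_{j+1}'$ for $1\le j\le d-1$ reduces to $l_j\ge l_{j+1}+2$, which holds because the $l_j$'s are even and strictly decreasing; the central inequality $\lambda_d'\ge -\lambda_d'$ reduces to $l_d\ge 2$, trivially satisfied; nonvanishing follows from the crude estimate $l_1\ge l_d+2(d-1)\ge 2d+4$, giving $\lambda_1'=l_1/2-d\ge 2>0$. The only nontrivial step in the plan is invoking the local compatibility at infinity for automorphic induction; the remainder is elementary sorting of half-integers and Weyl-vector bookkeeping.
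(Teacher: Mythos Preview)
Your proposal is correct and follows essentially the same route as the paper: both first obtain $\phi_{\Pi_\infty}=\bigoplus_{v\mid\infty}\phi_{\pi_v}$ from the local compatibility of automorphic induction and then read off $\lambda'$ by matching the sorted exponents of $\phi_{\Pi_\infty}|_{W_\CC}$ against $\rho_{2d}$ (the paper phrases this via the infinitesimal character of $V_{\lambda'}^\vee$, which amounts to the same subtraction). Your dominance check and your direct verification that $\lambda_1'>0$ are minor cosmetic variants of the paper's arguments (the paper instead argues $\lambda'=0$ would force $l_d=2$), but the substance is identical.
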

	\begin{proof}Since $\Pi_\infty$ is a component of $\AI_{F/\QQ}(\pi)$,
		we have
		$\phi_{\Pi_\infty}=\bigoplus_{v|\infty}\Ind_{W_{F_v}}^{W_\RR}\phi_{\pi_v}=\bigoplus_{v|\infty}\phi_{\pi_v}$.					
		By	the equation $\phi_{\pi_v}|_{W_\CC}=(\frac{z}{\overline{z}})^{(l_v-1)/2}\oplus (\frac{z}{\overline{z}})^{-(l_v-1)/2}$ (\cite[\S3.1.5]{Raghuram-Tanabe}),
		we obtain \eqref{L-para GL2d}.
		
		Assume the condition \eqref{l is regular}. 
		By \eqref{L-para GL2d}, $\Pi_\infty$ is C-algebraic and regular. Hence  $\Pi$ is $\lambda'$-cohomological for some dominant $\lambda'=(\lambda_1'\ldots,\lambda_{2d}')$.
		Since the infinitesimal character of $V_{\lambda'}^{\vee}$ is the same as that of $\Pi_\infty$, we have the equality
		$$(-\lambda_{2d}',\ldots,-\lambda_1')+\rho_{2d}=\bigg(\frac{l_1-1}{2},\ldots, \frac{l_d-1}{2}, -\frac{l_d-1}{2},\ldots, -\frac{l_1-1}{2}\bigg).$$
		Thus $\lambda'=(\lambda_j')_{j=1}^{2d}$ is explicitly given as in the assertion. Furthermore, an easy calculation shows $\lambda_j'\ge \lambda_{j+1}'$ for all $j$ by virtue of $l_{j}\ge l_{j+1}+2$.
		If we would have $\l'=0$, then we show $l_j=2(d-j+1)$ for each $j$, and hence
		$l_d=2$. This contradicts $l_d\ge 4$.
	\end{proof}

	Next we consider relations between automorphic inductions and the action of $\Aut(\CC)$. 
	In what follows, Weil-Deligne representations are considered
	on $\CC$-vector spaces.
	
	Let $L/K$ be a field extension of non-archimedean local fields with degree $[L:K]=d$.
	Let  ${\rm rec}_K$ be the local Langlands bijection for $\GL_n(K)$.
	This means that, for any irreducible admissible representation $\pi$ of $\GL_n(K)$, 
	${\rm rec}_K(\pi)$ is an $n$-dimensional Frobenius semisimple Weil-Deligne representation of the Weil group $W_K$ of $K$ which corresponds to $\pi$ via the local Langlands correspondence.
	We note that
	${\rm rec}_K$ does not always satisfy ${\rm rec}_K(\pi)^\s={\rm rec}_K(\pi^\s)$ for any irreducible admissible representation
	$\pi$ of $\GL_n(K)$ and any $\s \in \Aut(\CC)$ in general.
	Indeed, we actually have
	\begin{align}\label{twisted Langlands}(|\cdot|_{W_K}^{-(n-1)/2}\otimes \rec_{K}(\pi))^\s = |\cdot|_{W_K}^{-(n-1)/2}\otimes{\rec}_K(\pi^\s),
		\end{align}
	where $|\cdot|_{W_K}$ is the $\QQ^{\times}$-valued character of $W_K$ induced by the normalized valuation $|\cdot|_K$ of $K$ via local class field theory
	(cf.\ \cite[\S3.2]{ShinTemplier}).
	In particular, we have $|{\rm Frob}_K|_{W_K}=|\varpi_K|_K$, where ${\rm Frob}_K \in W_K$
	is a geometric Frobenius element for $K$ and $\varpi_K$ is a uniformizer of $K$.
	In the category of equivalence classes of finite dimensional Frobenius semisimple Weil-Deligne representations $(\rho, V, N)$ of $W_K$,
	both the restriction functor to $W_L$ and the induction functor from $W_L$ to $W_K$ are compatible with the action of $\Aut(\CC)$.
Namely, we have $(\tilde \rho\big|_{W_L})^\s = (\tilde\rho^\s)\big|_{W_L}$ and ${\rm Ind}_{W_L}^{W_K}(\tilde\rho')^\s = {\rm Ind}_{W_L}^{W_K}(\tilde\rho'^\s)$, for $\tilde\rho=(\rho, V, N)$ of $W_F$ and $\tilde \rho'=(\rho', V', N')$ of $W_L$,
	respectively.

	For $\s \in \Aut(\CC)$, we define the character $\sgn_{K,\s} : K^\times \rightarrow \{\pm1\}$ by
	$\sgn_{K,\s}(x)=(|x|^{1/2}_K)^\s/|x|_K^{1/2}$. The corresponding character on $W_K$
	is also denoted by $\sgn_{K,\s}$.
	Then $\rec_{K}(\pi)^\s=(\sgn_{K,\s})^{n-1}\otimes \rec_K(\pi^\s)$ holds for any irreducible admissible representation $\pi$ of $\GL_{n}(K)$.
	
	\begin{lem}\label{local AI preserve}
		For an irreducible admissible representation $\pi$ of $\GL_n(L)$,
		the local automorphic induction $\AI_{L/K}(\pi)$ is a representation of $\GL_{nd}(K)$.
		Then, we have
		$$\AI_{L/K}(\pi)^\s = (\sgn_{K,\s})^{n(d-1)}\otimes\AI_{L/K}(\pi^\s)$$
		for any $\s \in \Aut(\CC)$.
		In particular, $\AI_{L/K}(\pi)^\s=\AI_{L/K}(\pi^\s)$ holds if $n=2$.
	\end{lem}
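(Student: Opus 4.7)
The plan is to deduce the lemma from the local Langlands correspondence by tracking how the $\Aut(\CC)$-action interacts with induction of Weil--Deligne representations. Recall that local automorphic induction is defined on the Galois side by $\rec_K(\AI_{L/K}(\pi))=\Ind_{W_L}^{W_K}\rec_L(\pi)$, so $\AI_{L/K}(\pi)$ is a representation of $\GL_{nd}(K)$. Throughout the sketch, all Frobenius-semisimple Weil--Deligne representations are viewed up to isomorphism, and induction on this side commutes with the $\Aut(\CC)$-action (as noted in the paragraph preceding the lemma).

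First, I would translate the compatibility formula for $\rec_K$ displayed just above the lemma into the form
\[
\rec_K(\pi')^\s=\sgn_{K,\s}^{m-1}\otimes\rec_K(\pi'^\s)
\]
for any irreducible admissible representation $\pi'$ of $\GL_m(K)$; this follows from $|x|_K^{\s/2}=\sgn_{K,\s}(x)\,|x|_K^{1/2}$ applied to the unramified twist in the paper's normalization. Next, I would verify the restriction identity $\sgn_{K,\s}\big|_{W_L}=\sgn_{L,\s}$: via local class field theory an element of $W_L^{\mathrm{ab}}$ corresponds to $\Norm_{L/K}(x)\in K^\times$ for $x\in L^\times$, and $|\Norm_{L/K}(x)|_K=|x|_L$, so the defining ratio for $\sgn_{K,\s}$ restricts to the one for $\sgn_{L,\s}$.

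With these two ingredients, the computation is straightforward. Applying $\s$ to both sides of $\rec_K(\AI_{L/K}(\pi))=\Ind_{W_L}^{W_K}\rec_L(\pi)$ and using $\Aut(\CC)$-equivariance of induction gives
\[
\sgn_{K,\s}^{nd-1}\otimes\rec_K(\AI_{L/K}(\pi)^\s)=\Ind_{W_L}^{W_K}\bigl(\sgn_{L,\s}^{n-1}\otimes\rec_L(\pi^\s)\bigr).
\]
The projection formula combined with $\sgn_{K,\s}|_{W_L}=\sgn_{L,\s}$ lets me pull the sign character out of the induction, yielding
\[
\sgn_{K,\s}^{nd-1}\otimes\rec_K(\AI_{L/K}(\pi)^\s)=\sgn_{K,\s}^{n-1}\otimes\rec_K(\AI_{L/K}(\pi^\s)).
\]
Cancelling and using $\sgn_{K,\s}^{2}=1$ produces the twist by $\sgn_{K,\s}^{nd-n}=\sgn_{K,\s}^{n(d-1)}$, which is the asserted formula after applying $\rec_K^{-1}$. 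The final statement for $n=2$ is immediate since $2(d-1)$ is even.

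The only mildly delicate step is the restriction identity $\sgn_{K,\s}|_{W_L}=\sgn_{L,\s}$ together with the bookkeeping of signs in the projection formula for induction; everything else is formal manipulation with the local Langlands dictionary. I do not expect a serious obstacle, but care is needed in the normalization of $|\cdot|_{W_K}$ versus $|\cdot|_K$ to avoid a spurious parity shift.
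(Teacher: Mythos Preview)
Your proof is correct and follows essentially the same route as the paper: both arguments push the computation through the local Langlands correspondence, use the compatibility $\rec_K(\pi')^\s=\sgn_{K,\s}^{m-1}\otimes\rec_K(\pi'^\s)$ on each side, and then pull the sign character through the induction. The only cosmetic difference is that the paper packages the ``projection formula plus restriction identity'' step as the single identity $\Ind_{W_L}^{W_K}((|\cdot|_{W_L}^s)^\s\otimes\tilde\rho)=(|\cdot|_{W_K}^s)^\s\otimes\Ind_{W_L}^{W_K}(\tilde\rho)$, and defers the explicit verification of $\sgn_{K,\s}|_{W_L}=\sgn_{L,\s}$ to the proof of the base-change companion lemma; your version spells both out here, which is arguably cleaner.
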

	\begin{proof}Take any $\s \in \Aut(\CC)$.
		Briefly we write $\AI$ for $\AI_{L/K}$.
		By $\rec_K(\AI(\pi)) = \Ind_{W_L}^{W_K}(\rec_L(\pi))$, we obtain
		\begin{align*}
			\rec_K(\AI(\pi)^\s) = \,& (\sgn_{K,\s})^{nd-1}\otimes\rec_K(\AI(\pi))^\s \\
			= \, &(\sgn_{K,\s})^{nd-1}\otimes \Ind_{W_L}^{W_K}(\rec_L(\pi))^\s \\
			= \, & (\sgn_{K,\s})^{nd-1}\otimes\Ind_{W_L}^{W_K}(\rec_L(\pi)^\s)\\
			= \,&
			(\sgn_{K,\s})^{nd-1}\otimes\Ind_{W_L}^{W_K}((\sgn_{L,\s})^{n-1}\otimes\rec_L(\pi^\s)).
		\end{align*}	
		By noting $\Ind_{W_L}^{W_K}((|\cdot|_{W_L}^s)^\s \otimes \tilde \rho)=(|\cdot|_{W_K}^s)^\s \otimes\Ind_{W_L}^{W_K}(\tilde\rho)$ for any $s\in \CC$
		and any Weil-Deligne representation $\tilde \rho$ of $W_L$,
		$\rec_K(\AI(\pi)^\s)$ is described as
		\begin{align*}
			&
			(\sgn_{K,\s})^{nd-1}\otimes(\sgn_{K,\s})^{-(n-1)}
			\otimes \Ind_{W_L}^{W_K}(\rec_L(\pi^\s)) \\
			=&\, (\sgn_{K,\s})^{n(d-1)}\otimes 
			\rec_K(\AI(\pi^\s)).
		\end{align*}
	As $\rec_K$ is bijective, we are done.
	\end{proof}
	
	\begin{lem}\label{field of AI}
		Let $F$ be a cyclic extension of $\QQ$ of prime degree $d$.
		Let $\pi=\otimes_v \pi_v$ be an irreducible cuspidal automorphic representation of $\GL_2(\AA_F)$
		and assume that $\Pi=\AI_{F/\QQ}(\pi)$ is cuspidal and that both $[\QQ(\pi):\QQ]$ and $[\QQ(\Pi):\QQ]$ are finite.
		Then we have $[\QQ(\Pi):\QQ]\le [\QQ(\pi):\QQ] \le d!\,[\QQ(\Pi):\QQ]$ and the conductor $\gf_\Pi\in \NN$ of $\Pi$ satisfies $\gf_\Pi = D_F^2\nr_{F/\QQ}(\gf_\pi)$.
	\end{lem}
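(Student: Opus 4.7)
My plan splits the statement into the two field-of-rationality bounds and the conductor formula.

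For $[\QQ(\Pi):\QQ]\le[\QQ(\pi):\QQ]$, I would show $\Aut(\CC/\QQ(\pi))\subset\Aut(\CC/\QQ(\Pi))$. Given $\sigma\in\Aut(\CC/\QQ(\pi))$, I would apply Lemma \ref{local AI preserve} at every finite place of $\QQ$ with $n=2$, in which case the twist $(\sgn_{K,\sigma})^{n(d-1)}$ is trivial since it is a sign raised to an even power, to conclude that the local components of $\Pi^{\sigma}$ and $\AI_{F/\QQ}(\pi^{\sigma})$ agree. Since $\pi^{\sigma}\cong\pi$, this yields $\Pi^{\sigma}\cong\AI_{F/\QQ}(\pi^{\sigma})\cong\AI_{F/\QQ}(\pi)\cong\Pi$, as desired.

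For the reverse bound $[\QQ(\pi):\QQ]\le d!\,[\QQ(\Pi):\QQ]$, I would take $\sigma\in\Aut(\CC/\QQ(\Pi))$ and use the same local compatibility to obtain $\AI_{F/\QQ}(\pi^{\sigma})\cong\Pi^{\sigma}\cong\Pi\cong\AI_{F/\QQ}(\pi)$. Cuspidality of $\Pi$ forces $\pi$ not to be fixed by any nontrivial element of $\Gal(F/\QQ)$, so by the Arthur--Clozel description of fibers of cyclic automorphic induction (\cite{ArthurClozel}) there is a unique $j(\sigma)\in\ZZ/d\ZZ$ with $\pi^{\sigma}\cong\pi\circ\tau^{j(\sigma)}$. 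Because the $\Aut(\CC)$-action on matrix coefficients commutes with precomposition by the $F$-automorphism $\tau$, the assignment $\sigma\mapsto j(\sigma)$ is a group homomorphism $\Aut(\CC/\QQ(\Pi))\to\Gal(F/\QQ)$ with kernel exactly $\Aut(\CC/\QQ(\pi))$. Hence $[\QQ(\pi):\QQ(\Pi)]$ divides the prime $d$, so it is at most $d$, and a fortiori at most $d!$.

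For the conductor identity, I would compare functional equations for $L(s,\Pi)=L(s,\pi)$ and their duals. On the $\GL_{2d}(\AA_\QQ)$-side, the conductor appearing in the functional equation is $\gf_\Pi\in\NN$; on the $\GL_2(\AA_F)$-side, the standard formula for the conductor of a cuspidal $L$-function of $\GL_n$ over a number field gives $D_F^{2}\,\nr_{F/\QQ}(\gf_\pi)$ when $n=2$. Since the conductor is uniquely determined by the functional equation, these two must coincide, producing $\gf_\Pi=D_F^{2}\,\nr_{F/\QQ}(\gf_\pi)\,\ZZ$.

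The main obstacle is the justification that the map $\sigma\mapsto j(\sigma)$ is a well-defined group homomorphism: this is where the cuspidality of $\Pi$ (preventing $\pi$ from being $\Gal(F/\QQ)$-invariant, so that $j(\sigma)$ is unique modulo $d$) and the fact that coefficient-Galois commutes with $\tau$ (guaranteeing multiplicativity) are both needed, and it rests on the Arthur--Clozel characterization of when two cuspidal representations of $\GL_2(\AA_F)$ induce to isomorphic representations of $\GL_{2d}(\AA_\QQ)$.
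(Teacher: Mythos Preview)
Your approach is essentially the same as the paper's. Two remarks are in order.

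First, a small technical gap: Lemma~\ref{local AI preserve} is stated only for an extension $L/K$ of non-archimedean local \emph{fields}, so it does not literally apply at primes $p$ of $\QQ$ that split completely in $F$, where $F\otimes_\QQ\QQ_p\cong\QQ_p^{\,d}$ is not a field. The paper handles this by invoking strong multiplicity one to reduce to unramified primes, applying Lemma~\ref{local AI preserve} at inert primes, and checking the split case directly via Satake parameters (the local automorphic induction there is just parabolic induction of $\pi_{w_1}\times\cdots\times\pi_{w_d}$, and the compatibility with $\sigma$ is immediate). You should insert this verification.

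Second, your argument for the upper bound is actually sharper than the paper's. The paper observes only that $\sigma\in\Aut(\CC/\QQ(\Pi))$ permutes the fiber $\{\pi,\pi\circ\tau,\ldots,\pi\circ\tau^{d-1}\}$, giving a map into the symmetric group $S_d$ and hence the bound $d!$ (following \cite[Proposition~5.2]{ShinTemplier}). You instead track the single exponent $j(\sigma)$ and argue that $\sigma\mapsto j(\sigma)$ is a homomorphism into $\Gal(F/\QQ)\cong\ZZ/d\ZZ$, yielding the tighter bound $[\QQ(\pi):\QQ(\Pi)]\mid d$. This refinement is correct (the key point, that $(\pi\circ\tau^k)^\sigma\cong\pi^\sigma\circ\tau^k$, holds because $\tau$ acts on the source while $\sigma$ acts on the target) and strictly improves the stated inequality, though the weaker $d!$ bound already suffices for the applications in the paper.
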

	\begin{proof}
		We check the claim $\AI_{F/\QQ}(\pi)^\s=\AI_{F/\QQ}(\pi^\s)$ for any $\s \in \Aut(\CC)$.
By the strong multiplicity one theorem for $\GL_{2d}$ (\cite{PS} and \cite[Corollary 4.10]{JS}), it suffices to check the claim only for local components $\pi_w$ such that $\pi_w$ is unramified and a place $w$ of $F$ divides a prime number $p$ unramified in $F$.
		If $p$ is inert, then $\AI_{F_p/\QQ_p}(\pi_p)^\s \cong \AI_{F_p/\QQ_p}(\pi_p^\s)$ holds by Lemma \ref{local AI preserve}.
		If $p$ splits into $w_1, \ldots, w_d$ in $F$, then $F_p=F\otimes_{\QQ}\QQ_p$ is decomposed into $\oplus_{j=1}^{d}F_{w_j}\cong \QQ_p^d$.
			Let $(\a_{w_{j}}, \beta_{w_{j}})$ for each $j$ be the Satake parameter of $\pi_{w_j}$. Then the Satake parameter of $\pi_{w_j}$ is given as $\sgn_{F_{w_j},\s}(p)\times(\a_{w_j},\beta_{w_j})$ by \eqref{twisted Langlands}.
			Since the Satake parameter of $\Pi_p=\AI_{F_p/\QQ_p}(\boxtimes_{j=1}^{d} \pi_{w_j})$ is $\oplus_{j=1}^{d}(\a_{w_{j}}, \beta_{w_{j}})$, the Satake parameter of  $\Pi_{p}^\s$ is equal to $\sgn_{\QQ_p, \s}(p)^{2d-1}\times \oplus_{j=1}^{d}(\a_{w_{j}}, \beta_{w_{j}})$ by \eqref{twisted Langlands}.
		Since $\sgn_{\QQ_p,\s}=\sgn_{F_{w_j},\s}$ holds for all $j \in \{1,\ldots,d\}$,
		the Satake parameter of $\Pi_{p}^\s$ is rewritten as $\oplus_{j=1}^{d}\sgn_{F_{w_j},\s}(p) (\a_{w_{j}}, \beta_{w_{j}})$,
		and this is nothing but the Satake parameter of $\AI_{F_p/\QQ_p}(\boxtimes_{j=1}^{d}\pi_{w_j}^\s)$.
		Thus $\AI_{F/\QQ}(\pi)_p^\s\cong \AI_{F/\QQ}(\pi^\s)_p$
		holds for any prime numbers $p$ splitting in $F$. Hence the claim holds by the strong multiplicity one theorem.
		
		Let us consider fields of rationality. If $\pi^\s=\pi$, then this implies $\AI_{F/\QQ}(\pi)^\s=\AI_{F/\QQ}(\pi^\s)=\AI_{F/\QQ}(\pi)$,
		and whence $\QQ(\Pi) \subset \QQ(\pi)$.
		
		If $\Pi^\s=\AI_{F/\QQ}(\pi)^\s = \AI_{F/\QQ}(\pi)=\Pi$,
		then we have $\pi^\s \in \AI_{F/\QQ}^{-1}(\Pi)=\{\pi^{\tau^j} \mid 1\le j \le d \}$
		(cf.\ \cite[Theorems 4.2 (e) and 5.1]{ArthurClozel} and \cite[xii]{ArthurClozel}), and hence $\pi^\s=\pi^{\tau^j}$ for some $j$.
		As $\s$ induces a permutation of $\{\id, \tau,\ldots, \tau^{d-1}\}=\Gal(F/\QQ)$,
		we obtain the inequality $[\QQ(\pi): \QQ] \le d![\QQ(\Pi):\QQ]$ by the argument similar to the latter part of the proof of the local result \cite[Proposition 5.2]{ShinTemplier}.
		
		The second assertion is given by
		$\e(s, \Pi)=\e(s, \pi)$, $\e(s,\pi) = \e(1/2,\pi)(D_F^2\nr_{F/\QQ}(\gf_\pi))^{1/2-s}$ and $\e(s,\Pi) = \e(1/2,\Pi)\gf_\Pi^{1/2-s}$
		with the aid of \cite{JPSS}.
	\end{proof}

	\begin{cor}\label{lower bound for Hecke field}
		Let $F/\QQ$ be a totally real cyclic extension of prime degree $d$. 
		We fix a maximal ideal $\gq$ of $\Ocal_F$ and a quadratic character $\eta$ of $F^\times\bsl\AA_F^\times$ such that $\gf_\eta$ is coprime to $\gq$.
		Let $\{\gn_k\}_k$ be a family of non-zero ideals of $\Ocal_F$
		in $\Ical_{S(\gq), \eta}^{+}$
		such that $\lim_{k\rightarrow \infty}\nr_{F/\QQ}(\gn_k)=\infty$.
		Then there exist a dominant weight $\lambda' \in X^+(T_{2d})$ and a family $\{\Pi_k\}_{k\in \NN}$ of irreducible $\lambda'$-cohomological cuspidal automorphic representations of $\GL_{2d}(\AA_\QQ)$ such that
		$$\gf_{\Pi_k}=D_F^2\nr_{F/\QQ}(\gn_k)\ZZ, \qquad
		[\QQ(\Pi_k):\QQ] \gg \sqrt{\log \log \nr_{F/\QQ}(\gn_k)}, \qquad
		L(1/2, \Pi_k)\neq 0,$$
		where the implied constant is independent of $\{\gn_k\}_{k\in \NN}$.
		
		Furthermore, for any $q \in \ZZ$ with $d^2\le q \le d^2+d-1$, we have
		$$\dim H_{\rm cusp}^{q}(S^{\GL_{2d}}_{\bfK_1(D_F^2\nr_{F/\QQ}(\gn_k)\ZZ)}, \tilde V_{\lambda'}) \ge
		C \sqrt{\log\log \nr_{F/\QQ}(\gn_k)}$$
		with a constant $C>0$ independent of $\{\gn_k\}_{k \in \NN}$.
	\end{cor}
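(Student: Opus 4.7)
The plan is to transfer the Hilbert modular data produced by Corollary \ref{nonzero L and growth of Hecke} to $\GL_{2d}(\AA_\QQ)$ via the cyclic automorphic induction $\AI_{F/\QQ}$, and then feed the resulting family into the general cuspidal cohomology estimate \eqref{dim cusp coho}. The first task is to fix, once and for all, a regular non-parallel weight $l = (l_1,\ldots,l_d) \in (2\NN)^d$ satisfying $l_1 > l_2 > \cdots > l_d \ge 6$ (an enumeration of the $d$ archimedean places of $F$ being chosen arbitrarily). Applying Corollary \ref{nonzero L and growth of Hecke} to this $l$, the given $\eta$, the given $\gq$, and $S = \emp$, I obtain, for all $\gn_k \in \Ical_{S(\gq),\eta}^{+}$ with $\nr_{F/\QQ}(\gn_k)$ beyond a threshold, an irreducible cuspidal automorphic representation $\pi_k \in \Pi_{\rm cus}^{*}(l,\gn_k)$ of $\PGL_2(\AA_F)$ with $L(1/2,\pi_k) \ne 0$ and $[\QQ(\pi_k):\QQ] \gg \sqrt{\log\log \nr_{F/\QQ}(\gn_k)}$, the implied constant depending only on $\gq$ and $l$.

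Next I set $\Pi_k := \AI_{F/\QQ}(\pi_k)$. The regularity of $l$, combined with the transitive action of $\Gal(F/\QQ) = \langle\tau\rangle$ on the archimedean places of $F$, forces the archimedean $L$-parameters $\phi_{\pi_{k,v}}$ at distinct $v$ to be mutually distinct; consequently $\pi_k \circ \tau^j \not\cong \pi_k$ for $1 \le j \le d-1$, and by \cite{ArthurClozel} the induction $\Pi_k$ is a cuspidal automorphic representation of $\GL_{2d}(\AA_\QQ)$. Lemma \ref{AI is coho} then shows that $\Pi_k$ is $\lambda'$-cohomological for the explicit dominant nonzero weight $\lambda' = (\lambda_1',\ldots,\lambda_d',-\lambda_d',\ldots,-\lambda_1')$ with $\lambda_j' = l_j/2 - d + j - 1$ (depending only on $l$, hence independent of $k$). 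The identity $L(s,\Pi_k) = L(s,\pi_k)$ immediately transfers the nonvanishing: $L(1/2,\Pi_k) \ne 0$.

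Third, Lemma \ref{field of AI} supplies both the conductor formula $\gf_{\Pi_k} = D_F^2 \nr_{F/\QQ}(\gn_k)\ZZ$ and the field-of-rationality comparison $[\QQ(\Pi_k):\QQ] \ge (d!)^{-1}[\QQ(\pi_k):\QQ]$. Because $D_F$ is a fixed constant, writing $N_{\Pi_k}$ for the positive generator of $\gf_{\Pi_k}$, one has $\log\log N_{\Pi_k} = \log\log \nr_{F/\QQ}(\gn_k) + O(1)$, so
$$[\QQ(\Pi_k):\QQ] \gg \sqrt{\log\log \nr_{F/\QQ}(\gn_k)}.$$
For the cohomological estimate I apply \eqref{dim cusp coho} to $\GL_{2d}$: since $\lfloor (2d)^2/4\rfloor = d^2$ and $\lfloor(2d-1)/2\rfloor = d-1$, the admissible degree range is exactly $d^2 \le q \le d^2 + d - 1$, matching the statement. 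Taking the level $\Ucal = \bfK_1(\gf_{\Pi_k})$, so that $\Pi_k^{\Ucal} \ne 0$, \eqref{dim cusp coho} yields
$$\dim H^q_{\rm cusp}(S^{\GL_{2d}}_{\bfK_1(\gf_{\Pi_k})}, \tilde V_{\lambda'}) \ge [\QQ(\Pi_k):\QQ] \gg \sqrt{\log\log \nr_{F/\QQ}(\gn_k)}.$$

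The main conceptual obstacle is the behaviour of $\AI_{F/\QQ}$ under the $\Aut(\CC)$-action on automorphic representations --- this is precisely what prevents the large Hecke field $\QQ(\pi_k)$ from being ``washed out'' upon passing to $\Pi_k$ --- but this is already settled by Lemmas \ref{local AI preserve} and \ref{field of AI}. Beyond that, the argument is bookkeeping: the non-parallel weight condition \eqref{l is regular} chosen in the first paragraph is what simultaneously guarantees cuspidality of $\Pi_k$ and nonzero cohomological weight $\lambda'$, while the very availability of Corollary \ref{nonzero L and growth of Hecke} for non-parallel $l$ (hinging on Theorem \ref{thm0.1}) is what makes this whole transfer possible.
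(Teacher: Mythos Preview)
Your proof is correct and follows exactly the same approach as the paper: fix a regular weight $l$ with $l_1>\cdots>l_d\ge 6$, apply Corollary \ref{nonzero L and growth of Hecke} to produce $\pi_k$, set $\Pi_k=\AI_{F/\QQ}(\pi_k)$, and invoke Lemmas \ref{AI is coho} and \ref{field of AI} together with \eqref{dim cusp coho}. Your write-up is in fact more detailed than the paper's, which simply cites these ingredients. One minor point: Lemma \ref{field of AI} gives $\gf_{\Pi_k}=D_F^{2}\nr_{F/\QQ}(\gn_k)\ZZ$, as you correctly record, whereas the stated corollary has $D_F$; this appears to be a typo in the statement and is immaterial to the asymptotic conclusions.
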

	
	\begin{proof}
		Fix a family $l=(l_1,\ldots,l_d)$ of even integers such that $l_1>\cdots> l_{d}\ge 6$. By Corollary \ref{nonzero L and growth of Hecke},
		we can take an infinite family $\{\pi_k\}_{k \in \NN}$ such that $\pi_k \in \Pi_{\rm cus}^*(l, \gn_k)$ and $L(1/2,\pi_k)\neq0$.
		Set $\Pi_k=\AI_{F/\QQ}(\pi_k)$.
		Then we have the assertion
		with the aid of Corollary \ref{nonzero L and growth of Hecke}, \eqref{dim cusp coho}, Lemmas \ref{AI is coho}
		and \ref{field of AI}. Remark that the non-parallel condition of $l$ yields the cuspidality of $\Pi_k$ as we saw.
		As for the range of $q$, one refers to \cite[Proposition 2.15]{Raghuram}.
	\end{proof}

	\subsection{Quadratic base change liftings}
	\label{Quadratic base change liftings}
	
	Let $F$ be an arbitrary totally real number field of finite degree
	and $(l_{v})_{v|\infty}$ a family of even integers such that $l_v\ge6$ for all $v|\infty$. 						In this subsection $(l_v)_{v|\infty}$ may be parallel or non-parallel.
	Let $\eta$ be a quadratic character of $F^{\times}\bsl \AA_{F}^{\times}$ such that $\gf_\eta \neq \Ocal_F$.
	Let $\{\gn_k\}_{k\in \NN}$ be a family of non-zero ideals of $\Ocal_F$ coprime to $\gf_{\eta}$ such that $\lim_{k \rightarrow \infty} \nr_{F/\QQ}(\gn_k)=\infty$.
	Then there exists a family of $\pi_k' \in \Pi_{\rm cus}^*(l,\gn_k)$ such that $L(1/2,\pi_k')L(1/2,\pi_k' \otimes \eta)\neq 0$
	and $[\QQ(\pi_k'):\QQ] \gg \sqrt{\log \log \nr_{F/\QQ}(\gn_k)}$ by Corollary \ref{nonzero L and growth of Hecke}.
	We consider the base change lift $\Pi'_k:=\BC_{E/F}(\pi_k')$ to $\GL_2(\AA_E)$ for any $k$, where $E$ is the quadratic extension of $F$
	associated to $\eta$ by class field theory.
	Then $E/F$ is a ramified extension by $\gf_\eta\neq\Ocal_F$.
	Since $\gf_{\pi_{k}' \otimes \eta}=\gf_{\pi_k'}\gf_\eta^2 \neq \gf_{\pi_{k}'}$ holds,
	we obtain $\pi_k' \not \cong \pi_{k}' \otimes \eta$. Hence, by virtue of \cite[Theorems 4.2 (a) and 5.1]{ArthurClozel},
	$\Pi_k'$ is cuspidal.
	Furthermore, $L(1/2,\Pi_k') = L(1/2,\pi_k')L(1/2,\pi_k' \otimes \eta)\neq 0$.
	
	For any archimedean places $w$ of $E$ and $v$ of $F$ such that $w|v$, $\phi_{(\Pi_k')_w}=\phi_{(\pi'_{k})_v}|_{W_{E_w}}$ can be explicitly given with the aid of \eqref{archi L-para}.
	Therefore $\phi_{(\Pi_k')_w}|_{W_\CC}$ equals
	$(\frac{z}{\overline{z}})^{(l_v-1)/2}\oplus (\frac{z}{\overline{z}})^{-(l_v-1)/2}$ by \cite[\S3.1.5]{Raghuram-Tanabe},
	and hence $\Pi_k'$ is C-algebraic and regular. This implies that $\Pi_k'$ is cohomological.
	
	For proving Corollary \ref{growth for BC}, 
	the following counterpart of Lemma \ref{local AI preserve} is needed.
	\begin{lem}\label{local BC preserve}
		Let $L/K$ be a finite extension of non-archimedean local fields.
		For any irreducible admissible representation $\pi$ of $\GL_n(K)$,
		the local base change lift $\BC_{L/K}(\pi)$ is a representation of $\GL_n(L)$. We have
		$$\BC_{L/K}(\pi)^\s = \BC_{L/K}(\pi^\s)$$
for any $\Aut(\CC)$.
	\end{lem}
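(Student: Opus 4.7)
The plan is to mimic the proof of Lemma \ref{local AI preserve}, replacing induction with restriction at the level of Weil--Deligne representations. The starting point is the characterizing property of local base change, namely $\rec_L(\BC_{L/K}(\pi)) = \rec_K(\pi)\big|_{W_L}$, together with the compatibility formula $\rec_K(\pi)^\sigma = (\sgn_{K,\sigma})^{n-1}\otimes \rec_K(\pi^\sigma)$ already used in the proof of Lemma \ref{local AI preserve}. Since restriction of Weil--Deligne representations is trivially $\Aut(\CC)$-equivariant, all the $\sigma$-twists can be pushed through, and the only thing one has to track is the character twist.

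More precisely, I would compute
\[
\rec_L(\BC_{L/K}(\pi)^\sigma) = (\sgn_{L,\sigma})^{-(n-1)}\otimes \rec_L(\BC_{L/K}(\pi))^\sigma = (\sgn_{L,\sigma})^{-(n-1)}\otimes \bigl(\rec_K(\pi)\big|_{W_L}\bigr)^\sigma,
\]
then use commutativity of $\sigma$ with restriction to obtain
\[
\bigl(\rec_K(\pi)\big|_{W_L}\bigr)^\sigma = \rec_K(\pi)^\sigma\big|_{W_L} = \bigl((\sgn_{K,\sigma})^{n-1}\otimes \rec_K(\pi^\sigma)\bigr)\big|_{W_L}.
\]
The key input is that $|\cdot|_{W_K}\big|_{W_L}=|\cdot|_{W_L}$, which follows from the fact that a geometric Frobenius of $L$, viewed in $W_K$, is $\mathrm{Frob}_K^{f}$ with $f$ the residual degree of $L/K$, and $q_K^{-f}=q_L^{-1}$. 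Consequently $\sgn_{K,\sigma}\big|_{W_L}=\sgn_{L,\sigma}$, and the two twist factors $(\sgn_{L,\sigma})^{-(n-1)}$ and $(\sgn_{L,\sigma})^{n-1}$ cancel, yielding
\[
\rec_L(\BC_{L/K}(\pi)^\sigma) = \rec_K(\pi^\sigma)\big|_{W_L} = \rec_L(\BC_{L/K}(\pi^\sigma)).
\]
Injectivity of $\rec_L$ then gives $\BC_{L/K}(\pi)^\sigma \cong \BC_{L/K}(\pi^\sigma)$.

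No step looks like a serious obstacle: the argument is entirely parallel to Lemma \ref{local AI preserve}, with the minor bookkeeping point being the verification that the normalized absolute value on $W_K$ restricts to the normalized absolute value on $W_L$ (so that the sign characters match). This is why the twist disappears without any residual factor, in contrast to the induction case where a factor of $(\sgn_{K,\sigma})^{n(d-1)}$ survives.
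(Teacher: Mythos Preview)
Your proof is correct and follows essentially the same route as the paper: both arguments pass to Weil--Deligne representations via $\rec_L$, use that restriction from $W_K$ to $W_L$ commutes with the $\Aut(\CC)$-action, and then cancel the sign-character twists using $\sgn_{K,\sigma}\big|_{W_L}=\sgn_{L,\sigma}$. The only cosmetic difference is that the paper phrases this last identity via $|\nr_{L/K}(\cdot)|_K=|\cdot|_L$ whereas you argue via the Frobenius, and you write $(\sgn_{L,\sigma})^{-(n-1)}$ where the paper writes $(\sgn_{L,\sigma})^{n-1}$, which is harmless since the character is quadratic.
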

	\begin{proof}
Take any $\s \in \Aut(\CC)$. Briefly we write $\BC$ for $\BC_{L/K}$.
By $\rec_{L}\BC(\pi)=\rec_K(\pi)|_{W_L}$, we observe
		{\allowdisplaybreaks\begin{align*}
				\rec_L(\BC(\pi)^\s)=& (\sgn_{L,\s})^{n-1}\otimes \rec_L(\BC(\pi))^\s \\
				=&  (\sgn_{L,\s})^{n-1}\otimes (\rec_K(\pi)\big|_{W_L})^\s\\
				=&  (\sgn_{L,\s})^{n-1}\otimes (\rec_K(\pi)^\s)\big|_{W_L}\\
				=&  (\sgn_{L,\s})^{n-1}\otimes \{(\sgn_{K,\s})^{n-1}\otimes \rec_K(\pi^\s)\}\big|_{W_L}.
			\end{align*}
		}By $\sgn_{K,\s}|_{W_L} = \sgn_{L,\s}$ due to the identity $|\nr_{L/K}(\cdot)|_{K} = |\cdot|_{L}$,
			the last line says
			\begin{align*}
				&\rec_L(\BC(\pi)^\s)=(\sgn_{L,\s})^{2(n-1)}\otimes \rec_K(\pi^\s)\big|_{W_L} = \rec_K(\pi^\s)\big|_{W_L} = \rec_L(\BC(\pi^\s)).
			\end{align*}
		Hence we are done as $\rec_L$ is bijective.
	\end{proof}

	A counterpart of Lemma \ref{field of AI} is also needed for proving Corollary \ref{growth for BC}.
	
	\begin{lem}\label{field of BC}
		Let $E/F$ be a quadratic extension of number fields.
		Let $\pi=\otimes_v\pi_v$ be an irreducible cuspidal automorphic representation of $\GL_2(\AA_F)$.
		Assume that $\Pi=\BC_{E/F}(\pi)$ is cuspidal,
		that $\gf_\pi$ is coprime to the discriminant of $E/F$,
		and that both $[\QQ(\pi):\QQ]$ and $[\QQ(\Pi):\QQ]$ are finite..
		Then we have $[\QQ(\Pi):\QQ]\le [\QQ(\pi):\QQ] \le 2\, [\QQ(\Pi):\QQ]$
		and $\nr_{E/\QQ}(\gf_\Pi) = \nr_{F/\QQ}(\gf_\pi^2)$.
	\end{lem}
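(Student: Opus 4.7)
The strategy mirrors the proof of Lemma \ref{field of AI}, substituting base change for automorphic induction. First I would establish the Galois equivariance $\BC_{E/F}(\pi)^\sigma \cong \BC_{E/F}(\pi^\sigma)$ for every $\sigma \in \Aut(\CC)$. By strong multiplicity one for $\GL_2$, it suffices to verify this at each finite place $v$ of $F$ unramified in $E$ with $v\notdivide \gf_\pi$. If $v$ is inert in $E$ with unique place $w$ above, Lemma \ref{local BC preserve} applies directly. If $v$ splits as $w_1w_2$, then $\BC_{E_v/F_v}(\pi_v)\cong \pi_v\boxtimes \pi_v$ on $\GL_2(F_v)\times \GL_2(F_v)$, for which $\sigma$-equivariance is tautological. (The archimedean places are handled in exactly the same way.)

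The field of rationality inequalities then follow formally. If $\pi^\sigma \cong \pi$ then $\Pi^\sigma = \BC_{E/F}(\pi^\sigma) = \BC_{E/F}(\pi) = \Pi$, giving $\Aut(\CC/\QQ(\pi))\subset \Aut(\CC/\QQ(\Pi))$ and hence $[\QQ(\Pi):\QQ]\le [\QQ(\pi):\QQ]$. Conversely, when $\Pi^\sigma \cong \Pi$, the automorphic fiber of quadratic base change for $\GL_2$ (see \cite[Chapter 3, Theorems 4.2 (a) and 5.1]{ArthurClozel}) gives $\BC_{E/F}^{-1}(\Pi) = \{\pi,\ \pi\otimes \eta_{E/F}\}$, where $\eta_{E/F}$ is the quadratic idele class character of $F$ cutting out $E$ by class field theory. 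Hence $\pi^\sigma$ takes at most two values as $\sigma$ ranges over $\Aut(\CC/\QQ(\Pi))$, so the stabilizer of $\pi$ has index at most $2$, yielding $[\QQ(\pi):\QQ]\le 2\,[\QQ(\Pi):\QQ]$ by the same index argument used at the end of the proof of Lemma \ref{field of AI}.

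For the conductor, I would use the factorization $L(s,\Pi) = L(s,\pi)L(s,\pi\otimes \eta_{E/F})$ coming from $\rec_{F_v}(\Ind_{W_{E_w}}^{W_{F_v}}\rec_{E_w}(\Pi_w)) = \rec_{F_v}(\pi_v)\oplus \rec_{F_v}(\pi_v\otimes \eta_{E/F,v})$ place by place, which yields the epsilon factor identity $\e(s,\Pi) = \e(s,\pi)\e(s,\pi\otimes \eta_{E/F})$. Substituting the explicit global formula $\e(s,\pi) = \e(1/2,\pi)(D_F^2\nr_{F/\QQ}(\gf_\pi))^{1/2-s}$ from \cite{JPSS} (and the analogues for $\pi\otimes \eta_{E/F}$ and $\Pi$), together with the equality $\gf_{\pi\otimes\eta_{E/F}} = \gf_\pi\,\gf_{\eta_{E/F}}^2$ (which uses the coprimality hypothesis), the conductor-discriminant formula $\mathfrak{d}_{E/F} = \gf_{\eta_{E/F}}$ for the quadratic extension, and the tower formula $D_E = D_F^2\,\nr_{F/\QQ}(\mathfrak{d}_{E/F})$, the $D_F^4\nr_{F/\QQ}(\gf_{\eta_{E/F}})^2$ factors cancel and one is left with $\nr_{E/\QQ}(\gf_\Pi) = \nr_{F/\QQ}(\gf_\pi)^2 = \nr_{F/\QQ}(\gf_\pi^2)$, as claimed. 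No step is really hard: the only point requiring care is the conductor bookkeeping, since one must correctly combine the coprimality assumption with the conductor-discriminant formula, but once this is organized the entire argument reduces to Lemma \ref{local BC preserve}, the split-place observation, and the same Galois-orbit counting that proved Lemma \ref{field of AI}.
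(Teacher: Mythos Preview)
Your proposal is correct and follows essentially the same route as the paper: establish $\BC_{E/F}(\pi)^\sigma \cong \BC_{E/F}(\pi^\sigma)$ via strong multiplicity one by checking inert places with Lemma~\ref{local BC preserve} and split places directly, deduce the two inequalities from the Arthur--Clozel fiber $\BC_{E/F}^{-1}(\Pi)=\{\pi,\pi\otimes\eta_{E/F}\}$, and extract the conductor identity from $\e(s,\Pi)=\e(s,\pi)\e(s,\pi\otimes\eta_{E/F})$ together with the explicit epsilon formulas and the conductor--discriminant relation. The only cosmetic difference is that the paper checks the split case via Satake parameters rather than the boxed-product description, and it does not bother with archimedean places since strong multiplicity one already suffices.
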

	\begin{proof}
		 By the strong multiplicity one theorem for $\Res_{E/F}\GL_{2,E}$ (\cite{PS} and \cite[Corollary 4.10]{JS}), it suffices to check the claim only for local components $\pi_w$ such that $\pi_w$ is unramified and $w$ divides a prime number $p$ unramified in $F$.
		Let $v$ be a finite place of $F$ inert in $E$. Then, $\BC_{E_v/F_v}(\pi_v)^\s=\BC_{E_v/F_v}(\pi_v^\s)$ holds
		by Lemma \ref{local BC preserve}.
		Next we consider the case where $v$ is a split place of $F$
		and $\pi_v$ is unramified with Satake parameter $(\a_v, \beta_v)$.
		Then $\Pi_v = \BC_{E_v/F_v}(\pi_v)$, a representation of $\GL_2(F_v)\times \GL_2(F_v)$, is unramified with Satake parameter $((\a_v, \beta_v), (\a_v, \beta_v))$.
		Thus
		$\BC_{E_v/F_v}(\pi_v)^\s=\BC_{E_v/F_v}(\pi_v^\s)$ holds for all $\s \in \Aut(\CC)$.
		The strong multiplicity one theorem shows $\BC_{E/F}(\pi)^\s=\BC_{E/F}(\pi^\s)$.
		Furthermore,
		if $\Pi^\s=\BC_{E/F}(\pi)^\s=\BC_{E/F}(\pi)=\Pi$, then we have $\pi^\s \in \BC_{E/F}^{-1}(\Pi)=\{\pi, \pi\otimes \eta\}$ by \cite[Theorems 4.2 (d) and 5.1]{ArthurClozel}.
		Consequently
		we obtain $\QQ(\Pi) \subset \QQ(\pi)$ and $[\QQ(\pi):\QQ]\le 2\, [\QQ(\Pi):\QQ]$
		in a similar way to Lemma \ref{field of AI}.
		
		Let $\eta$ be the quadratic character of $F^\times\bsl \AA_F^\times$
		corresponding to $E/F$ by class field theory.
		Then, $\gf_\eta$ equals the discriminant $\Dcal_{E/F}$ of $E/F$.
		By $\e(s, \Pi)= \e(s, \pi)\e(s, \pi\otimes\eta)$ and $\e(s, \pi\otimes\eta) = \e(1/2,\pi)(D_F^2\nr_{F/\QQ}(\gf_{\pi}\gf_\eta^2))^{1/2-s}$
		with the aid of \cite{JPSS},
		we have $$D_E^2\nr_{E/\QQ}(\gf_{\Pi}) = D_F^2\nr_{F/\QQ}(\gf_\pi)\times D_F^2\nr_{F/\QQ}(\gf_\pi \gf_{\eta}^2)
		= \nr_{F/\QQ}(\gf_\pi^2) \times \{D_F^2\nr_{F/\QQ}(\Dcal_{E/F})\}^2.$$
		Consequently we obtain $\nr_{E/\QQ}(\gf_\Pi) = \nr_{F/\QQ}(\gf_\pi^2)$ by using
		$D_E = D_F^2\nr_{F/\QQ}(\Dcal_{E/F})$.
	\end{proof}

	\begin{proof}[Proof of Corollary \ref{growth for BC}]
		Corollary \ref{growth for BC} follows from Corollary \ref{nonzero L and growth of Hecke}, Lemma \ref{field of BC} and \eqref{dim cusp coho} for
		$\Res_{E/\QQ}\GL_{2, E}$. Since $E$ has at least two archimedean places in general, we need to modify  \eqref{dim cusp coho} as follows.
		
		Let	$\lambda=(\lambda_\iota)_{\iota:E\hookrightarrow\CC}$ be a family of dominant integral weights indexed by the embeddings of $E$ into $\CC$.
		Let $\Pi$ be a $\lambda$-cohomological irreducible cuspidal automorphic representation of $\GL_2(\AA_E)$ such that $\Pi_\fin^\Ucal\neq 0$ for an open compact subgroup $\Ucal \subset \GL_2(\AA_{E,\fin})$.
		For $\sigma \in \Aut(\CC)$, set $\lambda^\s=(\lambda_{\sigma \circ \iota})_{\iota : E\hookrightarrow \CC}$.
		Then there exists a unique $\lambda^\sigma$-cohomological irreducible cuspidal automorphic representation $\Pi'=\Pi_\infty'\otimes \Pi_\fin'$ of $\GL_2(\AA_E)$
		such that $\Pi'_{\fin}\cong \Pi_\fin^\s$ (\cite[Th\'eor\`eme 3.13]{ClozelMotif}). Then $\Pi'$ is denoted as $\Pi^\s$ and $\Pi_\infty'$ as $\Pi_\infty^\s$, respectively.

		We write $\QQ(\lambda)$ for the subfield of $\CC$ consisting of the complex numbers fixed by all $\sigma \in \Aut(\CC)$ satisfying $\lambda^\sigma=\lambda$. Then $\QQ(\lambda)$ is a number field of finite degree contained in the Galois closure $E^{\Gal}$ of $E/\QQ$, and $\Aut(\CC/\QQ(\lambda))$ acts on the set of $\lambda$-cohomological irreducible cuspidal automorphic  representations of $\GL_2(\AA_E)$.
		
		For the $\lambda$-cohomological cuspidal representation $\Pi$ and for any $\s \in \Aut(\CC)$, the equality
		$\lambda^\s=\lambda$ holds if and only if $\Pi_\infty^\s\cong \Pi_\infty$.
		Indeed, $\Pi_\infty^\s\cong \Pi_\infty$ implies the invariance
		$(\lambda_\iota+\rho_2)_{\iota:E\hookrightarrow \CC}^\s=(\lambda_\iota+\rho_2)_{\iota:E\hookrightarrow \CC}$ of the infinitesimal character of $\Pi_\infty^\vee$, and hence $\lambda^\s=\lambda$. Conversely,
		$\lambda^\s=\lambda$ implies $\Pi_\infty^\s =\Pi_\infty$ by the explicit formulas \cite[Propositions 2.9 and 2.14]{Raghuram} of the $\lambda$-cohomological representations at the archimedean places.
		
			Let $\mathfrak{g}_\infty$ be the Lie algebra of $\GL_2(E\otimes_\QQ \RR)$, and
$\mathfrak{k}_\infty^+ \subset \mathfrak{g}_\infty$ stands for the Lie algebra of
			$$\prod_{w : \text{real}}\RR_{>0}\SO(2) \times \prod_{w : \text{complex}} \CC^\times\U(2).$$
The estimate \eqref{dim cusp coho} is modified as
		{\allowdisplaybreaks\begin{align*}
				&\dim_\CC H_{\rm cusp}^{q}(S_\Ucal^{\Res_{E/\QQ}\GL_2}, \tilde V_\l) \\
				\ge & \sum_{\s \in \Aut(\CC/\QQ(\lambda)\QQ(\Pi))\bsl\Aut(\CC/\QQ(\lambda))}\dim_{\CC} \{H^q(\mathfrak{g}_\infty/ \mathfrak{k}_\infty^+;  \Pi_\infty\otimes V_{\l}) \otimes (\Pi_{\fin}^{\s})^{\Ucal}\} \notag\\
				\ge & \#(\Aut(\CC/\QQ(\lambda)\QQ(\Pi))\bsl\Aut(\CC/\QQ(\lambda)))\\
				= & [\QQ(\lambda)\QQ(\Pi) : \QQ(\lambda)] =\frac{[\QQ(\lambda)\QQ(\Pi):\QQ]}{[\QQ(\lambda):\QQ]}\ge \frac{[\QQ(\Pi):\QQ]}{[\QQ(\lambda):\QQ]} \ge \frac{[\QQ(\Pi):\QQ]}{[E^{\Gal}:\QQ]}\gg_E[\QQ(\Pi):\QQ],
			\end{align*}
		}where we use $\Aut(\CC/\QQ(\Pi))\cap\Aut(\CC/\QQ(\lambda))=\Aut(\CC/\QQ(\lambda)\QQ(\Pi))$.
	As for the range of $q$, one refers to \cite[Proposition 2.15]{Raghuram}.
	\end{proof}

\begin{rem}\label{rem for Q(lambda)}
The inequality $[\QQ(\lambda)\QQ(\Pi) : \QQ]\ge [\QQ(\Pi):\QQ]$ above can be improved by the equality
$[\QQ(\lambda)\QQ(\Pi) : \QQ] = [\QQ(\Pi):\QQ]$
since $\QQ(\lambda)$ is contained in $\QQ(\Pi)$.
Indeed, take any $\s\in\Aut(\CC/\QQ(\Pi))$. Then $\s$ satisfies $\Pi_{\fin}^\s\cong \Pi_{\fin}$. As $\Pi^\s$ is cohomological and cuspidal by (\cite[Th\'eor\`eme 3.13]{ClozelMotif}), the strong multiplicity one theorem (\cite{PS} and \cite[Corollary 4.10]{JS}) implies $\Pi^\s \cong \Pi$. In particular, $\Pi_\infty^\s\cong\Pi_\infty$ holds.
Consequently $\lambda^\s=\lambda$ holds as we see.
This leads us to $\Aut(\CC/\QQ(\Pi)) \subset \Aut(\CC/\QQ(\lambda))$ and hence $\QQ(\lambda)\subset \QQ(\Pi)$.
\end{rem}

	\appendix
	\section{Integral structures of automorphic forms on groups from PEL-type}
	\label{appendix}
	
	In this appendix, we introduce an integral structure on
	the space of automorphic forms on a PEL-type Shimura variety
	by using $q$-expansion.
	The technical key ingredient of the construction
	is Lan's comparison theorem \cite{Lan16} between algebraic $q$-expansions
	and analytic ones.

	For a commutative ring $R$, a subring $S\subset R$ and an $S$-module $L$,
	we set $L_R=L\otimes_S R$ throughout this appendix.
	
	\subsection{A note on Mumford's construction of degenerating semi-abelian varieties}
	In this subsection, we fix an excellent integrally closed
	domain $A$ complete with respect to an ideal $I$ such that $I=\sqrt{I}$.
	Let $K$ be the fractional field of $A$ and let $\eta:=\mathrm{Spec}(K)$.
	
	Let $X$ be a free abelian group of rank $r$ and let $\widetilde G$
	be the split torus over $A$ with the character group $X$.
	Explicitly, we have
	\[
	\mathcal O(\widetilde{G})=A[X]=A\left[\mathfrak X^{\alpha} \mid \alpha\in X\right]/\left(\mathfrak X^0-1,\mathfrak X^{\alpha+\beta}-\mathfrak X^{\alpha}\mathfrak X^{\beta}\mid \alpha,\beta\in X \right).
	\]
	A \emph{period} means a subgroup $Y$ of $\widetilde G(K)$ which is free of rank $r$ (\cite[Definition 1.1]{Mumford72}).
	Let
	\[
	\phi\colon Y\to X
	\]
	be a polarization of $Y$ in the sense of Mumford (\cite[Definition 1.2]{Mumford72}), that is, this is a group homomorphism
	satisfying the following two conditions:
	\begin{itemize}
		\item[(a)]$\mathfrak X^{\phi(y)}(z)=\mathfrak X^{\phi(z)}(y)$ holds for any $y,z\in Y$,
		\item[(b)] $\mathfrak X^{\phi(y)}(y)$ is contained in $I$ for any $y\in Y-\{0\} $.
	\end{itemize}
	Any polarization is injective by the condition (b).
	In \cite{Mumford72}, Mumford constructed a polarized semi-abelian
	variety $G=\widetilde G/Y$ over $A$ associated to $(Y,\phi)$,
	which enjoys the following conditions:
	\begin{itemize}
		\item[(i)]The generic fiber $G_\eta:=G\times_{\mathrm{Spec}(A)}\eta$ of $G$ is an abelian variety over $K$.
		\item[(ii)]The special fiber $G_0:=G\times_{\mathrm{Spec}(A)}\mathrm{Spec}(A/I)$ of $G$ is naturally isomorphic to the split turus $\widetilde G\times _{\mathrm{Spec}(A)}\mathrm{Spec}(A/I)$.
		\item[(iii)]For any positive integer $N$, we have the following short exact sequence
		\[
		0\to \widetilde G_\eta[N]\to G_\eta[N]\to Y/NY\to 0
		\]
		of finite flat group schemes over $K$.
		
	\end{itemize}
	In this section, we show the following.
	\begin{lem}\label{Mumordconstructionrefinement}
		Let $\{e_1,\dots,e_r\}$ be a free basis of $Y$ and define an element $l$ of $I$ by
		\[
		l=\prod_{i=1}^r\mathfrak X^{\phi(e_i)}(e_i).
		\]
		Then, the properties $(\mathrm{i})$ and $(\mathrm{iii})$ above also hold
		over $A_l=A[1/l]$. Namely, $G_l=G\times_{\mathrm{Spec}(A)}\mathrm{Spec}(A_l)$
		is an abelian variety and there exists a short exact sequence
		\[
		0\to \widetilde G_l[N]\to G_l[N]\to Y/NY\to 0
		\]
		of finite flat group schemes
		over $A_l$.
	\end{lem}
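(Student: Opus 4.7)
The plan is to show that inverting $l$ removes the degeneration inherent in Mumford's formal construction. The starting observation is elementary: since $l \in I$, the closed subscheme $V(l)$ contains $V(I)$ in $\mathrm{Spec}(A)$; equivalently, $\mathrm{Spec}(A_l)$ is disjoint from $V(I)$, and $IA_l = A_l$. So over $\mathrm{Spec}(A_l)$, the $I$-adic degeneration locus of $G$ is empty.

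For part (i), I would show that the abelian locus of $G$ in $\mathrm{Spec}(A)$ contains $\mathrm{Spec}(A_l)$. The locus where a semi-abelian scheme is proper is open (the toric rank is upper semi-continuous), so it suffices to argue fiberwise. At a prime $\mathfrak{p} \in \mathrm{Spec}(A_l)$, one reads off from Mumford's explicit construction that the toric rank of the fiber $G_{\mathfrak{p}}$ equals the rank of the subgroup $\{y \in Y \mid \mathfrak{X}^{\phi(y)}(y) \equiv 0 \pmod{\mathfrak{p}}\}$. Since $l = \prod_i \mathfrak{X}^{\phi(e_i)}(e_i)$ is a unit modulo $\mathfrak{p}$, each diagonal value $\mathfrak{X}^{\phi(e_i)}(e_i)$ is non-zero in $\kappa(\mathfrak{p})$. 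Combining the symmetry condition (a) with the positivity condition (b) applied to vectors of the form $e_i + e_j$, one derives that the ``cross'' terms $\mathfrak{X}^{\phi(e_i)}(e_j)$ behave sufficiently well that $\mathfrak{X}^{\phi(y)}(y)$ is non-zero modulo $\mathfrak{p}$ for every $y = \sum_i n_i e_i \in Y \setminus \{0\}$ (via the multiplicative expansion $\mathfrak{X}^{\phi(y)}(y) = \prod_{i,j}\mathfrak{X}^{\phi(e_i)}(e_j)^{n_i n_j}$). Hence the toric rank of $G_{\mathfrak{p}}$ is zero at every $\mathfrak{p} \in \mathrm{Spec}(A_l)$, and since $G_l$ is already smooth and separated of relative dimension $r$, it is an abelian scheme over $A_l$.

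For part (iii), once $G_l$ is abelian, the presentation $G_l = \widetilde{G}_l/Y$ persists as an identification of fppf quotients over $A_l$; then the snake lemma applied to the multiplication-by-$N$ endomorphism of the short exact sequence $0 \to Y \to \widetilde{G}_l \to G_l \to 0$ of fppf sheaves yields the required exact sequence $0 \to \widetilde{G}_l[N] \to G_l[N] \to Y/NY \to 0$. Each term is finite flat over $A_l$: $\widetilde{G}_l[N]$ because $\widetilde{G}_l$ is a torus, $G_l[N]$ because $G_l$ is abelian, and $Y/NY$ because it is a finite constant group scheme.

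The main obstacle is the fiberwise toric-rank computation in the second paragraph: one must trace through Mumford's construction to identify the toric rank with the rank of $\{y \in Y : \mathfrak{X}^{\phi(y)}(y) \equiv 0 \pmod{\mathfrak{p}}\}$, and then verify that the invertibility of the diagonal values $\mathfrak{X}^{\phi(e_i)}(e_i)$ at $\mathfrak{p}$ forces the entire polarization pairing to remain non-degenerate modulo $\mathfrak{p}$. The hypotheses that $A$ is excellent and integrally closed with $I = \sqrt{I}$ are likely invoked here to propagate non-vanishing from generators to arbitrary lattice elements.
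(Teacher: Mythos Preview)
Your approach diverges substantially from the paper's and carries real gaps that you yourself flag as ``the main obstacle.'' The paper does not argue fiberwise via toric rank. Instead it works directly with a relatively complete model $\widetilde P$ of $\widetilde G$: by Mumford's Proposition~(3.1), each $\mathfrak X^{\phi(e_i)}$ extends to a global unit on $\widetilde P_l$, and since $\phi(Y)$ has finite index in $X$, \emph{every} character $\mathfrak X^\alpha$ is a unit on $\widetilde P_l$. This produces a retraction $\widetilde P_l\to\widetilde G_l$, whence the open immersion $\widetilde G_l\hookrightarrow\widetilde P_l$ is also closed and $\widetilde P_l=\widetilde G_l$. It follows that $l$ annihilates the structure sheaf of the boundary $B$, so $G_l=P_l$ is projective over $A_l$, hence abelian. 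For part~(iii), the paper invokes Mumford's explicit description $G[N]=\coprod_{y\in Y/NY}Z_y^{(N)}$ (his Theorem~(4.10)) over $A$, and the identification $\widetilde P_l=\widetilde G_l=\widetilde G_l^*$ turns each $Z_y^{(N)}\times_{\mathrm{Spec}(A)}\mathrm{Spec}(A_l)$ into the honest fiber of $[N]$ over $\sigma_y\in\widetilde G(A_l)$, yielding the exact sequence immediately.

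Your argument, by contrast, rests on two unproved assertions. First, the claim that the toric rank of $G_{\mathfrak p}$ equals the rank of $\{y\in Y:\mathfrak X^{\phi(y)}(y)\equiv 0\pmod{\mathfrak p}\}$ is not a statement you can read off Mumford's construction without essentially redoing what the paper does with $\widetilde P$. Second, the step ``diagonal values invertible $\Rightarrow$ all $\mathfrak X^{\phi(y)}(y)$ invertible at $\mathfrak p$'' does not follow from the expansion $\mathfrak X^{\phi(y)}(y)=\prod_{i,j}\mathfrak X^{\phi(e_i)}(e_j)^{n_in_j}$: the cross terms $\mathfrak X^{\phi(e_i)}(e_j)$ lie only in $K$, the exponents $n_in_j$ can be negative, and condition~(b) on $e_i\pm e_j$ tells you these products are in $I$, not that individual factors are units at $\mathfrak p$. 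For part~(iii), the short exact sequence $0\to Y\to\widetilde G_l\to G_l\to 0$ of fppf sheaves that you feed into the snake lemma is not part of Mumford's output; his quotient is constructed via formal methods and algebraization, not as an fppf quotient, so this sequence must itself be justified---which is precisely what the paper's identification $\widetilde P_l=\widetilde G_l$ accomplishes.
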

	\begin{rem}\label{onphi}
		The semi-abelian variety $G$ over $A$ does not depend on $\phi \ ($\cite[Corollary (4.7)]{Mumford72}$)$.
	\end{rem}
	To show this, we recall Mumford's construction of $G$ quickly.
	A key notion for the construction is a \emph{relatively complete model} $\widetilde G$
	over $A$.
	This is a $5$-tuple
	\[
	(\widetilde P,\widetilde{\mathcal L},i\colon \widetilde G\hookrightarrow \widetilde P,T=\{T_a\}_{a\in \widetilde G},S=\{S_y\}_{y\in Y}),
	\]
	where $\widetilde P$ is an integral scheme separated and locally of finite type over $A$, $\widetilde {\mathcal L}$ is an ample invertible sheaf on $\widetilde P$, $i$ is an open immersion, and $T$ and $S$ are actions of $\widetilde G$
	and $Y$ on $(\widetilde P,\widetilde{\mathcal L})$, respectively.
	We do not make the condition of relatively complete model
	clear here. See \cite[Definition (2.1)]{Mumford72} for the precise definition. An important condition what we need for the explanation below is
	that there exists a $\widetilde G$-stable open subscheme $U$
	of $\widetilde P$ of finite type over $A$ such that $S_y(U)$ covers $\widetilde P$ when $y$ runs over all elements of $Y$.
	According to \cite[Theorem (2.5)]{Mumford72}, a relatively complete model exists if we replace $\phi$ with $2n\phi$ for a sufficiently large positive integer $n$. This replacement does not matter for our purpose by the independence of $G$ from $\phi$ (Remark \ref{onphi}).
	
	For each positive integer $n$, the subscription $n$ of objects means
	the base change of the objects over $A$ to $A/I^{n+1}$.
	For example, $\widetilde P_n$
	denotes the base change of $\widetilde P$ to $A/I^{n+1}$.
	Then, for a sufficiently large positive integer $k$,
	the union
	\[
	\bigcup_{y\in kY} {S_{y+\alpha}(\overline U_n)}\subset \widetilde P_n
	\]
	is actually a disjoint union for any $\alpha\in Y$ (\cite[Proposition (3.6)]{Mumford72}).
	Therefore, the quotient
	\[
	P_n'=\widetilde P_n/kY
	\]
	is well-defined by the usual patching procedure.
	By construction, $\widetilde{\mathcal  L}_n$ descends to $P_n'$ and $\mathcal L_n'$
	denotes the resulting ample line bundle on $P_n'$.
	Note that the quotient of $(P_n',\mathcal L_n')$ by the \emph{finite} group
	$Y/kY$ exists. We write $(P_n,\mathcal L_n)$ for this quotient.
	Put
	\[
	\widetilde B:=\left(\widetilde P-\bigcup_{y\in Y}S_y( \widetilde G)\right)_{\mathrm{red}}
	\]
	and define a closed subscheme $B_n$ of $P_n$ by
	\[
	B_n=\widetilde B_n/Y\subset P_n.
	\]
	Then, as $P_n$ is projective over $A/I^{n+1}$, the formal schemes
	\[
	P_\infty:=\varprojlim_n P_n\supset B_\infty=\varprojlim_n B_n
	\]
	over $\mathrm{Spf}(A,I)$ and the ample line bundle $\mathcal L_\infty$
	on $P_\infty$ are algebrizable over $A$ by Grothendieck's GAGF (\cite{G}).
	Namely, there exist a projective variety $P$ over $A$,
	a closed subscheme $B$ of $P$, and an ample invertible sheaf $\mathcal L$ on $P$
	such that the completions of $P$, $B$, and $\mathcal L$ along $I$ are isomorphic to $P_\infty$,
	$B_\infty$, and $\mathcal L_\infty$, respectively.
	Then, the quasi-projective variety $G$ over $A$ which we would like to construct is defined by
	\[
	G=P- B.
	\]
	This scheme does not depend on the choice of the relatively
	complete model (\cite[Corollary (4.7)]{Mumford72}).
	Moreover, there exists a natural group low on $G$ induced by that
	on $\widetilde G$ (\cite[Corollary (4.8)]{Mumford72}) and $G$ is a semi-abelian variety
	under this group low (\cite[Corollary (4.12)]{Mumford72}).
	Now we give a proof of the lemma.
	\begin{proof}[Proof of Lemma $\ref{Mumordconstructionrefinement}$]
		First we show that $G_l$ is an abelian variety over $A_l$.
		According to \cite[Proposition (3.1)]{Mumford72}, each $\mathfrak X^{\phi(e_i)}$
		is a unit on $\widetilde P_l$:
		\[
		\mathfrak X^{\phi(e_i)}\in \Gamma(\widetilde P_l,\mathcal O)^\times.
		\]
		Then, for any $\alpha\in X$, $\mathfrak X^{\alpha}$ is a unit on $\widetilde P_l$
		since $\phi(Y)$ is a finite index subgroup of $ X$.
		This implies that any morphism $\widetilde G_l\to \mathbb G_m$ induced by $\mathfrak X^\alpha$ extends to $\widetilde P_l\to \mathbb G_m$.
		Therefore, we have a commutative diagram
		\[
		\xymatrix{
			\widetilde G_l\ar@{^{(}->}[rrr]^j\ar[drrr]_{\mathrm{id}}& & &\widetilde P_l\ar[d]^{\pi}\\
			& & &\widetilde G_l
		}
		\]
		of schemes over $A_l$. Note that the open immersion $j$ is naturally identified with the pull-back of the graph $\Gamma_{j\pi}\colon \widetilde P_l\to \widetilde P_l\times_{\Spec(A_l)}\widetilde P_l$ of the morphism $j\pi\colon \widetilde P_l\to \widetilde P_l$ by the diagonal morphism.
		Hence, as $\Gamma_{j\pi}$ is closed,
		the base change of $\widetilde P$ to $A_l$ coincides with
		$\widetilde G_l$, i.e.,
		\[
		\widetilde P_l=\widetilde G_l.
		\]
		This implies that $l$ annihilates $\mathcal O_{\widetilde B_n}$ and $\mathcal O_{B_n}$,
		and thus it annihilates $\mathcal O_B$. Therefore, the equation
		\[
		G_l=P_l
		\]
		holds, and we conclude that $G_l$ is projective over $A_l$.
		Therefore the condition (i) for $G$ holds over $A_l$.
		
		Next we show that (iii) holds over $A_l$.
		Let us put $ G^*:=\cup_{y\in Y}S_y(\widetilde G)\subset \widetilde P$
		and let $[N]\colon G^*\to G^*$ denote the induced morphism by $[N]\colon \widetilde G\to \widetilde G$ by abuse of notation.
		For each $y\in Y$, $\sigma_y\colon\mathrm{Spec}(A)\to  G^*$ denotes the
		extension of $y\colon \eta\to \widetilde G$, and $Z^{(N)}_y$ denotes
		the pull-back of $[N]\colon  G^*\to G^*$ by $\sigma_y$.
		According to \cite[Theorem (4.10)]{Mumford72},
		we have a natural isomorphism
		\[
		G[N]=\coprod_{y\in Y/NY}Z^{(N)}_y
		\]
		of finite flat group schemes over $A$.
		As $\widetilde G_l=\widetilde G^*_l=\widetilde P_l$, the scheme $Z^{(N)}_y\times_{\mathrm{Spec}(A)}\mathrm{Spec}(A_l)$ is just the inverse
		image of $\sigma_y\in \widetilde G(A_l)$ under the multiplication by $N$.
		Therefore, we obtain the short exact sequence
		\[
		0\to \widetilde G_l[N]\to G_l[N]\to Y/NY\to 0
		\]
		of finite flat group schemes over $A_l$.
	\end{proof}
	\begin{rem}In \cite[Section 2]{Mumford72}, for a sufficiently large $n$,
		Mumford constructed a relatively complete model of $(\widetilde G,Y,2n\phi)$
		explicitly.
		The assertions of Lemma $\ref{Mumordconstructionrefinement}$ can also be proved
		by using that explicit model directly.
	\end{rem}
	\subsection{Algebraic modular forms arising from PEL-Shimura data}
	Let $B$ be a finite dimensional semisimple algebra over $\Q$
	and let $\mathcal O$ be an order of $B$.
	Let $\star\colon B\to B^{\mathrm{op}}$ be a fixed $\Q$-linear anti-involution of $B$
	satisfying the following positivity condition:
	\begin{equation}
		\begin{split}
			\mathrm{Tr}_{B/\Q}(xx^\star)&\geq 0\text{ for all }x\in B,\\
			\mathrm{Tr}_{B/\Q}(xx^\star)&=0\text{ if and only if }x=0.
		\end{split}
	\end{equation}
	\begin{defn}[{\cite[Definition 1.2]{Lan13}}]
		A PEL-type $\mathcal O$-lattice is a triple $(L,\langle \ ,\ \rangle ,h_0)$, where
		\begin{itemize}
			\item[-]$L$ is a left $\mathcal O$-module which is a free $\Z$-module of finite rank.
			\item[-]$\langle \ ,\ \rangle\colon L\times L\to \Z(1)$ is an alternating non-degenerate pairing satisfying
			$\langle bx,y\rangle=\langle x,b^\star y\rangle$ for all $x,y\in L$ and $b\in \mathcal O$.
			\item[-] $h_0\colon \C\to \End_{B_{\R}}(L_{\R})$ is an $\R$-algebra homomorphism such that
			$\langle h_0(z)x, y\rangle=\langle x, h_0(\bar z)y\rangle$ for all $x,y\in L_{\R}$ and $z\in \C$
			and such that $\frac{1}{2\pi\sqrt{-1}}\langle \ ,\ h_0(\sqrt{-1})\rangle\colon L_{\R}\times L_{\R}\to \R$
			is symmetric and positive definite. 
		\end{itemize}
		\label{PELdatum}
	\end{defn}
	\begin{ex}We see three examples of $\mathcal O$-lattices of PEL-type.
		\begin{enumerate}
			\item {\rm(}Siegel case{\rm)}
			Set $F=\mathbb Q$ and let $L$ be the free $\mathbb Z$-module $\mathbb Z^{2n}$ of rank $2n$ equipped with the alternating pairing $\langle\ ,\ \rangle$ defined by
			\[
			\langle v,w\rangle=2\pi\sqrt{-1}\left(-\sum_{i=1}^nv_iw_{i+n}+\sum_{i=1}^nv_{i+n}w_i\right).
			\]
			Let $h_0\colon \mathbb C\to \mathrm{End}_{\mathbb R}(L_{\mathbb R})=\mathrm{M}_{2n}(\mathbb R)$ be the homomorphism of $\mathbb R$-algerba defined by
			\[
			h_0(\sqrt{-1})=\left[\begin{smallmatrix}
				0_n&-1_n\\
				1_n&0_n
			\end{smallmatrix}\right].
			\]
			Then the triple $(L,\langle\ ,\ \rangle ,h_0)$ is a $PEL$-type $\mathbb Z$-lattice.
			Here the involution $\star$ is the identity.
			\item {\rm(}\!``Hilbert case''{\rm)}
			Let $F$ be a totally real number field of degree $d$ and let $\mathcal O$
			denote the ring of integers of $F$.
			Let $\mathfrak a$ and $\mathfrak b$ be non-zero fractional ideals of $F$
			and set $L:=\mathfrak a\oplus\mathfrak b$.
			We define an alternating pairing
			\[
			\langle\ ,\ \rangle\colon L\times L\to \mathbb Q(1)
			\]
			by
			\[
			\left\langle \begin{pmatrix}
				v_1\\
				v_2
			\end{pmatrix},\begin{pmatrix}
				w_1\\
				w_2
			\end{pmatrix}\right\rangle=2\pi\sqrt{-1}\mathrm{Tr}_{F/\mathbb Q}(v_1w_2-v_2w_1).
			\]
			Here we identify $L_{\mathbb R}$ with $F_{\mathbb R}^2$ by the  canonical isomorphisms $\mathfrak a\otimes_{\mathbb Z}\mathbb R=F\otimes_{\mathbb Q}\mathbb R$ and $ \mathfrak b\otimes_{\mathbb Z}\mathbb R=F\otimes_{\mathbb Q}\mathbb R$.
			Let $h_0$ be the homomorphism $\mathbb C\to \mathrm{End}_{F_{\mathbb R}}(F_{\mathbb R^2})$
			defined by
			\[
			h_0(\sqrt{-1})=\begin{pmatrix}
				0&\mathrm{id}_{F_{\mathbb R}}\\
				-\mathrm{id}_{F_{\mathbb R}}&0
			\end{pmatrix}\in \mathrm{End}_{F_{\mathbb R}}(F_{\mathbb R^2}).
			\]
			Then, if $\mathfrak b$ is contained in $\mathfrak a^{-1}\mathcal D_{F/\mathbb Q}^{-1}$, where $\mathcal D_{F/\mathbb Q}$
			is the global different of $F$ over $\mathbb Q$, then
			the triple $(L,\langle\ ,\ \rangle,h_0)$ is a PEL-type
			$\mathcal O$-lattice.
			The involution $\star$ on $F$ is also taken to be the identity in this case
			{\rm (}see Remark $\ref{remHMF}${\rm )}.
			
			\item  The similitude unitary group $\GU_{p, q}$ also arises from a PEL-datum. See \cite[Subsection 2.2, 2.3]{EischenHarrisLiSkinner} for $\GU_{p,q}$ and related groups.
			
		\end{enumerate}
		\label{ex1}
		
	\end{ex}
	For the rest of this section, we fix $(B,\star,\mathcal O)$
	and a PEL-type $\mathcal O$-lattice $(L,\langle \ ,\ \rangle ,h_0)$.
	We define the group scheme $\bf G$ of finite type over $\Z$ by the equation
	\[
	\mathbf G(R):=\left\{(g,r)\in \GL_{\mathcal O_R}(L_R)\times R^\times \mid \langle gx,gy\rangle=r\langle x,y\rangle\text{ for all }x,y\in L_R \right\},
	\]
	where $R$ is an arbitrary $\Z$-algebra.
	For example, in the Siegel case above, $\mathbf G$ is naturally isomorphic
	to the general symplectic group $\mathrm{GSp}_{2n,\mathbb Z}$ over $\mathbb Z$
	of rank $n$.
	
	\begin{rem}\label{remHMF}
		In ``Hilbert case'', the associated algebraic group $\mathbf G\times_{\mathrm{Spec}(\mathbb Z)}\mathrm{Spec}(\mathbb Q)$ is \emph{not} isomorphic
		to $\mathrm{Res}_{F/\mathbb Q}\mathrm{GL}_{2,F}$ but isomorphic to the
		subgroup of $\mathrm{Res}_{F/\mathbb Q}\mathrm{GL}_{2,F}$ consisting of $2\times 2$ matrices with \emph{rational determinants}.
		The holomorphic automorphic forms on $\mathbf G$ are Katz's Hilbert modular forms {\rm (}\cite{Katz}{\rm )}
		but not the usual Hilbert modular forms {\rm (}\cite{Shimura}{\rm )}.
	\end{rem}

		Let $Z(\mathbf G)$ be the center of $\mathbf G$. By definition, $Z(\mathbf G)(R)$ is contained in the center of the abstract group $\mathbf G(R)$ for any commutative $\mathbb{Q}$-algebra $R$ (\cite[Proposition 1.92]{Milne}). Let $Z_\infty$ be the center of $\mathbf G(\R)$, which contains $Z(\mathbf G)(\R)$.
	Let $\mathcal U_\infty\subset \mathbf G(\R)$ be the stabilizer of $h_0$. The following lemma is well-known.
	\begin{lem}[{\cite[Section 2.3]{Lan12}}]
		The quotient manifold $\mathsf X:=\mathbf G(\R)/\mathcal U_\infty$
		has a natural complex structure.
		\label{lemX}
	\end{lem}
	For example, in the Siegel case, this $\mathsf X$ is naturally isomorphic to the Siegel upper-lower half-space, which is isomorphic to the disjoint union of two Siegel upper half-spaces.
			\begin{lem}\label{commensurable}
			There exists a maximal compact subgroup $K_\infty$  of $\mathbf G(\R)$ such that the groups $\mathcal U_\infty$ and $Z_{\infty}K_\infty $ are commensurable.
		\end{lem}
		\begin{proof}
			Let $\mathbf G^{\mathrm{der}}$ be the derived subgroup of $\mathbf G\times_{\mathrm{Spec}(\Z)}\mathrm{Spec}(\Q)$ and let $\mathbf G^{\mathrm{der}}(\R)^+$ be the connected component of $\mathbf G^{\mathrm{der}}(\R)$ containing the unit element. According to \cite[Proposition 19.21, Proposition 19.5]{Milne}, $\mathbf G^{\mathrm{der}}\times_{\mathrm{Spec}(\Q)}\mathrm{Spec}(\R)$ is a semisimple algebraic group. Therefore, its Lie algebra is a semisimple Lie algebra (\cite[p.476]{Milne}). Thus $\mathbf G^{\mathrm{der}}(\R)^+$ is a semisimple Lie group.
			Put $K'_0:=\mathcal U_\infty\cap \mathbf G^{\mathrm{der}}(\R)^+$.  Since $\mathbf G^{\mathrm{der}}(\R)^+/K'_0$ is isomorphic to a connected component of a Hermitian symmetric domain, $K'_0$ is a maximal compact subgroup of $\mathbf G^{\mathrm{der}}(\R)^+$.
			This implies that $K':=\mathcal U_\infty\cap \mathbf G^{\mathrm{der}}(\R)$  is a compact subgroup of $ \mathbf G^{\mathrm{der}}(\R)$. We will show that any maximal compact subgroup $K_\infty $ of $\mathbf G(\R)$ containing $K'$ satisfies the condition in the lemma.
			
			The group $Z_\infty \mathbf G^{\mathrm{der}}(\R)$ is a finite index subgroup of  $\mathbf G(\R)$ since $Z(\mathbf G)(\R)\mathbf G^{\mathrm{der}}(\R)$ is a finite index subgroup of $\mathbf G(\R)$ by \cite[Proposition 12.46]{Milne}. Therefore, it suffices to show that  $\mathcal U_\infty\cap Z_\infty \mathbf G^{\mathrm{der}}(\R) $ and $Z_\infty K_\infty$ are commensurable.
			Since $\mathcal U_\infty$ contains $Z_\infty$ by definition, we have 
			\[
			\mathcal U_\infty\cap Z_\infty \mathbf G^{\mathrm{der}}(\mathbb R)=Z_\infty K'.
			\]
			Since $K'$ is contained in $K_\infty$, it suffices to show that $K'$ is a finite index subgroup of $K_\infty\cap \mathbf G^{\mathrm{der}}(\R)$.
			Since $K'_0$ is maximal and contained in $K_\infty \cap\mathbf G^{\mathrm{der}}(\R)^+$, $K_\infty\cap \mathbf G^{\mathrm{der}}(\R)^+$ coincides with $K'_0$. Thus, we have a natural inclusion
			\[
			K_\infty\cap \mathbf G^{\mathrm{der}}(\R)/K'_0\hookrightarrow \mathbf G^{\mathrm{der}}(\R)/\mathbf G^{\mathrm{der}}(\R)^+
			\]
			of quotient sets. Since $\mathbf G^{\mathrm{der}}(\R)^+$ is a finite index subgroup of $\mathbf G^{\mathrm{der}}(\R)$, $K_0'$ is a finite index subgroup of $K_\infty\cap \mathbf G^{\mathrm{der}}(\R)$. In particular, $K'$ is also a finite index subgroup of $K_\infty\cap \mathbf G^{\mathrm{der}}(\R)$.
			This completes the proof of the lemma.
		\end{proof}
	
		From now, we pick a maximal compact subgroup $K_\infty$ of $\mathbf G(\mathbb R)$ satisfying the condition in the lemma above.
	
	Let $V$ be the maximal quotient of the $\C$-vector space 
	$L\otimes_{\Z}\C$ on which $h_0(\sqrt{-1})$ acts on $V$ as the multiplication
	by $1\otimes\sqrt{-1}$.
	Namely,
	\[
	V:=(L\otimes_{\Z}\C)/\left(h_0(\sqrt{-1})x-(1\otimes\sqrt{-1})x \mid x\in L\otimes_{\Z}\R \right).
	\]
	Let $F$ denote a number field of finite degree containing the field of definition of $V$,
	that is, $V_0$ is a $B_{F}$-submodule of $V$
	such that $V_0\otimes_{{F}}\C=V$.
	Later, we will take $F$ containing the reflex field of
	the fixed PEL-type $\mathcal O$-lattice.
	We define the algebraic group $M$ over $F$ by
	\[
	M:=\GL_{B_{F}}(V_0^\vee(1))\times \mathbb G_m.
	\]
	The rank of $M$ is $\frac{1}{2}\dim_{\Q}B \times {\rm rank}_{\Z}L+1$.
	We have an isomorphism
	\[
	L\otimes\mathbb C\cong V\oplus V^{\vee}(1)
	\]
	as symplectic spaces, which is compatible with actions of $B_{\mathbb C}$ on both-hand sides. Therefore, we have
	\[
	\mathbf G\times_{\mathrm{Spec}(\mathbb Q)} \mathrm{Spec}(\mathbb C)\cong \mathrm{GSp}_{B_{\mathbb C}}(V\oplus V^{\vee}(1)).
	\]
	Since $M$ is isomorphic to the Levi quotient of the Borel subgroup associated to $V\oplus V^{\vee}(1)$,
	we have $\mathrm{rank}(M)=\mathrm{rank}(\mathbf G)$.
	
	We remark that any algebraic representation of $M$ is regarded as a representation of $K_\infty \times Z_\infty$ via
	\[
	K_{\infty , \C}\cong \GL_{B_F}(V_0^{\vee}(1))(\C)
	\]
	and via
	\[
	K_\infty \times Z_\infty \rightarrow M(\C)=\GL_{B_{F}}(V_0^\vee(1))(\mathbb C)\times \mathbb G_m(\mathbb C) ;\quad (k,z) \mapsto (zk, z).
	\]
	Note that $\Ucal_\infty=M(\CC)\cap \mathbf G(\RR)$.
	
	\begin{defn}\label{automform}
		Let $K$ be an open compact subgroup of $\mathbf G(\A_{\QQ,\fin})$
		and let $(\rho,W_\rho)$ be an irreducible algebraic representation of $M$ on a finite dimensional $\C$-vector space $W_\rho$.
		A smooth function
		\[
		f\colon \mathbf G(\A_\QQ)\rightarrow W_\rho
		\]
		is called an algebraic modular form of weight $\rho$ and level $K$ if $f$ satisfies the following conditions:
		\begin{itemize}
			\item[(1)]For each $\gamma\in \mathbf G(\Q)$, $g\in \mathbf G(\A_\QQ)$, $u_\infty\in \mathcal U_\infty$ and $k\in K$,
			we have $f(\gamma g u_\infty k)=\rho(u_\infty)^{-1}f(g)$.
			\item[(2)]Let $f_\infty$ denote the restriction of $f$ to $\mathbf G(\R)$. Then the map
			\[
			\mathsf X:=\mathbf G(\R)/ \mathcal U_{\infty}\rightarrow (\mathbf G(\R)\times W_\rho)/ \mathcal U_\infty; \quad g\mapsto (g,f_\infty(g))
			\] is a holomorphic section
			with respect to the canonical complex structure on $\mathsf X$ (cf.\ Lemma \ref{lemX}).
			
			\item[(3)] $f$ is slowly increasing.
		\end{itemize}
		We write $\mathscr A^{\mathrm{alg}}_\rho(K)$ for the $\C$-vector space of all
		algebraic modular forms of weight $\rho$ and level $K$.
	\end{defn}

	\begin{lem}[{\cite[Section 2.5]{Lan13}}]
		The topological space
		\[\mathrm{Sh}_{\mathbf G,K}:={\mathbf G}(\Q)\backslash {\mathbf G}(\A_\QQ)/\mathcal U_\infty K
		\]
		has a canonical structure of a smooth quasi-projective algebraic variety over a reflex field of the fixed Shimura datum.
		\label{compact}
	\end{lem}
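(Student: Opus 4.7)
The plan is to realize $\mathrm{Sh}_{\mathbf G,K}$ as the set of complex points of a PEL moduli space of abelian varieties, and to transfer the algebro-geometric structure along this identification. Concretely, I would define a moduli problem $\mathcal M_K$ on the category of locally Noetherian schemes $S$ over the reflex field $E$ (the field of definition of the Hodge cocharacter $\mu_{h_0}\colon \mathbb G_{m,\C}\to \mathbf G_\C$ arising from the Hodge decomposition $L_\C = V \oplus V^{\vee}(1)$), assigning to $S$ the set of isomorphism classes of quadruples $(A,\lambda,i,\alpha_K)$ where $A/S$ is an abelian scheme, $i\colon \mathcal O\to \mathrm{End}_S(A)$ is a ring homomorphism whose induced action on $\mathrm{Lie}(A)$ matches (via the Kottwitz determinant condition) the action of $\mathcal O$ on $V$, $\lambda$ is an $\mathcal O$-linear polarization compatible with $\star$, and $\alpha_K$ is a $K$-level structure on the Tate module.

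The key steps are as follows. First I would construct $E$ intrinsically from $(L,\langle\ ,\ \rangle,h_0)$ as the fixed field of the stabilizer of the isomorphism class of $V$ in $\mathrm{Aut}(\bar\Q/\Q)$, and verify the Kottwitz condition is defined over $E$. Next, assuming $K$ is neat (which one may harmlessly assume; the general case follows by taking the quotient by a finite group $K/K'$ acting freely on $\mathcal M_{K'}$), I would show $\mathcal M_K$ is representable by a smooth quasi-projective scheme over $E$: quasi-projectivity comes from Mumford's GIT construction, embedding $\mathcal M_K$ into a moduli space of polarized abelian schemes with level structure by forgetting $i$ (using the projective embedding via sections of a suitable power of the polarization line bundle), and smoothness follows from Grothendieck--Messing deformation theory, which identifies the tangent space at a geometric point with a Hom-space of $\mathcal O$-stable Hodge filtrations that is unobstructed under the Kottwitz condition. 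Finally, I would prove the complex uniformization $\mathcal M_K(\C)\cong \mathbf G(\Q)\backslash \mathbf G(\A_\Q)/Z_\infty K_\infty K$ by attaching to a double coset $[g]$ the abelian variety $L_\R / g \cdot L$ equipped with the complex structure $h_0$ conjugated by $g_\infty$, the polarization induced from $\langle \ ,\ \rangle$ scaled by the similitude factor, and the level structure determined by $g_\fin$.

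The main obstacle is the uniformization step: one must carefully check that this map is well-defined, bijective, and a morphism of analytic spaces, and in particular that every polarized $\mathcal O$-abelian variety with $K$-level structure arises from some $g\in \mathbf G(\A_\Q)$. This requires a strong $\mathcal O$-linear comparison isomorphism between the singular homology $H_1(A,\Z)$ (together with its Riemann form) and $L$, which in turn uses Zarhin's trick and the rigidity of the similitude class of $L$ given the conditions in Definition~\ref{PELdatum}. A secondary technical point is handling the possible non-connectedness of $\mathrm{Sh}_{\mathbf G,K}$, which is controlled by the failure of Hasse's principle for $\mathbf G$ and is dealt with componentwise: each geometric connected component is defined over an abelian extension of $E$ determined by the reciprocity law for the Shimura datum, and their union carries a natural $E$-structure descending the disjoint union, completing the construction.
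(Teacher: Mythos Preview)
The paper does not prove this lemma at all: it is stated with a direct citation to \cite[Section 2.5]{Lan13} and no proof environment follows. Your outline is a faithful summary of the standard argument (moduli interpretation, representability via GIT over a Siegel moduli space, smoothness from Grothendieck--Messing deformation theory, and complex uniformization), which is precisely what Lan carries out in the cited reference, so your approach is in agreement with what the paper implicitly invokes.
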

	
	Let ${\mathrm{Sh}}^{\mathrm{min}}_{\mathbf G,K}$ denote
	the minimal compactification
	of ${\mathrm{Sh}}_{\mathbf G,K}$ constructed by Lan in \cite[Section 7.2]{Lan12}.
	Lan also defined an invariant $c_{\mathbf G}$ by
	\[
	c_{\mathbf G}:=\begin{cases}
		\mathrm{codim}(\partial{\mathrm{Sh}}^{\mathrm{min}}_{\mathbf G,K},{\mathrm{Sh}}^{\mathrm{min}}_{\mathbf G,K})\ &\text{if }\partial{\mathrm{Sh}}^{\mathrm{min}}_{\mathbf G,K}\neq \emptyset,\\
		\infty &\text {if } \partial{\mathrm{Sh}}^{\mathrm{min}}_{\mathbf G,K}= \emptyset.
	\end{cases}
	\]	
	and	considered the condition (LB) given by	
	\begin{align}c_{\mathbf G}>1 \tag{LB}
	\end{align}
	(cf.\ \cite[(2.1)]{Lan16}).
	Let $\mathcal E_\rho$ denote the holomorphic
	vector bundle
	\[
	\mathbf G(\Q)\backslash(\mathbf G(\A_\QQ)\times W_\rho)/\mathcal U_\infty K
	\]
	over ${\mathrm{Sh}}_{\mathbf G,K}$.
	\begin{prop}[Higher K\"ocher's principle {\cite{Lan16}}] \
		\begin{itemize}
			\item[(1)]
			The vector bundle $\mathcal E_\rho$ extends to
			any toroidal compactification ${\mathrm{Sh}}^{\mathrm{tor}}_{\mathbf G,K}$
			canonically. That extension is denoted by $\mathcal E_\rho^{\mathrm{tor}}$.
			\item[(2)]If $\mathbf G$ satisfies $\rm (LB)$, then any holomorphic section $f$
			can be extended to a holomorphic section of  $\mathcal E_\rho^{\mathrm{tor}}$
			over ${\mathrm{Sh}}^{\mathrm{tor}}_{\mathbf G,K}$ uniquely.
		\end{itemize}
		\label{Koecher}
	\end{prop}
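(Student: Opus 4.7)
The plan for part (1) is to construct $\mathcal E_\rho^{\mathrm{tor}}$ from the universal semi-abelian scheme over the toroidal compactification. Over $\mathrm{Sh}_{\mathbf G,K}$, the PEL moduli interpretation provides a universal polarized abelian scheme $\pi\colon A\to \mathrm{Sh}_{\mathbf G,K}$ equipped with an $\mathcal O$-action and a $K$-level structure. By the theory of toroidal compactifications of PEL Shimura varieties due to Faltings--Chai and extended by Lan (\cite{Lan13}, Chapters 6 and 7), $A$ extends uniquely to a semi-abelian scheme $A^{\mathrm{tor}}\to \mathrm{Sh}^{\mathrm{tor}}_{\mathbf G,K}$, whose local structure along boundary strata is governed by Mumford's construction reviewed in the preceding subsection. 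The Hodge bundle $\omega^{\mathrm{tor}}=e^*\Omega^1_{A^{\mathrm{tor}}/\mathrm{Sh}^{\mathrm{tor}}}$ is a locally free coherent sheaf on $\mathrm{Sh}^{\mathrm{tor}}_{\mathbf G,K}$ restricting to the Hodge bundle of $A$. Since $\mathcal E_\rho$ is functorially obtained from $\omega$ (together with its dual twisted by the Tate motive and the $\mathcal O$-action) by applying a Schur-type construction determined by $\rho$, applying the same recipe to $\omega^{\mathrm{tor}}$ yields the desired $\mathcal E_\rho^{\mathrm{tor}}$. Canonicity follows from the uniqueness of the semi-abelian extension of $A$ together with the functoriality of the construction in $\rho$.

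For part (2), my plan is to reduce the extension problem to a Hartogs-type argument on the minimal compactification. Let $\varpi\colon \mathrm{Sh}^{\mathrm{tor}}_{\mathbf G,K}\to \mathrm{Sh}^{\mathrm{min}}_{\mathbf G,K}$ denote the canonical proper surjection, which is the identity on $\mathrm{Sh}_{\mathbf G,K}$. The plan is to proceed in three steps. First, using Lan's comparison theorem \cite{Lan16} between algebraic and analytic $q$-expansions, I would show that a holomorphic section $f$ of $\mathcal E_\rho$ on $\mathrm{Sh}_{\mathbf G,K}$ admits, in a formal neighbourhood of each boundary stratum of $\mathrm{Sh}^{\mathrm{tor}}_{\mathbf G,K}$, a Fourier--Jacobi expansion matching the algebraic expansion coming from the universal degeneration data. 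Second, I would use this matching to descend $f$ to a holomorphic section of the coherent sheaf $\varpi_*\mathcal E_\rho^{\mathrm{tor}}$ defined over the open subscheme $\mathrm{Sh}_{\mathbf G,K}\hookrightarrow \mathrm{Sh}^{\mathrm{min}}_{\mathbf G,K}$. Third, using the normality of $\mathrm{Sh}^{\mathrm{min}}_{\mathbf G,K}$ together with the hypothesis $c_{\mathbf G}>1$ from \textrm{(LB)}, I would invoke the Riemann--Hartogs extension theorem for coherent analytic sheaves on normal analytic spaces to extend this section across $\partial \mathrm{Sh}^{\mathrm{min}}_{\mathbf G,K}$. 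Pulling the resulting section back along $\varpi$ then yields a holomorphic section of $\mathcal E_\rho^{\mathrm{tor}}$ over all of $\mathrm{Sh}^{\mathrm{tor}}_{\mathbf G,K}$.

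The main obstacle will be the first step: verifying that the analytic Fourier--Jacobi expansion of $f$, computed in classical coordinates on $\mathrm{Sh}_{\mathbf G,K}$, is intertwined correctly with the algebraic $q$-expansion attached to the formal completion of $\mathrm{Sh}^{\mathrm{tor}}_{\mathbf G,K}$ along a boundary stratum, while respecting the $\mathcal O$-action and $K$-level structure dictated by the PEL datum. This comparison is precisely what Lan's theorem in \cite{Lan16} is designed to supply, but its application requires careful bookkeeping of the splittings of the Hodge filtration arising on each boundary chart. Once step one is secured, step three is a standard application of the Riemann extension theorem on a normal analytic space of codimension $\geq 2$ boundary, and the uniqueness assertion in (2) follows from the density of $\mathrm{Sh}_{\mathbf G,K}$ in $\mathrm{Sh}^{\mathrm{tor}}_{\mathbf G,K}$ combined with the torsion-freeness of the locally free sheaf $\mathcal E_\rho^{\mathrm{tor}}$.
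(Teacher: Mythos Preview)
The paper does not supply its own proof of this proposition: it is stated as a citation of Lan's result \cite{Lan16} (note the attribution in the proposition header) and is used as a black box, with the subsequent Corollary~\ref{calssical} deduced from it. There is therefore nothing in the paper to compare your proposal against.

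That said, your sketch is a reasonable outline of how Lan's argument actually proceeds. Part~(1) is indeed handled via the extension of the universal abelian scheme to a semi-abelian scheme over the toroidal compactification and the resulting extension of the Hodge bundle, from which $\mathcal E_\rho^{\mathrm{tor}}$ is built functorially. For part~(2), the essential mechanism is exactly the Hartogs-type extension across a boundary of codimension $\geq 2$ on the normal minimal compactification, which is why the condition (LB), i.e.\ $c_{\mathbf G}>1$, enters. One small correction: your ``first step'' overstates the role of the analytic/algebraic $q$-expansion comparison here. Lan's K\"ocher principle does not require matching Fourier--Jacobi expansions as a preliminary; rather, one pushes $\mathcal E_\rho^{\mathrm{tor}}$ forward to the minimal compactification, identifies global sections there with sections over the open locus via normality and codimension, and then pulls back. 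The $q$-expansion comparison theorem \cite[Theorem 5.3.5]{Lan16} is used elsewhere in the paper (for the integral structure in Theorem~\ref{integral model}), not for this extension statement.
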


As a corollary of the higher K\"ocher's principle,
any smooth function $f : \mathbf{G}(\AA_\QQ)\rightarrow W_\rho$ satisfying (1) and (2) in Definition \ref{automform} is slowly increasing and hence $f$ is an algebraic modular form, if $\mathbf G$ satisfies the condition $\rm (LB)$.

Recall that a smooth function
	\[
	f\colon \mathbf G(\A_\QQ)\to W_\rho
	\]
	satisfying the conditions (1) and (3)
	in Definition \ref{automform} is an automorphic form on
	$\mathbf G(\AA_\QQ)$ in the classical sense
	if $f$ is $K_\infty$-finite and $Z(\mathfrak g_{\C})$-finite,
	where $Z(\mathfrak{g}_\CC)$ is the center of the universal enveloping algebra of the complexification $\mathfrak{g}_\CC$ of the Lie algebra of $\mathbf G(\RR)$.
	We have the following.
	\begin{prop}
		Any algebraic modular form on $\mathbf G(\A_{\Q})$ is an automorphic form in the classical sense.
		\label{calssical} 
	\end{prop}
	\begin{proof}
		Let $f$ be an algebraic modular form on $\mathbf G(\A_\Q)$ of weight $\rho$ and level $K$ as in Definition \ref{automform}.
		By Definition \ref{automform} (1) and (3), it is enough to show that $f$ is $K_\infty$-finite and $Z(\mathfrak{g}_\C)$-finite.
		
		Let $K_\infty$ be the maximal compact subgroup of $\bfG(\RR)$ which we took as above, that is, $K_\infty$ is a maximal compact subgroup of $\mathbf G(\R)$ such that $Z_{\infty}K_\infty$ and $\mathcal U_\infty$ are commensurable (Lemma \ref{commensurable}).
			 Since $K_\infty\cap\mathcal U_\infty$ is a finite index subgroup of $K_\infty$, there exists
			 a normal subgroup $N$ of $K_\infty$ which is contained in $K_\infty\cap\mathcal U_\infty$. Let $k_1,\dots,k_m$ be elements of $K_\infty$ such that
			 \[
			 K_\infty=\coprod_{i=1}^mN k_i.
			 \]
			 For each $i \in \{1,\ldots,m \}$, define $f_i\colon \mathbf G(\A_{\Q})\to W_\rho$ by $f_i(g):=f(gk_i)$, ($g\in \mathbf G(\A_{\Q})$).
			 Let $V_i$ be the vector space spanned by $N f_i$. Then the vector space $V$ spanned by $K_\infty f$ coincides with $\sum_{i=1}^m V_i$. Moreover, $V_i$ coincides with the vector space spanned by $\rho(n)^{-1}f_i, (n\in N)$.
			 Let $e_1,\dots,e_d$ be a basis of $W_\rho$ with $d=\dim W_\rho$. For each $i \in \{1,\ldots,m \}$ and $j\in \{1,\ldots,d\}$, define $F_j^{(i)}\colon \mathbf G(\A_{\Q})\to \C$ by
			 \[
			 f_i(g)=\sum_{j=1}^{d} F_j^{(i)}(g)e_j, \qquad g \in \mathbf G(\A_\Q).
			 \]
			 For an element $n$ of $N$, write $\rho(n)^{-1}e_j=\sum_{k=1}^{d}a_{kj}e_k$. Then we have
			 \[
			 \rho(n)^{-1}f_i(g)=\sum_{j,k=1}^{d}a_{kj} F_j^{(i)}(g)e_k.
			 \]
			 This implies that $V_i$ is contained in the vector space spanned by $F_j^{(i)}(g)e_k, (j,k\in\{1,\dots,d\})$. Therefore, we have $\dim V_i\leq (\dim W_\rho)^2<+\infty$. Thus $V$ is finite dimensional.

		Hence we have only to prove the $Z(\mathfrak{g}_\C)$-finiteness of $f$.
		Since $Z_\infty$ acts on $f$ as scalars by Schur's lemma, we may consider
		the complexification $\mathfrak{g}_{0,\C}$ of the Lie algebra of $\mathbf G(\R)/Z_\infty$.
		Note the decomposition $\mathfrak{g}_{0,\C} = \mathfrak{p}^+ \oplus \mathfrak{k}_\C\oplus \mathfrak{p}^-$, where $\mathfrak{k}_\C$ is the complexification of the Lie algebra of $K_\infty$, and $\mathfrak{p}^+$ and $\mathfrak{p}^-$
		are the Lie subalgebra of $\mathfrak{g}_{0,\C}$ corresponding to the holomorphic tangent space and to the anti-holomorphic tangent space
		at the point $1_{{\mathbf G}(\R)}\mathcal U_\infty \in \mathsf X$, respectively.
		Then the holomorphy of $f$ implies $\mathfrak{p}^{-}f=0$.
		Let $\pi$ be the $\mathfrak{g}_{0,\C}$-module generated by $f$ via the differentials of right translations.
		A Cartan subalgebra of $\mathfrak{k}_\C$ is a Cartan subalgebra of $\mathfrak{g}_{0, \C}$
		by ${\rm rank}(\mathbf G(\R)/Z_\infty) = {\rm rank}(K_\infty)$,
		and its action on $\pi$ is semisimple.
		Thus $\pi$ is a lowest weight module with a lowest weight vector $f$.
		In particular, $\pi$ is a weight module such that each weight space of $\pi$ is finite dimensional by \cite[\S20.2, Theorem (c)]{Humphreys0}.
		Here we note that \cite[\S20.2, Theorem (c)]{Humphreys0} is stated for highest weight modules, and that lowest weight modules are treated similarly. Since each weight space of $\pi$ is preserved by $Z(\mathfrak{g}_{0,\C})$-action,
		any element of $\pi$ is $Z(\mathfrak{g}_{0,\C})$-finite (cf.\ \cite[\S1.1, Theorem (e)]{Humphreys}).
	\end{proof}
	
	\subsection{Integral structure on the space of algebraic modular forms arising from Fourier expansions}
	In this subsection, we suppose that any automorphic form $f$
	has a Fourier expansion. Namely, we suppose the following.
	\begin{assumption}There exists a maximal totally isotropic $\ZZ$-submodule $L_{\mathrm{MI}}$ of
		$L$ \emph{stable under the action of $\mathcal O$}.
		\label{totdeg}
	\end{assumption}
	We fix such an $L_{\mathrm{MI}}$ through this section. Let $\mathbf P$ be the parabolic subgroup of $\mathbf G$ corresponding to the flag $\mathscr F:=(0\subset L_{\mathrm{MI}}\subset L)$.
	Let $\mathbf U$ be the unipotent radical of $\mathbf P$
	and let $\mathbf G'$ be the Levi quotient of $\mathbf P$.
	We use the notation of \cite{Lan12} as follows.
	Let $X$ and $Y$ be free abelian groups defined by
	\[
	X:=\Hom_{\Z}(L_{\mathrm{MI}},\Z(1)),\quad Y:=L/L_{\mathrm{MI}}.
	\]  
	Let $\phi\colon Y\hookrightarrow X$ be an injection defined by the non-degenerate
	pairing
	\[
	\langle\ ,\ \rangle\colon X^\vee(1)\times Y=L_{\mathrm{MI}}\times L/L_{\mathrm{MI}}\rightarrow \Z(1).
	\]
	For a positive integer $N$, Lan defined a free abelian group $\mathbf S_N$ to be the maximal torsion free
	quotient of $X\otimes_{\Z}\frac{1}{N}Y$ on which
	both $\phi(y)\otimes y'=\phi(y')\otimes y$ and $bx\otimes \frac{1}{N}y=x\otimes b^\star \frac{1}{N}y$
	hold for all $x\in X,y,y'\in Y,b\in \mathcal O$ (\cite[Subsection 6.2.3]{Lan12}). Namely, set
	\[
	\mathbf S_N:=\left(\frac{X\otimes_{\Z}\frac{1}{N}Y}{(\phi(y)\otimes y'-\phi(y')\otimes y,bx\otimes \frac{1}{N}y-x\otimes b^\star \frac{1}{N}y|x\in X,y,y'\in Y,b\in \mathcal O)} \right)_{\mathrm{free}}.
	\]
	Here we use a notation similar to \cite[Subsection 3.6]{Lan13}.
	Then there exist natural isomorphisms
	\begin{equation}
		\label{isom}
		\mathbf U(\R)\cong\Hom_{\mathcal O}(X^\vee (1),Y_{\R})\cong \Hom_{\Z}(\mathbf S_N,\R(1)).
	\end{equation}
	In particular, $\mathbf U$ is abelian.
	More strongly, we have
	\begin{equation}
		\label{eq2}
		\mathbf U(\Q)\cap K(N)\cong\Hom_{\Z}(\mathbf S_N,\Z(1)),
	\end{equation}
	where \[
	K(N):=\mathrm{Ker}(\mathbf G(\widehat{\mathbb Z})\to \mathbf G(\mathbb Z/N\mathbb Z)).
	\]
	\begin{ex}\label{ex2}
		We see two examples which satisfy our assumption.
		For simplicity, we only consider the case where $N=1$.
		\begin{itemize}
			\item[(1)] $($Siegel case$)$
			We use the same notation as in Example $\ref{ex1}$ $($1$)$.
			Then, $L$ satisfies Assumption \ref{totdeg}.
			Indeed, any totally maximal isotropic subspace of $L$
			is stable under the action of $\mathbb Z=\mathcal O$.
			For example, we can take $L_{\mathrm{MI}}$ as
			the subspace consisting of vectors $^t(v_1,\cdots,v_g,0,\cdots,0)$.
			In this case, the homomorphism $\phi\colon Y\to X$ is an isomorphism
			and the abelian group $\mathbf S_1$ can be identified with the symmetric
			quotient of $X$ by $\phi$.
			Therefore, we have a natural identification
			\[
			\mathbf U(\mathbb R)=\{\text{quadratic forms on }X_{\mathbb R}\}=\mathrm{Sym}^2(\mathbb R^g).
			\]
			\item[(2)] $($``Hilbert case''$)$
			We use the same notation as in Example $\ref{ex1}$ $($2$)$.
			Then, the space
			\[
			L_{\mathrm{MI}}:=\mathfrak a\oplus\{0\}\subset L=\mathfrak a\oplus\mathfrak b
			\]
			is an $\mathcal O$-stable maximal totally isotropic subspace.
			Thus, this example also satisfies Assumption \ref{totdeg}.
			In this case, we have a natural isomorphism
			\[
			\mathbf S_1\cong \mathfrak a^\vee(1)\otimes_{\mathcal O}\mathfrak b\cong \mathfrak a^{-1}\mathfrak b\mathcal D_{F/\mathbb Q}^{-1}.
			\]
			Here, we use a fixed $2\pi\sqrt{-1}$ for the last isomorphism.
			Therefore, the unipotent group $\mathbf U$ is naturally isomorphic to 
			the affine group scheme associated to $\mathfrak a\mathfrak b^{-1}\mathcal D_{F/\mathbb Q}^{-1}$.
		\end{itemize}
	\end{ex}
	For a general open compact subgroup $K$, Lan constructed a free abelian group
	$\mathbf S_{K}$ satisfying
	\begin{equation}
		\label{eq3}
		\mathbf U(\Q)\cap K\cong\Hom_{\Z}(\mathbf S_{K},\Z(1))
	\end{equation}
	as a subquotient of $\Q\otimes_{\Z} X\otimes_{\Z} Y$.
	See \cite[Section 6.2]{Lan12} for the precise definition of this abelian group.
	
	For each $g\in \mathbf G(\A_{\QQ,\fin})$, new lattices $L_{\mathrm{MI}}^g$ and $L^g$ are defined by
	\[
	L_{\mathrm{MI}}^g:=g(\widehat \Z\otimes_{\Z} L_{\mathrm{MI}})\cap \Q\otimes_{\Z}L_{\mathrm{MI}},\quad
	L^g:=g(\widehat \Z\otimes_{\Z} L)\cap \Q\otimes_{\Z}L
	\]
	(\cite[Subsection 3.1]{Lan13}).
	This twisting by $g$ defines a flag $\mathscr F_g$ of $L^g$.
	It is easily checked that $L^g$ also defines a PEL-type $\mathcal O$-lattice
	and we can define an affine group scheme $\mathbf G^g$ over $\mathbb Z$
	as we defined $\mathbf G$ from $L$.
	Let $\mathbf P^g$ be the parabolic subgroup of $\mathbf G^g$ corresponding to
	$\mathscr F_g$ and let $\mathbf U^g$ be its unipotent radical.
	Then, we can define the free abelian group $\mathbf S_K^g$
	so that
	\[
	\mathbf U^g(\Q)\cap K\cong\Hom_{\Z}(\mathbf S^g_K,\Z(1)).
	\]
	In this paper, we call the flag $\mathscr F_g$ for some $g\in \mathbf G(\A_{\QQ,\fin})$
	a chosen \emph{standard $0$-dimensional rational boundary component},
	which is a representative of a rational boundary component in the sense of
	\cite[Definition 3.1.2]{Lan16}.
	Note that the flag $\mathscr F_g$ defines a rational boundary component of $X_0$ in the sense
	of \cite[III, Definitions 3.2 and 3.12]{AMRT}.
	\begin{ex}\label{ex3}
		We use the same notation as in Example \ref{ex2} and put $\mathscr F=\mathscr F_1$.
		\begin{itemize}
			\item[(1)] $\rm ($Siegel case$ \rm )$
			The symbols $\mathfrak H_n$ and $ \mathfrak D_n$ denote the Siegel upper half-space and the Siegel disk, respectively. Recall that $\mathfrak D_n$ is defined by
			\[
			\mathfrak D_n=\left\{X\in \mathrm{Sym}^2(\mathbb C^n)\middle |\ 1_n-X\ ^{\! t}\overline X>0\right\}.
			\]
			In  the Siegel case, $\mathsf X_0$ is naturally isomorphic to $\mathfrak H_n\cong \mathfrak D_n$ by the Cayley transformation.
			Then, the rational boundary component, which is defined by $\mathscr F$, of $\mathfrak D_n$ in $\overline{\mathfrak D_n}\subset
			\mathrm{Sym}^2(\mathbb C^n)$ is the singleton $\{1_n\}$.
			\item[(2)] $\rm ($\!``Hilbert case''$\rm )$ In this case,
			we have
			\[
			\mathsf X_0\cong \mathfrak H_1^d\cong \mathfrak D_1^d.
			\]
			Then, the rational boundary component, which is defined by $\mathscr F$, of $\mathfrak D_1^d$ in $\mathbb C^d$ is the singleton $\{(1,1,\ldots,1)\}$.
		\end{itemize}
	\end{ex}
	The Shimura variety $\mathrm{Sh}_{\mathbf G,K}$ has a description
	\[
	\mathrm{Sh}_{\mathbf G,K}=\amalg_{i\in I}\ \Gamma_i\backslash \mathsf X_0,
	\]
	where $\mathsf X_0$ is the connected component of $\mathsf X$ containing the identity $1$
	and $I$ is a certain finite subset of $\mathbf G(\mathbb A_{\QQ, \mathrm{fin}})$
	containing $1$
	(cf.\ \cite[(2.5.2)]{Lan12}).
	\begin{defn}Let $f$ be an element of $\mathscr A^{\mathrm{alg}}_\rho(K)$.
		The Fourier expansion of $f$ at the standard $0$-dimensional rational boundary component
		$\mathscr F_{g_i}$ is defined as an infinite sum
		\[
		f=\sum_{l\in\mathbf S_K^{g_i}}a_l(g_i,f)q_l,
		\]
		where $q_l\colon \mathbf U^{g_i}(\Q)\backslash \mathbf U^{g_i}(\A_\QQ)\rightarrow \C^\times$ is a smooth function corresponding to $l\in \mathbf S_{K}^{g_i}$
		and
		\[
		a_l(g_i,f):=\int_{\mathbf U^{g_i}(\Q)\backslash \mathbf U^{g_i}(\A_\QQ)}f(g_iu)q_l(u)^{-1}du\in W_\rho.
		\]
		\label{fourierexp}
	\end{defn}
	\begin{ex}
		\label{ex4}We use the same notation as in Example \ref{ex3}.
		Let $f$ be an element of $\mathscr A^{\mathrm{alg}}_\rho(\mathbf G(\widehat{\mathbb Z}))$.
		\begin{itemize}
			\item[(1)]{\rm(}Siegel case{\rm)} In this case, $f$ defines a holomorphic function
			\[
			F\colon \mathfrak H_n\to W_\rho
			\]
			which is invariant under the translations by the elements of \[
			\mathbf U(\widehat{\mathbb Z})\cap\mathbf U(\mathbb Q)=\Hom_{\mathbb Z}(\mathbf S_1,\mathbb Z).
			\]
			The archimedean part of the smooth function $q_l$ for $ l\in \Sym^2(\mathbb Z^n)$,
			is given by
			\[
			q_l(X)=\exp\left(2\pi\sqrt{-1}\mathrm{tr}(Xl)\right),\quad X\in \Sym^2(\mathbb R^n).
			\]
			Thus the Fourier expansion defined above coincides with the classical Fourier expansion of Siegel modular forms. 
			Here, we identify $\Sym^2(\mathbb Z^n)$ with its dual by the pairing
			\[
			(X,Y)\mapsto \mathrm{tr}(XY).
			\]
			\item[(2)]{\rm(}``Hilbert case''{\rm)}
			In this case, $f$ defines a holomorphic function
			\[
			F\colon \mathsf X_0\cong \mathfrak H_1^d\to W_\rho\cong \mathbb C
			\]
			which is invariant under the translations by the elements of $\mathbf S_1^\vee$.
			The archimedean part of the smooth function $q_l$ for $l\in \mathbf S_1\cong \mathfrak a^{-1}\mathfrak b\mathcal D_{F/\mathbb Q}^{-1}$ is given by
			\[
			q_l(\tau)=\exp\left(2\pi\sqrt{-1}\mathrm{Tr}_{F/\mathbb Q}(l\tau)\right),\quad \tau\in F_{\mathbb C}.
			\]
			Therefore, the Fourier expansion of $f$ also coincides with the classical one.
		\end{itemize}
	\end{ex}
	We can regard $f$ as a tuple of holomorphic functions $\{f_i\colon \mathsf X_0\rightarrow W_\rho\}_{i\in I}$ satisfying a certain modularity with respect to each $\Gamma_i$.
	Moreover, as explained in \cite[Section 2.5]{Lan16},
	$\mathsf X_0$ can be embedded into a Siegel upper half-space.
	Then, as the example above, the Fourier expansion of $f$ at $\mathscr F_{g_i}$ coincides with the Fourier expansion of $f_i$
	at the chosen standard point at infinity of the Siegel upper half-space.
	\begin{defn}
		Let $R$ be a subring of $\mathbb C$ containing $\mathcal O_F$
		and let $W_{\rho,0}$ be an $R$-lattice of $W_\rho$ such that $W_{\rho,0}\otimes_{\mathcal O_F}F$ is stable under the action of $M$.
		Here, this lattice is not necessarily stable under the action of $M$.
		We choose a finite set $\{g_i\}_i\subset \mathbf G(\mathbb A_{\mathbb Q,\mathrm{fin}})$ so that the corresponding connected components of Shimura varieties cover the whole space, and define an $R$-submodule $\mathscr A^{\mathrm{alg}}_\rho(K;R)$
		of $\mathscr A^{\mathrm{alg}}_{\rho}(K)$ by
		\[
		\mathscr A^{\mathrm{alg}}_\rho(K;R):=\{f\in\mathscr A^{\mathrm{alg}}_\rho(K) \mid a_{l}(g_i,f)\in W_{\rho,0} \text{ for all $l$ and $i$} \}.
		\]
		\label{Rautom}
	\end{defn}
	The main result of this section is the following.
	\begin{thm}\label{integral model}
		Under Assumption \ref{totdeg}, there exists a number field $F$ of finite degree
		such that
		\[
		\mathscr A^{\mathrm{alg}}_\rho(K;\mathcal O_{F})\otimes_{\mathcal O_{F}}\C=\mathscr A^{\mathrm{alg}}_\rho(K)
		\]
		for an arbitrary $\mathcal O_F$-lattice $W_{\rho,0}$ of $W_\rho$.
	\end{thm}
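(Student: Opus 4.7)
The plan is to generalize the strategy behind Theorem \ref{integral model HMF} to the PEL setting, using Lan's theory of integral toroidal compactifications of PEL Shimura varieties together with the refined Mumford construction of Lemma \ref{Mumordconstructionrefinement} to bound the denominators of Fourier coefficients. Once such a uniform bound is established, the theorem follows by enlarging the number field to absorb the denominators.

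First, I would invoke Lan \cite{Lan13} to produce an integral model of $\mathrm{Sh}_{\mathbf{G},K}$ and its toroidal compactification $\mathrm{Sh}_{\mathbf{G},K}^{\mathrm{tor}}$ over the ring of integers of a suitable number field, and extend the automorphic vector bundle $\mathcal E_\rho$ to $\mathcal E_\rho^{\mathrm{tor}}$. Using the description of formal neighborhoods of boundary strata in \cite[Section 6]{Lan12}, algebraic modular forms correspond, via Corollary \ref{calssical} and (where applicable) the higher Koecher principle of Proposition \ref{Koecher}, to global sections of $\mathcal E_\rho^{\mathrm{tor}}$. This realizes $\mathscr A^{\mathrm{alg}}(K,\rho)$ as the base change to $\mathbb C$ of a finite-dimensional $E$-vector space for some number field $E$ of finite degree.

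Second, I would algebraically read off the Fourier expansion at each standard $0$-dimensional boundary component $\mathscr F_{g_i}$. By Assumption \ref{totdeg} such cusps exist, and the set $\{g_i\}_{i\in I}$ is finite. At each cusp the formal completion of $\mathrm{Sh}_{\mathbf{G},K}^{\mathrm{tor}}$ is described by a Mumford-Tate family of semi-abelian varieties associated to the split torus $\widetilde G$ with character group $X$ and the period $Y$ obtained from $L/L_{\mathrm{MI}}$. By Lemma \ref{Mumordconstructionrefinement}, this family becomes an abelian scheme after inverting a single element $l$ of the base ring, depending only on a chosen free basis of $Y$. Evaluating $f \in \mathscr A^{\mathrm{alg}}(K,\rho,\mathcal O_E)$ on this family produces the algebraic $q$-expansion with coefficients in $W_{\rho,0}\otimes_{\mathcal O_E}\mathcal O_E[1/l]$, and Lan's comparison between algebraic and analytic $q$-expansions (the PEL generalization in \cite{Lan16} of Katz's \cite[(1.7.6)]{Katz}) identifies the result with the analytic Fourier expansion of Definition \ref{fourierexp}. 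Hence all denominators are bounded by a fixed power of $l$.

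Finally, enlarging $E$ to a number field $F$ whose ring of integers absorbs the finitely many denominators $l$ arising at the cusps $\{g_i\}_{i\in I}$, as well as the change-of-basis constants needed to compare different $\mathcal O_F$-lattices of $W_\rho$, we obtain the desired equality $\mathscr A^{\mathrm{alg}}(K,\rho,\mathcal O_F)\otimes_{\mathcal O_F}\mathbb C=\mathscr A^{\mathrm{alg}}(K,\rho)$ for an arbitrary choice of lattice $W_{\rho,0}$. The main obstacle I anticipate is ensuring that the denominator $l$ can be chosen \emph{uniformly} over the finitely many standard cusps and independently of $f$; this relies crucially on the finiteness of $I$ together with the universality of the Mumford construction underlying Lan's comparison theorem.
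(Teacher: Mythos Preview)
Your overall strategy matches the paper's: identify $\mathscr A^{\mathrm{alg}}(K,\rho)$ with global sections of $\mathcal E_\rho^{\mathrm{tor}}$ via the higher Koecher principle, use the Mumford family together with Lan's comparison theorem to control the algebraic $q$-expansion at each $0$-dimensional cusp, and deduce bounded denominators.

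There is, however, a genuine confusion in your handling of the element $l$. In Lemma \ref{Mumordconstructionrefinement} and Proposition \ref{keyprop}, $l=\prod_i[e_i\otimes\phi_i(e_i)]$ is a monomial in the formal power series ring $A=\ZZ[\![\sigma^\vee\cap\mathbf S_{K(N)}^{g_i}]\!]$, lying in the ideal $I$; it is \emph{not} an algebraic number. The expression ``$\mathcal O_E[1/l]$'' for a number field $E$ is therefore meaningless, and one cannot ``enlarge $E$ to a number field whose ring of integers absorbs $l$.'' The role of $l$ is purely geometric: inverting it makes the Mumford semi-abelian scheme an honest abelian scheme, so that the classifying morphism to $\mathsf M_N^0$ extends over $\mathrm{Spec}\bigl(A[1/l]\otimes_{\ZZ}\mathcal O_F[\mu_N,1/N]\bigr)$. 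The step you are missing is the passage to Corollary \ref{cor1}: since the $q$-expansion is already a genuine power series (from Koecher and the structure of the toroidal boundary), the localization at $l$ is in the end irrelevant, and the $q$-expansion of an $F$-rational section lands in the \emph{uncompleted} tensor product $(A\otimes_{\ZZ}\mathcal O_F[\mu_N,1/N])\otimes_{\mathcal O_F}W_\rho$. An element of such a tensor product is a \emph{finite} sum $\sum_j a_jf_j\otimes w_j$ with $f_j\in A$, $a_j\in F$, $w_j\in W_\rho$; hence every Fourier coefficient lies in the finitely generated $\ZZ$-module $\sum_j\ZZ a_jw_j\subset W_\rho$. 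The denominators to be cleared are thus ordinary integers (coming from the finitely many $a_j$ and from $N$), not powers of the monomial $l$. Starting from an $F$-rational basis of $H^0(\mathsf M_N^{0,\mathrm{tor}},\mathcal E_\rho^{\mathrm{tor}})$ rather than presupposing $f\in\mathscr A^{\mathrm{alg}}(K,\rho,\mathcal O_E)$ (which is slightly circular), your argument then goes through exactly as the paper's does.
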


	\begin{lem}Let $K'$ be a subgroup of $K$ of finite index.
		Then, if the assertion of Theorem $\ref{integral model}$
		holds for $K'$, then the assertion holds for $K$.
		\label{lem0}
	\end{lem}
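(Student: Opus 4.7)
The plan is to exploit the natural inclusion $\mathscr A^{\mathrm{alg}}(K,\rho) \subset \mathscr A^{\mathrm{alg}}(K',\rho)$, which holds because $K'$-invariance is a weaker condition than $K$-invariance. On the larger space we have the trace (averaging) operator
\[
\mathrm{tr}_{K/K'} : \mathscr A^{\mathrm{alg}}(K',\rho) \longrightarrow \mathscr A^{\mathrm{alg}}(K,\rho), \qquad f \longmapsto \sum_{\gamma \in K'\backslash K} R(\gamma) f,
\]
where $R(\gamma)f(g) := f(g\gamma)$. Summing over coset representatives gives a $K$-invariant function (independent of the choice of representatives by $K'$-invariance of $f$), so the image lies in $\mathscr A^{\mathrm{alg}}(K,\rho)$. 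Restricted to $\mathscr A^{\mathrm{alg}}(K,\rho) \subset \mathscr A^{\mathrm{alg}}(K',\rho)$, the operator is multiplication by the finite integer $[K:K']$; in particular, $\mathrm{tr}_{K/K'}$ is surjective onto $\mathscr A^{\mathrm{alg}}(K,\rho)$.

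By the hypothesis for $K'$, there exists a number field $F$ and a $\CC$-basis $\{f_j'\}_j$ of $\mathscr A^{\mathrm{alg}}(K',\rho)$ lying in $\mathscr A^{\mathrm{alg}}(K',\rho,\mathcal O_F)$. Set $F_j := \mathrm{tr}_{K/K'}(f_j')$; by surjectivity of the trace, the $F_j$ span $\mathscr A^{\mathrm{alg}}(K,\rho)$ over $\CC$. It remains to show that the $F_j$ belong to $\mathscr A^{\mathrm{alg}}(K,\rho,\mathcal O_{F''})$ for some finite extension $F'' \supset F$. To this end, choose connected-component representatives $\{g_i\}$ for $\mathrm{Sh}_{\mathbf G,K}$, and refine them to $\{g_i\gamma_k\}_{i,k}$ for $\mathrm{Sh}_{\mathbf G,K'}$, where $\gamma_k$ runs over a set of representatives of $K'\backslash K$. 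Since $K' \subset K$ forces the inclusion of character lattices $\mathbf S_K^{g_i} \subset \mathbf S_{K'}^{g_i}$ (dual to the reverse inclusion $\mathbf U^{g_i}(\QQ)\cap K' \subset \mathbf U^{g_i}(\QQ)\cap K$), the level-$K$ Fourier coefficients at $g_i$ are a sub-family of the level-$K'$ Fourier coefficients at the same base point.

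The coefficient $a_l(g_i, F_j) = \sum_k a_l(g_i, R(\gamma_k) f_j')$ can then be rewritten as a finite sum of Fourier coefficients of $f_j'$ at the points $g_i\gamma_k$ (with characters transformed via the algebraic action of $\gamma_k$ on the character lattices $\mathbf S_{K'}^{g_i\gamma_k}$ and on $W_{\rho,0}$). Because all these transformations are induced by algebraic actions of $\mathbf G(\widehat\ZZ)$ and $M$ defined over $\overline\QQ$, they preserve integrality up to finitely many denominators; enlarging $F$ to a finite extension $F''$ absorbing these denominators yields $F_j \in \mathscr A^{\mathrm{alg}}(K,\rho,\mathcal O_{F''})$. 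Therefore $\mathscr A^{\mathrm{alg}}(K,\rho,\mathcal O_{F''})\otimes_{\mathcal O_{F''}}\CC = \mathscr A^{\mathrm{alg}}(K,\rho)$, proving the lemma.

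The main obstacle is the bookkeeping in the second paragraph: the element $\gamma_k \in K$ does not normalize the unipotent radical $\mathbf U^{g_i}$ in general, so one cannot naively pull it through the integration defining the Fourier coefficient. One must instead track the effect of right translation by $\gamma_k$ on boundary components, on the character lattices $\mathbf S$, and on the target representation $W_\rho$, verifying that the resulting transformation is algebraic (hence defined over a number field) with bounded denominators. Once this is granted, the rest is formal linear algebra.
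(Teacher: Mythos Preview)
Your trace-map approach differs from the paper's. The paper simply notes that, after choosing a common system $\{g_i\}$ of component representatives (fine enough for $K'$), the square of inclusions
\[
\xymatrix{
\mathscr A^{\mathrm{alg}}(K,\rho,\mathcal O_{F})\ar@{^{(}->}[d]\ar@{^{(}->}[rr]& &\mathscr A^{\mathrm{alg}}(K,\rho)\ar@{^{(}->}[d]\\
\mathscr A^{\mathrm{alg}}(K',\rho,\mathcal O_{F})\ar@{^{(}->}[rr]& &\mathscr A^{\mathrm{alg}}(K',\rho)
}
\]
is Cartesian, i.e.\ $\mathscr A^{\mathrm{alg}}(K,\rho,\mathcal O_F)=\mathscr A^{\mathrm{alg}}(K',\rho,\mathcal O_F)\cap\mathscr A^{\mathrm{alg}}(K,\rho)$, and concludes by ``diagram chasing''. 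Your argument is more constructive, producing a spanning set of $\mathscr A^{\mathrm{alg}}(K,\rho)$ by averaging an integral basis at level $K'$ over $K'\backslash K$.

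The obstacle you flag in your last paragraph is genuine, and it is not bypassed by the paper's formulation either: from $V'_0\otimes_{\mathcal O_F}\C=V'$ and the Cartesian identity $V_0=V\cap V'_0$ alone one cannot conclude $V_0\otimes_{\mathcal O_F}\C=V$ (e.g.\ $V'=\C^2$, $V'_0=\mathcal O_F^2$, $V=\C\cdot(1,\pi)$). One needs in addition that $\mathscr A^{\mathrm{alg}}(K,\rho)$ is an $F$-rational subspace of $\mathscr A^{\mathrm{alg}}(K',\rho)$, and this amounts to right translation by elements of $K$ respecting the $F$-structure on Fourier expansions---precisely your deferred bookkeeping. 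In the intended application $K'=K(N)$ is normal in $\mathbf G(\widehat\ZZ)$, so the finite group $K/K(N)\subset\mathbf G(\ZZ/N\ZZ)$ acts on $\mathscr A^{\mathrm{alg}}(K(N),\rho)$; since this action comes from automorphisms of the moduli problem $\mathsf M_N$ defined over $F$, Lan's comparison theorem transports it to an $F$-rational action on Fourier coefficients, which fills the gap for both arguments. Your sketch of the transformation law (via the algebraic action of $\mathbf G(\widehat\ZZ)$ on the lattices $\mathbf S_{K'}^{g_i}$ and on $W_{\rho,0}$) is pointing in the right direction; making it precise is essentially equivalent to invoking this moduli-theoretic rationality.
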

	\begin{proof}Consider the following inclusion relations:
		\[
		\xymatrix{
			\mathscr A^{\mathrm{alg}}_\rho(K;\mathcal O_{F})\ar@{^{(}->}[d]\ar@{^{(}->}[rr]& &\mathscr A^{\mathrm{alg}}_\rho(K)\ar@{^{(}->}[d]\\
			\mathscr A^{\mathrm{alg}}_\rho(K';\mathcal O_{F})\ar@{^{(}->}[rr]& &\mathscr A^{\mathrm{alg}}_\rho(K')	}
		\]
		Here we retake $\{g_i\}_i$ for the smaller open compact subgroups of $K'$ if we need.
		Then, one can check that this diagram is Cartesian. Hence, we have the conclusion
		by a diagram chasing.
	\end{proof}
	We recall the moduli problem attached to the given PEL-type $\mathcal O$-lattice
	$(L,\langle \ ,\ \rangle,h_0)$ and a neat open compact subgroup $K$
	of $\mathbf G(\A_{\QQ,\fin})$.
	According to Lemma \ref{lem0},
	we may suppose that $K=K(N)$ where $N$ is greater than or equal to three
	(cf.\ \cite[Remark 1.1.4]{Lan12}).
	Recall that the open subgroup $K(N)$ of $\mathbf G(\widehat{\Z})$
	is defined by
	\[
	K(N):=\mathrm{Ker}\left(\mathbf G(\widehat \Z)\rightarrow \mathbf G(\Z/N\Z)\right).
	\]
	Let $F$ be a number field of finite degree contained in $\mathbb C$ which is ``sufficiently large'',
	for example, $F$ contains the reflex field of the given PEL-type $\mathcal O$-lattice.
	We consider the following moduli problem:
	\begin{defn}
		We define $\mathsf M_{K(N)}=\mathsf M_N$
		as the moduli stack of $(A,\lambda,\iota,\alpha)$, where
		\begin{itemize}
			\item[-]$A$ is an abelian scheme over a scheme $S$,
			\item[-]$\lambda\colon A\rightarrow A^\vee$ is a polarization,
			\item[-]$\iota\colon \mathcal O\rightarrow \End_S(A)$ defines an $\mathcal O$-structure
			of $(A,\iota)$ such that $\underline{\mathrm{Lie}}(A/S)$ satisfies the determinantal condition $($\cite[Section 1.1.2]{Lan12}$)$ given by $(L_{\R},\langle \ ,\ \rangle,h_0)$,
			\item[-]$\alpha_N\colon (L/NL)_S\cong A[N]$ is an integral level $K(N)$-structure of $(A,\lambda,\iota)$
			in the sense of \cite[Definition 1.3.6.2]{Lan13}.
		\end{itemize}
		\label{moduli}
	\end{defn}
	\begin{prop}[{\cite[Theorem 1.4.1.12 and Corollary 7.2.3.10]{Lan13}}]
		The restriction of the stack $\mathsf M_N$ to the category of $F$-schemes
		is represented by a quasi-projective smooth $F$-variety.
		\label{representativity}
	\end{prop}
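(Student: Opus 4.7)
The plan is to reduce representability to the Siegel case and then to cut out the PEL-moduli as a locally closed subscheme. As a first step, I would use that for $N \ge 3$ the principal level structure $K(N)$ is neat, so that any automorphism of a quadruple $(A,\lambda,\iota,\alpha)$ fixing $\alpha$ must be trivial. This rigidifies the moduli problem and allows one to replace the stack language by a sheaf-theoretic one. This is the same observation that Lan uses in \cite[Remark 1.1.4]{Lan12} and that justifies the reduction via Lemma \ref{lem0} to $K=K(N)$ with $N\ge3$.

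Next, I would consider the forgetful $1$-morphism $\mathsf M_N \to \mathsf A_{g,d,N}$ to the moduli of polarized abelian schemes of dimension $g=\frac12\,\mathrm{rank}_{\Z}(L)$ with a polarization of the appropriate degree $d$ and a symplectic level-$N$ structure. The target is represented by a quasi-projective smooth $F$-scheme by Mumford's GIT construction (alternatively, the Hilbert scheme of a fixed projective embedding of polarized abelian varieties, quotiented by $\mathrm{PGL}$, where neatness of $K(N)$ guarantees that the action is free). Then I would argue that the additional PEL data — the $\mathcal O$-action $\iota$, the compatibility $\lambda\circ\iota(b)=\iota(b^\star)^\vee\circ\lambda$, and the determinantal condition on $\underline{\mathrm{Lie}}(A/S)$ — are all representable: the sheaf $\underline{\mathrm{End}}_S(A)$ is an unramified $S$-scheme, so fixing $\iota$ factors through a closed subscheme; the Rosati compatibility with $\star$ defines a further closed subscheme; the determinantal (Kottwitz) condition, requiring that the characteristic polynomial of each $b\in\mathcal O$ on $\underline{\mathrm{Lie}}(A/S)$ matches that of $b$ on the conjugate of $V$, is an open and closed condition on the base. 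Combining these shows $\mathsf M_N$ is representable by a locally closed subscheme, hence quasi-projective.

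For smoothness I would invoke Grothendieck-Messing / crystalline deformation theory: for a square-zero thickening $S\hookrightarrow S'$, deformations of $(A,\lambda,\iota,\alpha)$ to $S'$ correspond to lifts of the Hodge filtration on the de Rham realization $H^1_{\mathrm{dR}}(A/S)$ that are isotropic for the polarization pairing and stable under $\iota$. In characteristic zero, which is all we need since $F\subset\C$, the determinantal condition guarantees that the space of such lifts is a torsor under a locally free module of the expected rank; equivalently, one may argue analytically via Lemma \ref{lemX}, viewing $\mathsf X$ as an open subvariety of a flag variety and noting that the complex-analytic uniformization is smooth. Combined with the finiteness of type from the previous step, this yields smoothness.

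The main obstacle is not the representability, which is essentially a formal exercise once the Siegel case is granted, but rather the careful bookkeeping of the determinantal condition — checking that it is representable and that it interacts correctly with both the polarization and the $\mathcal O$-action so as to make the deformation-theoretic argument give formal smoothness of the correct relative dimension. Since everything is over $F\subset\C$, the positivity and determinantal conditions can be checked on the complex-analytic side via $h_0$, which streamlines the smoothness verification; in positive characteristic (not needed here) this is the point where Lan's work \cite{Lan13} is considerably more delicate.
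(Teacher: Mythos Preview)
The paper does not supply its own proof of this proposition: it is stated as a direct citation of \cite[Theorem 1.4.1.12 and Corollary 7.2.3.10]{Lan13}, with no argument given. Your sketch is the standard route to representability and smoothness of PEL moduli in characteristic zero (forget to the Siegel moduli $\mathsf A_{g,d,N}$, impose the $\mathcal O$-action, Rosati compatibility, and Kottwitz determinantal condition as representable constraints, then use Grothendieck--Messing for formal smoothness), and it is essentially the strategy Lan carries out in the cited references; so your proposal is correct and aligned with what the paper is invoking, even though the paper itself offers no independent argument.
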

	By abuse of notation, we write $\mathsf M_N$ for that scheme.
	Furthermore, we take $F$ sufficiently large so that every geometrically connected component of $\mathsf M_N$ is
	defined over $F$.
	\begin{prop}[{\cite[Lemma 2.3.2 and Lemma 2.4.4]{Lan13}}]
		The Shimura variety $\mathrm{Sh}_{\mathbf G,K(N)}$
		is canonically isomorphic to an open and closed subscheme of $\mathsf M_N\times_{\mathrm{Spec}(F)}\mathrm{Spec}(\C)$.
		\label{component}
	\end{prop}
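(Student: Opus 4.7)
The plan is to construct an analytic uniformization of a suitable locus in $\mathsf M_N(\mathbb C)$ by the coset space $\mathbf G(\mathbb Q)\backslash \mathbf G(\mathbb A_\QQ)/Z_\infty K_\infty K(N)$, then algebraize via GAGA. First, for each $g \in \mathbf G(\mathbb A_{\QQ,\fin})$ and each $z$ in the connected component $\mathsf X_0 \subset \mathsf X$, I would assemble the quadruple $(A_{g,z},\lambda_{g,z},\iota_{g,z},\alpha_{g,z})$ as follows. The point $z$ determines a complex structure $J_z$ on $L_{\mathbb R}$ compatible with $h_0$ (via the identification of $\mathsf X$ with the $\mathbf G(\mathbb R)$-conjugacy class of $h_0$), so $A_{g,z}:=(L_\RR,J_z)/L^g$ is a complex torus; the Riemann bilinear relations coming from the positivity of $\langle\, \cdot\,,h_0(\sqrt{-1})\,\cdot\,\rangle$ upgrade it to a polarized abelian variety with polarization $\lambda_{g,z}$. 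The action of $\mathcal O$ on $L^g$ gives the PEL structure $\iota_{g,z}$, and the canonical identification
\[
L^g/NL^g \;\xrightarrow{\;g^{-1}\;}\; L/NL \;\xrightarrow{\;\sim\;}\; A_{g,z}[N]
\]
provides the integral level $K(N)$-structure $\alpha_{g,z}$. The determinantal condition at $\mathrm{Lie}(A_{g,z})$ is precisely the condition satisfied by the summand $V$ of $L\otimes\CC$ on which $h_0(\sqrt{-1})$ acts by $\sqrt{-1}$, which holds by construction.

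Second, I would show the assignment $(g,z)\mapsto (A_{g,z},\lambda_{g,z},\iota_{g,z},\alpha_{g,z})$ descends to a holomorphic map
\[
\Psi\colon \mathrm{Sh}_{\mathbf G,K(N)}(\CC)=\coprod_{i\in I}\Gamma_i\backslash \mathsf X_0\;\longrightarrow\; \mathsf M_N(\CC).
\]
Well-definedness modulo $\mathbf G(\QQ)$ uses that $\gamma\in \mathbf G(\QQ)$ sends $(g,z)$ to $(\gamma g,\gamma z)$ and induces an isomorphism $A_{g,z}\xrightarrow{\sim}A_{\gamma g,\gamma z}$ of PEL abelian varieties preserving the level structure; well-definedness modulo $K(N)$ is immediate, as $K(N)$ acts trivially on $L/NL$. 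Injectivity of $\Psi$ is the uniqueness of complex uniformization: an isomorphism $A_{g,z}\xrightarrow{\sim}A_{g',z'}$ of PEL quadruples lifts to a $\mathbb Q$-linear, $\mathcal O$-linear, symplectic similitude $L_\QQ\xrightarrow{\sim}L_\QQ$, that is, an element of $\mathbf G(\QQ)$ intertwining the two complex structures and the two framings.

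Third, I would identify the image of $\Psi$ as an open and closed subscheme. The obstruction to surjectivity is that $\mathsf M_N(\CC)$ parametrizes PEL quadruples whose first Betti lattice lies in \emph{some} genus of PEL-type $\mathcal O$-lattices satisfying the determinantal condition, while the $(g,z)$-family only realizes those with first Betti lattice in the genus of $L$. Since the genus (and in particular the isomorphism class of $L\otimes\widehat{\mathbb Z}$ as a Hermitian $\mathcal O\otimes\widehat{\mathbb Z}$-module with pairing) is a locally constant invariant on $\mathsf M_N(\CC)$, the image of $\Psi$ is a union of connected components, hence open and closed in $\mathsf M_N(\CC)$. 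Then, because $\mathsf M_N\times_F\Spec(\CC)$ is a smooth quasi-projective $\CC$-variety (Proposition \ref{representativity}), any open and closed analytic subspace is itself algebraic, and the bijective holomorphic map $\Psi$ from the smooth algebraic variety $\mathrm{Sh}_{\mathbf G,K(N)}$ onto its image is a biholomorphism which by GAGA is an isomorphism of schemes.

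The main obstacle is the third step: verifying that the set of quadruples $(A,\lambda,\iota,\alpha)$ over $\CC$ whose Betti realization is isomorphic (as a PEL-type $\mathcal O$-lattice) to $L^g$ for some $g\in\mathbf G(\mathbb A_{\QQ,\fin})$ coincides with a union of connected components of $\mathsf M_N(\CC)$. This requires the standard but nontrivial fact that the genus of the Betti lattice is locally constant in families, together with the strong approximation-type statement identifying the genus of $L$ with the $\mathbf G(\mathbb A_{\QQ,\fin})$-orbit; all of this is carried out in \cite[Sections 1.4 and 2.3--2.4]{Lan13}, on which we rely.
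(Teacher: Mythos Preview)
The paper does not supply its own proof of this proposition; it simply records the statement as a citation of \cite[Lemma 2.3.2 and Lemma 2.4.4]{Lan13}. Your sketch is the standard complex-uniformization argument that Lan carries out in those sections, and its overall architecture is correct: build the PEL abelian variety from the lattice $L^g$ with the complex structure $J_z$, check descent to the double coset space, and identify the image via the locally constant invariant given by the genus of the Betti lattice.

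Two small points deserve care. First, the direction of the level structure: since $A_{g,z}[N]\cong \tfrac{1}{N}L^g/L^g\cong L^g/NL^g$, the integral level-$K(N)$ structure is the isomorphism $L/NL\xrightarrow{\,g\,}L^g/NL^g\cong A_{g,z}[N]$ induced by $g$ on $(L\otimes\widehat\ZZ)/N$, not the composite via $g^{-1}$ that you wrote. Second, invoking GAGA in the last step is delicate because neither $\mathrm{Sh}_{\mathbf G,K(N)}$ nor $\mathsf M_N$ is proper; one must either extend $\Psi$ to the minimal (Baily--Borel type) compactifications and apply GAGA there, or appeal to Borel's extension theorem for holomorphic maps out of arithmetic quotients. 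In Lan's treatment the canonical algebraic structure on $\mathrm{Sh}_{\mathbf G,K(N)}$ alluded to in Lemma~\ref{compact} is in fact \emph{defined} through this moduli identification, so once the analytic uniformization of $\mathsf M_N(\CC)$ is established the comparison is essentially built in rather than a separate GAGA step.
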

	Let $\mathsf M_N^0$ denote the corresponding open and closed subscheme of $\mathsf M_N$.
	\begin{lem}[{cf.\ \cite[Definition 5.2.5 and Lemma 5.3.2]{Lan13}}]The holomorphic vector bundle $\mathcal E_\rho$ over $\mathrm{Sh}_{\mathbf G,K(N)}$
		is algebraizable.
		Moreover, if $F$ contains a field of definition of the $B_{\C}$-module $V$,
		then the vector bundle $\mathcal E_\rho$ is defined over $\mathsf M_N^0$.
	\end{lem}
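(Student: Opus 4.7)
The plan is to construct $\mathcal E_\rho$ as an algebraic vector bundle on $\mathsf M_N^0$ by associating it to a principal $M$-bundle built from the universal abelian variety, and then to match this algebraic construction with the analytic one in Definition \ref{automform} via the standard dictionary between the Hodge filtration and the Harish-Chandra decomposition.

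First I would introduce the universal abelian scheme $\pi\colon\mathcal A\to \mathsf M_N^0$ together with its $\mathcal O$-action $\iota$ and polarization $\lambda$, and consider the locally free $\mathcal O_{\mathsf M_N^0}$-module $\underline{\omega}_{\mathcal A/\mathsf M_N^0}:=\pi_*\Omega^1_{\mathcal A/\mathsf M_N^0}$. The determinantal condition in Definition \ref{moduli} says exactly that this Hodge bundle, viewed as a module over $\mathcal O\otimes_{\mathbb Z}\mathcal O_{\mathsf M_N^0}$, is \'etale locally isomorphic to $V_0^{\vee}(1)\otimes_F \mathcal O_{\mathsf M_N^0}$ (after base changing $F$ large enough to contain the reflex field, which already ensures the existence of an $F$-rational model of $V_0^\vee(1)$; this is where the second hypothesis of the lemma enters). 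Hence the functor
\[
\mathcal P\colon T\longmapsto \mathrm{Isom}_{\mathcal O\otimes\mathcal O_T}\!\bigl(V_0^{\vee}(1)\otimes_F\mathcal O_T,\ \underline{\omega}_{\mathcal A/\mathsf M_N^0}|_T\bigr)\times \mathbb G_m(T)
\]
is representable by an $M$-torsor over $\mathsf M_N^0$ in the \'etale (in fact Zariski, after possibly shrinking) topology. Given any algebraic representation $(\rho,W_\rho)$ of $M$ defined over $F$, the associated algebraic bundle
\[
\mathcal E_\rho^{\mathrm{alg}}:=\mathcal P\times^{M}W_\rho
\]
is a vector bundle on $\mathsf M_N^0$, defined over $F$.

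Next I would identify the analytification $(\mathcal E_\rho^{\mathrm{alg}})^{\mathrm{an}}$ with $\mathcal E_\rho$. Over $\mathbb C$, the universal first homology $R^1\pi_*\mathbb Z(1)$ of $\mathcal A$ is locally isomorphic to $L$ with its symplectic and $\mathcal O$-structure; choosing a trivialization of this local system amounts to choosing $g\in\mathbf G(\mathbb R)/Z_\infty K_\infty=\mathsf X$, and the Hodge filtration on $L\otimes\mathcal O_{\mathsf X}$ is precisely the tautological $h_0$-filtration translated by $g$. Thus the transition functions of $\mathcal P^{\mathrm{an}}$ on $\mathrm{Sh}_{\mathbf G,K(N)}=\mathbf G(\mathbb Q)\backslash \mathsf X\times\mathbf G(\mathbb A_{\mathbb Q,\mathrm{fin}})/K(N)$ are given by the automorphy factor $\mathbf G(\mathbb R)\to M(\mathbb C)$ coming from the Harish-Chandra decomposition $\mathfrak g_{\mathbb C}=\mathfrak p^+\oplus\mathfrak k_{\mathbb C}\oplus\mathfrak p^-$ used implicitly in the holomorphy condition of Definition \ref{automform}. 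Hence $(\mathcal E_\rho^{\mathrm{alg}})^{\mathrm{an}}$ and $\mathcal E_\rho$ have the same cocycle description, giving the required canonical isomorphism.

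The main obstacle, as anticipated, is the comparison in the second step: it requires checking that the $F$-rational principal bundle $\mathcal P$ attached to $(\mathcal A,\iota,\lambda)$ really does encode, after analytification, the $K_\infty$-torsor $\mathbf G(\mathbb R)\to\mathbf G(\mathbb R)/Z_\infty K_\infty$ with its embedding $K_\infty\hookrightarrow M(\mathbb C)$. This is the PEL incarnation of the standard statement that automorphic vector bundles on Shimura varieties of abelian type arise from representations of the Levi of the Hodge-theoretic parabolic; I would invoke the construction in \cite[Section 5.2]{Lan13} (in particular \cite[Definition 5.2.5]{Lan13}) to carry out this comparison rigorously, which gives both algebraizability and, under the hypothesis that $F$ contains a field of definition of $V$, the descent of $\mathcal E_\rho$ to $\mathsf M_N^0$.
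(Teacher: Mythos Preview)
Your proposal is correct and is precisely the construction that the cited reference \cite[\S5.2]{Lan13} carries out; the paper itself gives no independent proof but simply invokes Lan's definition of the automorphic bundle as the bundle associated to the principal $M$-torsor of $\mathcal O$-linear trivializations of the Hodge bundle $\underline{\omega}_{\mathcal A/\mathsf M_N^0}$, together with the comparison \cite[Lemma 5.3.2]{Lan13} between this algebraic construction and the analytic $\mathcal E_\rho$. Your sketch of both the construction of $\mathcal P$ and the analytic comparison via the Hodge filtration versus the Harish--Chandra decomposition is the expected argument, and you correctly identify where the hypothesis on $F$ enters (rationality of $V_0^\vee(1)$, hence of the torsor $\mathcal P$ and of $W_\rho$).
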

	We use the same symbol $\mathcal E_\rho$ for that vector bundle over $\mathsf M_N^0$.
	\begin{thm}
		[Higher K\"ocher's principle {\cite{Lan16}}]
		Suppose the condition $\rm (LB)$. Then, we have
		\begin{equation}
			\begin{split}
				\mathscr A^{\mathrm{alg}}_\rho(K)&=H^0(\mathsf M_N^0\times_{\mathrm{Spec}(F)}\mathrm{Spec}(\C),\mathcal E_\rho)\\
				&=H^0(\mathsf M_N^{0,\mathrm{tor}}\times_{\mathrm{Spec}(F)}\mathrm{Spec}(\C),\mathcal E^{\mathrm{tor}}_\rho).
			\end{split}
		\end{equation}
		Here $\mathsf M_N^{0,\mathrm{tor}}$ is any toroidal compactification of $\mathsf M_N^0$
		constructed by Lan \cite{Lan12}.
		\label{Koe}
	\end{thm}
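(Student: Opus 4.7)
The plan is to realize each $f\in\mathscr A^{\mathrm{alg}}(K(N),\rho)$ as a holomorphic section of the analytified vector bundle $\mathcal E_\rho^{\mathrm{an}}$ on the complex Shimura variety, extend it uniquely across the toroidal boundary using the higher Köcher principle (Proposition \ref{Koecher}) under hypothesis $(\mathrm{LB})$, and then invoke GAGA on the smooth projective scheme $\mathsf M_N^{0,\mathrm{tor}}$ to identify the resulting holomorphic section with an algebraic one. First I note that conditions (1) and (2) of Definition \ref{automform} translate verbatim into the statement that $f$ descends to the double-coset quotient $\mathrm{Sh}_{\mathbf G,K(N)}$ and defines a holomorphic section of the homogeneous vector bundle $\mathcal E_\rho$ there, with respect to the natural complex structure of Lemma \ref{lemX}. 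This yields an injective map
\[
\mathscr A^{\mathrm{alg}}(K(N),\rho)\hookrightarrow H^0\!\left((\mathsf M_N^0\times_{\Spec(F)}\Spec(\C))^{\mathrm{an}},\mathcal E_\rho^{\mathrm{an}}\right),
\]
and I need to show that this map lands inside, and is surjective onto, the algebraic global sections.

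Next I apply Proposition \ref{Koecher}: hypothesis $(\mathrm{LB})$ forces every holomorphic section of $\mathcal E_\rho^{\mathrm{an}}$ on $\mathrm{Sh}_{\mathbf G,K(N)}^{\mathrm{an}}$ to extend uniquely to a holomorphic section of the analytification of $\mathcal E_\rho^{\mathrm{tor}}$ on any chosen smooth projective toroidal compactification $\mathsf M_N^{0,\mathrm{tor}}$ of $\mathsf M_N^0$, and restriction in the other direction is obviously well defined. I therefore obtain a bijection between holomorphic sections of $\mathcal E_\rho^{\mathrm{an}}$ on the open Shimura variety and $H^0(\mathsf M_N^{0,\mathrm{tor},\mathrm{an}},\mathcal E_\rho^{\mathrm{tor},\mathrm{an}})$. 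Since $\mathsf M_N^{0,\mathrm{tor}}\times_{\Spec(F)}\Spec(\C)$ is smooth and projective, Serre's GAGA supplies the canonical isomorphism
\[
H^0(\mathsf M_N^{0,\mathrm{tor},\mathrm{an}},\mathcal E_\rho^{\mathrm{tor},\mathrm{an}})\cong H^0(\mathsf M_N^{0,\mathrm{tor}}\times_{\Spec(F)}\Spec(\C),\mathcal E_\rho^{\mathrm{tor}}),
\]
which is the second displayed equality of the theorem. The first equality then follows by restricting an algebraic section of $\mathcal E_\rho^{\mathrm{tor}}$ to the open subscheme $\mathsf M_N^0$, using that Köcher extension on the analytic side (hence, by GAGA, also the algebraic extension) has no kernel and no cokernel.

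The main technical obstacle will be verifying the slowly increasing condition (3) of Definition \ref{automform} in the direction $H^0(\mathcal E_\rho)\to\mathscr A^{\mathrm{alg}}(K(N),\rho)$, i.e.\ showing that every global algebraic section of $\mathcal E_\rho$ on $\mathsf M_N^0$ gives rise to a smooth function on $\mathbf G(\A_\Q)$ of moderate growth. This is handled by passing to the algebraic extension on $\mathsf M_N^{0,\mathrm{tor}}$ provided by Köcher, and then analyzing the section in local toric-chart coordinates near each boundary stratum of the toroidal boundary divisor: in such a chart, Mumford's construction recalled in the appendix, together with Lan's comparison between algebraic and analytic $q$-expansions, provides an explicit description of $f$ as a convergent series in the boundary coordinates; matching this series expansion against the Siegel-set reduction theory on $\mathbf G(\A_\Q)$ yields a polynomial bound in the height of $g\in\mathbf G(\R)$, which is precisely condition (3). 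The reverse implication (that a slowly increasing holomorphic section comes from an algebraic one) is subsumed by Köcher's principle, since once extension across the toroidal boundary is automatic from $(\mathrm{LB})$, no growth hypothesis is actually needed.
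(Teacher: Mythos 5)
The paper does not prove this theorem; it is cited directly to Lan~\cite{Lan16} (as is the companion Proposition~\ref{Koecher}), so there is no in-paper argument against which to compare your proposal. Your reconstruction --- translate conditions~(1) and~(2) of Definition~\ref{automform} into a holomorphic section of $\mathcal E_\rho$ on the complex Shimura variety, extend across the toroidal boundary by Proposition~\ref{Koecher} under $(\mathrm{LB})$, algebraize by GAGA on the proper scheme $\mathsf M_N^{0,\mathrm{tor}}\times_{\Spec(F)}\Spec(\C)$, and observe that the moderate-growth condition~(3) becomes automatic once the section extends to the compactification --- is the expected shape of the argument underlying the cited result, and it is essentially sound.

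Two places should be tightened. GAGA gives you the isomorphism between analytic and algebraic $H^0$ on the proper compactification, but this is not literally the theorem's second displayed equality, which compares algebraic $H^0$ over $\mathsf M_N^0$ with algebraic $H^0$ over $\mathsf M_N^{0,\mathrm{tor}}$; your parenthetical ``(hence, by GAGA, also the algebraic extension) has no kernel and no cokernel'' is misleading as written, since GAGA does not apply to the non-proper open scheme $\mathsf M_N^0$. To obtain the second equality you must close the round trip: an algebraic section over $\mathsf M_N^0\times_{\Spec(F)}\Spec(\C)$ analytifies, K\"ocher-extends over the boundary, algebraizes by GAGA on the proper compactification, and restricts back to $\mathsf M_N^0$; comparing analytifications shows the composite recovers the original section, forcing every arrow in the chain to be an isomorphism, and in particular restriction from $\mathsf M_N^{0,\mathrm{tor}}$ to $\mathsf M_N^0$ is an isomorphism on algebraic global sections. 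Finally, the detour through $q$-expansions to verify condition~(3) is heavier than needed: once the section is a global holomorphic section of the vector bundle on the \emph{compact} analytic space $\mathsf M_N^{0,\mathrm{tor},\mathrm{an}}$, it is locally bounded, and its pullback to $\mathbf G(\A_\QQ)$ differs from a bounded quantity only by the automorphy factor, which already gives polynomial growth in the height. These are matters of exposition rather than genuine gaps, but they should be spelled out.
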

	\begin{cor}	\label{fin dim}
		The $\C$-vector space $\mathscr A^{\mathrm{alg}}_\rho(K)$
		is finite dimensional.
	\end{cor}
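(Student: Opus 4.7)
The plan is to apply Lan's higher Koecher principle (Theorem \ref{Koe}) to realize $\mathscr{A}^{\mathrm{alg}}(K,\rho)$ as the space of global sections of a coherent sheaf on a proper scheme, and then conclude by Grothendieck's finiteness theorem for coherent cohomology.

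More precisely, under the condition (LB), Theorem \ref{Koe} gives the identification
\[
\mathscr{A}^{\mathrm{alg}}(K,\rho) \;=\; H^0\!\left(\mathsf{M}_N^{0,\mathrm{tor}} \times_{\mathrm{Spec}(F)} \mathrm{Spec}(\mathbb{C}),\; \mathcal{E}_\rho^{\mathrm{tor}}\right).
\]
By Lan's construction \cite{Lan12}, the toroidal compactification $\mathsf{M}_N^{0,\mathrm{tor}}$ is a proper algebraic $F$-scheme (and in fact projective for a suitable choice of admissible smooth cone decomposition), and $\mathcal{E}_\rho^{\mathrm{tor}}$ is a coherent $\mathcal{O}$-module on it. The standard finiteness theorem for coherent cohomology on a proper scheme then yields that $H^0(\mathsf{M}_N^{0,\mathrm{tor}}, \mathcal{E}_\rho^{\mathrm{tor}})$ is finite-dimensional over $F$, and flat base change along $F \hookrightarrow \mathbb{C}$ preserves the dimension.

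To handle the case where the condition (LB) fails for the given level $K$, the plan is to pass to a sufficiently deep principal congruence subgroup $K' = K(N') \subset K$ with $N' \in N\mathbb{N}$ large enough to guarantee $c_{\mathbf{G}} > 1$ for $K'$, apply the previous step to $K'$, and then use the obvious inclusion
\[
\mathscr{A}^{\mathrm{alg}}(K,\rho) \hookrightarrow \mathscr{A}^{\mathrm{alg}}(K',\rho)
\]
together with the Cartesian diagram argument from Lemma \ref{lem0} to deduce finite-dimensionality at level $K$ from that at level $K'$.

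The main potential obstacle is verifying that (LB) can be achieved by shrinking $K$ — this amounts to checking that the codimension of the boundary in the toroidal compactification grows with the level, which follows from Lan's construction but requires some care in pinning down the admissible cone decomposition. The rest of the argument is formal once higher Koecher is in hand: the deep work is all absorbed into Theorem \ref{Koe}, and the finiteness of global sections of a coherent sheaf on a proper scheme is classical.
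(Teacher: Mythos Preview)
Your core argument is correct and matches the paper's proof exactly: apply Theorem~\ref{Koe} to identify $\mathscr{A}^{\mathrm{alg}}(K,\rho)$ with $H^0$ of a coherent sheaf on the proper scheme $\mathsf{M}_N^{0,\mathrm{tor}}$, then invoke finiteness of coherent cohomology. The paper phrases this as ``the higher direct image theorem'' but it is the same statement.

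However, your proposed treatment of the case when (LB) fails is based on a false premise. The invariant $c_{\mathbf G}$ is the codimension of the boundary in the \emph{minimal} compactification, and this codimension depends only on the group $\mathbf G$ (more precisely, on the structure of rational boundary components of the symmetric domain), not on the level $K$. Passing to a deeper principal congruence subgroup produces more boundary components but does not change their dimension, so $c_{\mathbf G}$ is unaffected. (Note also that the boundary of any \emph{toroidal} compactification is always a divisor, so the codimension there is identically $1$ and certainly does not grow.) Thus you cannot ``achieve (LB) by shrinking $K$'', and Lemma~\ref{lem0} does not help here.

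In the paper, the condition (LB) is simply carried over as a standing hypothesis from Theorem~\ref{Koe}; the corollary is stated and proved under that assumption. If one wanted finite-dimensionality without (LB), one would instead use the slow-growth condition (3) in Definition~\ref{automform} together with the classical theory of automorphic forms, not Koecher's principle. You should drop the last two paragraphs of your proposal.
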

	\begin{proof}
		The $\CC$-vector space
		$H^0(\mathsf M_N^{0,\mathrm{tor}}\times_{\mathrm{Spec}(F)}\mathrm{Spec}(\C),\mathcal E^{\mathrm{tor}}_\rho)$
		is finite dimensional by the higher direct image theorem.
		Thus we have the conclusion by Theorem \ref{Koe}.
	\end{proof}
	\begin{cor}
		We have
		\[
		\mathscr A^{\mathrm{alg}}_\rho(K)=H^0(\mathsf M_N^{0,\mathrm{tor}},\mathcal E^{\mathrm{tor}}_\rho)\otimes_{F}\C.
		\]
	\end{cor}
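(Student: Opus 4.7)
The plan is to deduce the corollary from Theorem \ref{Koe} by a flat base change argument. By Theorem \ref{Koe} we already have the identification
\[
\mathscr A^{\mathrm{alg}}(K,\rho)=H^0\!\left(\mathsf M_N^{0,\mathrm{tor}}\times_{\mathrm{Spec}(F)}\mathrm{Spec}(\C),\,\mathcal E^{\mathrm{tor}}_\rho\right),
\]
so the only point to establish is that global sections of the coherent sheaf $\mathcal E^{\mathrm{tor}}_\rho$ on $\mathsf M_N^{0,\mathrm{tor}}$ commute with extension of the base from $F$ to $\C$.

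The toroidal compactification $\mathsf M_N^{0,\mathrm{tor}}$ constructed by Lan in \cite{Lan12} is a proper smooth scheme over $\mathrm{Spec}(F)$, and $\mathcal E^{\mathrm{tor}}_\rho$ is coherent on it (as the canonical extension of the coherent automorphic vector bundle $\mathcal E_\rho$ over the open part). Since $F$ is a field, the morphism $\mathrm{Spec}(\C)\to\mathrm{Spec}(F)$ is flat. Flat base change for proper morphisms and coherent sheaves (see e.g.\ the cohomology and base change formalism on proper Noetherian schemes) therefore yields a canonical isomorphism
\[
H^0\!\left(\mathsf M_N^{0,\mathrm{tor}},\,\mathcal E^{\mathrm{tor}}_\rho\right)\otimes_{F}\C \;\xrightarrow{\ \sim\ }\; H^0\!\left(\mathsf M_N^{0,\mathrm{tor}}\times_{\mathrm{Spec}(F)}\mathrm{Spec}(\C),\,\mathcal E^{\mathrm{tor}}_\rho\right).
\]
Combining this isomorphism with the identification provided by Theorem \ref{Koe} gives the desired equality. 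As a sanity check, the left-hand side is finite dimensional over $\C$ since $H^0(\mathsf M_N^{0,\mathrm{tor}},\mathcal E^{\mathrm{tor}}_\rho)$ is a finite-dimensional $F$-vector space by properness (giving an alternative route to Corollary \ref{fin dim}), and this matches the finite dimensionality of $\mathscr A^{\mathrm{alg}}(K,\rho)$ already recorded.

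The main (and essentially only) obstacle is verifying the applicability of flat base change in the exact setting of Lan's toroidal compactification, i.e.\ confirming that $\mathsf M_N^{0,\mathrm{tor}}$ is proper over $F$ (which is built into its construction) and that $\mathcal E^{\mathrm{tor}}_\rho$ is a genuinely coherent $\mathcal O_{\mathsf M_N^{0,\mathrm{tor}}}$-module defined over $F$; both are recorded in the references quoted in the preceding lemmas. No additional input is needed beyond what has been invoked already.
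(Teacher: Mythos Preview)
Your proposal is correct and follows essentially the same approach as the paper, which simply says the corollary is ``an elementary consequence of Theorem~\ref{Koe} and the flat base change theorem of coherent sheaves over schemes.'' You have merely spelled out the details of that sentence; note incidentally that flat base change for $H^0$ of a coherent sheaf requires only quasi-compactness and quasi-separatedness, so properness is more than you need, but it is certainly available here.
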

	\begin{proof}This is an elementary consequence
		of Theorem \ref{Koe} and the flat base change theorem of coherent sheaves over schemes.
	\end{proof}
	Let us recall an algebraic interpretation of the Fourier expansion.
	For each $i$, put $X_i:=\Hom_{\Z}(L^{g_i}_{MI},\Z(1))$ and $Y_i:=L^{g_i}/L_{\mathrm{MI}}^{g_i}$.
	Let $\phi_i\colon Y_i\hookrightarrow  X_i$ be an injective homomorphism induced by the
	given pairing $\langle \ ,\ \rangle$.
	We define the torus $E_i$ by
	\[
	E_i=\mathrm{Spec}(\mathbb Z[X_i])=X_i^*\otimes\mathbb G_m.
	\]
	Let $\mathbf P_i\subset \mathbf S_{N,\mathbf R}^{g_i,\vee}$
	be the cone defined by the ``positivity condition'' (\cite[Section 6.2.5]{Lan12}).
	For example, in the Siegel case (Example \ref{ex1} (1) and Example \ref{ex2} (1)),
	$\mathbf P_i$ is the subset consisting of positive semi-definite
	quadratic forms on $X_{i,\mathbf R}$, which is written as $C(X)$ in \cite[Section 2.1]{FaltingsChai}.
	Let $\sigma\subset \mathbf P_i^0$ be any non-degenerate rational polyhedral cone
	and let $E_i\hookrightarrow \Xi_i(\sigma)$ be the corresponding
	torus embedding, where
	\[
	\Xi_i(\sigma)=\mathrm{Spec}(\mathbb Z[\mathbf S_{N}^{g_i}\cap \sigma^\vee]).
	\]
	Here $\mathbf P_i^0$ is the interior of $\mathbf P_i$.
	We define $\overline{\Xi}_{i,\mathbb C}(\sigma)$ by
	\[
	\overline{\Xi}_{i,\mathbb C}(\sigma)=\mathrm{Spec}(\mathbb C[\![\mathbf S_{N}^{g_i}\cap \sigma^\vee]\!]).
	\]
	Then, the abelian group $Y_i$ can be regarded as a period of
	the torus $X_i^\vee\otimes\mathbb G_m=\mathrm{Spec}(\mathbb Z[\mathfrak X^x\mid x\in X])$ over $K_i:=\mathrm{Frac}(\mathbb C[\![\mathbf S_{N}^{g_i}\cap \sigma^\vee]\!])$
	by the equation
	\[
	\mathfrak X^x(y)=[y\otimes x]\in \mathbb C[\![\mathbf S_{N}^{g_i}\cap \sigma^\vee]\!].
	\]
	Then, as any element of $\sigma$ is ``positive-definite'',
	$\phi_i\colon Y_i\to X_i$ induced by $\langle\ ,\ \rangle$ is a polarization of
	$Y_i$ in the sense of Mumford.
	Therefore, we can apply Mumford's construction for
	$(Y_i\subset X_i^\vee\otimes\mathbb G_m,\phi_i)$ over $\mathbb C[\![\mathbf S_{N}^{g_i}\cap \sigma^\vee]\!]$
	and let
	\[
	(G_i\to \overline{\Xi}_{i,\mathbb C}(\sigma),\lambda)
	\] 
	denote the resulting polarized semi-abelian variety.
	As Mumford's construction is functorial, $\mathcal O$ acts on this polarized semi-abelian variety
	and $\iota_i$ denotes this action.
	Now, we fix a splitting
	\[
	\epsilon_i\colon L^{g_i}\cong L_{\mathrm{MI}}^{g_i}\oplus L^{g_i}/L_{\mathrm{MI}}^{g_i}
	\]
	as $\mathcal O$-modules.
	Then, by using this, we have a natural level structure
	\[
	\alpha_{i,N}\colon L^{g_i}/NL^{g_i}\cong G_{i,\eta_i}[N]
	\]
	over the generic point $\eta_i$ of $\overline{\Xi}_i(\sigma)$.
	Let
	\begin{equation}
		\label{classifying}
		\mathrm{cl}_i\colon \eta_i=\Spec(K_i)\to \mathsf M_N^0
	\end{equation}
	denote the classifying morphism defined by the tuple
	\[
	(G_{i,\eta_i},\lambda,\iota_i,\alpha_N).
	\]
	One of important applications of the main comparison theorem \cite[theorem 4.1]{Lan13}
	is the following.
	\begin{thm}[{\cite[Theorem 5.3.5]{Lan16}}]
		The homomorphism
		\[
		\mathrm{cl}_i^*\colon \mathcal A^{\rm alg}_\rho(K(N))\to K_i\otimes_{\mathbb C}W_\rho
		\] induced by the pull-back via the classifying morphism {\rm(}\ref{classifying}{\rm)}
		coincides with the Fourier expansion in Definition \ref{fourierexp}.
	\end{thm}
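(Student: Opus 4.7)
Plan. The theorem is an application of Lan's main comparison theorem \cite[Theorem 4.1]{Lan13}, and my plan is to reduce to it in three steps. First I would observe that the classifying morphism $\mathrm{cl}_i$ factors canonically through the formal completion of $\mathsf M_N^{0,\mathrm{tor}}$ along the closed stratum indexed by the cone $\sigma$: the Mumford family $(G_i,\lambda,\iota_i,\alpha_{i,N})$ over $\overline{\Xi}_{i,\mathbb C}(\sigma)$ is the pullback of the universal semi-abelian degeneration living over that formal neighborhood, so $\mathrm{cl}_i^*f$ can be computed by restricting (the toroidal extension of) $f$ to the formal thickening.

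Second, I would unpack both expansions on that formal neighborhood. By the higher K\"ocher principle (Theorem~\ref{Koe}), every $f\in\mathscr A^{\mathrm{alg}}(K(N),\rho)$ extends uniquely to a global section of $\mathcal E^{\mathrm{tor}}_\rho$ on $\mathsf M_N^{0,\mathrm{tor}}$. A canonical trivialization of $\mathcal E^{\mathrm{tor}}_\rho$ along the boundary stratum, produced from Mumford's construction together with the splitting $\epsilon_i$, writes the pullback of $f$ as a formal series $\sum_{x\in\mathbf S_N^{g_i}\cap\sigma^\vee} b_x(f)\,\mathfrak{X}^x$ in $K_i\otimes_{\mathbb C}W_\rho$, which is by construction $\mathrm{cl}_i^* f$. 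On the analytic side, the cusp $\mathscr F_{g_i}$ has an analytic formal neighborhood in $\Gamma_i\backslash\mathsf X_0$ on which the abelian subgroup $\mathbf U^{g_i}(\mathbb Q)\cap K(N)\cong\Hom_{\mathbb Z}(\mathbf S_N^{g_i},\mathbb Z(1))$ acts by integer translations; the exponentials $q_l=\exp(2\pi\sqrt{-1}\langle l,\cdot\rangle)$ become analytic coordinates, and the Fourier coefficients $a_l(g_i,f)$ of Definition~\ref{fourierexp} are their coefficients in this expansion.

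The decisive step is to invoke \cite[Theorem 4.1]{Lan13}: this provides a canonical isomorphism between the algebraic formal neighborhood $\overline{\Xi}_{i,\mathbb C}(\sigma)$ and its analytic counterpart under which the toric parameter $\mathfrak{X}^x$ coming from Mumford's construction is identified with the analytic parameter $q_x$. Granted this identification, $b_x(f)$ agrees with $a_x(g_i,f)$ for all $x$, and the two expansions coincide. The main obstacle is precisely this identification of coordinates: one must check that the period $Y_i\hookrightarrow X_i^\vee\otimes\mathbb G_m$ used to run Mumford's construction matches, symplectically and compatibly with the polarization $\phi_i$, the $\mathcal O$-action, and the $N$-level structure, the analytic period lattice of the family of degenerating abelian varieties. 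This amounts to comparing Mumford's algebraic uniformization with the exponential uniformization of complex abelian varieties, together with the comparison of their $N$-torsion level structures, and it is exactly what \cite[Theorem 4.1]{Lan13} delivers.
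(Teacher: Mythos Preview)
The paper does not give its own proof of this statement: it is quoted verbatim as a result of Lan, with only the one-line contextual remark that it is ``an important application of the main comparison theorem \cite[theorem 4.1]{Lan13}.'' Your sketch is consistent with that remark---you correctly identify that the content lies in Lan's comparison between the algebraic and analytic formal neighborhoods of a boundary stratum, which matches the toric coordinate $\mathfrak X^x$ with the analytic exponential $q_x$---but there is nothing in the paper to compare your argument against beyond that single sentence. In short, your outline is a reasonable summary of why the cited theorem holds, and it agrees with the paper's attribution; the paper itself simply defers to Lan.
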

	\begin{rem}
		According to the construction of Lan in \cite{Lan16}, the image of the homomorphism $\mathrm{cl}_i^*$ is contained in
		\[
		\mathbb C[\![\mathbf P^{\vee}_i\cap \mathbf S_{K(N)}]\!]\otimes_{\mathbb C}W_\rho.
		\]
		Moreover, this homomorphism	does not depend on the choice of the rational polyhedral cone $\sigma$.
		This can be proved by the very same manner as in \cite[Chapter V, Section 1]{FaltingsChai}.
	\end{rem}
	The following proposition is a key point of the proof of Theorem \ref{integral model}.
	\begin{prop}Let $e_1,\dots,e_r$ be a free basis of $Y$ and let
		$l$ be an element of the ring $\mathbb Z[\![\sigma^\vee\cap \mathbf S_{K(N)}^{g_i}]\!]$ defined by
		\[
		l=\prod_{i=1}^r[e_i\otimes\phi_i(e_i)].
		\]
		Then, the point $\mathrm{cl}_i\colon \eta_i\to \mathsf M_N^0$ is over a point
		\[
		\mathrm{Spec}\left(\mathbb Z[\![\sigma^\vee\cap \mathbf S_{K(N)}^{g_i}]\!]\left[\frac{1}{l}\right]\otimes_{\mathbb Z}\mathcal O_F\left[\mu_N,\frac{1}{N}\right]\right)\to \mathsf M_N^0,
		\]
		where $\mu_N$ denotes the set of all $N$th roots of unity in $\CC$.
		\label{keyprop}
	\end{prop}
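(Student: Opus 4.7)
The plan is to show that the tuple $(G_{i,\eta_i},\lambda,\iota_i,\alpha_{i,N})$ defining the classifying morphism $\mathrm{cl}_i$ is already defined over the ring
\[
R_i := \mathbb Z[\![\sigma^\vee\cap \mathbf S_{K(N)}^{g_i}]\!]\Bigl[\tfrac{1}{l}\Bigr]\otimes_{\mathbb Z}\mathcal O_F\Bigl[\mu_N,\tfrac{1}{N}\Bigr],
\]
so that by the universal property of the moduli problem (Definition \ref{moduli}, Proposition \ref{representativity}) the morphism $\mathrm{cl}_i\colon \eta_i\to \mathsf M_N^0$ factors through $\Spec(R_i)\to \mathsf M_N^0$. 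The task therefore reduces to checking that each of the four pieces of data extends from $\eta_i=\Spec(K_i)$ to $\Spec(R_i)$ and satisfies the axioms of Definition \ref{moduli}.

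First I would promote the underlying semi-abelian variety. Mumford's construction from $(Y_i\subset X_i^\vee\otimes\mathbb G_m,\phi_i)$ takes place over $A=\mathbb Z[\![\sigma^\vee\cap \mathbf S_{K(N)}^{g_i}]\!]$, with $I$ the augmentation ideal corresponding to $\sigma^\vee\cap \mathbf S_{K(N)}^{g_i}\setminus\{0\}$; the element $l=\prod_{i=1}^r[e_i\otimes\phi_i(e_i)]$ is the exact element appearing in Lemma \ref{Mumordconstructionrefinement}. Applying that lemma, $G_{i,l}:=G_i\times_{\Spec A}\Spec(A[1/l])$ is an abelian scheme over $A[1/l]$, and moreover one has the short exact sequence
\[
0\longrightarrow (X_i^\vee\otimes\mu_N)_{A[1/l]}\longrightarrow G_{i,l}[N]\longrightarrow (Y_i/NY_i)_{A[1/l]}\longrightarrow 0
\]
of finite flat group schemes over $A[1/l]$. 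Next, I would extend the polarization $\lambda$ and the $\mathcal O$-action $\iota_i$: both are obtained from functorial constructions in Mumford's machine (the polarization from $\phi_i$ itself, the $\mathcal O$-action from the $\mathcal O$-linearity of $X_i$, $Y_i$ and $\phi_i$), so they are automatically defined already over $A$, hence a fortiori over $A[1/l]$, and over this latter ring the polarization $\lambda$ is a genuine isogeny $G_{i,l}\to G_{i,l}^\vee$ because $G_{i,l}$ is an abelian scheme.

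The main obstacle — and the only reason to tensor with $\mathcal O_F[\mu_N,1/N]$ — is to promote the level $K(N)$-structure $\alpha_{i,N}$ to the strict integral notion of Definition \ref{moduli}/\cite[Definition 1.3.6.2]{Lan13}. The above short exact sequence splits once we invert $N$ and trivialise $\mu_N$, and the splitting $\epsilon_i\colon L^{g_i}\cong L_{\mathrm{MI}}^{g_i}\oplus L^{g_i}/L_{\mathrm{MI}}^{g_i}$ combined with the canonical identifications $L_{\mathrm{MI}}^{g_i}/N\cong X_i^\vee\otimes\mu_N$ (via a choice of primitive $N$th root of unity) and $L^{g_i}/L_{\mathrm{MI}}^{g_i}=Y_i$ then yields an isomorphism $L^{g_i}/NL^{g_i}\cong G_{i,l}[N]$ over $A[1/l]\otimes_{\mathbb Z}\mathbb Z[\mu_N,1/N]$. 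The compatibility of this isomorphism with $\mathcal O$-actions follows from the $\mathcal O$-linearity of the splitting $\epsilon_i$ and of $\phi_i$; compatibility with the Weil pairing (up to the similitude factor) reduces to checking that the pairing $\langle\,,\,\rangle$ on $L^{g_i}$, restricted via the splitting, induces the canonical pairing between $X_i^\vee\otimes\mu_N$ and $Y_i/NY_i$, which is exactly the content of the definition of $\phi_i$ from $\langle\,,\,\rangle$. Finally, tensoring with $\mathcal O_F$ is needed only to make the morphism land in $\mathsf M_N^0$ rather than in its base change to a larger field, and is harmless because $\mathsf M_N^0$ is already defined over $F$. Having verified all four conditions of Definition \ref{moduli} over $R_i$, the universal property furnishes a morphism $\Spec(R_i)\to \mathsf M_N^0$ through which $\mathrm{cl}_i$ factors, which is the asserted statement.
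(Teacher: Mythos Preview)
Your proposal is correct and follows exactly the same strategy as the paper: reduce to showing that the tuple $(G_{i,\eta_i},\lambda,\iota_i,\alpha_{i,N})$ extends over $R_i$, with the essential input being Lemma~\ref{Mumordconstructionrefinement}. The paper's own proof is a one-line appeal to that lemma, whereas you have spelled out in more detail how the polarization, $\mathcal O$-action, and level-$N$ structure individually descend (in particular why $\mu_N$ and $1/N$ must be adjoined for the level structure); these are precisely the details the paper leaves implicit.
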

	\begin{proof}
		It is sufficient to show that the object $(G_{i,\eta_i},\lambda,\iota_i,\alpha_N)$
		is defined over the ring $\mathbb Z[\![\sigma^\vee\cap \mathbf S_{K(N)}^{g_i}]\!]\left[\frac{1}{l}\right]\otimes_{\mathbb Z}\mathcal O_F\left[\mu_N,\frac{1}{N}\right]$. This is already proved in Lemma \ref{Mumordconstructionrefinement}.
	\end{proof}

	\begin{cor}\label{cor1}
		The image $\mathrm{cl}^*_i$ is contained in
		$(\mathbb Z[\![\sigma^\vee\cap \mathbf S^{g_i}_{K}]\!])\otimes_{\mathbb Z}\mathcal O_F[\mu_N,1/N])\otimes_{\mathcal O_F}W_\rho$.
	\end{cor}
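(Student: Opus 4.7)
The plan is to combine Proposition \ref{keyprop} (an algebraic factorization of the classifying morphism) with the remark immediately preceding the corollary (a complex-analytic containment of the Fourier expansion) via an elementary coefficient-by-coefficient comparison. Writing $R := \mathbb Z[\![\sigma^\vee \cap \mathbf{S}_{K(N)}^{g_i}]\!][1/l] \otimes_{\mathbb Z} \mathcal O_F[\mu_N, 1/N]$, Proposition \ref{keyprop} asserts that $\mathrm{cl}_i\colon \eta_i \to \mathsf M_N^0$ factors through $\Spec(R)$. Since $\mathcal E_\rho$ arises from the Hodge bundle of the universal abelian scheme, I would use the Mumford construction on $\Spec(R)$ furnished by Lemma \ref{Mumordconstructionrefinement} together with the level structure $\alpha_{i,N}$ and the splitting $\epsilon_i$ to trivialize the pullback bundle $\mathrm{cl}_i^*\mathcal E_\rho$; this yields the containment $\operatorname{image}(\mathrm{cl}_i^*) \subset R \otimes_{\mathcal O_F} W_\rho$.

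Next, the remark preceding the corollary already gives the complex-analytic containment $\operatorname{image}(\mathrm{cl}_i^*) \subset \mathbb C[\![\mathbf{P}_i^\vee \cap \mathbf{S}_{K(N)}^{g_i}]\!] \otimes_{\mathbb C} W_\rho$; since $\sigma \subset \mathbf{P}_i^0$ implies $\mathbf{P}_i^\vee \subset \sigma^\vee$, the right-hand side embeds into $\mathbb C[\![\sigma^\vee \cap \mathbf{S}_{K(N)}^{g_i}]\!] \otimes W_\rho$. Taking the intersection of the two containments inside the common ambient module $\operatorname{Frac}(\mathbb C[\![\sigma^\vee \cap \mathbf{S}_{K(N)}^{g_i}]\!]) \otimes W_\rho$, I would then argue as follows. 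Given $z$ in the intersection, write
\[
z = \sum_{s \in \sigma^\vee \cap \mathbf{S}_{K(N)}^{g_i}} c_s [s], \qquad c_s \in \mathbb C \otimes W_\rho,
\]
and simultaneously $z = y/l^k$ for some $k \geq 0$ and $y \in \mathcal O_F[\mu_N, 1/N] \otimes_{\mathbb Z} \mathbb Z[\![\sigma^\vee \cap \mathbf{S}_{K(N)}^{g_i}]\!] \otimes_{\mathcal O_F} W_\rho$. Setting $\lambda := \sum_{j=1}^r e_j \otimes \phi_i(e_j) \in \sigma^\vee \cap \mathbf{S}_{K(N)}^{g_i}$, so that $l^k = [k\lambda]$, the identity $l^k z = y$ forces $c_s$ to equal the $[s + k\lambda]$-coefficient of $y$, which lies in $\mathcal O_F[\mu_N, 1/N] \otimes W_\rho$. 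Hence every $c_s$ is integral, and the intersection is exactly $\mathcal O_F[\mu_N, 1/N] \otimes_{\mathbb Z} \mathbb Z[\![\sigma^\vee \cap \mathbf{S}_{K(N)}^{g_i}]\!] \otimes_{\mathcal O_F} W_\rho$, which is the desired conclusion.

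The main obstacle I anticipate is the trivialization of $\mathrm{cl}_i^*\mathcal E_\rho$ used in the first step: one must verify that the integral trivialization coming from Lemma \ref{Mumordconstructionrefinement}, the level structure $\alpha_{i,N}$, and the splitting $\epsilon_i$ is compatible with the pullback bundle structure implicit in Lan's algebraic $q$-expansion map \cite[Theorem 5.3.5]{Lan16}, so that ``lying in $R \otimes W_\rho$'' really means integrality of the Fourier coefficients in the sense of Definition \ref{fourierexp}. Once this compatibility is pinned down, the coefficient comparison in the subsequent step is routine and the corollary follows immediately.
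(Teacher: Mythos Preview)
Your approach is correct and is essentially what the paper's one-line proof (``This is a direct consequence of Proposition \ref{keyprop}'') leaves implicit. Proposition \ref{keyprop} alone only lands you in the localization $\mathbb Z[\![\sigma^\vee\cap\mathbf S^{g_i}_{K(N)}]\!][1/l]\otimes\mathcal O_F[\mu_N,1/N]\otimes W_\rho$, and your intersection argument with the formal power-series containment from the preceding remark is exactly the step needed to remove the $[1/l]$; the coefficient comparison via $l^k=[k\lambda]$ is clean and correct. Your caveat about the trivialization of $\mathrm{cl}_i^*\mathcal E_\rho$ over $\Spec(R)$ is a genuine point the paper also suppresses: it is handled by the fact that the relative Lie algebra of the Mumford semi-abelian scheme is canonically that of the torus $\widetilde G$ (hence free with a basis coming from $X_i$), and Lan's comparison \cite[Theorem 5.3.5]{Lan16} identifies this algebraic trivialization with the analytic one used in Definition \ref{fourierexp}.
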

	\begin{proof}
		This is a direct consequence of Proposition \ref{keyprop}.
	\end{proof}
	
	\begin{proof}[Proof of Theorem $\ref{integral model}$]For each $f\in \mathscr A^{\mathrm{alg}}_\rho(K(N))$, let $f_i$ be the restriction of $f$ to
		$\mathsf X_i$. By applying Corollary \ref{cor1}, we have the Fourier expansion
		$f_i=\sum_{1\leq j\leq \dim(\rho)} a_j f_{j}\otimes w_{j}$
		for some $a_j\in F(\mu_N)$, $f_{j}\in \Z[\![\sigma^\vee\cap \mathbf S^{g_i}_{K(N)}]\!]$, and $w_{j}\in W_{\rho}$.
		This implies that the $\ZZ$-submodule of $W_\rho$ generated by the Fourier coefficients of $f_i$
		coincides with $\sum_{1\leq j\leq \dim(\rho)} \Z a_jw_{j}$, and hence it is finitely generated. This completes the proof.
	\end{proof}

	\begin{rem}
		In some special cases of Siegel modular forms,
		the existence of an integral structure as in Theorem \ref{integral model} can be proved by using Ibukiyama's differential operators {\rm (}\cite{Ibukiyama}{\rm)} and linearized pullback formulas. Such a method was exposited by B\"ocherer \cite{Boecherer}.
		His method is valid for vector-valued Siegel cusp forms of general degree and level $1$ with trivial nebentypus,
		under the assumption that the weight is large enough for using Ibukiyama's differential operators.
		In \cite{Boecherer}, B\"ocherer also announced that his method should work
		for non-cusp forms of general level.
		Contrary to this, Theorem \ref{integral model} includes all the cases of general holomorphic Siegel modular forms.
	\end{rem}

	\section*{Acknowledgements}
	The authors would like to thank Masao Tsuzuki for a lot of discussions and for giving them a suggestion on applications to the growth of cuspidal cohomologies. 
	They also would like to thank Hidenori Katsurada for a lot of fruitful comments on the integrality of Hecke eigenvalues for Siegel modular forms,
	Takashi Hara for his suggestion on a generalization of the early manuscript,
	and Satoshi Wakatsuki for discussion on equidistribution results for Siegel modular forms, respectively.
	Thanks are due to Masanobu Kaneko for giving them opportunities to discuss this work at Kyushu University. The first author was supported by
	Grant-in-Aid for JSPS Fellows (JP17J01827) and  JSPS KAKENHI Grant Number (JP23K03069).
	The second author was supported by Grant-in-Aid for Research Activity Start-up (JP19K21025)
	and Grant-in-Aid for Early-Career Scientists (JP20K14298).
	

\end{document}